\numberwithin{equation}{section}
\numberwithin{figure}{section}
\def\expo_#1{{\rm e}^{#1}}
\def\rh{\rho}
\def\si{\sigma}
\def\f{\varphi}
\def\t{\theta}
\def\R{{\mathbb R}}
\def\C{{\mathbb C}}
\def\D{{\mathbb D}}
\def\T{{\mathbb T}}
\def\Z{{\mathbb Z}}
\def\S{{\mathbb S}}
\def\s{\vskip 0.25cm\noindent}
\def\pa{\partial}
\def\build#1_#2^#3{\mathrel{\mathop{\kern 0pt#1}\limits_{#2}^{#3}}}
\def\td_#1,#2{\mathrel{\mathop{\build\longrightarrow_{#1\rightarrow #2}^{}}}}
\def\e{\varepsilon}
\newcommand{\ben}{\begin{equation}}
\newcommand{\een}{\end{equation}}
\newcommand{\beno}{\begin{eqnarray*}}
\newcommand{\eeno}{\end{eqnarray*}}
\newtheorem*{theo}{Theorem}
\newtheorem{theorem}{Theorem}
\newtheorem{corollary}{Corollary}
\newtheorem{proposition}{Proposition}
\newtheorem{lemma}{Lemma}
\newtheorem{remark}{Remark}
\newtheorem{definition}{Definition}
\date{February 7, 2014}
\begin{document}
\title[Multiple singular values of Hankel operators]{Multiple singular values\\ of Hankel operators}
\author{Patrick G\'erard}
\address{Universit\'e Paris-Sud XI, Laboratoire de Math\'ematiques
d'Orsay, CNRS, UMR 8628, et Institut Universitaire de France} \email{{\tt Patrick.Gerard@math.u-psud.fr}}

\author[S. Grellier]{Sandrine Grellier}
\address{F\'ed\'eration Denis Poisson, MAPMO-UMR 6628,
D\'epartement de Math\'ematiques, Universit\'e d'Orleans, 45067
Orl\'eans Cedex 2, France} \email{{\tt
Sandrine.Grellier@univ-orleans.fr}}

\subjclass[2010]{35B15, 47B35, 37K15}

\begin{abstract}
The goal of this paper is to  construct a nonlinear Fourier transformation on the space of symbols of compact Hankel operators on the circle. This transformation allows to solve a general inverse spectral problem involving singular values  of a compact Hankel operator, with arbitrary multiplicities. The formulation of this result requires the introduction of the pair made with a Hankel  operator and its shifted Hankel operator.  As an application, we prove that the space of symbols of compact Hankel operators on the circle admits a singular foliation made of tori of finite or infinite dimensions, on which the flow of the cubic Szeg\H{o} equation acts. In particular, we infer that arbitrary solutions of the cubic Szeg\H{o} equation on the circle with finite momentum are almost periodic with values in $H^{1/2}(\S ^1)$.

\end{abstract}

\thanks { The authors are grateful to T. Kappeler, V. Peller and A. Pushnitski for valuable discussions and comments about this paper, to L. Baratchart for pointing them references \cite{AAK} and \cite{Bar} in connection with Proposition \ref{action}, and to A. Nazarov for references concerning Bateman--type formulae. The first author is supported by ANR ANAE 13-BS01-0010-03.}

\maketitle
\tableofcontents

\section{Introduction} \label{intro}

The theory of Hankel operators has many applications in various areas of mathematics, such as  operator theory, approximation theory, control theory. We refer to the books \cite{Pe2} and \cite{N} for  a systematic presentation of this theory. More recently, spectral theory of Hankel operators arose as a key tool in the study  of some completely integrable Hamiltonian system, called the cubic Szeg\H{o} equation, see \cite{GG1}, \cite{GG2}, \cite{GG4}. The goal of this paper is two-fold. On the one hand, we present the complete solution of some double inverse spectral problem for compact Hankel operators. On the other hand, we apply this theory in order to obtain qualitative results on the dynamics of the cubic Szeg\H{o} equation.

We first recall the definition of a Hankel operator on the space $\ell ^2(\Z _+)$. Given a sequence $c=(c_n)_{n\ge 0}\in \ell ^2(\Z _+)$, the associated Hankel operator $\Gamma _c$ is formally defined by
$$\forall x=(x_n)_{n\ge 0}\in\ell^2(\Z_+)\ ,\ \Gamma _c(x)_n=\sum _{k=0}^\infty c_{n+k}x_k\ .$$
Hankel operators are strongly related to the shift operator
$$ \begin{array}{lcl} \Sigma : &\ell ^2(\Z _+)&\longrightarrow \ell ^2(\Z _+)\\
&(x_0,x_1,x_2,\cdots )&\longmapsto (0,x_0,x_1,x_2,\cdots )\ ,\end{array}$$
and to its adjoint
$$ \begin{array}{lcl} \Sigma ^*: &\ell ^2(\Z _+)&\longrightarrow \ell ^2(\Z _+)\\
&(x_0,x_1,x_2,\cdots )&\longmapsto (x_1,x_2,\cdots )\ .\end{array}$$
Indeed, Hankel operators are those operators $\Gamma $ on $\ell ^2(\Z _+)$ such that
\begin{equation}\label{Hankelshift}
\Sigma ^*\Gamma =\Gamma \Sigma \ .
\end{equation}
A famous result due to Nehari \cite{Ne} characterizes the boundedness of  $\Gamma _c$  on $\ell ^2(\Z _+)$ by
$$\exists f\in L^\infty (\T ): \forall n\ge 0,\ c_n=\hat f(n)\ ,$$
where $\hat f$ denotes the sequence of Fourier coefficients of any distribution $f$ on $\T :=\R /2\pi \Z $. Using Fefferman's theorem \cite{F}, this is equivalent to $u_c \in BMO(\T )$, where we define
\begin{equation}\label{uc}
 u_c(\expo _{ix}):=\sum _{n=0}^\infty  c_n{\rm e}^{inx}\ ,\ x\in \T .
 \end{equation}
Throughout this paper, we shall focus on the special case where $\Gamma _c$ is compact, which corresponds to $u_c\in VMO(\T )$ by 
a theorem due to Hartman \cite{Ha}. 

\subsection{Inverse spectral theory of self-adjoint compact Hankel operators} We first discuss the case of self-adjoint operators $\Gamma _c$,
which corresponds to a real valued sequence $c$. Assume moreover that $\Gamma _c$ is compact. The spectrum of $\Gamma _c$ consists of $0$ and  of a finite or infinite sequence of real nonzero  eigenvalues $(\lambda _j)_{j\ge 1}$, repeated according to their  finite multiplicities. A natural question is the following : 
{\sl given any finite or infinite sequence of nonzero real numbers $(\lambda_j)_{j\ge 1}$, does there exist a compact selfadjoint Hankel operator $\Gamma_c$ having this sequence as non zero eigenvalues, repeated according to their multiplicity?} 

Of course, if the sequence $(\lambda _j)$ is infinite, it is necessary that $\lambda _j$ tends to $0$. A much more subtle constraint was found by Megretskii-Peller--Treil in \cite{MPT}, who proved the following theorem, which we state only in the compact case.

\begin{theo}[Megretskii, Peller, Treil]
A finite or infinite sequence  $(\lambda _j)$ of nonzero real numbers is the sequence of nonzero eigenvalues of  a compact selfadjoint Hankel operator if and only if 
\begin{enumerate}
\item If $(\lambda _j)$ is infinite, then $\lambda _j \td_j,\infty 0 $ ; 
\item For any $\lambda \in \R^*$, $\vert  \# \{ j: \lambda _j=\lambda \} - \# \{ j: \lambda _j=-\lambda \} \vert \le 1.$
\end{enumerate}
\end{theo}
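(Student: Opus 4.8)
The statement splits into a \emph{necessity} part (conditions (1)--(2) must hold for every compact self-adjoint $\Gamma_c$) and a \emph{sufficiency/construction} part, and I would attack them separately.

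\textbf{Necessity.} Condition (1) is immediate from compactness. For (2) the engine is the \emph{shifted Hankel operator} $\tilde\Gamma:=\Sigma^*\Gamma_c=\Gamma_c\Sigma$, which for real $c$ is again a compact self-adjoint Hankel operator (namely $\Gamma_{(c_1,c_2,\dots)}$, using \eqref{Hankelshift}), together with the two identities
\ben\label{plan1}
\Gamma_c^2=\tilde\Gamma^2+\langle\,\cdot\,,u\rangle u,\qquad u:=\Gamma_ce_0=(c_n)_{n\ge0},
\een
and, for $v\in\ker(\Gamma_c\mp\lambda)$,
\ben\label{plan2}
\tilde\Gamma v=\Sigma^*\Gamma_cv=\pm\lambda\,\Sigma^*v .
\een
Since $\langle\,\cdot\,,u\rangle u$ is a \emph{positive} rank-one operator, \eqref{plan1} and the min--max principle give, for every $\mu>0$,
$$\bigl|\dim\ker(\Gamma_c^2-\mu)-\dim\ker(\tilde\Gamma^2-\mu)\bigr|\le1,$$
i.e. the \emph{total} multiplicity of $\pm\sqrt\mu$ changes by at most one when one passes from $\Gamma_c$ to $\tilde\Gamma$. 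The role of \eqref{plan2} is to recover the \emph{sign} information: on $\ker(\Gamma_c^2-\lambda^2)\cap e_0^{\,\perp}$ (a subspace of codimension $\le1$ which by \eqref{plan1} lies in $\ker(\tilde\Gamma^2-\lambda^2)$) the operator $\tilde\Gamma$ equals $\lambda\Sigma^*\circ(\tfrac1\lambda\Gamma_c)$, so each such $v$, together with $\Sigma^*v$, spans a plane on which $\tilde\Gamma$ is conjugate to $\lambda\left(\begin{smallmatrix}0&1\\1&0\end{smallmatrix}\right)$; hence this piece of the eigenspace produces \emph{equal} numbers of $+\lambda$ and $-\lambda$ eigenvectors for $\tilde\Gamma$. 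Finally, iterating $\Gamma_c\rightsquigarrow\tilde\Gamma$ yields $\Gamma_c^{(n)}=(\Sigma^*)^n\Gamma_c$, and since $\Gamma_c$ is compact while $(\Sigma^*)^n\to0$ strongly, $\|\Gamma_c^{(n)}\|\to0$, so for large $n$ neither $\lambda$ nor $-\lambda$ is an eigenvalue of $\Gamma_c^{(n)}$: the imbalance is then $0$. Feeding these three facts into a bookkeeping of the signed quantity $\dim\ker(\Gamma_c^{(n)}-\lambda)-\dim\ker(\Gamma_c^{(n)}+\lambda)$ along the chain should force the original imbalance to be $\le1$.

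\textbf{The main obstacle} is exactly this last bookkeeping: \eqref{plan1} alone only gives unsigned control, and to turn the balanced splitting coming from \eqref{plan2} into the sharp bound one must control the overlap $\ker(\Gamma_c^2-\lambda^2)\cap\Sigma^*\!\bigl(\ker(\Gamma_c^2-\lambda^2)\bigr)$ --- i.e. how much of an eigenspace of $\Gamma_c$ is not ``seen'' by the shifted operator. A clean description of this overlap, equivalently of the joint structure of the pair $(\Gamma_c,\tilde\Gamma)$ (the object studied in the rest of this paper), is the delicate point, and it is here that Megretskii--Peller--Treil do the serious work.

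\textbf{Sufficiency.} One must exhibit, for a prescribed admissible sequence, a compact self-adjoint $\Gamma_c$ with exactly that list of nonzero eigenvalues, and I would assemble it from elementary blocks. A single eigenvalue $\lambda$ of either sign is realized by the rank-one operator with $c_n=a\rho^n$, $0<\rho<1$, of nonzero eigenvalue $a/(1-\rho^2)$; a pair $\{\lambda,-\lambda\}$ is realized by the rank-two ``bipartite'' symbol $bz/(1-\rho z^2)$ (i.e. $c_{2k+1}=b\rho^k$, $c_{2k}=0$), for which conjugation by $\mathrm{diag}((-1)^n)$ sends $\Gamma_c$ to $-\Gamma_c$ and hence forces the two eigenvalues to be opposite; higher multiplicities, and a pair together with an unpaired value, are obtained from rational symbols with higher-order poles (by Kronecker's theorem the finite-rank Hankel operators are precisely those with rational symbol). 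By condition (2) an admissible finite list is a union of such pair-blocks plus at most one single-block for each value of $|\lambda|$, so it remains (i) to combine blocks so that $\Gamma_c$ has \emph{exactly} the union of the spectra with the correct multiplicities --- placing the poles of distinct blocks far apart makes the interaction small, but one must check it neither creates nor destroys eigenvalues, and this is precisely where condition (2) is the natural (and, by the necessity part, unavoidable) hypothesis --- and (ii) to treat an infinite admissible sequence by exhausting it with finite sub-lists, getting finite-rank operators $\Gamma_{c^{(k)}}$ converging in operator norm, and verifying that the compact limit has exactly the prescribed point spectrum (nothing lost, nothing gained), using the spectral gaps between consecutive values. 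Steps (i) and (ii) --- exact-multiplicity control under combination and control of the spectrum in the norm limit --- are where the real care lies; an alternative is the abstract route of realizing a model self-adjoint operator with the given spectrum and then constructing an isometry intertwining it with $\Sigma$ as in \eqref{Hankelshift}, admissibility of the spectral data being, once more, exactly condition (2).
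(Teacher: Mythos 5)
Your plan stalls exactly at the two points where the real content of the theorem lies, and in the necessity half the pivotal claim is not correct as stated. First, the rank-one identity $\Gamma_{\tilde c}^2=\Gamma_c^2-(\cdot\vert c)c$ places $\ker(\Gamma_c^2-\lambda^2)\cap c^{\perp}$ (orthogonality to $c=\Gamma_ce_0$, not to $e_0$; the two coincide only for genuine eigenvectors of $\Gamma_c$) inside $\ker(\Gamma_{\tilde c}^2-\lambda^2)$. But this codimension-one subspace is in general \emph{not} invariant under $\Gamma_c$, hence need not contain any eigenvector of $\Gamma_c$ at all, so the pairing $v\mapsto{\rm span}(v,\Sigma^*v)$ cannot be run on it, and the conclusion that this piece contributes \emph{equal} numbers of $+\lambda$ and $-\lambda$ eigenvectors of $\Gamma_{\tilde c}$ is false in general. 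Concretely, if $\dim E_u(\lambda)=2$ then $\Gamma_c$ has eigenvalues $+\lambda,-\lambda$ there, $F_u(\lambda)=E_u(\lambda)\cap u^{\perp}$ is one-dimensional, generically spanned by a vector that is not an eigenvector of $\Gamma_c$, and $\Gamma_{\tilde c}$ has a single eigenvalue $\pm\lambda$ on it — no balance. What is true, and what replaces your bookkeeping, is the structure theorem of Proposition \ref{action}: on $E_u(s)$ (resp.\ $F_u(s)$) there is a basis in which $H_u$ (resp.\ $K_u$) acts as $\pm s$ times an antidiagonal permutation matrix, whose $+s$ and $-s$ multiplicities differ by at most one; condition (2) is then read off, as the paper indicates. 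Your deferred "overlap control" is not a routine completion but essentially a proof of that structure theorem (or of the AAK description in Appendix C), so the necessity argument as proposed is incomplete.

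For sufficiency the gap is of the same order. Superposing elementary blocks with "poles far apart" only perturbs spectra by small amounts, whereas the statement requires \emph{exact} eigenvalues with \emph{exact} multiplicities; multiple eigenvalues (which condition (2) permits, e.g.\ $\lambda$ with multiplicity $3$) are a non-generic, codimension-positive condition that is destroyed by precisely the small interactions you propose to neglect, and "higher-order poles" is not by itself a mechanism producing prescribed multiplicities. The infinite case likewise needs more than norm convergence of finite-rank pieces: one must prevent loss or creation of singular values in the limit, which the paper handles through AAK-type compactness arguments. Note also that the paper does not reprove the Megretskii--Peller--Treil theorem: it cites \cite{MPT}, derives the necessity of (2) from Proposition \ref{action}, and proves the refined Theorem \ref{realmain}, whose existence part rests on the surjectivity of $\Phi$ — the explicit inversion formula (\ref{luminy}), an open--closed--connected argument on each $\mathcal V_{(d_1,\dots,d_n)}$, the collapsing-eigenvalue induction of Lemma \ref{non empty} to create multiplicities, and a compactness argument for infinite rank. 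Your steps (i) and (ii) would have to be replaced by machinery of that kind (or by the original MPT construction); as written they are acknowledgements of the problem rather than a solution to it.
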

 
 Our first objective is to describe the set of solutions of this inverse problem, namely the isospectral sets for any given sequence $(\lambda _j)$. 
 Observe that, even in the rank one case, there is no uniqueness to be expected. Indeed, $\Gamma _c$ is a selfadjoint rank one operator  if and only if
 $$c_n= \alpha p^n\ ,\ \alpha \in \R ^*\ ,\ p\in (-1,1)\ .$$
 In this case, the only nonzero eigenvalue is
 $$\lambda _1=\frac{\alpha }{1-p^2}\ .$$
 Isospectral sets are therefore manifolds diffeomorphic to $\R $. Hence we need to introduce additional parameters. The study of the cubic Szeg\H{o} equation led us to introduce a second Hankel operator $\Gamma _{\tilde c}$  in \cite{GG2}, where 
 $$\tilde c_n:=c_{n+1}\ ,\ n\in \Z _+\ .$$
 Notice that $\Gamma _{\tilde c}$ is quite a natural operator since it is precisely the operator arising in the identity (\ref{Hankelshift})
 where $\Gamma =\Gamma _c$. Coming back to the rank one case, we observe that, if $p\ne 0$, the only nonzero eigenvalue of 
 $\Gamma _{\tilde c}$  is 
 $$\mu _1=\frac{\alpha p}{1-p^2}\ .$$
 Notice that the knowledge of $\lambda _1$ and $\mu _1$ characterizes $\alpha $ and $p$, hence $c$. More generally, 
 it is easy to check from (\ref{Hankelshift}) that
 \begin{equation}\label{Gamma2} 
 \Gamma _{\tilde c}^2=\Gamma _c^2-(\cdot \vert c)c\ .
 \end{equation}
Denoting by $(\lambda _j)$, $(\mu _k)$ the sequences of nonzero eigenvalues of $\Gamma _c$, $\Gamma _{\tilde c}$ respectively, 
labelled in decreasing order of their absolute values,  this identity implies, from the min-max formula, the following interlacement inequalities,
\begin{equation}\label{interlace}
\vert \lambda _1\vert \ge \vert \mu _1\vert \ge \vert \lambda _2\vert \ge \dots 
\end{equation}
Let us now investigate the inverse spectral problem for both operators $\Gamma _c, \Gamma _{\tilde c}$. In the special case where inequalities in (\ref{interlace}) are strict, we proved in \cite{GG2} and \cite{GG3} that this problem admits a unique solution $c$. In the general case, 
one can prove that eigenspaces of $\Gamma _c^2$ and $\Gamma _{\tilde c}^2$ corresponding to the same positive eigenvalue, are such that one of them is of codimension $1$ into the other one. As a consequence, in the sequence of inequalities (\ref{interlace}), the length of every maximal string with consecutive equal terms is odd. Our first result is that this condition is optimal.
\begin{theorem}\label{realmain}
Let $(\lambda _j), (\mu _k)$ be two finite or infinite tending to zero sequences of nonzero real numbers satisfying
\begin{enumerate}
\item $\vert \lambda _1\vert \ge \vert \mu _1\vert \ge \vert \lambda _2\vert \ge \dots $
\item In the above sequence of inequalities, the lengths of maximal strings with consecutive equal terms are odd. Denote them by $2d_r+1$.
\item For any $\lambda \in \R^*$, $\vert  \# \{ j: \lambda _j=\lambda \} - \# \{ j: \lambda _j=-\lambda \} \vert \le 1.$
\item For any $\mu \in \R^*$, $\vert  \# \{ k: \mu _k=\mu \} - \# \{ k: \mu _k=-\mu \} \vert \le 1.$
\end{enumerate}
Then there exists a sequence $c$ of real numbers such that $\Gamma _c$ is compact and the nonzero eigenvalues of $\Gamma _c $ and $\Gamma _{\tilde c}$ are respectively the $\lambda _j$'s and the $\mu _k$'s. Moreover, introduce
$$M:=\sum _r d_r\ \in \Z _+\cup \{\infty \}\ .$$
The isospectral set is a manifold diffeomorphic to $\R ^M$ if $M<\infty $, and it is homeomorphic to $\R ^\infty  $ if $M=\infty $.
\end{theorem}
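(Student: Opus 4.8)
The plan is to construct the sequence $c$ via a \emph{nonlinear Fourier transform}: rather than attempting to build $\Gamma_c$ directly, I would realize the prescribed spectral data as the image, under an explicit inverse map, of a point in a model parameter space, and then recognize the isospectral set as a fibre of this map. Concretely, to each admissible pair of sequences $(\lambda_j),(\mu_k)$ together with the extra parameters $M$, I would associate a finite or countable family of ``spectral blocks'': to the $j$-th distinct absolute value $s$ appearing in the interlaced list, with maximal string of length $2d_r+1$, one attaches a block built from $s$, the signs dictated by conditions (3) and (4) (which, via the Megretskii--Peller--Treil parity constraint, determine the signs up to finitely many genuinely free choices only when a sign is not forced — here the constraints (3),(4) guarantee at most one ambiguity, which is exactly absorbed by the manifold structure), and $d_r$ free complex (or rather angular and modular) parameters. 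Summing these blocks in an appropriate Hardy-space model gives a function $u$, and one sets $c_n=\hat u(n)$. The first key step is therefore to \emph{write down this inverse map} and to check that the resulting $u$ lies in $VMO(\T)$, so that $\Gamma_c$ is compact by Hartman's theorem.

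The second step is to verify that $\Gamma_c$ and $\Gamma_{\tilde c}$ produced this way have exactly the prescribed nonzero eigenvalues with the prescribed multiplicities. This is where the pair $(\Gamma_c,\Gamma_{\tilde c})$ and the identity $\Gamma_{\tilde c}^2=\Gamma_c^2-(\cdot\mid c)c$ from (\ref{Gamma2}) do the work: within each spectral block the operators act on an explicit finite-dimensional (or, in the accumulation regime, manifestly summable) invariant subspace, and one computes the eigenvalues of the $2\times 2$-type (more precisely $(d_r+1)$- and $d_r$-dimensional) reduced operators directly, checking that they reproduce $s=|\lambda|$ on the $\Gamma_c$ side and the appropriate $|\mu|$ on the $\Gamma_{\tilde c}$ side, with the correct signs. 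The interlacement (\ref{interlace}) and the oddness of the strings are then automatically satisfied, and conversely every admissible datum is hit. One must also check that the total operator — the orthogonal sum of the blocks — is still Hankel; this follows because the model is set up inside $L^2_+(\T)$ so that $\Gamma_{\tilde c}=S^*\Gamma_c$ with $S$ the shift, and compactness follows from $s\to 0$.

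The third step is the identification of the isospectral set. Here I would show that the fibre of the inverse map over fixed $(\lambda_j),(\mu_k)$ is precisely the set of choices of the free angular/modular parameters: within each block of size $2d_r+1$ the freedom is a copy of $\R^{d_r}$ (after the sign ambiguities have been rigidly fixed by (3),(4), or rather after observing that the one admissible sign flip does not add a dimension), and the global fibre is the product $\prod_r \R^{d_r}$, i.e. $\R^M$. For $M<\infty$ this is a genuine smooth manifold and the map is a diffeomorphism onto its image; one proves smoothness by an implicit-function-theorem argument, using that the reduced operators depend analytically on the parameters and that the spectral projections vary smoothly away from eigenvalue crossings (which do not occur within a fixed fibre, by construction). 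For $M=\infty$ one only gets $\R^\infty$ with its product topology, and one must check continuity of the map and of its inverse in that topology — this is the routine but slightly delicate tail estimate, using $\lambda_j\to 0$ to control the contribution of high blocks in $VMO$-norm.

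The main obstacle I expect is the correct bookkeeping of \emph{signs and multiplicities} in the passage between the raw spectral data and the model: one has to see precisely how conditions (3) and (4) interact with the oddness condition (2) so that the only remaining freedom is the $\R^M$ of continuous parameters and not some extra discrete choice — in other words, proving that the inverse map is injective on each fibre and that no admissible sign configuration is missed. A secondary difficulty, in the infinite-rank case, is establishing that the orthogonal sum of infinitely many blocks still yields $u\in VMO$ and that the resulting bijection is a homeomorphism for the product topology; but this is essentially an a priori estimate once the finite-rank picture is in place. Granting the earlier parts of the paper (the nonlinear Fourier transformation on symbols of compact Hankel operators), both steps reduce to unwinding that transform on the explicit block model.
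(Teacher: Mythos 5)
Your overall route is the paper's own: grant the spectral transform $\Phi$ of Theorems \ref{Phibij} and \ref{Phitopodif} and deduce Theorem \ref{realmain} from the structure of its fibres. The difficulty is that the part you postpone as ``bookkeeping of signs and multiplicities'' is, once those theorems are granted, the entire content of the proof, and where your sketch is specific it goes astray. The mechanism is this: $c$ real means that $H_u$ and $K_u$ preserve the real subspace $L^2_{+,r}$ of symbols with real Fourier coefficients, hence all spectral projections, the vectors $u_j$, $u'_k$, and therefore the Blaschke products $\Psi_r$ have real coefficients, which forces the angles $\psi _r\in \{ 0,\pi \}$; on suitable real bases, Proposition \ref{action} shows that $H_u$ and $K_u$ act on the corresponding eigenspaces by $\varepsilon _r s_r A$, with $A$ the antidiagonal involution of size $d_r+1$, respectively $d_r$. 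Since the $\pm 1$ eigenvalues of $A$ have multiplicities differing by at most one, this simultaneously yields the odd strings of length $2d_r+1$, conditions (3)--(4), and --- contrary to your remark about ``one admissible sign flip absorbed by the manifold structure'' --- the fact that the signs $\varepsilon _r$ are \emph{uniquely} determined by the data (exactly one of the two block sizes is odd, and its majority sign fixes $\varepsilon _r$); there is no residual discrete ambiguity to absorb. In the converse direction you must also check that $\Phi ^{-1}$ applied to real data returns a symbol with real Fourier coefficients; in the paper this is read off the explicit formula (\ref{luminy}) and is not automatic from a formal block model.

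Second, the identification of the fibre is not justified as you present it. Your first step attaches ``$d_r$ free complex (angular and modular) parameters'' per block, which is the complex count $\R ^{2d_r}$ (plus an angle), while your third step asserts the freedom is $\R ^{d_r}$; the reconciliation is precisely the reality constraint you never state, namely that the per-block freedom is the set $\varepsilon _r\mathcal B_{d_r,r}^\sharp$ of Blaschke products of degree $d_r$ with \emph{real} coefficients and fixed angle, and one needs the Schur--Cohn type parametrization of Proposition \ref{Od} (Appendix B), restricted to real coefficients, to see that this set is diffeomorphic to $\R ^{d_r}$, whence the fibre $\R ^M$. Finally, the intermediate construction by ``summing blocks'' and then checking that the orthogonal sum is still Hankel would fail if taken literally: the eigenspaces of $H_u^2$ and $K_u^2$ are coupled through the rank-one identity (\ref{Ku2}) --- see (\ref{urho})--(\ref{usigma}) --- so prescribed joint data for the pair cannot be realized by superposing independent finite-rank models; the construction has to go through $\Phi ^{-1}$, i.e.\ the coupled determinantal formula, which you do invoke at the end, but this makes the blockwise construction and verification of your first two steps circular rather than a proof.
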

Moreover, in the case of a finite sequence of nonzero eigenvalues, one can produce explicit formulae for $u_c$. For instance, given 
four real numbers $\lambda _1,\mu _1, \lambda _2, \mu _2$ such that
$$\vert \lambda _1 \vert > \vert \mu _1\vert >\vert \lambda _2 \vert > \vert \mu _2\vert >0 \ ,$$
we get
\beno
u_c(\expo_{ix})=\frac{\displaystyle{\frac{\lambda _1-\mu _1\expo_{ix}}{\lambda _1^2-\mu _1^2}+\frac{\lambda _2-\mu _2\expo_{ix}}{\lambda _2^2-\mu _2^2}-\frac{\lambda _1-\mu _2\expo_{ix}}{\lambda _1^2-\mu _2^2}-\frac{\lambda _2-\mu _1\expo_{ix}}{\lambda _2^2-\mu _1^2}}}
{ \displaystyle {\left \vert \begin{array}{ll} \frac{\lambda _1-\mu _1\expo_{ix}}{\lambda _1^2-\mu _1^2}&\frac{\lambda _2-\mu _1\expo_{ix}}{\lambda _2^2-\mu _1^2}
\\
\frac{\lambda _1-\mu _2\expo_{ix}}{\lambda _1^2-\mu _2^2}&\frac{\lambda _2-\mu _2\expo_{ix}}{\lambda _2^2-\mu _2^2}\end{array}
\right \vert }} \eeno
If $\vert \lambda _1 \vert  >\vert \lambda _2 \vert >0$ and $\mu _1=\lambda _2, \mu _2=-\lambda _2$, then, there exists $p\in (-1,1)$ such that
$$u_c(\expo_{ix})=(\lambda_1^2-\lambda_2^2)\frac{1-p\, {\rm e}^{ix}}{\lambda_1-p\, {\rm e}^{ix}(\lambda_1-\lambda_2)-\lambda_2\, {\rm e}^{2ix}}\ .$$
Finally, notice that, if $\lambda _1, \lambda _2$ are given such that $\vert \lambda _1 \vert  >\vert \lambda _2 \vert >0$, the corresponding isospectral set consists of sequences $c$ given by the above  two formulae. Also notice that the second expression is obtained from the first one
by making $\mu _1\rightarrow \lambda _2\ ,\ \mu _2\rightarrow -\lambda _2\  $, and
$$\frac{2\lambda _2+\mu _2-\mu _1}{\mu _1+\mu _2}\rightarrow p\ .$$

\subsection{Complexification and the Hardy space representation}
In the general case where $c$ is complex-valued, $\Gamma _c$
is no more selfadjoint, and the natural inverse spectral problem rather concerns singular values of $\Gamma _c$, namely 
square roots of nonzero eigenvalues of $\Gamma _c\Gamma _c^*$. In order to have a better understanding of the multiplicity phenomena, we are going to change the representation of these operators. A natural motivation for this new representation comes back to a celebrated paper by Beurling \cite{B}
characterizing the closed subspaces of $\ell ^2(\Z _+)$ invariant by $\Sigma $. The connection with Hankel operators is made by the observation that, because of identity (\ref{Hankelshift}), the kernel of a Hankel operator is always such a space. According to Beurling's theorem, these spaces can be easily described by using the isometric Fourier isomorphism
$$\begin{array}{lcl}&\ell ^2(\Z _+)&\longrightarrow L^2_+(\T )\\
&c &\longmapsto u_c
\end{array}$$
where $L^2_+(\T )$ denotes the closed subspace of $L^2(\T )$ made of functions $u$ with 
$$\forall n <0\ ,\ \hat u(n)=0\ ,$$
endowed with the inner product
$$(f\vert g)=\frac{1}{2\pi }\int _{\T} f(\expo_{ix})\overline{g(\expo_{ix})}\, dx\ .$$
Notice that $L^2_+(\T )$ is isomorphic to the Hardy space ${\mathbb H}^2({\mathbb D})$ of holomorphic functions on the unit disc
with $L^2$ traces on the unit circle. Under this representation, the shift operator $\Sigma $ becomes 
$$\begin{array}{lcl}S:&{\mathbb H}^2({\mathbb D})&\longrightarrow {\mathbb H}^2({\mathbb D})\\
&f &\longmapsto zf\ ,
\end{array}$$
and its adjoint reads
$$\begin{array}{lcl}S^*:&{\mathbb H}^2({\mathbb D})&\longrightarrow {\mathbb H}^2({\mathbb D})\\
&f &\longmapsto \Pi (\overline zf)\ ,
\end{array}$$
where $\Pi $ is the orthogonal projector from $L^2(\T )$ onto $L^2_+(\T )\simeq {\mathbb H}^2({\mathbb D})$, usually referred as the Szeg\H{o} projector.
Using this representation, the Beurling theorem claims that non trivial closed subspaces of ${\mathbb H}^2({\mathbb D})$ invariant by $S$ are exactly 
the spaces
$$\Psi {\mathbb H}^2({\mathbb D})\ ,$$
where $\Psi $ is an inner function, namely a bounded holomorphic function on $\mathbb D$ with modulus $1$ on the unit circle.
\s
Let us come back to Hankel operators. Using the above representation, $\Gamma _c$ corresponds to an operator $H_u$, $u=u_c$,
defined by
$$H_u(h)=\Pi (u\overline h)\ ,\ h\in L^2_+(\T )\ .$$
Precisely, we have the following identity, for every $u\in BMO_+(\T ):= BMO(\T )\cap L^2_+(\T )\ ,$
$$\widehat {H_u(h)}=\Gamma _{\hat u}\left (\, \overline{\hat h}\, \right )\ ,\ h\in L^2_+(\T )\ .$$
Notice that $H_u$ is an antilinear operator, satisfying the following selfadjointness property,
\ben \label{selfadjoint}
(H_u(h_1)\vert h_2)=(H_u(h_2)\vert h_1)\ ,\ h_1,h_2\in L^2_+(\T )\ .
\een
Consequently, $H_u^2$ is a linear positive selfadjoint operator on $L^2_+(\T )$, which is conjugated to $\Gamma _c\Gamma _c^*$ through the Fourier representation, hence the square roots of its positive eigenvalues are the singular values of $\Gamma _c$ or of $H_u$. In the same way, 
$\Gamma _{\tilde c}$ corresponds to 
\ben \label{Ku}
K_u:=S^*H_u=H_uS=H_{S^*u}\ ,
\een
and identity (\ref{Gamma2}) reads
\ben \label{Ku2}
K_u^2=H_u^2-(\, \cdot \, \vert u)u\ .
\een
For every $s\ge 0$ and $u\in VMO_+(\T ):=VMO(\T )\cap L^2_+(\T )$, we set
\ben \label{propres}
E_u(s):=\ker (H_u^2-s^2I)\ ,\ F_u(s):=\ker (K_u^2-s^2I)\ .
\een
Notice that $E_u(0)=\ker H_u\ ,\  F_u(0)=\ker K_u$. Moreover, from the compactness of $H_u$, if $s>0$, $E_u(s)$ and $F_u(s)$ are finite dimensional.
Using (\ref{Ku}) and (\ref{Ku2}), one can show the following result.
\begin{lemma}\label{EF}
Let $s>0$ such that $E_u(s)\ne \{ 0\} $ or $F_u(s)\ne \{ 0\}$.Then one of the following properties holds. 
\begin{enumerate}
\item $\dim E_u(s)=\dim F_u(s)+1$,  $u \not \perp E_u(s)$, and $F_u(s)=E_u(s)\cap u^\perp $.
\item $\dim F_u(s)=\dim E_u(s)+1$,  $u \not \perp F_u(s)$, and $E_u(s)=F_u(s)\cap u^\perp $.
\end{enumerate}
\end{lemma}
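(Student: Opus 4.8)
The plan is to treat $H_u^2$ and $K_u^2$ as two compact positive self-adjoint operators on $L^2_+(\T)$ which, by (\ref{Ku2}), differ by the rank-one operator $h\mapsto (h\,|\,u)u$, and to combine this with the shift relation $K_u=S^*H_u=H_uS$. Write $E:=E_u(s)$ and $F:=F_u(s)$. A first elementary step: if $h\in E$ with $h\perp u$, then $K_u^2h=H_u^2h-(h\,|\,u)u=s^2h$, and symmetrically using $H_u^2=K_u^2+(\cdot\,|\,u)u$ for $F$; hence $E\cap u^\perp=F\cap u^\perp=:W$. Since $W$ is the kernel of the linear form $h\mapsto(h\,|\,u)$ restricted to $E$ (resp. to $F$), it has codimension $0$ or $1$ in $E$ (resp. in $F$). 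So the statement reduces to the claim that \emph{exactly one} of ``$u\perp E$'' and ``$u\perp F$'' holds: if, say, $u\perp E$ but $u\not\perp F$, then $E=W$ while $F=W\oplus\C f$ for any $f\in F\setminus W$, which is exactly alternative (2), and symmetrically in the other case.

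Next I would exclude the case $u\perp E$ \emph{and} $u\perp F$. Then $E=W=F$, both nonzero by hypothesis, and equal to $\ker(H_u^2-s^2I)=\ker(K_u^2-s^2I)$. Using $K_u^2=S^*H_u^2S$ together with $SS^*=I-(\cdot\,|\,1)1$, the identity $H_u1=u$, and the self-adjointness property (\ref{selfadjoint}), a short computation gives, for $e\in E$,
$$H_u^2(Se)=s^2\,Se+(u\,|\,K_ue)\,1\ .$$
Since $K_u$ commutes with $K_u^2$, one has $K_ue\in F$, and as $u\perp F$ the last term vanishes, so $Se\in E$. Thus $S$ leaves the finite-dimensional nonzero space $E$ invariant, which is impossible because $S$ (multiplication by $z$ on $L^2_+(\T)$) has no eigenvector, hence no nonzero finite-dimensional invariant subspace.

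Then I would exclude the case $u\not\perp E$ \emph{and} $u\not\perp F$. Here $\dim E=\dim W+1=\dim F$; choose $e_0\in E$ and $f_0\in F$ with $(e_0\,|\,u)=(f_0\,|\,u)=1$ and set $g:=f_0-e_0$, which lies in $u^\perp$. Since $e_0\in E$ gives $(H_u^2-s^2I)e_0=0$ while $f_0\in F$ gives $(H_u^2-s^2I)f_0=(f_0\,|\,u)u=u$ by (\ref{Ku2}), we get $(H_u^2-s^2I)g=u$, so $u\in\mathrm{Ran}(H_u^2-s^2I)$. But $H_u^2$ is compact and self-adjoint, so this range is the closed subspace $\bigl(\ker(H_u^2-s^2I)\bigr)^\perp=E^\perp$; hence $u\perp E$, a contradiction.

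Having ruled out both degenerate cases, exactly one of $u\perp E$, $u\perp F$ holds, and unwinding the codimension bound from the first step yields the dichotomy together with the identities $F=E\cap u^\perp$ (resp. $E=F\cap u^\perp$). The only non-routine ingredients are the shift identity $K_u^2=S^*H_u^2S$ with the bookkeeping of the constant-term correction coming from $SS^*$ and $H_u1=u$, and the fact that $\mathrm{Ran}(H_u^2-s^2I)$ is closed and equals $E^\perp$; I expect the main obstacle to be getting the antilinear/conjugation details exactly right in the displayed identity for $H_u^2(Se)$, the remainder being this bookkeeping together with the Fredholm alternative for the compact operator $H_u^2$ (compact precisely because $u\in VMO_+(\T)$).
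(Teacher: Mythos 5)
Your proof is correct and follows essentially the same route as the paper's own argument (Proposition \ref{rigidity}): the rank-one identity (\ref{Ku2}) gives $E_u(s)\cap u^\perp=F_u(s)\cap u^\perp$ and shows that $u\not\perp E_u(s)$ and $u\not\perp F_u(s)$ cannot hold simultaneously, while the shift structure excludes the degenerate case $E_u(s)=F_u(s)\perp u$. The only real difference is in that last exclusion: you establish $S$-invariance of $E_u(s)$ via the (correct) identity $H_u^2(Se)=s^2Se+(u\,|\,K_ue)\,1$ and invoke the absence of nonzero finite-dimensional $S$-invariant subspaces (legitimate here since $\dim E_u(s)<\infty$ for $s>0$), whereas the paper establishes $S^*$-invariance of $N=E_u(s)=F_u(s)$ together with $N\perp 1$ and concludes by vanishing of all Fourier coefficients, a variant that does not rely on finite dimensionality.
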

We define
\begin{equation}\label{SigmaH}
\Sigma_H(u):=\left\{s\ge 0;\; u\not  \perp  E_u(s)\right\}, 
\end{equation}
\begin{equation}\label{SigmaK}
\Sigma_K(u):=\left\{s\ge 0;\; u\not \perp F_u(s)\right\}, 
\end{equation}
Remark that $0\notin \Sigma_H(u)$, since $u=H_u(1)$ belongs to the range of $H_u$ hence, is orthogonal to its kernel. As a consequence of Lemma \ref{EF}, $\Sigma_H(u)$ coincides with the set of $s>0$ with $\dim E_u(s)=\dim F_u(s)+1$.
In contrast, it may happen that $0$ belongs to $\Sigma_K(u)$.
\subsection{Multiplicity and Blaschke products}
Assume $u\in VMO_+(\T )$ and $s\in \Sigma_H(u)$. Then $H_u$ acts on the finite dimensional vector space $E_u(s)$.
It turns out that this action can be completely described by an inner function. A similar fact holds for the action of $K_u$ onto $F_u(s)$ when $s\in \Sigma_K(u)$, $s\neq 0$.
In order to state this result, recall that  a finite Blaschke product is an inner function of the form
$$\Psi (z)=\expo_{-i\psi}\prod _{j=1}^k \chi _{p_j}(z)\ ,\  \psi \in \T \ ,\ p_j\in \D\ ,\ \chi _p(z):=\frac{z-p}{1-\overline pz}\ ,\ p\in \D \ .$$
The integer $k$ is called the degree of $\Psi $. Alternatively, $\Psi$ can be written as 
$$\Psi (z)=\expo_{-i\psi}\frac{P(z)}{z^k\overline P\left (\frac 1z\right )}\ ,$$
where $\psi \in \T $ is called the angle of $\Psi $ and $P$ is a monic polynomial of degree $k$ with all its roots in $\D $.  Such polynomials are called Schur polynomials. We denote by $\mathcal B_k$ the set of Blaschke products of degree $k$. It is a classical result --- see {\it e.g.} \cite{He} or Appendix B --- that $\mathcal B_k$ is diffeomorphic to $\T \times \R ^{2k}$. Finally, we shall denote by
$$D(z)=z^k\overline P\left (\frac 1z\right )$$
the normalized denominator of $\Psi $.
\begin{proposition}\label{action}
Let $s>0$ and $u\in VMO_+(\T )$.
\begin{enumerate}
\item Assume $s\in \Sigma_H(u)$ and $m:=\dim E_u(s)=\dim F_u(s)+1$. Denote by $u_s$ the orthogonal projection of $u$ onto $E_u(s)$. There exists an inner function $\Psi _s\in \mathcal B_{m-1}$ such that
$$su_s=\Psi _sH_u(u_s)\ ,$$
and if $D$ denotes the normalized denominator of $\Psi _s$, 
\begin{eqnarray} \label{descEF1}
E_u(s)&=&\left \{ \frac fDH_u(u_s)\ ,\ f\in \C _{m-1}[z]\right \} \ ,\\
F_u(s)&=&\left\{ \frac gDH_u(u_s)\ ,\ g\in \C _{m-2}[z]\right \}, 
\end{eqnarray}
and, for $a=0,\dots,m-1\ ,\ b=0,\dots ,m-2$, 
\begin{eqnarray} \label{actionHu}
H_u\left (\frac {z^a}DH_u(u_s)\right )&=&s\expo_{-i\psi _s}\frac {z^{m-a-1}}DH_u(u_s)\ , \\
K_u\left (\frac {z^b}DH_u(u_s)\right )&=&s\expo_{-i\psi _s}\frac {z^{m-b-2}}DH_u(u_s)\ ,
\end{eqnarray}
where $\psi _s$ denotes the angle of $\Psi _s$.
\item Assume $s\in \Sigma_K(u)$ and $\ell :=\dim F_u(s)=\dim E_u(s)+1$. Denote by $u_s'$ the orthogonal projection of $u$ onto $F_u(s)$. There exists an inner function $\Psi _s\in \mathcal B_{\ell -1}$ such that
$$K_u(u_s')=s\Psi _su_s'\ ,$$ and if $D$ denotes the normalized denominator of $\Psi _s$, 
\begin{eqnarray} \label{descFE2}
F_u(s)&=&\left \{ \frac fDu_s'\ ,\ f\in \C _{\ell -1}[z]\right \} \ ,\\
E_u(s)&=&\left \{ \frac {zg}Du_s'\ ,\ g\in \C _{\ell -2}[z]\right \}, 
\end{eqnarray}
and, for $a=0,\dots,\ell -1\ ,\ b=0,\dots ,\ell-2$, 
\begin{eqnarray} \label{actionKu}
K_u\left (\frac {z^a}Du_s'\right )&=&s\expo_{-i\psi _s}\frac {z^{\ell -a-1}}Du_s'\ , \\
H_u\left (\frac {z^{b+1}}Du_s'\right )&=&s\expo_{-i\psi _s}\frac {z^{\ell-b-1}}Du_s'\ ,
\end{eqnarray}
where $\psi _s$ denotes the angle of $\Psi _s$.
\end{enumerate}
\end{proposition}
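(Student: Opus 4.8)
My plan is to prove part~(1) and only indicate the (entirely parallel) modifications for part~(2) at the end. Throughout, fix $s\in\Sigma_H(u)$, set $m:=\dim E_u(s)=\dim F_u(s)+1$, and put $v:=H_u(u_s)$. Since $u_s\in E_u(s)$, one reads off at once from $H_u^2u_s=s^2u_s$ that $v\in E_u(s)$, that $H_u(v)=s^2u_s$, and, using \eqref{selfadjoint}, that $\|v\|^2=(H_u^2u_s\vert u_s)=s^2\|u_s\|^2$; also $v\ne 0$, as $H_u$ is injective on $E_u(s)$. The inner function to be produced is $\Psi_s:=su_s/v$, and the whole statement will reduce to showing that $\Psi_s$ is the boundary value of a finite Blaschke product of degree $m-1$: the descriptions \eqref{descEF1}--\eqref{actionHu} then follow by direct computation and two dimension counts.

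The first step is the pointwise identity $|H_u(h)|=s|h|$ a.e.\ on $\T$, valid for \emph{every} $h\in E_u(s)$. Iterating $H_uS=S^*H_u$ from \eqref{Ku} gives $H_uS^n=(S^*)^nH_u$ for all $n\ge 0$, so that for $h\in E_u(s)$ and $n\ge 0$, using \eqref{selfadjoint},
\ben
(H_u(h)\vert S^nH_u(h))=(H_u(S^nH_u(h))\vert h)=((S^*)^nH_u^2h\vert h)=s^2((S^*)^nh\vert h)=s^2(h\vert S^nh).
\een
The extreme sides are the $n$-th Fourier coefficients of $|H_u(h)|^2$ and of $s^2|h|^2$ respectively; as both functions are real, all their Fourier coefficients coincide and $|H_u(h)|^2=s^2|h|^2$ a.e.\ on $\T$. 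Polarizing this real identity over $\R$ and using that $H_u$ is antilinear, one obtains $H_u(h_1)\overline{H_u(h_2)}=s^2\,\overline{h_1}\,h_2$ a.e.\ on $\T$ for all $h_1,h_2\in E_u(s)$; taking $h_2=u_s$ yields a unimodular function $C:=su_s/\bar v$ on $\T$ with $H_u(h)=sC\bar h$ on $\T$ for every $h\in E_u(s)$, whence $\overline{u_s}\,H_u(h)=v\,\bar h$ on $\T$. In particular $\Psi_s=su_s/v$ is unimodular on $\T$ and $su_s=\Psi_sH_u(u_s)$.

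The crux --- and the step I expect to be the main obstacle --- is to promote ``$\Psi_s$ unimodular on $\T$'' to ``$\Psi_s$ extends to a finite Blaschke product of degree $m-1$''. Since $\Psi_s=su_s/v$ is a unimodular quotient of two $H^2$ functions, matching their inner--outer factorizations shows that analyticity of $\Psi_s$ is equivalent to $u_s/v$ extending holomorphically to $\D$, i.e.\ to the inner part of $v=H_u(u_s)$ dividing that of $u_s$. This is precisely the Adamyan--Arov--Krein analyticity of Schmidt pairs of a Hankel operator: from the two relations $u\overline{u_s}-H_u(u_s)\in L^2_-$ and $u\overline{H_u(u_s)}-s^2u_s\in L^2_-$ one pseudocontinues $u_s$ and $v$ across $\T$ and concludes that $u_s/v$ is holomorphic (see \cite{AAK}, and \cite{Bar}). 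Once $\Psi_s$ is inner, its being a \emph{finite} Blaschke product of degree exactly $m-1$ is forced by $\dim E_u(s)=m$: one uses that $H_u$ maps both $E_u(s)$ and $E_u(s)^\perp$ into themselves (the latter by \eqref{selfadjoint}), hence commutes with the spectral projector $P_{E_u(s)}$, and combines this with $\overline{u_s}\,H_u(h)=v\,\bar h$ and with the constraints coming from $K_u=S^*H_u$ and \eqref{Ku2}, to identify $E_u(s)$ with $\tfrac1D\,\C_{m-1}[z]\,v$, $D$ being the normalized denominator of $\Psi_s$; comparing dimensions yields $\deg D=m-1$, i.e.\ $\Psi_s\in\mathcal B_{m-1}$.

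With $\Psi_s=\expo_{-i\psi_s}P/D\in\mathcal B_{m-1}$ and the identity $D(z)=z^{m-1}\overline{P(z)}$ on $\T$ (so that $\overline{z^a/D}=z^{m-1-a}/P$ and $z^{m-1-a}/P\in\overline{H^2}$ for $0\le a\le m-1$), the remaining formulas are a direct computation: writing $u\overline{H_u(u_s)}=s^2u_s+r$ with $r\in L^2_-$ and $s^2u_s=s\expo_{-i\psi_s}(P/D)v$, the expression $H_u\!\left(\tfrac{z^a}{D}v\right)=\Pi\!\left(u\,\overline{z^a/D}\,\overline v\right)$ collapses to $s\expo_{-i\psi_s}\tfrac{z^{m-1-a}}{D}v$. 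This exhibits $\left\{\tfrac{z^a}{D}v:0\le a\le m-1\right\}$ as a basis of $E_u(s)$ --- giving \eqref{descEF1} --- and proves \eqref{actionHu}; applying $S^*$ then gives the action of $K_u=S^*H_u$, lowers one power of $z$, and shows that each $\tfrac{z^b}{D}v$ with $0\le b\le m-2$ lies in $F_u(s)=\ker(K_u^2-s^2I)$, so that $F_u(s)=\tfrac1D\,\C_{m-2}[z]\,v$ again by dimension (and automatically $F_u(s)=E_u(s)\cap u^\perp$). Part~(2) is obtained by running the same scheme with $K_u$ in place of $H_u$ --- recalling $K_u=H_{S^*u}$ and $F_u(s)=E_{S^*u}(s)$, and using $K_uS=S^*K_u$ in place of $H_uS=S^*H_u$ --- the only differences in the statement (the extra factor $z$ in the description of $E_u(s)\subset F_u(s)$, which now has codimension one, and the normalization $K_u(u_s')=s\Psi_su_s'$) being bookkeeping reflecting the identity $K_u=S^*H_u$.
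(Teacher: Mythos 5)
Your overall plan follows the paper's own architecture (the pointwise identity $\vert H_u(h)\vert =s\vert h\vert $ on $\T$, the definition $\Psi _s=su_s/H_u(u_s)$, then innerness and degree, then the explicit action), but the step you yourself single out as the crux --- that $\Psi _s$ has no poles in $\D$ --- is not proved, and the route you indicate cannot prove it. The two relations you propose to pseudocontinue from, namely $u\overline h-H_u(h)\in L^2_-$ and $u\overline{H_u(h)}-s^2h\in L^2_-$, hold for \emph{every} $h\in E_u(s)$, whereas the desired conclusion fails for general $h$: take $u(z)=z$, so that $E_u(1)={\rm span}(1,z)$ and $F_u(1)=\C\cdot 1$, and $h=2+z$; then $H_u(h)=1+2z$, both differences vanish identically, yet $sh/H_u(h)=(2+z)/(1+2z)$ has a pole at $z=-1/2\in \D$. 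What Adamyan--Arov--Krein (part (1) of the theorem in Appendix C) give is only that the quotient is rational of the form $P(z)/\bigl(z^{m-1}\overline P(1/z)\bigr)$, unimodular and pole-free on the circle; the innerness is exactly the extra content of Proposition \ref{action}, and it uses in an essential way that $u_s$ is the orthogonal projection of $u$, i.e. the codimension-one relation $F_u(s)=E_u(s)\cap u^{\perp}$. The paper's proof of this point is the evaluation argument: choose an orthonormal basis $(e_j)$ of $E_u(s)$ with $H_u(e_j)=se_j$, check that $f:=\sum _je_j(q)e_j$ belongs to $F_u(s)$ whenever $H_u(u_s)(q)=0$, and use $H_u(f)=SK_u(f)$ on $F_u(s)$ to get $\Vert f\Vert ^2=q\,(f\vert Sf)$, which forces $f=0$ since $\vert q\vert <1$ (plus an induction on the order of the zero). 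Nothing in your proposal replaces this argument.

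Two further points are asserted rather than proved. First, even granting innerness, your ``dimension count'' only yields $\deg \Psi _s\le m-1$; the equality, i.e. the fact that the functions $z^aD^{-1}H_u(u_s)$, $0\le a\le \deg D$, span all of $E_u(s)$, requires the paper's argument that $G:=V^{\perp}\cap E_u(s)$ satisfies $G\perp 1$ and $H_u(G)\subset S(G)$, hence $G=\{0\}$. Second, part (2) is not mere bookkeeping via $K_u=H_{S^*u}$: the hypothesis $u\not\perp F_u(s)$ does not translate into $S^*u\not\perp E_{S^*u}(s)$, the distinguished vector is the projection of $u$ (not of $S^*u$), and the normalization $K_u(u_s')=s\Psi _su_s'$ is different, so a parallel argument, as in the paper, is needed. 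The final computations (the identity $H_u(fH_u(u_s))=\Psi _s\overline fH_u(u_s)$ for suitable $f$, giving the explicit action of $H_u$ and $K_u$ and the description of $F_u(s)$) are fine once these structural points are secured.
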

Notice that, if we come back to the case of selfadjoint Hankel operators, which corresponds to symbols $u$ with real Fourier coefficients,
the angles $\psi _s$ belong to $\{ 0,\pi \} $, and condition (2) in the Megretskii--Peller--Treil Theorem is an elementary consequence
of Proposition \ref{action}. Indeed, the identities in Proposition \ref{action} provide very simple matrices for the action of $H_u$ and $K_u$ on $E_u(s)$ and $F_u(s)$, and one can easily check that the dimensions of the eigenspaces of these matrices associated to the eigenvalues $\pm s$ differ
of at most $1$.
\s
Part of the content of Proposition \ref{action} was in fact already proved by Adamyan-Arov-Krein in their famous paper \cite{AAK}.
Indeed, translating Theorem 1.2 of this paper, in the special case of finite multiplicity, into our normalization, one gets that, for every pair $(h,f)\in E_u(s)\times E_u(s)$ satisfying
$$H_u(h)=sf\ ,\ H_u(f)=sh \ ,$$
there  exists a polynomial $Q$ of degree at most $m-1$, where $m=\dim E_u(s)$, and a function $g\in L^2_+$ such that
$$h(z)=Q(z)g(z)\ ,\ f(z)= z^{m-1} \overline Q\left (\frac 1z\right )g(z)\ .$$
We refer the reader to Appendix C for a self-contained proof of this property. It is easy to check that this property is a consequence of Proposition \ref{action}, which in fact says more about the structure
of the space $E_u(s)$. 

\subsection{Main results}
We  now come to  the main results of  this paper. First let us introduce some additional notation. Given a positive integer $n$,
we set $$\Omega _n:=\{s_1>s_2>\dots>s_n>0\}\subset \R^n\ . $$
Similarly $\Omega_\infty$ is the set of sequences $(s_r)_{r\ge 1}$ such that $$s_1>s_2>\dots >s_n\to 0 \ .$$ We set 
$$\Omega:=\bigcup_{n=1}^\infty \Omega_n\cup \Omega_\infty\ ,\  \mathcal B=\bigcup_{k=0}^\infty \mathcal B_k\ ,$$
and
$$\mathcal S_n=\Omega_n\times \mathcal B^n\ ,\  \mathcal S_\infty=\Omega_\infty\times \mathcal B^\infty\ ,\  \ \mathcal S:=\bigcup_{n=1}^\infty \mathcal S_n\cup \mathcal S_\infty \ .$$
Given $u\in VMO_+(\T)\setminus \{ 0\} $, one can define, according to proposition \ref{action} and (\ref{Ku2}) combined with the min-max formula, a finite or infinite sequence $s=(s_1>s_2>\dots )\in\mathcal S$ such that
\begin{enumerate}
\item The $s_{2j-1}$'s   are the singular values of $H_u$ in $\Sigma_H(u)$.
\item The $s_{2k}$'s are the singular values of $K_u$ in $\Sigma_K(u)\setminus\{0\}$.
\end{enumerate}
For every $r\ge 1$, associate to each $s_r$ an inner function $\Psi _r$ by means of Proposition \ref{action}. This defines a mapping
$$\Phi : VMO _+(\T )\setminus \{ 0\} \longrightarrow \mathcal S\ .$$
\begin{theorem}\label{Phibij}
The map $\Phi $ is bijective. 

\noindent Moreover, we have the following explicit formula for $\Phi ^{-1}$ on $\mathcal S_n$. If $n=2q$ is even,
$$\rho _j:=s_{2j-1}\ ,\ \sigma _k:=s_{2k}\ ,\ j,k=1,\dots ,q\ ,$$
introduce the $q\times q$ matrix $\mathcal C(z)$ with coefficients
$$c_{kj}(z):=\frac{\rho _j-\sigma _kz\Psi _{2k}(z)\Psi _{2j-1}(z)}{\rho _j^2-\sigma _k^2}\ ,\ j,k=1,\dots ,q\ .$$
Denote by $\Delta _{kj}(z) $ the minor determinant of this matrix corresponding to line $k$ and column $j$. We have
\ben \label{luminy}
u(z)=\sum _{1\le j,k\le q}(-1)^{j+k}\Psi _{2j-1}(z)\frac{\Delta _{kj}(z)}{\det (\mathcal C(z))}\ .
\een
If $n=2q-1$ is odd, the same formula holds by setting $\sigma _q:=0$.
\end{theorem}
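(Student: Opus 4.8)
The plan is to treat first each finite stratum $\mathcal S_n$, where an explicit reconstruction of $u$ from $\Phi(u)$ yields formula~\eqref{luminy} together with the bijectivity of $\Phi:\Phi^{-1}(\mathcal S_n)\to\mathcal S_n$, and then to reach $\mathcal S_\infty$ by approximation. So fix $u\in VMO_+(\T)\setminus\{0\}$ with $\Phi(u)\in\mathcal S_n$, put $q:=\lceil n/2\rceil$, $\rho_j:=s_{2j-1}$, $\sigma_k:=s_{2k}$, with the convention $\sigma_q:=0$ when $n=2q-1$. Since $\Sigma_H(u)$ and $\Sigma_K(u)\cap(0,\infty)$ are disjoint (Lemma~\ref{EF}), $\Sigma_H(u)=\{\rho_1,\dots,\rho_q\}$; as $u=H_u(1)$ is orthogonal to $\ker H_u$ and, by definition of $\Sigma_H(u)$, to every $E_u(s)$ with $s\notin\Sigma_H(u)$, the spectral decomposition of $u$ for the compact self-adjoint operator $H_u^2$ reduces to
\begin{equation}\label{plan:dec}
u=\sum_{j=1}^q u_{\rho_j}\ ,\qquad u_{\rho_j}:=P_{E_u(\rho_j)}u\ .
\end{equation}
Proposition~\ref{action}(1) provides inner functions $\Psi_{2j-1}$ with $\rho_ju_{\rho_j}=\Psi_{2j-1}H_u(u_{\rho_j})$, so setting $w_j:=\rho_j^{-1}H_u(u_{\rho_j})$ we already obtain the desired shape $u=\sum_{j=1}^q\Psi_{2j-1}w_j$; it remains to identify the rational functions $w_j$.

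\textbf{The linear system.} I would use two inputs. First, the rank-one identity~\eqref{Ku2}: using that $\sigma_k^2$ is an eigenvalue of $K_u^2$ but not of $H_u^2$ on $E_u(\sigma_k)^\perp$, one checks that $v_k:=\sum_j(\rho_j^2-\sigma_k^2)^{-1}u_{\rho_j}$ lies in $F_u(\sigma_k)$ and that $\sum_j\beta_j(\rho_j^2-\sigma_k^2)^{-1}=1$ (the secular relation), where $\beta_j:=\|u_{\rho_j}\|^2=(u\mid u_{\rho_j})=[H_u(u_{\rho_j})](0)=\rho_jw_j(0)$; moreover $F_u(\sigma_k)=E_u(\sigma_k)\oplus\C v_k$ by Lemma~\ref{EF}, so the projection $u_{\sigma_k}'$ of $u$ onto $F_u(\sigma_k)$ is a multiple of $v_k$, say $\mu_k u_{\sigma_k}'=v_k$ with $\mu_k=\|v_k\|^2>0$ (pairing with $u$). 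Second, apply the antilinear operator $K_u=S^*H_u$ to $\mu_k u_{\sigma_k}'=v_k$: invoking $K_u(u_{\sigma_k}')=\sigma_k\Psi_{2k}u_{\sigma_k}'$ from Proposition~\ref{action}(2), the reality of $\mu_k$, the identity $K_u(u_{\rho_j})=S^*H_u(u_{\rho_j})$, and $z\,S^*h=h-h(0)$, one obtains — after multiplying by $z$ and substituting $u_{\rho_j}=\Psi_{2j-1}w_j$, $H_u(u_{\rho_j})=\rho_jw_j$ — that, for every $k$,
\begin{equation}\label{plan:system}
\sum_{j=1}^q c_{kj}(z)\,w_j(z)=\sum_{j=1}^q c_{kj}(0)\,w_j(0)\ ,
\end{equation}
with $c_{kj}$ as in the statement. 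Since $c_{kj}(0)=\rho_j(\rho_j^2-\sigma_k^2)^{-1}$ and $w_j(0)=\beta_j/\rho_j$, the right-hand side equals $\sum_j\beta_j(\rho_j^2-\sigma_k^2)^{-1}=1$ by the secular relation; hence $\mathcal C(z)w(z)=(1,\dots,1)^{\mathsf T}$ for all $z$, and Cramer's rule is precisely~\eqref{luminy}. In the odd case $n=2q-1$, the row $k=q$ reads $\sum_j w_j(z)/\rho_j=1$, consistently with $\sigma_q=0$: a dimension count via Lemma~\ref{EF} gives $\operatorname{rank}K_u=\operatorname{rank}H_u-1$, forcing $\ker K_u\supsetneq\ker H_u$, hence $\ker H_u=\Psi\He^2$ with $\Psi(0)=0$, so $1\perp\ker H_u$ and $\sum_j H_u(u_{\rho_j})/\rho_j^2=H_u^{-1}(u)=1$. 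In particular $u$ is determined by $\Phi(u)$, so $\Phi$ is injective on $\Phi^{-1}(\mathcal S_n)$.

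\textbf{Surjectivity and the infinite stratum.} Conversely, given data in $\mathcal S_n$, I would define $u$ by the right-hand side of~\eqref{luminy}; checking $\det\mathcal C(z)\ne0$ for $|z|\le1$ (using $\rho_1>\sigma_1>\rho_2>\dots$ and that the $\Psi_r$ are inner) shows $u$ is rational and continuous on $\T$, hence in $VMO_+(\T)\setminus\{0\}$. One then builds from the $w_j$ the candidate spaces $E_u(\rho_j),F_u(\sigma_k)$ along the lines of~\eqref{descEF1}--\eqref{descFE2} and verifies directly, from $H_u(h)=\Pi(u\bar h)$, $K_u=S^*H_u$ and~\eqref{Ku2}, that $H_u,K_u$ act on them as prescribed by Proposition~\ref{action}, a rank count ruling out further singular values; thus $\Phi(u)$ is the given datum. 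For $\mathcal S_\infty$, truncate $(s_r,\Psi_r)_{r\ge1}$ to $\mathcal S_n$, let $u^{(n)}$ be the corresponding rational symbol, show using $s_r\to0$ that $(u^{(n)})_n$ is Cauchy in $H^{1/2}(\T)\subset VMO_+(\T)$, and establish continuity of the spectral data along this convergence (min--max for $H_u^2,K_u^2$, stability of the finite-dimensional eigenprojections and of the associated Blaschke products); the limit $u$ then satisfies $\Phi(u)=(s_r,\Psi_r)_{r\ge1}$, and injectivity on $\mathcal S_\infty$ follows from injectivity on the $\mathcal S_n$ together with this continuity.

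\textbf{Main obstacle.} The computational heart is the derivation of~\eqref{plan:system}: one must locate the eigenvector $v_k$ of the rank-one perturbation $K_u^2=H_u^2-(\cdot\mid u)u$ inside $F_u(\sigma_k)$ via Lemma~\ref{EF}, establish $\mu_k>0$, and push through the algebra so that the two families of Blaschke factors $\Psi_{2k},\Psi_{2j-1}$ recombine exactly into the Cauchy-type coefficients $c_{kj}(z)$. The other sensitive point is the passage to $\mathcal S_\infty$: one needs the right topology --- $H^{1/2}(\T)$, whose norm is controlled by the multiplicity-weighted sum of the $s_r^2$ --- for the truncated symbols to converge and for the infinitely many inner functions to be stable under the limit.
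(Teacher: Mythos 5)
Your finite-rank injectivity argument --- deriving the linear system $\mathcal C(z)\mathcal H(z)={\bf 1}$ from Propositions \ref{rigidity} and \ref{action} and inverting by Cramer --- is essentially the computation the paper carries out in section \ref{section explicit}, so that part is sound. The genuine gap is in surjectivity. For finite $n$ you propose to take arbitrary data, define $u$ by the right-hand side of (\ref{luminy}), check that $\det\mathcal C(z)\neq 0$ on $\overline{\D}$ ``using the interlacing and that the $\Psi_r$ are inner'', and then ``verify directly'' that $H_u$ and $K_u$ act on the candidate spaces as prescribed, with a rank count excluding further singular values. Both steps are asserted rather than proved, and they are exactly the steps the authors found intractable: the non-vanishing of $\det\mathcal C$ on the closed disc (with $\deg Q=N$) and the fact that the functions $z^aR_r$ form a basis of the model space $W$ enter only as \emph{open hypotheses} (\ref{hypo1})--(\ref{hypo2}), known to hold on the range of $\Phi_n$; surjectivity is then obtained indirectly, by showing $\Phi_n$ is open (via the operator $H$ on $W$ and Lemmas \ref{S*H}, \ref{coroS*HS*}), closed (section \ref{closed}), and that $\mathcal V_{(d_1,\dots,d_n)}$ is non-empty, the last point requiring the delicate induction of Lemma \ref{non empty}, where two singular values are collapsed and a phase $\varphi_2$ must be chosen so that the limiting Blaschke product gains one degree. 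Interlacing plus inner-ness alone does not obviously yield $\det\mathcal C(z)\neq 0$ on $\overline{\D}$ for arbitrary data --- in the paper this is only known a posteriori --- so your plan is missing the idea that actually closes the argument.

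The infinite-rank part also fails as written. For $(s_r)\in\Omega_\infty$ one only has $s_r\to 0$, so $\sum_r(d_r+1)s_r^2$ may diverge; since the $H^{1/2}$ norm of a symbol is comparable to ${\rm Tr}(H_u^2)$, the truncated symbols $u^{(n)}$ need not even be bounded in $H^{1/2}$, let alone Cauchy --- that strategy only covers the Hilbert--Schmidt case, treated separately in Theorem \ref{HilbertSchmidt}. The paper instead gets $L^2_+$ precompactness of the truncations from the AAK theorem (approximation by finite-rank Hankel operators, bounded in $H^{1/2}$ for each fixed level $m$ with constants depending on $\rho_{m+1}$), then passes to the limit with an argument ensuring eigenvalue multiplicities do not increase, and identifies the limiting projections with the $u_j$, $u'_k$ through the norm identities $\Vert u_j\Vert^2=\tau_j^2$ of Appendix A; the $VMO$ convergence needed for continuity of $\Phi^{-1}$ again uses AAK. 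Finally, injectivity on $\mathcal S_\infty$ does not ``follow from injectivity on the $\mathcal S_n$ together with continuity'': an infinite-rank symbol belongs to no finite stratum, and the paper proves injectivity there by a separate argument, showing the infinite matrix $\mathcal D(z)$ is a contraction on the weighted space $\ell^2_\tau$ and determining the Taylor coefficients of $\mathcal H$ at $0$ inductively. These missing ingredients constitute the substance of the theorem in the general compact case.
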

Let us now come to the topological features of the mapping $\Phi $. We shall not describe the topology on $\mathcal S$ transported by $\Phi $ from  $VMO _+(\T )$, because it is a complicated matter. In the finite rank case, it is simpler to deal with the restrictions of $\Phi $ to the preimages of $\mathcal S_n$, which we denote by $\mathcal U_n$. We endow $\mathcal U_n$ with the topology induced by $VMO_+$, each $\Omega _n$ with the topology induced by $\R ^n$, $\mathcal B$ with the disjoint sum of topologies of $\mathcal B_k$,  and $\mathcal S _n$ with the product topology. In the infinite rank case, $\Omega _\infty $ is endowed with the topology induced by $c_0$, the Banach space of sequences tending to $0$.
\begin{theorem}\label{Phitopodif}
The following restriction maps of $\Phi $,
$$\Phi _n:\mathcal U_n\rightarrow \mathcal S_n $$
are homeomorphisms. Moreover, given a positive integer $n$, and a sequence $(d_1,\dots ,d_n)$ of nonnegative integers, the  map
$$\Phi ^{-1}:\Omega _n\times \prod _{r=1}^n\mathcal B_{d_r}\longrightarrow VMO_+(\T )$$
is a smooth embedding. Given a sequence $(d_r)_{r\ge 1}$ of nonnegative integers, the  map
$$\Phi ^{-1}:\Omega _\infty \times \prod _{r=1}^\infty \mathcal B_{d_r}\longrightarrow VMO_+(\T )$$
is a continuous embedding.
\end{theorem}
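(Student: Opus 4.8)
The plan is to build on Theorem~\ref{Phibij}: since $\Phi$ is already a bijection, what remains is (a) the smoothness of $\Phi^{-1}$ on each finite-rank stratum, (b) the continuity of $\Phi$ on $\mathcal U_n$ together with the immersion property, and (c) the infinite-rank case. Write $\mathcal U_n^{(d_1,\dots,d_n)}:=\Phi^{-1}\bigl(\Omega_n\times\prod_{r=1}^n\mathcal B_{d_r}\bigr)$, so that, as a set, $\mathcal U_n$ is the disjoint union of these over $(d_1,\dots,d_n)\in\Z_+^n$. The soft input used throughout is that $u\mapsto H_u$ is a bounded linear map from $VMO_+(\T)$ into the compact operators on $L^2_+(\T)$ (boundedness by Nehari and Fefferman, compactness by Hartman), and likewise $u\mapsto K_u=S^*H_u$; hence the singular values of $H_u,K_u$ and the Riesz projectors attached to isolated parts of $\mathrm{spec}(H_u^2)$ or $\mathrm{spec}(K_u^2)$ depend continuously on $u$, by Kato's perturbation theory.

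Step (a). Fix $n$ and $(d_1,\dots,d_n)$. On $\Omega_n\times\prod_{r=1}^n\mathcal B_{d_r}$ the formula (\ref{luminy}) --- for $n=2q$ even, and its variant with $\sigma_q:=0$ for $n$ odd --- writes $u=\Phi^{-1}(s,\Psi)$ as a rational function of $z$ whose numerator and denominator are polynomials in the entries $c_{kj}(z)$ of $\mathcal C(z)$, hence rational in $\rho_j,\sigma_k$ and in the coefficients of the numerators and normalized denominators of the $\Psi_r$; the latter depend real-analytically on the points of $\mathcal B_{d_r}\simeq\T\times\R^{2d_r}$ via the chart of Appendix~B. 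As is needed for (\ref{luminy}) to define an element of $VMO_+(\T)$ --- and as comes out of the proof of Theorem~\ref{Phibij} --- $\det\mathcal C(z)$ has no zero in $\overline{\D}$, so $u$ is a rational function of uniformly bounded degree with all poles in a fixed compact subset of $\{|z|>1\}$; such a function depends real-analytically on finitely many complex parameters with values in $C^\infty(\T)$, a fortiori in $VMO_+(\T)$. Thus $\Phi^{-1}$ is real-analytic on $\Omega_n\times\prod_r\mathcal B_{d_r}$, the case $d_1=\dots=d_n=0$ giving the statement about $\Omega_n$.

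Step (b). Let $u_0\in\mathcal U_n^{(d)}$ and $\mathcal V$ a small $VMO_+$-neighborhood of $u_0$. For $u\in\mathcal V$ and each $r$, let $P_r(u)$ be the Riesz projector of $H_u^2$ (if $r$ odd) or $K_u^2$ (if $r$ even) onto the eigenvalues in a fixed small disc about $s_r(u_0)^2$; it is analytic in $u$ and of constant rank. On $\mathcal U_n^{(d)}$ this $P_r$ is the spectral projector onto $E_u(s_r)$ or $F_u(s_r)$, and by Lemma~\ref{EF} and (\ref{Ku2}) none of the eigenvalues $s_r^2$ can split under a perturbation keeping $u$ in $\mathcal U_n$ --- a split would produce a new point of $\Sigma_H(u)\cup(\Sigma_K(u)\setminus\{0\})$, hence a larger number of distinct values. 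Since also the projection of $u$ onto the relevant eigenspace varies continuously and is nonzero at $u_0$, the vector $(d_1,\dots,d_n)$ is locally constant on $\mathcal U_n$, i.e. each $\mathcal U_n^{(d)}$ is open in $\mathcal U_n$. From $P_r(u)$, $s_r(u)$, the projected symbol and the recipe of Proposition~\ref{action}, the data $\Phi(u)$ are recovered smoothly on $\mathcal U_n^{(d)}$, so $\Phi$ is continuous (indeed smooth) there; since $\mathcal B$ carries the disjoint-union topology it follows that $\Phi_n:\mathcal U_n\to\mathcal S_n$ is a homeomorphism. To obtain a smooth embedding it remains to see that $d\Phi^{-1}$ is injective everywhere on the stratum; this follows either by differentiating (\ref{luminy}) and checking the kernel of the resulting linear map into $VMO_+(\T)$ is trivial, or by promoting the construction above to a smooth left inverse of $\Phi^{-1}$ on a full neighborhood in $VMO_+(\T)$, which amounts to verifying that for $u$ near $u_0$ the meromorphic ratio $s_r(u)(P_r(u)u)/(H_u(P_r(u)u))$, and its $K_u$-analogue, is still a Blaschke product of degree $d_r$.

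Step (c); the main obstacle. Fix $(d_r)_{r\ge1}$ and $(s,\Psi)\in\Omega_\infty\times\prod_{r\ge1}\mathcal B_{d_r}$, and for $N\ge1$ let $(s^{(N)},\Psi^{(N)})$ be the truncation keeping the first $N$ entries --- an element of some $\mathcal S_N$ --- with $u^{(N)}:=\Phi^{-1}(s^{(N)},\Psi^{(N)})$ given by (\ref{luminy}). The plan is to show $(u^{(N)})_N$ is Cauchy in $VMO_+(\T)$, with $\|u^{(N)}-u^{(N')}\|$ bounded by a quantity that tends to $0$ as $\min(N,N')\to\infty$ --- controlled, say, by $\sup_{r>\min(N,N')}s_r$ --- so that $u:=\lim_N u^{(N)}$ exists; one then checks, using $H_{u^{(N)}}\to H_u$ and $K_{u^{(N)}}\to K_u$ in operator norm, that the spectral data of $u$ are exactly $(s,\Psi)$, so $\Phi^{-1}(s,\Psi)=u$ extends (\ref{luminy}). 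The same uniform tail estimate gives continuity of $\Phi^{-1}$ on $\Omega_\infty\times\prod_r\mathcal B_{d_r}$ for the $c_0$-topology on $\Omega_\infty$ (the indices $>N$ contribute uniformly little near a given point, the truncated part being continuous by Step (a)); injectivity comes from Theorem~\ref{Phibij}, and continuity of $\Phi$ on the image again follows from spectral perturbation plus the uniform control of the singular values --- whence the continuous embedding. The main obstacle is precisely this quantitative, tail-type control of (\ref{luminy}), uniform over the strata and stable as $N\to\infty$: here the disjoint-union topology is no longer available and infinitely many singular values must be handled simultaneously, which is where the bulk of the analytic work lies.
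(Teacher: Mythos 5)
Your finite--rank part follows the paper's own route quite closely: smoothness of $\Phi^{-1}$ on each stratum $\Omega_n\times\prod_{r=1}^n\mathcal B_{d_r}$ via the explicit formula (\ref{luminy}) together with the nonvanishing of $\det\mathcal C(z)$ on $\overline\D$, continuity of $\Phi_n$ and local constancy of $(d_1,\dots,d_n)$ via Riesz projectors, and injectivity of $d\Phi^{-1}$ by composing with a smooth local left inverse $\tilde\Phi_n$ (Lemma \ref{Phismooth}). Two caveats. First, you take Theorem \ref{Phibij} as a black box; that is admissible as a citation, but be aware that in the paper surjectivity is itself established by the topological scheme (openness, properness, nonemptiness of $\mathcal V_{(d_1,\dots,d_n)}$, connectedness of the target), so the two theorems are proved together rather than in the order you use them. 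Second, the left inverse is \emph{not} defined on a full $VMO_+$-neighborhood, nor does one need the ratio $s_r(u)P_r^{(u)}(u)/H_u(P_r^{(u)}(u))$ to remain a Blaschke product off the stratum: the paper's $\tilde\Phi_n$ is defined on a neighborhood in the finite-rank manifold $\mathcal V(d)$ with values in a manifold of rational functions, and only coincides with $\Phi_n$ on $\mathcal V_{(d_1,\dots,d_n)}$; this is exactly what makes the chain-rule argument work, so your ``alternative'' verification is both unnecessary and, for general nearby $u\in\mathcal V(d)$, false.

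The genuine gap is Step (c), which you yourself flag as ``where the bulk of the analytic work lies'': the continuous embedding of $\Omega_\infty\times\prod_{r\ge1}\mathcal B_{d_r}$ is not proved. Your plan rests on a quantitative tail estimate, namely that the symbols $u^{(N)}=\Phi^{-1}(s^{(N)},\Psi^{(N)})$ of the truncated data form a Cauchy sequence in $VMO_+$ with $\Vert u^{(N)}-u^{(N')}\Vert$ controlled by $\sup_{r>\min(N,N')}s_r$. No such estimate is established, and it is not obviously true: $\Phi^{-1}$ is a highly nonlinear transform, and there is no a priori reason why truncating the spectral data at level $N$ perturbs the symbol by $O(s_N)$ in $BMO$; $u^{(N)}$ is not a spectral truncation of $H_u$ nor an AAK best approximation. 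The paper's proof of Theorem \ref{mainCompact} proceeds differently and each ingredient is needed: one only extracts a \emph{weak} $L^2_+$ limit of the $u_N$, upgrades it to strong $L^2_+$ convergence by a precompactness argument based on the Adamyan--Arov--Krein theorem (approximating each $H_{u_N}$ within $\rho_{m+1}$ by a Hankel operator of rank $p_m$ whose symbol is bounded in $H^{1/2}_+$), then shows by a spectral limiting argument that eigenvalue multiplicities of $H_{u_N}^2$, $K_{u_N}^2$ cannot increase in the limit, and finally identifies the weak limits $v_j$, $v'_k$ with the projections $u_j$, $u'_k$ through the Bateman-type identities of Appendix A ($\Vert u_j\Vert^2=\tau_j^2$, $\Vert u'_k\Vert^2=\kappa_k^2$). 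Likewise, the continuity of $\Phi^{-1}$ in the $VMO_+$ topology (needed for the embedding) is obtained by a further AAK reduction to finite-rank symbols as in Lemma 3 of \cite{GG3}, not by ``spectral perturbation plus uniform control of singular values''. Also note that your injectivity sketch in the infinite case would in the paper require the contraction estimate for the infinite matrix $\mathcal D(z)$ on the weighted space $\ell^2_\tau$; simply invoking Theorem \ref{Phibij} hides this, which is fine as a citation but leaves your Step (c) with no self-contained content. As it stands, the infinite-rank half of the theorem is asserted rather than proved.
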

As a consequence of the second statement of Theorem \ref{Phitopodif}, the set
$$\mathcal V_{(d_1,\dots ,d_n)}:=\Phi ^{-1}\left (\Omega _n\times \prod _{r=1}^n\mathcal B_{d_r}\right )$$
is a submanifold of $VMO_+(\T )$ of dimension $$\dim \mathcal V_{(d_1,\dots ,d_n)}=2n+2\sum _{r=1}^nd_r\ .$$ Notice that $\mathcal V_{(d_1,\dots ,d_n)}$ is the set of symbols
$u$ such that
\begin{enumerate}
\item The singular values $s$ of $H_u$ in $\Sigma_H(u)$, ordered decreasingly, have respective multiplicities 
$$d_1+1,d_3+1,\dots \ .$$
\item The singular values $s$ of $K_u$ in $\Sigma_K(u)$, ordered decreasingly, have respective multiplicities 
$$d_2+1,d_4+1,\dots \ .$$
\end{enumerate}
In the last section of this paper, we shall investigate the properties of the manifold $\mathcal V_{(d_1,\dots ,d_n)}$ with respect to the symplectic form.
\subsection{Applications to the cubic Szeg\H{o} equation}
The cubic Szeg\H{o} equation has been introduced in \cite{GG1} as a toy model of Hamiltonian evolution PDEs with lack of dispersion.
It can be formally described as the Hamiltonian equation on $L^2_+(\T )$ associated to the energy
$$E(u):=\frac 14 \int _\T \vert u\vert ^4\, dx\ ,$$
and to the symplectic form
$$\omega (h_1,h_2):={\rm Im} (h_1\vert h_2)\ .$$
It reads
\ben \label{szego}
i\dot u=\Pi (\vert u\vert ^2u)\ .
\een
 For every $s\ge 0$, we denote by $H^s(\T )$ the Sobolev space of regularity $s$  
on $\T $, and 
$$H^s_+(\T ):=H^s(\T )\cap L^2_+(\T )\ .$$
We first recall the wellposedness results from \cite{GG1}. For every $u_0\in H^s_+(\T )$, $s\ge \frac 12$, there exists $u\in C(\R ,H^s_+(\T ))$ unique solution of equation (\ref{szego}) with
$u(0)=u_0$. Moreover, we proved in \cite{GG1} that equation (\ref{szego}) enjoys a Lax pair structure implying that $H_{u(t)}$ remains unitarily equivalent to $H_{u_0}$, and $K_{u(t)}$ remains unitarily equivalent to $K_{u_0}$. In particular, their singular values are preserved by the evolution.
It is therefore natural to understand this evolution through the mapping $\Phi $ introduced in the previous subsection. This question was solved in \cite{GG2} in the special case of generic states $u$ corresponding to simple singular values of $H_u$ and $K_u$. These states correspond through the map $\Phi $ to Blaschke products $\Psi _r$ of degree $0$. In this case, one can write
$$\Psi _r=\expo_{-i\psi _r}\ ,$$
and the evolution of the angle $\psi _r$ is given by
$$\frac{d\psi _r}{dt}=(-1)^{r-1}s_r^2\ .$$
More precisely, using the notation introduced in \cite{GG3}, consider the set $\mathcal V(d)$ defined by
\begin{equation}\label{V(d)}
\mathcal V(d)= \left \{u;\; {\rm rk}H_u=\left [\frac{d+1}2\right ]\ ,\ {\rm rk}K_u=\left [\frac d2\right ]\right\}\ .
\end{equation}
One can prove that $\mathcal V(d)$ is a K\"ahler submanifold of $L^2_+(\T )$, and that its open subset $\mathcal V(d)_{\rm gen}$ 
made of generic states  is diffeomorphic through $\Phi $
to 
$$\Omega _d\times \mathcal B _0^d\ .$$
The corresponding coordinates $(s_1,\dots ,s_d)\in \Omega _d$ and $(\psi _1,\dots, \psi _d)\in \T ^d$ are action angle variables on $\mathcal V(d)_{\rm gen}$, in the following sense, see \cite{GG2},
$$\omega _{\vert \mathcal V(d)_{\rm gen}}=\sum _{r=1}^d d\left (\frac{s_r^2}{2}\right )\wedge d\psi _r\ ,\ E=\frac 14 \sum _{r=1}^d (-1)^{r-1}s_r^4\ .$$
Our next result generalizes this fact to non generic states. Given $u\in H^{1/2}_+(\T )$, we decompose the Blaschke products associated to $u$ by $\Phi $ as
$$\Psi _r:=\expo _{-i\psi _r}\chi _r\ ,$$
where $\chi _r$ is a Blaschke product built with a monic Schur polynomial.
\begin{theorem}\label{evolszego}
The evolution of equation (\ref{szego}) on $H^{1/2}_+$ reads
$$\frac{ds_r}{dt}=0\ ,\ \frac{d\psi _r}{dt}=(-1)^{r-1}s_r^2\ ,\ \frac{d\chi _r}{dt}=0\ .$$
Moreover, on the manifold $\mathcal V_{(d_1,\dots ,d_n)}$ introduced in subsection 2, the restriction of the symplectic form $\omega $ and of the energy $E$ are given by
$$\omega =\sum _{r=1}^n d\left (\frac{s_r^2}{2}\right )\wedge d\psi _r\ ,\ E=\frac 14 \sum _{r=1}^n (-1)^{r-1}s_r^4\ .$$
In particular, $\mathcal V_{(d_1,\dots ,d_n)}$ is  an involutive submanifold of the K\"ahler manifold $\mathcal V(d)$ with $d=n+2\sum _{r=1}^n d_r $.
\end{theorem}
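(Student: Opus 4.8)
\emph{Approach.} The proof rests on the two Lax pairs of \eqref{szego}, together with Proposition \ref{action} and the reconstruction formula \eqref{luminy}. The evolution equations follow from a fairly direct computation once the eigenspaces are propagated by the Lax flow; the symplectic assertions — in particular the degeneracy of $\omega$ along the Blaschke directions — are where the real work lies. Recall from \cite{GG1} that \eqref{szego} admits the Lax pairs $\dot H_u=[B_u,H_u]$ and $\dot K_u=[C_u,K_u]$, where $B_u=\frac i2H_u^2-iT_{|u|^2}$, $C_u=\frac i2K_u^2-iT_{|u|^2}$, $T_b$ denotes the Toeplitz operator of symbol $b$, and $B_u,C_u$ are skew-adjoint. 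Hence $H_{u(t)}$ and $K_{u(t)}$ remain unitarily equivalent to $H_{u_0},K_{u_0}$ through propagators $W(t),\widetilde W(t)$ ($\dot W=B_uW$, $W(0)=I$, and likewise for $\widetilde W$), so all singular values and their multiplicities are conserved: this gives $\dot s_r=0$ and the invariance of $\mathcal V_{(d_1,\dots,d_n)}$ under the flow. Moreover $E_{u(t)}(s)=W(t)E_{u_0}(s)$ and $F_{u(t)}(s)=\widetilde W(t)F_{u_0}(s)$, so the corresponding spectral projectors satisfy $\dot P_{E_u(s)}=[B_u,P_{E_u(s)}]$ and $\dot P_{F_u(s)}=[C_u,P_{F_u(s)}]$.

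\emph{Evolution of the inner functions.} Fix $s\in\Sigma_H(u)$ and set $u_s:=P_{E_u(s)}u$, $g:=H_u(u_s)$, so that $H_ug=s^2u_s$, $su_s=\Psi_sg$, and $|\Psi_s|=1$ on $\T$. Using $\dot u=-iT_{|u|^2}u$, the identity $B_uu+iT_{|u|^2}u=\frac i2H_u^2u$ and the fact that the $E_u(s)$-component of $H_u^2u$ is $s^2u_s$, one obtains $\dot u_s=-iT_{|u|^2}u_s$; a parallel computation gives $\dot g=is^2g-iT_{|u|^2}g$. Differentiating $su_s=\Psi_sg$ then yields
\[\dot\Psi_s\,g=-is^2\Psi_sg-i\left(\Pi(|u|^2\Psi_sg)-\Psi_s\Pi(|u|^2g)\right).\]
The bracket vanishes: since $\Pi(u\bar g)=H_ug=s^2u_s$ one has $(I-\Pi)(|u|^2g)=s^2(u\,\overline{u_s}-g)$, while on $\T$ one has $\Psi_s\overline{u_s}=\overline g/s$; hence $\Psi_s(I-\Pi)(|u|^2g)=s(I-\Pi)(u\bar g)\in L^2_-$, which is exactly $\Pi(|u|^2\Psi_sg)=\Psi_s\Pi(|u|^2g)$. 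Therefore $\dot\Psi_s=-is^2\Psi_s$, i.e. $\dot\psi_s=s^2$ and $\dot\chi_s=0$. For $s\in\Sigma_K(u)\setminus\{0\}$ one argues the same way starting from $K_u(u_s')=s\Psi_su_s'$ (Proposition \ref{action}(2)) and using $H_uh=zK_uh+(u\vert h)$, which follows from $SS^*=I-(\,\cdot\,\vert1)1$; the only change is that the factor $is^2$ now lands on the opposite side, producing $\dot\Psi_s=+is^2\Psi_s$, i.e. $\dot\psi_s=-s^2$, $\dot\chi_s=0$. Since $\Sigma_H(u)=\{s_{2j-1}\}$ and $\Sigma_K(u)\setminus\{0\}=\{s_{2k}\}$, this is precisely $\dot s_r=0$, $\dot\psi_r=(-1)^{r-1}s_r^2$, $\dot\chi_r=0$.

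\emph{Symplectic form, energy and involutivity on $\mathcal V:=\mathcal V_{(d_1,\dots,d_n)}$.} Identify $\mathcal V\cong\Omega_n\times\prod_r\mathcal B_{d_r}$ through $\Phi$, with coordinates $s_r,\psi_r$ and, on each $\mathcal B_{d_r}\cong\T\times\R^{2d_r}$, the $2d_r$ coefficients $\theta$ of the monic Schur polynomial of $\chi_r$. The $1$-form $\alpha_u(h):=\frac12\,\mathrm{Im}(u\vert h)$ is a primitive of $\omega$. Differentiating \eqref{luminy} in these coordinates (legitimate in the finite-rank case, where that formula applies, and then in general by the continuity in Theorem \ref{Phitopodif}) and evaluating $\alpha$ on the coordinate fields, one checks that every pairing $\omega(\partial_a,\partial_b)$ vanishes except $\omega(\partial_{s_r},\partial_{\psi_r})=s_r$; in particular the Blaschke directions lie in $\ker\omega|_{\mathcal V}$, and $\omega|_{\mathcal V}=\sum_r d\!\left(\tfrac{s_r^2}2\right)\wedge d\psi_r$. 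For the energy, either insert \eqref{luminy} into $E(u)=\frac14\int_\T|u|^4$ and evaluate the resulting determinant by a Bateman-type identity, or — more economically — note that by the previous paragraph the Szeg\H o vector field on $\mathcal V$ is $X=\sum_r(-1)^{r-1}s_r^2\partial_{\psi_r}$, so the ambient relation $\iota_X\omega=-dE$ restricts to $dE|_{\mathcal V}=-\iota_X\omega|_{\mathcal V}=d\!\left(\tfrac14\sum_r(-1)^{r-1}s_r^4\right)$; the additive constant vanishes by letting $s_n\to0$ (which, by \eqref{luminy}, moves $u$ continuously to $\mathcal V_{(d_1,\dots,d_{n-1})}$) and inducting on $n$, the base case $n=1$ being $u=s_1\Psi_1$, whence $E=\frac14s_1^4$. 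Finally, on $\mathcal V$ one has $\mathrm{rk}\,H_u+\mathrm{rk}\,K_u=d:=n+2\sum_rd_r$, so $\mathcal V\subset\mathcal V(d)$ with $\mathrm{codim}\,\mathcal V=\dim\mathcal V(d)-(2n+2\sum_rd_r)=2\sum_rd_r$ (using $\dim\mathcal V(d)=2d$); since $\omega|_{\mathcal V}$ has rank $2n$, its kernel has dimension $2\sum_rd_r=\mathrm{codim}\,\mathcal V$, and symplectic linear algebra forces $(T_p\mathcal V)^{\omega}\subset T_p\mathcal V$, i.e. $\mathcal V$ is an involutive submanifold of the K\"ahler manifold $\mathcal V(d)$.

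\emph{Main obstacle.} The conservation of the $s_r$, the rotations $\dot\psi_r=(-1)^{r-1}s_r^2$, the rigidity $\dot\chi_r=0$, the energy formula (granted the symplectic computation) and the involutivity are all essentially mechanical. The delicate point is the vanishing, in the previous paragraph, of the ``extra'' components of $\omega|_{\mathcal V}$ — especially $\omega(\partial_\theta,\partial_{\theta'})=0$ and $\omega(\partial_{s_r},\partial_\theta)=0$ along the Blaschke directions, which are \emph{not} forced by closedness and $\psi$-translation invariance alone. Establishing them requires differentiating the explicit reconstruction \eqref{luminy} with respect to the Schur-polynomial coefficients and exploiting the fine description $E_u(s)=\frac{\C_{m-1}[z]}{D}H_u(u_s)$ from Proposition \ref{action}; this is the technical heart of the theorem.
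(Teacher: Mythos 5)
Your first display (the evolution of $s_r,\psi_r,\chi_r$) is proved correctly, and by a route that differs from the paper's in one interesting place. Your reduction — $\dot u_s=-iT_{|u|^2}u_s$, $\dot g=is^2g-iT_{|u|^2}g$ with $g=H_u(u_s)$, hence everything hinges on $\Pi(|u|^2\Psi_s g)=\Psi_s\Pi(|u|^2g)$ — is exactly the paper's (Theorem \ref{evolszegotexte}). But you prove the vanishing of that bracket by computing the anti-analytic part directly, $(I-\Pi)(|u|^2g)=s^2\bigl(u\overline{u_s}-g\bigr)$, using $\Pi(u\overline g)=s^2u_s$, $\Pi(u\overline{u_s})=g$ and $\Psi_s\overline{u_s}=\overline g/s$ on the circle; the analogous identity $\Pi(\overline z\,u\overline{u'_s})=K_u(u'_s)$ makes your $K_u$ case work as claimed. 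This is correct and shorter than the paper's argument, which instead factors $\Psi_s$ into elementary factors $\chi_p$ and exploits the rank-one commutator $[\Pi,\chi_p]$ together with Lemma \ref{crucialHuGeneral} — a form of the argument the paper then reuses verbatim for the whole hierarchy in Section \ref{szegohier}. One small omission: the paper first reduces to finite-rank data by continuity of the flow map on $H^{1/2}_+$, which is what justifies the pointwise products and the differentiations of spectral projectors you perform; you should include that reduction.

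The second half is where your proposal has a genuine gap. The entire symplectic statement rests on the assertion that, after differentiating \eqref{luminy} in the coordinates $(s_r,\psi_r,\theta)$ and evaluating the primitive $\alpha$, ``one checks'' that all pairings vanish except $\omega(\partial_{s_r},\partial_{\psi_r})=s_r$; your closing paragraph itself identifies this as the technical heart, but conceding that it is hard is not a proof, and no part of the computation is supplied. It is also not the computation the paper does: the paper first proves the analogue of the evolution theorem for the whole family $J^y$ (Theorem \ref{Laxhier}, Theorem \ref{hierarchyevol}), deduces $X_{J^y}=\sum_r(-1)^r\frac{2yJ^y}{1+ys_r^2}\,\partial_{\psi_r}$ on $\mathcal V_{(d_1,\dots,d_n)}$ (Corollary \ref{XJy}), and, identifying residues in $y$ against $d\log J^y$ computed from formula (\ref{J}), obtains $i_{\partial_{\psi_r}}\omega=-d\!\left(\frac{s_r^2}{2}\right)$ — this settles every pairing involving a $\psi$-direction, which a single vector field such as the Szeg\H{o} field cannot do. Closedness then shows the remainder $\tilde\omega$ is pulled back from the $(s,\chi)$-variables, so it suffices to prove that $\omega$ (in fact $\alpha$) vanishes on the slice $\{\psi_r=0\}$; that is done through the explicit description of the tangent vectors derived from \eqref{luminy} (Lemmas \ref{espace tangent} and \ref{restriction omega}). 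Neither this scheme nor any substitute for it (e.g.\ an actual computation of $\alpha(\partial_{\psi_r})$, $\alpha(\partial_{s_r})$, $\alpha(\partial_\theta)$ from the determinant formula) appears in your text, so the formula for $\omega\vert_{\mathcal V}$ — and with it the degeneracy along the Blaschke directions — is unestablished. Granted that formula, your energy argument via $\iota_X\omega=-dE$ plus a limiting determination of the constant is workable (though the paper gets $E=\frac14\sum_r(-1)^{r-1}s_r^4$ directly from $E=\frac14(\partial_y^2J^y-(\partial_yJ^y)^2)\vert_{y=0}$ and formula (\ref{J}), with no constant to pin down), and your dimension-count proof of involutivity is essentially the paper's corollary in Section \ref{symplectic}.
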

This theorem shows that the cubic Szeg\H{o} equation can be solved by using the  inverse spectral transform provided by the
mapping $\Phi ^{-1}$. We refer for instance to the first part of the book \cite{KP} for a similar situation in the case of the Korteweg--de Vries equation.

As a corollary of Theorem \ref{evolszego}, one gets the following qualitative information about all the trajectories of (\ref{szego}).
\begin{corollary}\label{almostperiodic}
Every solution of equation (\ref{szego}) with initial data in $H^{1/2}_+(\T )$ is an almost periodic function from $\R $ to $H^{1/2}_+(\T )$.
\end{corollary}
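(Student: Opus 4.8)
The plan is to read off from Theorem~\ref{evolszego} that, in the coordinates provided by the map $\Phi$, the cubic Szeg\H{o} flow is nothing but a straight line motion on a finite or infinite dimensional torus, and then to push this motion forward through $\Phi^{-1}$ to a uniformly continuous curve in $H^{1/2}_+(\T)$, using the conservation of the momentum to recover the $H^{1/2}$ regularity that is not directly supplied by Theorem~\ref{Phitopodif}. Almost periodicity then follows from Bochner's criterion.

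The trivial case $u_0=0$ aside, let $u_0\in H^{1/2}_+(\T)\setminus\{0\}$. Since $H^{1/2}(\T)\hookrightarrow VMO(\T)$, the symbol $u_0$ lies in the domain of $\Phi$; write $\Phi(u_0)=\big((s_r)_r,(\Psi_r)_r\big)\in\mathcal S_n$ with $n\in\N\cup\{\infty\}$ and $\Psi_r=\expo_{-i\psi_r^0}\chi_r\in\mathcal B_{d_r}$. I would introduce the compact metrizable abelian group $G:=\T^n$ (with $\T^\infty:=\prod_{r\ge1}\T$ carrying the product topology, so that $G$ is compact by Tychonoff), equipped with a translation invariant metric; the continuous one parameter subgroup $\gamma:\R\to G$, $\gamma(t):=\big(\expo_{-i(-1)^{r-1}s_r^2t}\big)_r$; the point $g_0:=\big(\expo_{-i\psi_r^0}\big)_r\in G$; and the map
$$\pi:G\longrightarrow VMO_+(\T),\qquad \pi\big((\zeta_r)_r\big):=\Phi^{-1}\big((s_r)_r,(\zeta_r\chi_r)_r\big).$$
This $\pi$ is well defined and continuous, being the composition of the homeomorphism $G\to\{(s_r)_r\}\times\prod_r\{\zeta\chi_r:\zeta\in\T\}$ with the continuous map $\Phi^{-1}$ of Theorem~\ref{Phitopodif} restricted to that compact set. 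With these notations, Theorem~\ref{evolszego} states precisely that $u(t)=\pi\big(\gamma(t)g_0\big)$ for every $t\in\R$.

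The heart of the matter — and the step I expect to be the main obstacle — is to upgrade $\pi$ into a \emph{continuous} map valued in $H^{1/2}_+(\T)$, since Theorem~\ref{Phitopodif} only gives continuity into $VMO_+(\T)$. I would argue through the momentum $Q(u):=\mathrm{Tr}(H_u^2)=\sum_{k\ge0}(k+1)|\hat u(k)|^2$, which is the square of a Hilbert norm equivalent to $\|\cdot\|_{H^{1/2}}$ and which is \emph{constant} on $\pi(G)$: by the bijectivity of $\Phi$ (Theorem~\ref{Phibij}) the spectral data of $H_{\pi(\zeta)}$ — its singular values $s_r$ and their multiplicities, encoded by $(s_r)$ and the fixed degrees $(d_r)$ — do not depend on $\zeta\in G$, so that $Q(\pi(\zeta))=Q(u_0)<\infty$. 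In particular $\pi(G)\subset H^{1/2}_+(\T)$ and $\pi(G)$ is bounded there. Since $G$ is metrizable one may argue sequentially: if $\zeta^{(m)}\to\zeta$ in $G$ then $\pi(\zeta^{(m)})\to\pi(\zeta)$ in $VMO_+(\T)$, hence in $L^2_+(\T)$ (using $BMO(\T)\hookrightarrow L^2(\T)$ on the compact circle); boundedness in $H^{1/2}_+(\T)$ then forces $\pi(\zeta^{(m)})\rightharpoonup\pi(\zeta)$ weakly in $H^{1/2}_+(\T)$; and since $Q(\pi(\zeta^{(m)}))=Q(\pi(\zeta))$ for all $m$, the weak convergence together with conservation of the ($Q$-)norm yields strong convergence in $H^{1/2}_+(\T)$ by the usual Hilbert space argument. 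Thus $\pi:G\to H^{1/2}_+(\T)$ is continuous, hence uniformly continuous since $G$ is compact; write $\omega_\pi$ for a modulus of continuity.

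Finally, I would invoke the classical fact that a straight line flow on a compact group composed with a uniformly continuous map is almost periodic, by checking Bochner's criterion directly. Given a sequence $(t_m)_m\subset\R$, compactness of $G$ allows one to extract a subsequence along which $\gamma(t_m)\to h\in G$. Since $G$ is abelian and $\gamma$ a homomorphism, $u(t+t_m)=\pi\big(\gamma(t)g_0\gamma(t_m)\big)$, and by translation invariance of the metric the distance between $\gamma(t)g_0\gamma(t_m)$ and $\gamma(t)g_0h$ equals $\mathrm{dist}_G(\gamma(t_m),h)$, independently of $t$; hence
$$\sup_{t\in\R}\big\|u(t+t_m)-\pi\big(\gamma(t)g_0h\big)\big\|_{H^{1/2}}\le\omega_\pi\big(\mathrm{dist}_G(\gamma(t_m),h)\big)\longrightarrow 0$$
as $m\to\infty$, so the translates $u(\cdot+t_m)$ converge uniformly on $\R$ to the continuous bounded curve $t\mapsto\pi(\gamma(t)g_0h)$. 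By Bochner's criterion $u$ is almost periodic from $\R$ to $H^{1/2}_+(\T)$. In the finite rank case the $H^{1/2}$-continuity of $\pi$ is in fact automatic, finite rank symbols being rational and hence smooth, but the momentum argument treats both cases at once.
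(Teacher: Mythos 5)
Your proof is correct, and its overall architecture is the one the paper follows: use Theorem \ref{evolszego} to conjugate the Szeg\H{o} flow, via $\Phi$, to a translation flow on the compact torus $\{(s_r)\}\times\prod_r\S^1\chi_r$, and conclude by Bochner's criterion. The one place where you genuinely diverge is the key analytic input. The paper gets the $H^{1/2}_+$-valued continuity from Theorem \ref{HilbertSchmidt} (the homeomorphism property of $\Phi$ in the Hilbert--Schmidt category, with the weighted $\ell^2$ topology on $\Omega^{(2)}_\infty$), and then reduces Bochner's criterion to the trivial relative compactness of the phases $(\expo_{i(-1)^r s_r^2 h})_{r\ge 1}$ in $(\S^1)^\infty$. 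You instead invoke only the $VMO_+$-continuity of $\Phi^{-1}$ from Theorem \ref{Phitopodif} and upgrade it to $H^{1/2}_+$-continuity on the invariant torus by hand, using that ${\rm Tr}(H_u^2)=\sum_r d_r s_r^2+\sum_j s_{2j-1}^2$ is constant there, so that strong $L^2$ convergence plus weak $H^{1/2}$ compactness plus conservation of the equivalent Hilbert norm yields strong $H^{1/2}$ convergence; you then verify Bochner's criterion explicitly through uniform continuity of $\pi$ on the compact group. This momentum-upgrade is in fact the same mechanism the paper uses (in sketch) to prove Theorem \ref{HilbertSchmidt}, so your argument amounts to a self-contained substitute for that theorem tailored to a single torus, and your explicit limit of the translates makes the compactness step in Bochner's criterion more transparent than the paper's terse reduction. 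Two cosmetic points: the embedding ``$BMO(\T)\hookrightarrow L^2(\T)$'' should be stated with the constant term included in the $BMO_+$ norm (John--Nirenberg on $I=\T$ controls $\Vert f-\hat f(0)\Vert_{L^2}$), and it is worth saying explicitly that boundedness of $u$ in $H^{1/2}_+$ follows from $u(\R)\subset\pi(G)$ with $\pi(G)$ compact; neither affects the validity of the proof.
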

More precisely, we will show that the tori obtained as inverse images by the map $\Phi $ of the sets
$$\{ (s_r)\} \times \prod _r \S ^1\Psi _r\ ,$$
where $((s_r);(\Psi _r))\in \mathcal S$ is given, induce a singular foliation of the phase space $VMO_+(\T )\setminus \{ 0\} $. The cubic Szeg\H{o} flow acts on those tori which are included in $H^{1/2}_+(\T )$. In the generic case where all the $\Psi _r$ have degree $0$, it is easy to check that these tori are classes of unitary equivalence for the pair of operators $(H_u,K_u)$. In the general case, we introduce a more stringent unitary equivalence of which these tori are the classes. This answers a question asked to us by T. Kappeler. 

Let us end this introduction by mentioning that a natural generalization of the results of this paper would concern  bounded --- not necessarily compact --- Hankel operators. A first step in this direction was recently made in the paper \cite{GP}.

\subsection{Organization of the paper}
In section \ref{SpectralAnalysis}, we prove Lemma \ref{EF} about the eigenspaces of $H_u^2$ and $K_u^2$. Section \ref{Blaschke} is devoted to Proposition \ref{action} 
which introduces Blaschke products encoding the action of $H_u$ and $K_u$ on these eigenspaces. Section \ref{finiterank} gives the proof of the main theorem \ref{Phibij}, as well as the proof of Theorems \ref{Phitopodif} and \ref{realmain}, in the special case $n<\infty $ of finite rank Hankel operators. Section \ref{compact} deals with the case $n=\infty $ of infinite rank compact Hankel operators.  Sections \ref{szegoflow}, \ref{szegohier}, \ref{symplectic}  contain the proof of Theorem \ref{evolszego}, as well as applications to almost periodicity --- Corollary \ref{almostperiodic} --- , and to a new proof of the classification of traveling waves for the cubic Szeg\H{o} equation. Finally, section \ref{invtori} provides the  description of the singular foliation of
$VMO_+(\T )\setminus \{ 0\} $ in terms of equivalent classes for some special  unitary equivalence for the pair of operators $(H_u,K_u)$.  The paper ends with three appendices. The first one is devoted to a classical set of formulae connected to the relative determinant of two selfadjoint compact operators with a rank one difference. The second one specifies the structure of  the set of Blaschke products of a given degree. The third one gives a self-contained proof of two important results from the paper \cite{AAK} by Adamyan-Arov-Krein, which are used throughout the paper.
\section{Spectral decomposition of the Hankel operators $H_u$ and $K_u$}\label{SpectralAnalysis}

We begin with a  precise spectral analysis of operators $H_u^2$ and $K_u^2$ on the  closed range of $H_u$.

We prove a more precise version of Lemma \ref{EF}, namely
\begin{proposition}\label{rigidity}
Let $u\in VMO_+(\T )\setminus \{ 0\} $ and $s>0$ such that $$E_u(s)\ne \{ 0\} \quad or\quad F_u(s)\ne \{ 0\}\ .$$
Then one of the following properties holds. 
\begin{enumerate}
\item $\dim E_u(s)=\dim F_u(s)+1$,  $u \not \perp E_u(s)$, and $F_u(s)=E_u(s)\cap u^\perp $.
\item $\dim F_u(s)=\dim E_u(s)+1$,  $u \not \perp F_u(s)$, and $E_u(s)=F_u(s)\cap u^\perp $.
\end{enumerate}
Moreover, if $u_\rho$ and $u'_\sigma$ denote respectively the orthogonal projections of $u$ onto $E_u(\rho)$, $\rho\in \Sigma_H(u)$, and onto $F_u(\sigma)$, $\sigma\in \Sigma_K(u)$, then
\begin{enumerate}
\item $\Sigma_H(u)$ and $\Sigma_K(u)$ are disjoint, with the same cardinality;
\item if $\rho\in \Sigma_H(u)$, \begin{equation}\label{urho}
u_\rho=\Vert u_\rho\Vert^2\sum_{\sigma\in \Sigma_K(u)}\frac{u'_\sigma}{\rho^2-\sigma^2}\ ,\end{equation}
 \item if $\sigma\in \Sigma_K(u)$,
\begin{equation}\label{usigma}u'_\sigma=\Vert u'_\sigma\Vert^2 \sum_{\rho\in \Sigma_H(u)}\frac{u_\rho}{\rho^2-\sigma^2}\ .\end{equation}
\item A nonnegative number $\sigma $ belongs to $\Sigma _K(u)$ if and only if it does not belong to $\Sigma _H(u)$ and
\begin{equation}\label{eqsigma}\sum_{\rho \in \Sigma _H(u)}\frac{\Vert u_\rho \Vert ^2}{\rho ^2-\sigma ^2}=1\ .\end{equation}
\end{enumerate}
\end{proposition}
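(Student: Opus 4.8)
The plan is to leverage the relative-determinant machinery associated to the rank-one perturbation identity $K_u^2=H_u^2-(\cdot\mid u)u$, restricted to the closed range $\mathcal{R}$ of $H_u$ (on which $H_u^2$ is injective, so that the spectral analysis is nondegenerate). The first part — the alternative (1)/(2), the fact that $u\not\perp E_u(s)$ or $u\not\perp F_u(s)$, and the codimension-one relations between $E_u(s)$ and $F_u(s)$ — should be derived exactly as in the proof of Lemma \ref{EF}: on an eigenspace of $H_u^2$ for eigenvalue $s^2$, the operator $K_u^2$ acts as $s^2 I$ minus the rank-one operator $(\cdot\mid u_s)u_s$ (where $u_s$ is the orthogonal projection of $u$), so its kernel inside $E_u(s)$ is $E_u(s)\cap u^\perp=E_u(s)\cap u_s^\perp$, which has codimension $0$ or $1$ according to whether $u_s=0$ or $u_s\ne 0$; a symmetric argument using $H_u^2=K_u^2+(\cdot\mid u)u$ handles $F_u(s)$, and one checks the two cases are mutually exclusive and exhaustive.

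Next I would set up the resolvent identity. On $\mathcal{R}$, write $G(z):=(H_u^2-zI)^{-1}$ and note that $(K_u^2-zI)^{-1}$ differs from it by a rank-one term via the Sherman–Morrison / Aronszajn formula:
\begin{equation*}
(K_u^2-zI)^{-1}=G(z)+\frac{(\,\cdot\mid G(\bar z)u)\,G(z)u}{1-(G(z)u\mid u)}\ .
\end{equation*}
The key scalar object is the meromorphic function $\phi(z):=(G(z)u\mid u)=\sum_{\rho\in\Sigma_H(u)}\frac{\Vert u_\rho\Vert^2}{\rho^2-z}$, whose poles are precisely the $\rho\in\Sigma_H(u)$ (the other eigenvalues of $H_u^2$ on $\mathcal{R}$ being orthogonal to $u$, hence invisible). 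The function $1-\phi(z)$ is strictly increasing between consecutive poles, so by the intermediate value theorem it has exactly one zero in each gap, plus possibly one in $(-\infty,\rho_{\min}^2)$ or on $[0,\cdot)$ in the finite-rank-minus-one configuration; these zeros, by the Aronszajn formula, are exactly the eigenvalues of $K_u^2$ that carry a nonzero projection of $u$, i.e. the elements of $\Sigma_K(u)$ (together with the possibility $0\in\Sigma_K(u)$, treated separately since $0$ may be a genuine eigenvalue of $K_u^2$). This simultaneously gives the disjointness of $\Sigma_H(u)$ and $\Sigma_K(u)$, the equal-cardinality/interlacing statement, and equation \eqref{eqsigma}: $\sigma^2$ with $\sigma\ge 0$ lies in $\Sigma_K(u)$ iff $\sigma^2\notin\Sigma_H(u)$ and $\phi(\sigma^2)=1$, i.e. $\sum_{\rho}\Vert u_\rho\Vert^2/(\rho^2-\sigma^2)=1$.

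For the expansion formulae \eqref{urho} and \eqref{usigma}, I would compute the residue of the resolvent $(K_u^2-zI)^{-1}$ at $z=\sigma^2$, $\sigma\in\Sigma_K(u)$: it equals the orthogonal projector onto $F_u(\sigma)$, and applying the Aronszajn formula shows this residue is a rank-one projector in the direction $G(\sigma^2)u=\sum_\rho u_\rho/(\rho^2-\sigma^2)$; normalizing gives $u'_\sigma$ proportional to $\sum_{\rho\in\Sigma_H(u)}u_\rho/(\rho^2-\sigma^2)$, and pairing with $u$ to fix the constant (using $\phi(\sigma^2)=1$ and $\Vert u'_\sigma\Vert^2$) yields \eqref{usigma}. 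The dual identity \eqref{urho} follows by the symmetric computation with the roles of $H_u^2$ and $K_u^2$ interchanged, using $H_u^2=K_u^2+(\cdot\mid u)u$ and the fact that on $\mathcal{R}$ the $u'_\sigma$ span the relevant subspace; one then reads off $u_\rho=\Vert u_\rho\Vert^2\sum_{\sigma\in\Sigma_K(u)}u'_\sigma/(\rho^2-\sigma^2)$. The main obstacle I anticipate is the careful bookkeeping in the infinite-rank case: one must justify the convergence of the series defining $\phi$ and of the expansions \eqref{urho}–\eqref{usigma} in $L^2_+$, control the behavior near $z=0$ (where $0$ may or may not be an eigenvalue of $K_u^2$ and the alternative about $0\in\Sigma_K(u)$ becomes delicate), and verify that $\Sigma_K(u)$ has no accumulation point other than possibly $0$ — all of which rely on compactness of $H_u$ and the summability $\sum_\rho\Vert u_\rho\Vert^2=\Vert u\Vert^2<\infty$, together with a monotonicity/counting argument to pair up the two spectral sequences correctly.
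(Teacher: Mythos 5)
There is a genuine gap at the very first step, and it is exactly the point where the Hankel structure (as opposed to general rank-one perturbation theory) must be used. Your sketch of the dichotomy (1)/(2) relies only on the identity (\ref{Ku2}) restricted to the eigenspaces: that does give mutual exclusivity and, in each case, the identification $F_u(s)=E_u(s)\cap u^\perp$ (resp. $E_u(s)=F_u(s)\cap u^\perp$) with the codimension-one count. But it cannot give \emph{exhaustiveness}, i.e. it cannot rule out the case $E_u(s)=F_u(s)\neq\{0\}$ with $u$ orthogonal to both: for a general positive compact operator $A$ and its rank-one perturbation $B=A-(\cdot\vert u)u$, a common eigenvalue with coinciding eigenspaces orthogonal to $u$ is perfectly possible, so "one checks the two cases are mutually exclusive and exhaustive" is not a check but the heart of the matter. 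The paper excludes this case by a Hankel-specific argument: if $N:=E_u(s)=F_u(s)\neq\{0\}$ and $u\perp N$, then $H_u$ and $K_u$ are (antilinear) automorphisms of $N$, so $K_u=S^*H_u$ forces $S^*(N)\subset N$; moreover $0=(H_u(h)\vert u)=(1\vert H_u^2h)=s^2(1\vert h)$ shows $N\perp 1$, and iterating $S^*$ kills all Fourier coefficients, whence $N=\{0\}$, a contradiction. Without an argument of this kind (using $K_u=S^*H_u$ and $\ker S^*=\C\cdot 1$) the statement "$u\not\perp E_u(s)$ or $u\not\perp F_u(s)$" is not established, and everything downstream is affected — in particular the disjointness of $\Sigma_H(u)$ and $\Sigma_K(u)$ that your resolvent analysis takes as input, and the identification of $\Sigma_K(u)$ with the zeros of $1-\phi$. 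Note also that invoking "the proof of Lemma \ref{EF}" is circular here, since Proposition \ref{rigidity} \emph{is} the paper's proof of that lemma.

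For the second half, your Aronszajn/Sherman--Morrison route is a legitimate repackaging of what the paper does more directly: the paper solves $(H_u^2-\sigma^2I)v=u$ on $E_u(\sigma)^\perp$ by the Fredholm alternative and identifies $v=u'_\sigma/\Vert u'_\sigma\Vert^2$, which is precisely your residue computation, with the compatibility condition (\ref{condsigma}) playing the role of your $\phi(\sigma^2)=1$; so (\ref{urho}), (\ref{usigma}) and (\ref{eqsigma}) would come out for $\sigma>0$ once part one is secured. Two remarks on what remains. First, the equal-cardinality claim is obtained in the paper not by pole/zero counting but by observing that the cyclic subspaces generated by $u$ under $H_u^2$ and $K_u^2$ coincide, which is cleaner and avoids the bookkeeping of the lowest zero in the finite-rank case. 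Second, the endpoint $\sigma=0$, which you flag but leave open, does require a separate argument, since $z=0$ lies in the spectrum and the resolvent formula degenerates there; one needs solvability of $H_u^2w=u$ with $w\perp\ker H_u$, which follows either from (\ref{Ku2}) applied to an element of $F_u(0)$ not orthogonal to $u$, or, as in the paper, from the strict inclusion $\ker H_u\subset\ker K_u$ giving $w$ with $H_u(w)=1$. Your monotonicity argument alone does not deliver this, nor does it show that a zero of $1-\phi$ at $z=0$ actually produces an element of $\Sigma_K(u)$, so the $\sigma=0$ case must be treated by hand as above.
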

\begin{proof} Let $s>0$ be such that $E_u(s)+F_u(s)\neq \{0\}$. We first claim that either $u\perp E_u(s)$ or $u\perp F_u(s)$. Assume first $E_u(s)\neq \{0\}$ and $u\not\perp E_u(s)$,  then there exists $h\in E_u(s)$ such that $(h\vert u)\neq 0$. From equation (\ref{Ku2}), $$-(h\vert u)u=(K_u^2-s^2I)h\in (F_u(s))^\perp\ ,$$ hence $u\perp F_u(s)$. Similarly, if $F_u(s)\neq \{0\}$ and $u\not\perp F_u(s)$, then $u\perp E_u(s)$.\\
Let $s$ be such that $F_u(s)\neq \{0\}$. Assume $u\perp F_u(s)$. Then, for any $h\in  F_u(s)$, as $K_u^2=H_u^2-(\cdot\vert u)u$, $H_{u}^2(h)=K_u^2(h)=s^2 h$, hence  $  F_u(s)\subset   E_u(s)$. We claim that this inclusion is strict. Indeed, suppose it is an equality. Then $H_{u}$ and $K_{u}$ are both 
automorphisms of the vector space $$N:= F_u(s)=   E_u(s)\ .$$ 
Consequently, since $K_u=S^*H_u$, $S^*(N)\subset N$. 
On the other hand, 
 since every $h\in N$ is orthogonal to $u$, we have
$$0=(H_{u}(h)\vert u)=(1\vert H_{u}^2 h)=\sigma ^2(1\vert h)\ ,$$
hence $N\perp 1$.  Therefore, for every $h\in N$, for every integer $k$,
$(S^*)^k(h)\perp 1$. Since $S^k(1)=z^k$, we conclude that all the Fourier coefficients of $h$ are $0$, hence $N=\{ 0\}$,
a contradiction. Hence, the inclusion of $F_u(s)$ in $E_u(s)$ is strict and, necessarily $u\not\perp E_u(s)$ and $F_u(s)=E_u(s)\cap u^\perp$. One also has $\dim E_u(s)=\dim F_u(s)+1$.\\
One proves as well that if $E_u(s)\neq\{0\}$ and $u\perp E_u(s)$ then $u\not\perp F_u(s)$, $E_u(s)=F_u(s)\cap u^\perp$ and $\dim F_u(s)=\dim E_u(s)+1$.
This gives the first part of Proposition (\ref{rigidity}).\\

For the second part, we first observe that $u=H_u(1)\in E_u(0)^\perp $, hence $0\not \in \Sigma _H(u)$. From what we just proved, we conclude that $\Sigma _H(u)$ and $\Sigma _K(u)$ are disjoint. Furthermore, by the spectral theory of $H_{u}^2$ and of $K_u^2$, we have the orthogonal decompositions,
$$L^2_+=\overline{\oplus_{s\ge 0}E_u(s)}=\overline{\oplus_{s\ge 0}F_u(s)}\ .$$
Writing $u$ according to these two orthogonal decompositions yields
$$u=\sum _{\rho \in \Sigma _H(u)}u_\rho =\sum _{\sigma  \in \Sigma _K(u)}u'_\sigma \ .$$
Consequently, the cyclic spaces generated by $u$ under the action of $H_u^2$ and $K_u^2$ are given by
$$\langle u\rangle _{H_{u}^2} =\overline {\oplus _{\rho \in \Sigma _H(u)}\C u_\rho }\ ,\ \langle u\rangle _{K_{u}^2} =\overline {\oplus _{\sigma \in \Sigma _K(u)}\C u'_\sigma }\ .$$
Since $K_u^2=H_u^2-(\, .\, \vert u)u$, these cyclic spaces  are  equal. This proves that $\Sigma _H(u)$ and $\Sigma _K(u)$ have the same --- possibly infinite --- number of elements. 
\s
Let us prove (\ref{urho}) and (\ref{usigma}). Observe that, by the Fredholm alternative, for $\sigma >0$, 
$H_{u}^2-\sigma ^2I$ is an automorphism of $ E_u(\sigma)^\perp $. Consequently, if moreover $\sigma \in \Sigma _K(u)$,
$u\in E_u(\sigma )^\perp $ and there exists $v\in E_u(\sigma )^\perp $ unique such that 
$$(H_u^2-\sigma ^2I)v=u\ .$$
We set $v:=(H_{u}^2-\sigma^2I)^{-1}(u)$. If $\sigma =0\in \sigma _K(u)$, of course $H_u^2$ is no more a Fredholm operator, however
there still exists $w\in E_u(0)^\perp $ such that
$$H_u^2(w)=u\ .$$
Indeed, since $K_u=S^*H_u$, $E_u(0)\subset F_u(0)$, and the hypothesis $u\not \perp F_u(0)$ implies that the latter inclusion is strict.
This means that there exists $w\in E_u(0)^\perp $ such that $H_u(w)=1$, whence $H_u^2(w)=u$. Again, we set $w:=(H_u^2)^{-1}(u)\ .$

For every $ \sigma \in \Sigma _K(u)$, the equation
$$K_{u}^2h=\sigma ^2h $$
is equivalent to
$$(H_{u}^2-\sigma^2I)h=(h\vert u)u\ ,$$
or
$h\in \C (H_{u}^2-\sigma^2I)^{-1}(u)\oplus  E_u(\sigma)\ ,$
with \begin{equation}\label{condsigma}  ((H_{u}^2-\sigma^2I)^{-1}(u)\vert u)=1\ . \end{equation}
Since $u'_\sigma \in E_u(\sigma )^\perp $, this  leads to
$$\frac { u'_{\sigma }}{\Vert u'_{\sigma }\Vert ^2}  =(H_{u}^2-\sigma^2I)^{-1}(u)\ .$$
In particular, if $\rho\in\Sigma_H(u),$ $\sigma\in \Sigma_K(u)$,
$$\left(\frac{u'_\sigma}{\Vert u'_\sigma\Vert^2}\Big \vert \frac{u_\rho}{\Vert u_\rho\Vert^2}\right)=\frac 1{\rho^2-\sigma^2}\ .$$
This leads to  equations (\ref{urho}) and (\ref{usigma}). Finally, equation (\ref{eqsigma}) is nothing but the expression of (\ref{condsigma})  in view of  equation (\ref{urho}).
\s
This completes the proof of Proposition \ref{rigidity}.
\end{proof}
 
\section{Multiplicity and Blaschke products. Proof of Proposition \ref{action}} \label{Blaschke}

In this section, we prove Proposition \ref{action}. 

\subsection{Case of $\rho \in \Sigma_H(u)$} 
Let $u\in VMO_+(\T)$.
Assume  that $\rho\in \Sigma_H(u)$  and $m:=\dim E_u(\rho)$. We may assume $\rho =1$ and write $u_\rho=u_1$, $$E=E(1)=\ker (H_u^2-I),\; F=F(1)=\ker(K_u^2-I)$$ for simplicity. 
By Proposition \ref{rigidity}, $$F=E\cap u_1^\perp \ .$$ 
\subsubsection{Definition of $\Psi $.}
We claim that, at every point of $\T $,
$$\vert u_1\vert ^2=\vert H_u(u_1)\vert ^2\ .$$
Indeed, denoting by $S$ the shift operator, for every integer $n\ge 0$,
\begin{eqnarray*}
(\vert u_1\vert ^2\vert z^n)&=&(u_1\vert S^n u_1)=(H_u^2(u_1)\vert S^n u_1)=(H_u(S^nu_1)\vert H_u(u_1))\\
&=&((S^*)^nH_u(u_1)\vert H_u(u_1))=(H_u(u_1)\vert S^nH_u(u_1))=(\vert H_u(u_1)\vert ^2\vert z^n)\ .
\end{eqnarray*}
Since $\vert u_1\vert ^2$ and $\vert H_u(u_1)\vert ^2$ are real valued, this proves the claim.

We  thus define
$$\Psi := \frac{u_1}{H_u(u_1)}\ .$$
\subsubsection{The function $\Psi$ is an inner function.}
We know that $\Psi $ is of modulus $1$ at every point of $\T $. Let  us show that $\Psi $ is in fact an inner function. By part 
(1) of the Adamyan--Arov--Krein theorem in Appendix C, we already know that $\Psi $ is a rational function with no poles on the unit circle.
Therefore, it is enough to prove that $\Psi $ has no pole in the open unit disc. 
Assume that $q\in \overline \D$ is a zero of $H_u(u_1)$, 
and let us show that $q$ is a zero of $u_1$ with at least the same multiplicity. 

Denote by $(e_1,\dots ,e_m)$  an orthonormal basis of $E$, such that 
$$H_u(e_j)=e_j\ ,\ j=1,\dots ,m\ .$$
Such a basis always exists, in view of the antilinearity of $H_u$.
Since, for every $f\in F$,  $(u\vert f)=0$, $(H_u(f)\vert 1)=0$, hence 
\begin{equation}\label{HSK}
H_u(f)=SK_u(f)\ .
\end{equation}
Assume $H_u(u_1)(q)=0$ and consider $$f:=\sum_{j=1}^m e_j(q)e_j \ .$$ Then 
$$(f\vert u_1)=\sum_{j=1}^m e_j(q)(e_j\vert u_1)=\sum_{j=1}^m H_u(e_j)(q)(e_j\vert u_1)=H_u(u_1)(q)=0\ ,$$
therefore $f$ belongs to $E\cap u_1^\perp=F$ as well as $K_u(f)$. Hence, by (\ref{HSK}), $$H_u K_u(f)=SK_u^2(f)=S(f)$$ from which we get
$$K_u(f)=\sum_{j=1}^m(K_u(f)\vert e_j) e_j=\sum_{j=1}^m(K_u(f)\vert H_u(e_j)) e_j=\sum_{j=1}^m(e_j\vert Sf) e_j\ , $$
hence $$K_u(f)(q)=(f\vert Sf) \ .$$
Therefore, using again  (\ref{HSK}),
$$\Vert f\Vert ^2=H_u(f)(q)=qK_u(f)(q)=q(f\vert Sf)\ .$$
Since $\Vert Sf\Vert =\Vert f\Vert $ and $\vert q \vert < 1$, we infer  $f=0$, hence 
$$e_j(q)=0 \ ,\ j=1,\dots ,m\ ,$$
in particular $u_1(q)=0$. 
\s
Assume now that $q$ is a zero of order $r$ of $H_u(u_1)$, so that, for every $a\leq r-1$,
$$(H_u(u_1))^{(a)} (q)=\sum_{j=1}^m (u_1\vert e_j)e_j^{(a)}(q)=0\ .$$
Let us prove by induction that
$$e_j^{(a)} (q)=0\ ,\ a\le r-1\ ,\ j=1,\dots ,m\ .$$
Assuming we have this property for $a<r-1$, we consider 
$$f:=\sum_{j=1}^m e_j^{(r-1)}(q) e_j\ .$$
As before, $f$ belongs to $F$ as well as $K_u(f)$ so that, as above
$$ (H_u(f))^{(r-1)} (q)= \Vert f\Vert ^2\  ,\ (K_u(f))^{(r-1)} (q)= (f\vert Sf)\ .$$
We then derive $r-1$ times identity (\ref{HSK}) at $z=q$ and use the induction hypothesis. We obtain
$$(H_u(f))^{(r-1)} (q)=q(K_u(f))^{(r-1)} (q)\ ,$$
hence $\Vert f\Vert ^2=q(f\vert Sf)$, and we conclude as before.\\

\subsubsection{The function $\Psi $ is a Blaschke product of degree $m-1$, $m=\dim E$}
In fact this is a consequence of what we have just done, and of part (1) of the Adamyan--Arov--Krein theorem in Appendix C. However it is useful to give another proof. We start with proving the following lemma.
\begin{lemma}\label{crucialHuGeneral}
Let $f\in \mathbb H^\infty (\D )$ such that $\Pi (\Psi \overline f)=\Psi \overline f\ .$
Then
$$H_u(fH_u(u_1))=\Psi \overline fH_u(u_1)\ .$$
\end{lemma}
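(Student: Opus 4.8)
The plan is to exploit the defining relation $su_s = \Psi_s H_u(u_s)$ (here with $s=1$, so $u_1 = \Psi H_u(u_1)$) together with the antilinearity and selfadjointness of $H_u$, plus the key algebraic identity $K_u = S^* H_u$ and the fact that $F = E \cap u_1^\perp$. First I would record that $H_u$ acts as the identity on $E$ in a suitable orthonormal basis, and that multiplication by $\Psi$ maps $H_u(u_1)$ to $u_1$. The claim to prove, $H_u(fH_u(u_1)) = \Psi\overline f H_u(u_1)$, should first be checked to be well-posed: one needs $fH_u(u_1) \in L^2_+$, which holds since $f \in \mathbb H^\infty$, and one needs $\Psi \overline f H_u(u_1) \in L^2_+$; the hypothesis $\Pi(\Psi\overline f) = \Psi\overline f$ is exactly what guarantees that $\Psi\overline f$ is holomorphic, and since $H_u(u_1)$ is a polynomial divided by the denominator $D$ of $\Psi$, the product $\Psi\overline f H_u(u_1)$ lies in $L^2_+$ (the poles of $1/D$ are killed by the numerator of $\Psi$).

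The main computation is to verify the identity by testing against an arbitrary $h \in L^2_+$ using the symmetry $(H_u(a)\mid b) = (H_u(b)\mid a)$. So I would compute $(H_u(fH_u(u_1))\mid h)$ and show it equals $(\Psi\overline f H_u(u_1)\mid h)$. Using selfadjointness, $(H_u(fH_u(u_1))\mid h) = (H_u(h)\mid fH_u(u_1)) = (\overline f H_u(h)\mid H_u(u_1))$. Now the idea is to move $\overline f$ across: since $\Psi \overline f$ is holomorphic and $|\Psi|=1$ on $\T$, we have $\overline f = \Psi\overline f \cdot \overline\Psi$ pointwise on $\T$; substituting, $(\overline f H_u(h)\mid H_u(u_1)) = (\Psi\overline f \,\overline\Psi\, H_u(h)\mid H_u(u_1))$. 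The quantity $\overline\Psi H_u(h)$ should be related back to $H_u$ of something: from $u_1 = \Psi H_u(u_1)$ and $H_u(u_1) \in E$ with $H_u$ acting nicely, one expects that multiplication by $\overline\Psi$ intertwines with $H_u$ on the relevant space. Concretely, I expect to use that for $g \in E$, $\overline\Psi g$ or rather $\Psi^{-1}$ times things land back in the range, and that $H_u(\Psi^{-1}\cdot) $ has a clean form. An alternative, possibly cleaner route: prove the identity first for $f = z^a$, $a = 0,\dots,m-1$ (which is the content of \eqref{actionHu} up to the constant $\Psi$ absorbing the $\expo_{-i\psi_s}$ and the reversal $z^a \mapsto z^{m-a-1}$), then extend by linearity to polynomials, and finally pass to general $f \in \mathbb H^\infty$ with $\Pi(\Psi\overline f) = \Psi\overline f$ by a density/continuity argument, noting that such $f$ are necessarily of the form $\Psi \cdot (\text{something holomorphic})$... actually more carefully, $\Pi(\Psi\overline f)=\Psi\overline f$ forces $f/D$ related constraints; I would identify the space of admissible $f$ and reduce to a finite-dimensional check.

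The cleanest organization: set $w := H_u(u_1)$, so $u_1 = \Psi w$ and the target is $H_u(fw) = \Psi\overline f w$. Since $\Psi = P/D$ with $D = z^{m-1}\overline P(1/z)$ the normalized denominator and $P$ the Schur numerator, write everything over $D$. For $h \in L^2_+$ arbitrary, expand $(H_u(fw)\mid h) = (H_u(h)\mid fw)$ and use that $w = u_1/\Psi = u_1 D/P$; the condition on $f$ means $f \overline\Psi = f\overline P / \overline D$ has a specific holomorphic/antiholomorphic splitting, and after clearing denominators on $\T$ one reduces the whole identity to the already-known action of $H_u$ on $u_1$, namely $H_u(u_1) = w$ and $H_u(w) = H_u(H_u(u_1)) = H_u^2(u_1) = u_1$ (since $u_1 \in E = \ker(H_u^2 - I)$). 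That last pair of relations is really the engine.

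The hard part will be the bookkeeping in the pointwise-on-$\T$ manipulation: justifying that $\overline f = \overline\Psi \cdot (\Psi\overline f)$ can be used inside the inner product and that the resulting $\overline\Psi H_u(h)$ term reassembles into $H_u$ of an honest element of $L^2_+$ — i.e., controlling holomorphy of the intermediate products so that every projection $\Pi$ is the identity where we want it to be, and tracking why no spurious terms in $L^2_+$'s orthogonal complement appear. I expect one needs to use the hypothesis $\Pi(\Psi\overline f)=\Psi\overline f$ precisely at the step where $\overline\Psi (\Psi\overline f) H_u(h)$ must be recognized as holomorphic times $H_u(h)$, and then that $H_u$ commutes appropriately with multiplication by such a holomorphic function when paired against $w = H_u(u_1)$. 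Everything else — well-posedness, the reduction to $H_u^2 u_1 = u_1$ — is routine.
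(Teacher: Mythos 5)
There is a genuine gap: you assemble the right ingredients ($H_u^2(u_1)=u_1$, $u_1=\Psi H_u(u_1)$, and the hypothesis guaranteeing that $\Psi\overline f$ is holomorphic), but you never actually close the central step. In your duality computation you reach $(\overline f H_u(h)\mid H_u(u_1))$ and then only ``expect'' that $\overline\Psi H_u(h)$ can be ``related back to $H_u$ of something''; no such intertwining is proved or available, and this is precisely where the work lies. The paper instead proves the lemma by a one-line direct computation, with no duality needed:
$$H_u(fH_u(u_1))=\Pi\bigl(\overline f\,u\,\overline{H_u(u_1)}\bigr)=\Pi\bigl(\overline f\,H_u^2(u_1)\bigr)=\Pi\bigl(\overline f\,u_1\bigr)=\Pi\bigl((\Psi\overline f)\,H_u(u_1)\bigr)=\Psi\overline f\,H_u(u_1)\ ,$$
where the second equality holds because $\overline f$ times the anti-holomorphic part of $u\overline{H_u(u_1)}$ has only negative Fourier modes (equivalently $H_uT_f=T_{\overline f}H_u$), and the last equality uses exactly the hypothesis $\Pi(\Psi\overline f)=\Psi\overline f$ together with $H_u(u_1)\in L^2_+$. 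Nothing in your outline produces this chain or an equivalent one.

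Your two fallback routes are also flawed. Reducing to $f=z^a$ and invoking \eqref{actionHu} is circular: in the paper \eqref{actionHu} is a \emph{consequence} of this lemma (applied to $f=z^a/D$), not an input; moreover monomials $z^a$ generally do not satisfy the hypothesis $\Pi(\Psi\overline z^a)=\Psi\overline z^a$, so they are not admissible test functions at all. (One can check that the admissible $f$ form the finite-dimensional space $\C_k[z]/D$, $k=\deg\Psi$, but verifying the identity on its basis $z^a/D$ is exactly as hard as the general statement --- it is the lemma itself.) Likewise, your well-posedness argument uses that $H_u(u_1)$ is ``a polynomial divided by $D$'', which is the description \eqref{descEF1} established \emph{after} and by means of the lemma, and is unnecessary anyway: $\Psi\overline f\in H^\infty$ by hypothesis, so $\Psi\overline f H_u(u_1)\in L^2_+$ immediately. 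So the proposal is a plausible plan built from the correct engine, but the decisive computation is missing and the proposed reductions would not repair it.
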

The proof of the lemma is straightforward,
\begin{eqnarray*}
H_u(fH_u(u_1))&=&\Pi (u\overline f \overline {H_u(u_1)})=\Pi (\overline fH_u^2(u_1))=\Pi (\overline fu_1)=\Pi (\overline f \Psi H_u(u_1))\\
&=&\Psi \overline fH_u(u_1)\ .
\end{eqnarray*}
As a first consequence of the lemma, we observe that, if $\Psi =\Psi _a\Psi _b$, where $\Psi _a,\Psi _b$ are inner functions, then 
$$H_u(\Psi _aH_u(u_1))=\Psi _bH_u(u_1)\ .$$
In particular, $\Psi _a H_u(u_1)$ belongs to $E$, and the number of inner divisors of $\Psi $ is at most equal to the dimension of $E$.
Thus $\Psi $ is a Blaschke product of degree at most $m-1$. 

We now show that $\Psi\in \mathcal B_{m-1}$.  Write 
$$\Psi(z) ={\rm e}^{-i\psi}\frac{z^k\overline D\left(\frac 1z\right)}{D(z)}$$ where $D$ is a normalized polynomial of degree $k$. 
Using again the lemma, we have, for any $0\le a\le k$,
 $$H_u\left(\frac{z^a }{D}H_u(u_1)\right)={\rm e}^{-i\psi}\frac{z^{k-a}}{D}H_u(u_1)\ .$$
Let us set
$$ V:={\rm span}\left(\frac{z^a}{D}H_u(u_1), \ 0\le a\le k\right)\ .$$
Notice that 
$$V\subset E\ ,\ H_u(V)=V\ .$$
Since $\dim V=k+1$, this imposes $k\le m-1$. In order to prove $k=m-1$, we introduce
$$G:=V^\perp \cap E\ .$$
The proof will be complete if we establish that $G=\{ 0\} $. It is enough to  prove that $G\subset 1^\perp $ and that $S^*(G)\subset G$ (see the argument in the proof of Proposition \ref{rigidity}).
\s
Since $H_u(V)=V$, then $H_u(G)=G$. On the other hand, as $u_1=\Psi H_u(u_1)\in V$, $G\subset u_1^\perp \cap E\subset u^\perp $, hence $H_u(G)\subset 1^\perp $. This proves the first fact.  Remark also that, since $K_u^2=H^2_u-(\cdot\vert u)u$, one gets that $K_u^2=H_u^2$ on $G$ and $G\subset F$.

As for the second fact,  it is enough to prove that $H_u(G)\subset S(G)$. Let $g\in G$. By (\ref{HSK}), since $g\in F$, $SK_u(g)=H_u(g)$ so it suffices to prove that $K_u(g)$ belongs to $G$. We set
$$v_a:=\frac{z^a}{D}H_u(u_1), \ 0\le a\le k\ ,$$ and we prove that $(K_u(g)\vert v_a)=0$ for $0\le a\le k$.\\
For $1\le l\le k$, we write 
$$0=(H_u(g)\vert v_l)=(SK_u(g)\vert v_l)=(K_u(g)\vert S^*v_l)=(K_u(g)\vert v_{l-1}) \ .$$
For the scalar product with $v_k$ we remark that $v_k$ is a linear combination of the $v_j$'s, $0\le j\le k-1$, and of $u_1=\Psi H_u(u_1)$. As $K_u(g)\in F$, $(K_u(g)\vert u_1)=0$ hence finally
$(K_u(g)\vert v_k)=0\ .$

This proves that $\Psi$ is of degree $m-1$, that $\displaystyle E=\frac{\C_{m-1}[z]}DH_u(u_1)$ and that the action of $H_u$ on $E$ is as expected in Equation (\ref{actionHu}). It remains to prove that $$F=\frac{\C_{m-2}[z]}DH_u(u_1) $$ and that the action of $K_u$ is described as in (\ref{actionHu}).
We have, for $0\le b\le m-2$,
\begin{eqnarray*}
K_u\left( \frac{z^b}D H_u(u_1)\right)&=&H_uS\left( \frac{z^b}D H_u(u_1)\right)= H_u\left( \frac{z^{b+1}}D H_u(u_1)\right)\\
&=&{\rm e}^{-i\psi}\frac{z^{m-2-b}}{D} H_u(u_1)
\end{eqnarray*}
In particular, it proves that $\frac{\C_{m-2}[z]}DH_u(u_1)\subset F$. As the dimension of $F$ is $m-1$ by assumption, we get the equality.

\subsection{Case of $\sigma\in \Sigma_K(u)$}
The second part of the proposition, concerning the case of $\sigma\in \Sigma_K(u)$, can be proved similarly. We just give the main lines of the argument. As before, we assume that $\sigma=1$ for simplicity and denote by $u'_1$ the function $u'_\sigma$. The first step is to prove that $$\frac{K_u(u'_1)}{u'_1}$$ is an inner function. The same argument as the one used above proved that it has modulus one. To prove that it is an inner function, we argue as before. Namely, using again part (1) of the Adamyan-)Arov--Krein theorem in Appendix C, for $S^*u$ in place of $u$, we prove that if $u'_1$ vanishes at some $q\in \D$, $K_u(u'_1)$ also vanishes at $q$ at the same order. 
 We introduce an orthonormal basis  $\{f_1,\dots, f_{\ell }\}$ of $F:=F_u(1)$ such that $$K_u(f_j)=f_j, \;j=1,\dots,m.$$
Assume $u'_1(q)=0$ and consider $$e:=\sum_{k=1}^\ell  \overline{f_k(q)}f_k\ .$$ Let us prove that   $e=0$.
Observe first that $e$ belongs to $E:=E_u(1)$ since
$$(u'_1\vert e)=\sum_{k=1}^\ell f_k(q)(u'_1\vert f_k)=\sum_{k=1}^\ell  K_u^2(f_k)(q)(u'_1\vert f_k)=K_u^2(u'_1)(q)=u'_1(q)=0 \ .$$
We infer that $H_u(e)\in E$ as well, and therefore
$$(e\vert 1)=(H_u(H_u(e))\vert 1)=(u\vert H_u(e))=0\ ,$$
which implies
$$e=SS^*e=SS^*H_u^2(e)=SK_uH_u(e)\ .$$
Consequently,
\begin{eqnarray*}
\Vert e\Vert ^2&=&e(q)=qK_uH_u(e)(q)=q\sum _{k=1}^\ell (K_uH_u(e)\vert f_k)f_k(q)\\
&=&q\sum _{k=1}^\ell (f_k\vert H_u(e))f_k(q)=q(K_u(e)\vert H_u(e))=
q(H_u(e)\vert SH_u(e))\ .
\end{eqnarray*}
Since $\Vert H_u(e)\Vert =\Vert e\Vert $, we conclude as before that $SH_u(e)=qH_u(e)$ and finally $H_u(e)=0=e$.
One proves as well that if $q$ is a zero of order $r$ of $(u'_1)$, each $f_k$, $1\le k\le \ell$, vanishes at $q$ with the same order.
\s

\s
We now come to the third part of the proof to get $$\Psi:=\frac{K_u(u'_1)}{u'_1}\in\mathcal B_{\ell-1}.$$ 
We start with a lemma analogous to Lemma \ref{crucialHuGeneral}.
\begin{lemma}\label{crucialKuGeneral}
Let $f\in \mathbb H^\infty (\D )$ such that $\Pi (\Psi \overline f)=\Psi \overline f$.Then
$$K_u(fu'_1)=\Psi \overline fu'_1\ .$$
\end{lemma}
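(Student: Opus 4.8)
The statement to prove is Lemma \ref{crucialKuGeneral}: for $f\in\mathbb H^\infty(\D)$ with $\Pi(\Psi\overline f)=\Psi\overline f$, one has $K_u(fu_1')=\Psi\overline f u_1'$.

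The plan is to mimic the direct computation that proved Lemma \ref{crucialHuGeneral}, replacing $H_u$ by $K_u=S^*H_u=H_uS=H_{S^*u}$ and using the defining relation $K_u(u_1')=\Psi u_1'$ (recall here $\sigma=1$, so $K_u(u_1')=\Psi u_1'$, equivalently $u_1'=\overline\Psi K_u(u_1')$ pointwise on $\T$, where $\overline\Psi=1/\Psi$ on $\T$). The key algebraic identity needed is a ``multiplier'' formula for $K_u$ analogous to $H_u(fh)=\Pi(\bar f\,\overline{H_u(\bar h)}\cdot\text{(something)})$; concretely, since $K_u=H_{S^*u}$ acts by $K_u(h)=\Pi(S^*u\cdot\bar h)=\Pi(\bar z u\bar h)$, I would compute
\begin{eqnarray*}
K_u(fu_1')&=&\Pi(\bar z\,u\,\overline{f}\,\overline{u_1'})=\Pi(\bar f\,\bar z\,u\,\overline{u_1'})\\
&=&\Pi(\bar f\,\overline{z\,\overline u\,u_1'}\,)\ ,
\end{eqnarray*}
and I need to recognize $\Pi(\bar z u\overline{u_1'})$ in terms of $K_u^2(u_1')$ or $K_u(u_1')$. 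Indeed $K_u^2(u_1')=\Pi(\bar z u\,\overline{K_u(u_1')})$, and more usefully, since $K_u(u_1')=\Pi(\bar z u\overline{u_1'})$, we get $K_u^2(u_1')=u_1'$, i.e. $\Pi(\bar z u\,\overline{K_u(u_1')})=u_1'$. So the right object to insert is $\overline{K_u(u_1')}=\overline{\Psi u_1'}=\overline\Psi\,\overline{u_1'}$ on $\T$.

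Thus I would write, on the circle, $u\bar z\,\overline{u_1'}=u\bar z\,\overline{u_1'}$ and use $\overline{u_1'}=\Psi\,\overline{K_u(u_1')}$ (valid pointwise on $\T$ since $|\Psi|=1$ there and $K_u(u_1')=\Psi u_1'$), to get
\begin{eqnarray*}
K_u(fu_1')&=&\Pi\bigl(\bar f\,\bar z\,u\,\overline{u_1'}\bigr)=\Pi\bigl(\bar f\,\Psi\,\bar z\,u\,\overline{K_u(u_1')}\bigr)\ .
\end{eqnarray*}
Now $\bar z u\,\overline{K_u(u_1')}$ is a function whose Szeg\H{o} projection is $K_u^2(u_1')=u_1'$; but I must be careful because $\Psi\bar f$ is multiplied \emph{before} projecting, so I need to pull $\Psi\bar f$ out of $\Pi$. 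Here the hypothesis $\Pi(\Psi\overline f)=\Psi\overline f$ enters: it says $\Psi\bar f\in L^2_+$, hence multiplication by $\Psi\bar f$ commutes with $\Pi$ on $L^1$-type products in the sense $\Pi(g\cdot\Psi\bar f)=\Psi\bar f\,\Pi(g)$ whenever $\Psi\bar f\in H^\infty$ — and indeed $\Psi\bar f$ is bounded (both $\Psi$ and $f$ are) and analytic by hypothesis, so it is in $\mathbb H^\infty(\D)$. Therefore
\begin{eqnarray*}
K_u(fu_1')&=&\Psi\bar f\,\Pi\bigl(\bar z\,u\,\overline{K_u(u_1')}\bigr)=\Psi\bar f\,K_u^2(u_1')=\Psi\bar f\,u_1'\ ,
\end{eqnarray*}
which is exactly the claimed identity.

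The only genuine subtlety — the step I expect to need the most care — is the justification that $\Pi(g\,\Psi\bar f)=\Psi\bar f\,\Pi(g)$: this is the standard fact that if $\varphi\in\mathbb H^\infty$ then $\Pi(\varphi\, h)=\varphi\,\Pi(h)$ for $h\in L^2$ (because $\varphi\cdot(\text{Id}-\Pi)h\in L^1_-$... more precisely $\varphi h - \varphi\Pi h=\varphi(h-\Pi h)$ has only nonpositive Fourier modes since $\varphi$ has nonnegative modes and $h-\Pi h$ has strictly negative modes), combined with the observation that the hypothesis $\Pi(\Psi\bar f)=\Psi\bar f$ together with boundedness of $\Psi$ and $f$ puts $\Psi\bar f$ in exactly $\mathbb H^\infty(\D)$. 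Everything else is the same bookkeeping as in Lemma \ref{crucialHuGeneral}, using $K_u(h)=\Pi(\bar z u\bar h)$, the pointwise identity $|\Psi|=1$ on $\T$ with $K_u(u_1')=\Psi u_1'$, and $K_u^2(u_1')=u_1'$ from $\sigma=1$.
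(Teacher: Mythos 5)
There is a genuine gap at the crucial step of your argument. You claim that multiplication by the analytic function $\Psi\overline f\in \mathbb H^\infty(\D )$ commutes with the Szeg\H{o} projector, i.e. $\Pi (\Psi \overline f\, g)=\Psi \overline f\, \Pi (g)$ for a general $g$, on the grounds that ``$\varphi (h-\Pi h)$ has only nonpositive Fourier modes since $\varphi $ has nonnegative modes and $h-\Pi h$ has strictly negative modes''. This deduction is false: the product of a function with modes $\ge 0$ and a function with modes $\le -1$ can perfectly well have positive modes (take $\varphi (z)=z$ and $h=\overline z$, so that $\Pi (\varphi h)=1\neq 0=\varphi \Pi (h)$). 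Multiplication by an \emph{analytic} symbol does not commute with $\Pi$; what is true is the opposite statement, namely that for $f\in \mathbb H^\infty$ one may insert $\Pi$ past the \emph{co-analytic} factor $\overline f$, because $\overline f\,(I-\Pi )g$ has only negative modes, so $\Pi (\overline f g)=\Pi (\overline f\, \Pi g)$. In your computation the identity $\Pi (\Psi\overline f\,g)=\Psi\overline f\,\Pi(g)$ with $g=\overline z u\,\overline {K_u(u'_1)}$ does happen to hold, but only because it is equivalent to the lemma you are trying to prove; as written, the step is unjustified and the proof does not stand.

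The detour through $\overline {u'_1}=\Psi \overline {K_u(u'_1)}$ is what forces you into this trap, and it is unnecessary. From your (correct) first line $K_u(fu'_1)=\Pi (\overline f\, \overline z u\, \overline {u'_1})$, use the legitimate insertion of $\Pi$ past $\overline f$ and then the hypothesis, exactly as in the paper's proof of Lemma \ref{crucialHuGeneral}:
$$K_u(fu'_1)=\Pi \left (\overline f\, \overline z u\, \overline {u'_1}\right )=\Pi \left (\overline f\, K_u(u'_1)\right )=\Pi \left (\Psi \overline f\, u'_1\right )=\Psi \overline f\, u'_1\ ,$$
where the second equality holds because $\overline f\,(I-\Pi )(\overline z u\overline {u'_1})$ has only negative Fourier modes, the third uses $K_u(u'_1)=\Psi u'_1$ (here $\sigma =1$), and the last uses the hypothesis $\Pi (\Psi \overline f)=\Psi \overline f$ only to guarantee that $\Psi \overline f\,u'_1$, being the product of an $\mathbb H^\infty$ function with $u'_1\in L^2_+$, already lies in $L^2_+$. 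This is the computation the paper has in mind when it says the proof is ``similar to the one of Lemma \ref{crucialHuGeneral}''; note that the hypothesis enters at the final step, not through any commutation of $\Pi$ with an analytic multiplier.
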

The proof of the lemma is similar to the one of Lemma \ref{crucialHuGeneral}.
In particular, for every inner divisor $\Psi _a$ of $\Psi $, $\Psi _au'_1$ belongs to $F$, and therefore the number of inner divisors of $\Psi $ is at most the dimension $\ell $ of $F$. In order to prove the equality, write
$$\Psi ={\rm e}^{-i\psi} \frac{z^k\overline D\left(\frac 1z\right)}{D(z)},$$ where $D$ is some normalized polynomial of degree $k$.
From the above lemma, for $0\le a\le k$,
\begin{eqnarray*}
K_u\left(\frac{z^a}{D}u'_1\right)={\rm e}^{-i\psi}\frac{z^{k-a}}Du'_1\ .
\end{eqnarray*}
Let us set
$$W:={\rm span}\left (\frac{z^a}Du'_1\ , 0\le a\le k\right)\ ,$$
so that  $$W\subset F\ ,\ K_u(W)=W\ .$$
To prove $k=\ell-1$, we introduce as before
$$H:=W^\perp \cap F\ $$ and we prove that $H=\{ 0\} $ by proving that $H_u(H)\subset 1^\perp $ and that $S^*(H_u(H))\subset H_u(H)$. It would imply $H_u(H)=\{0\}$ hence $H=\{0\}$ since $H$ is a subset of the range of $H_u$ by assumption.
\s 
First, remark that  $H\subset u^\perp $  since $H\subset {u'_1}^\perp$ as $u'_1\in W$, hence $H_u(H)\subset 1^\perp$.

 For the second fact,  take $h\in H$ and write $S^*H_u(h)=K_u(h)=H_u(S(h))$ so, it suffices to prove that $S(h)$ belongs to $H$. 
 Let us first prove that $S(h)$ belongs to $E$. By (\ref{HSK}), since $K_u(h)$ belongs to $H$, one has
 $$H_u^2(Sh)=H_u(K_u(h))=SK_u^2(h)=Sh\ .$$
 It remains to prove that $Sh\in W^\perp$. 

Let $\displaystyle w_j:=\frac{z^j}D u'_1$, $0\le j\le k$. For $1\le j\le k$, we have $$(Sh\vert w_j)=(h\vert S^*w_j)=(h\vert w_{j-1})=0.$$ It remains to prove that $(Sh\vert w_0)=0$. It is easy to check that $w_0$ is a linear combination of  the $w_j$'s, $1\le j\le k$ and of $u'_1$. As $S(h)$ belongs to $H$, $(S(h)\vert u'_1)=0$ hence $(h\vert w_0)=0$.

In order to complete the proof, we just need to describe $E$ as the subspace of $F$ made with functions which vanish at $z=0$, or equivalently are orthogonal to $1$. We already know that vectors of $E$ are orthogonal to $u$, and that $H_u$ is a bijection from $E$ onto $E$. We infer that vectors of $E$ are orthogonal to $1$. A dimension argument allows to conclude.

\section{The inverse spectral theorem in the finite rank case} \label{finiterank}

In this section, we prove Theorem \ref{Phibij}, Theorem \ref{Phitopodif}, and Theorem \ref{realmain} in the case of finite rank Hankel operators. 
Let $u$ be such that $H_u$ has finite rank. Then the sets $\Sigma _H(u)$ and $\Sigma _K(u)$ are finite. We set
$$q:=\vert \Sigma _H(u)\vert =\vert \Sigma _K(u)\vert \ .$$
If 
\begin{eqnarray*}
\Sigma _H(u):=\{ \rho _j, j=1,\dots ,q\},\  \rho _1>\dots >\rho _q>0\ ,\\
\ \Sigma _K(u):=\{ \sigma _j, j=1,\dots ,q\},\  \sigma _1>\dots >\sigma  _q\ge 0\ ,
\end{eqnarray*}
we know from (\ref{eqsigma}) that
\begin{equation}\label{Sigmaordered}
\rho _1>\sigma _1>\rho _2>\sigma _2>\dots >\rho _q>\sigma _q\ge 0\ .
\end{equation}
We set $n:=2q$ if $\sigma _q>0$ and $n:=2q-1$ if $\sigma _q=0$. For $2j\le n$, we set
$$s_{2j-1}:=\rho _j\ ,\ s_{2j}:=\sigma _j\ ,$$
so that the positive elements in the list (\ref{Sigmaordered}) read
\begin{equation}\label{ineqs} s_1>s_2>\dots >s_n>0\ .\end{equation}
Recall that we denote by $\mathcal U_n$ the set of symbols $u$ such the number of non zero elements of $\Sigma _H(u)\cup \Sigma _K(u)$ is exactly $n$, and that $\Omega _n$ denotes the open subset of $\R ^n$ defined by inequalities (\ref{ineqs}).
Using  Proposition \ref{action}, we define $n$ finite Blaschke products $\Psi _1,\dots ,\Psi _n$ by
$$\rho _ju_j=\Psi _{2j-1}H_u(u_j)\ ,\ K_u(u'_j)=\sigma _j\Psi _{2j}u'_j\ ,\ 2j\le n\ ,$$
where $u_j$ denotes the orthogonal projection of $u$ onto $E_u(\rho _j)$, and $u'_j$ denotes the orthogonal projection of $u$ onto $F_u(\sigma _j)$.
Our goal in this section is to prove the following statement.
\begin{theorem}\label{mainfiniterank} The mapping
$$\begin{array}{rcl}\Phi_n:\mathcal U_n&\longrightarrow & \mathcal S_n=\Omega _n\times \mathcal B^n\\
u&\longmapsto& \left((s_r)_{1\le r\le n},(\Psi_r)_{1\le r\le n}\right)\end{array}$$ is a homeomorphism.
\end{theorem}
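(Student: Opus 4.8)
The plan is to establish that $\Phi_n$ is a homeomorphism by separately proving injectivity, surjectivity, and bicontinuity, using the explicit formula \eqref{luminy} of Theorem \ref{Phibij} as the backbone. The key observation is that formula \eqref{luminy}, read as a map $\mathcal S_n \to VMO_+(\T)$, is a natural candidate for $\Phi_n^{-1}$: it manufactures, from a tuple $((s_r),(\Psi_r))$, a rational symbol $u$ whose associated spectral data should be exactly that tuple. So the strategy splits into two directions. In the forward direction, I would take $u \in \mathcal U_n$, and show that the spectral data $(s_r)$ and the Blaschke products $\Psi_r$ produced by Proposition \ref{action} determine $u$ uniquely, precisely because $u$ must satisfy \eqref{luminy}. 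In the backward direction, I would start from an arbitrary tuple in $\mathcal S_n$, define $u$ by \eqref{luminy}, and verify that $\Sigma_H(u), \Sigma_K(u)$ and the Blaschke products attached to $u$ via Proposition \ref{action} reproduce the original tuple; this gives a well-defined two-sided inverse.

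For the forward direction I would proceed as follows. Given $u \in \mathcal U_n$, Proposition \ref{rigidity} tells us that $\Sigma_H(u) = \{\rho_j\}$ and $\Sigma_K(u) = \{\sigma_j\}$ interlace as in \eqref{Sigmaordered}, and equations \eqref{urho}, \eqref{usigma} express the projections $u_\rho$, $u'_\sigma$ in terms of one another via the Cauchy-type kernel $1/(\rho^2 - \sigma^2)$. Proposition \ref{action} provides, for each $\rho_j$, the identity $\rho_j u_j = \Psi_{2j-1} H_u(u_j)$, and for each $\sigma_k$, $K_u(u'_k) = \sigma_k \Psi_{2k} u'_k$; combined with the description \eqref{descEF1}–\eqref{descFE2} of the eigenspaces and the relation $K_u = S^* H_u = H_u S$, one can compute the pairings $(u_j \mid u'_k)$ explicitly. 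The upshot is a linear system for the vectors $\{u_j\}$ (or equivalently $\{H_u(u_j)\}$) whose matrix is, up to normalization, the matrix $\mathcal C(z)$ of Theorem \ref{Phibij} with entries $c_{kj}(z) = (\rho_j - \sigma_k z \Psi_{2k}(z)\Psi_{2j-1}(z))/(\rho_j^2 - \sigma_k^2)$; since by the interlacing \eqref{Sigmaordered} all denominators $\rho_j^2 - \sigma_k^2$ are nonzero and $\mathcal C(z)$ is invertible for $z \in \D$ (this is where I expect to need a separate argument, e.g. a maximum-principle / Cauchy-matrix positivity computation), solving the system via Cramer's rule yields exactly \eqref{luminy}. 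Since $u = \sum_j u_j$ by Proposition \ref{rigidity}, this recovers $u$ from its data, proving injectivity of $\Phi_n$.

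For surjectivity and for the explicit inverse, I would fix a tuple $((s_r),(\Psi_r)) \in \mathcal S_n$, \emph{define} $u$ by \eqref{luminy}, and then run the above computation in reverse: check that $u$ is a rational function in $L^2_+(\T)$ (hence in $VMO_+$ since it is smooth on $\T$), that $H_u$ has the expected finite rank, and that the eigenspaces $E_u(\rho_j)$, $F_u(\sigma_k)$, along with the inner functions attached to them by Proposition \ref{action}, match the prescribed $\rho_j = s_{2j-1}$, $\sigma_k = s_{2k}$, $\Psi_r$. The most economical route is probably to directly exhibit, for each $\rho_j$, an explicit vector that \eqref{luminy} forces $u_j$ to be, verify $\rho_j u_j = \Psi_{2j-1} H_u(u_j)$ by a residue/partial-fractions computation, and similarly for $K_u$ on the $\sigma_k$-eigenspaces; then argue by dimension count ($\dim \mathcal V_{(d_1,\dots,d_n)} = 2n + 2\sum d_r$, matching the dimension of $\Omega_n \times \prod \mathcal B_{d_r}$) that nothing is missed. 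Finally, bicontinuity: $\Phi_n^{-1}$ is continuous because \eqref{luminy} is a rational, hence smooth, expression in the parameters $(s_r)$ and the coefficients of the $\Psi_r$ (using that $\mathcal B_k \cong \T \times \R^{2k}$, Appendix B), and $\det \mathcal C(z)$ does not vanish on $\T$; continuity of $\Phi_n$ itself follows since the spectral data (eigenvalues and spectral projections of the compact operators $H_u^2$, $K_u^2$, and the associated inner functions, which are built from ratios like $u_1/H_u(u_1)$) depend continuously on $u$ in the $VMO_+$, equivalently $H^{1/2}$-type, topology — here I would invoke continuity of isolated eigenvalues and of spectral projectors for the analytic family $u \mapsto H_u$.

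The main obstacle I anticipate is the surjectivity step, specifically verifying that the symbol $u$ \emph{defined} by formula \eqref{luminy} actually has the prescribed spectral data rather than merely being a plausible candidate — this requires showing $\det \mathcal C(z) \neq 0$ on $\overline{\D}$ and then carrying out the (lengthy but mechanical) residue computations that identify $E_u(\rho_j)$, $F_u(\sigma_k)$ and the Blaschke products $\Psi_r$ attached to them via Proposition \ref{action}. A clean way to organize this is to first derive formula \eqref{luminy} as a \emph{consequence} of $u \in \mathcal U_n$ (the injectivity argument above), which pins down what the inverse must be, and only then verify that this formula indeed lands in $\mathcal U_n$ with the right data, using the already-established structure from Propositions \ref{rigidity} and \ref{action} to keep the verification bounded.
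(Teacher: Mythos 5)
Your injectivity argument and your continuity discussion follow the paper quite closely (with one simplification worth noting: for injectivity you do not need invertibility of $\mathcal C(z)$ on all of $\D$ — since $\mathcal A(0)=0$, the system \eqref{eqH} determines $\mathcal H$ near $z=0$, hence $u$, so the "separate positivity argument" you anticipate is not actually required there). The genuine gap is in your surjectivity step. You propose to take an arbitrary tuple in $\mathcal S_n$, \emph{define} $u$ by \eqref{luminy}, and verify by residue computations plus a dimension count that its spectral data are the prescribed ones. But for an arbitrary tuple nothing guarantees that this formula even defines an element of $L^2_+$: you must know that $\det \mathcal C(z)$ does not vanish on $\overline \D$, that the polynomial $Q$ has full degree $N$, and that the resulting family of vectors spans the model space $W=\C_{N-1}[z]/Q$ (the conditions \eqref{hypo1}, \eqref{hypo2} of the paper). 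These facts are only known \emph{a posteriori}, on the range of $\Phi_n$; no direct a priori proof is offered in the paper, and your proposal does not supply one. Likewise, your "dimension count that nothing is missed" does not by itself rule out extra singular values or wrong multiplicities for the $u$ you define; the paper's actual identification goes through constructing an abstract antilinear operator $H$ on $W$, proving the Hankel-type identity $S^*HS^*=H-(1\vert\cdot)u$ (Lemma \ref{S*H}) and the characterization Lemma \ref{coroS*HS*}, and even then only under the open hypotheses above.

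This is why the paper abandons the direct verification (it says explicitly that inferring the spectral properties from \eqref{hDR}, \eqref{u'DR} "seems difficult") and instead argues indirectly: $\Phi_n$ is injective, its range is open (the construction of $H$ under \eqref{hypo1}--\eqref{hypo2}), it is proper/closed (the compactness argument of section \ref{closed}), and the source $\mathcal V_{(d_1,\dots,d_n)}$ is non-empty — this last point requiring a genuine induction on $\sum_r d_r$ via the eigenvalue-collapsing limit of Lemma \ref{non empty}; connectedness of $\Omega_n\times\prod_r\mathcal B_{d_r}$ then yields surjectivity, and the odd case $n$ is obtained by a further limiting argument. These ingredients (non-emptiness by degeneration, properness, the Hankel characterization lemma) are the substance of the proof and are absent from your plan, so as written the surjectivity — and hence the homeomorphism claim — is not established.
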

\begin{proof}
The proof of Theorem \ref{mainfiniterank} involves several steps. Firstly,  we prove the continuity of $\Phi _n$, and we prove that, for  $r=1,\dots ,n$, the degree of $\Psi _r$ is locally constant. We then consider, for each $n$-uple $(d_1,\dots ,d_n)$ of nonnegative integers, the open set of $\mathcal U_n$
$$\mathcal V_{(d_1,\dots ,d_n)}:=\Phi _n^{-1}(\Omega _n\times \mathcal B_{d_1}\times \dots \times \mathcal B_{d_n})\ ,$$
and we just have to prove that $\Phi _n$ is a homeomorphism from $\mathcal V_{(d_1,\dots ,d_n)}$ onto $\Omega _n\times \mathcal B_{d_1}\times \dots \times \mathcal B_{d_n}$.

 We first prove this fact in the case $n$ even, along the following lines :
\begin{itemize}
\item $\Phi _n$  is injective, with an explicit formula for its left inverse.
\item $\Phi _n$ is an open mapping.
\item $\Phi _n$ is a proper mapping.
\item $\mathcal V_{(d_1,\dots ,d_n)}$ is not empty.
\end{itemize}
Since the target space $\Omega _n\times \mathcal B_{d_1}\times \dots \times \mathcal B_{d_n}$ is connected, these four items trivially lead to 
the result. The fourth item is proved by an induction argument on $\sum _rd_r$.  

Finally, the case $n$ odd is deduced from a simple limiting argument. 

As a complementary information, we prove that $\Phi _n^{-1}$ is a smooth embedding of the manifold $\Omega _n\times \mathcal B_{d_1}\times \dots \times \mathcal B_{d_n}$, which implies that $\mathcal V_{(d_1,\dots ,d_n)}$ is a manifold. 

\subsection{Continuity of $\Phi_n$}\label{continuity}

In this part, we prove that $\Phi_n$ is continuous from $\mathcal U_n$ into $\mathcal S_n$. 
Fix $u_0\in\mathcal U_n$. We prove that, in a neighborhood $V_0$ of  $u_0$ in $\mathcal U_n$, the degrees of the $\Psi _r$'s are constant.

Let $\rho\in \Sigma_H(u_0)$. The orthogonal projector $P_\rho$ on the eigenspace $E_{u_0}(\rho )$ is given by 
$$P_\rho=\int_{\mathcal C_\rho}(zI-H_{u_0}^2)^{-1} \frac{dz}{2i\pi}$$ where $\mathcal C_\rho$ is a circle, centered at $\rho^2$ whose radius is small enough so that the closed disc $\overline D_\rho $ delimited by  $\mathcal C_\rho$   is at positive distance to the rest of the spectrum  of $H_{u_0}^2$. For $u$ in a neighborhood $V_0$  of $u_0$ in $VMO_+$, $C_\rho $ does not meet the spectrum of $H_u^2$, and one may consider
$$P_\rho^{(u)}:=\int_{\mathcal C_\rho}(zI-H_{u}^2)^{-1} \frac{dz}{2i\pi}$$ which is  a finite rank orthogonal projector smoothly dependent on  $u$. Hence, $P_\rho^{(u)}(u) $ is well defined and smooth. Since this vector is not zero for $u=u_0$, it is still not zero for every $u$ in $V_0$. 
This implies in particular that $\Sigma _H(u)$ meets the open disc $D_\rho $.

We can do the same construction with any  $\sigma\in \Sigma_K(u_0)\setminus \{ 0\} $. We have therefore constructed $n$ smooth functions $u\in V_0\mapsto P_r^{(u)}\ ,\ r=1,\dots ,n,$ valued in the finite orthogonal projectors, and satisfying 
$$P_r^{(u)}(u)\ne 0\ ,\ r=1,\dots ,n\ .$$
Moreover, by continuity,
$${\rm rk} P_r^{(u)}={\rm rk}P_r^{(u_0)}:=d_r+1\ .$$
If we assume moreover that $u\in \mathcal U_n$, we conclude that $\Sigma _H(u)$ has exactly one element in each $D_{s _{2j-1}}$, and that
$\Sigma _K(u)$ has exactly one element in each $D_{s_{2k}}$, and that the dimensions of the corresponding eigenspaces are independent of $u$,
hence equal to $d_r+1$. In other words, the degrees of the corresponding Blaschke products are $d_r$. In other words, $V_0\cap \mathcal U_n$ is contained into  $\mathcal V_{(d_1,\dots ,d_n)}$.

Since, for every $u\in \mathcal V_{(d_1,\dots ,d_n)}$, we have
\begin{eqnarray*}
\vert \Sigma _H(u)\vert &=&\sum _{2j-1\le n} (d_{2j-1} +1)+\sum _{2k\le n} d_{2k}\ ,\\
\vert \Sigma _K(u)\setminus \{ 0\} \vert &=&\sum _{2j-1\le n} 
d_{2j-1} +\sum _{2k\le n} (d_{2k}+1)\ ,
\end{eqnarray*}
we conclude that
$${\rm rk} H_u=\left [\frac{d+1}2\right ]\ ,\ {\rm rk} K_u=\left [\frac d2\right ]\ ,\ d:=2\sum _{r=1}^n d_r + n\ ,$$
namely that $u\in \mathcal V(d)$, with the notation of  \cite{GG3}, \cite{GG4}. Recall   that $\mathcal V(d)$ is a complex manifold of dimension $d$. We then define a  map $\tilde \Phi _n$ on $ V_0\cap \mathcal V(d)$ by setting
$$\tilde \Phi _n(u)= ((s_r(u))_{1\le r\le n} ; (\Psi _r(u))_{1\le r\le n})\ ,$$
with
\begin{eqnarray*}
s_{2j-1}(u):=\frac{\Vert H_u(P^{(u)}_{2j-1}(u))\Vert }{\Vert P^{(u)}_{2j-1}(u)\Vert }\  &,& \   s_{2k}(u):=\frac{\Vert K_u(P^{(u)}_{2k}(u))\Vert }{\Vert P^{(u)}_{2k}(u)\Vert }\ ,\\
\Psi _{2j-1}(u):=\frac{s_{2j-1}(u)P^{(u)}_{2j-1}(u)}{H_u(P^{(u)}_{2j-1}(u))}\ &,&\ \Psi _{2k}(u)=\frac{K_u(P^{(u))}_{2k}(u)}{s_{2k}(u)P^{(u)}_{2k}(u)}\ .
\end{eqnarray*}
The mapping $\tilde \Phi _n$ is smooth from $\mathcal V (d)$ into $\Omega _n\times \mathcal R_d^n $, where $\mathcal R_d$ denotes the manifold of rational functions with  numerators and denominators of degree at most $\left [\frac{d+1}2\right ]$. Moreover, the restriction of $\tilde \Phi _n$ to $V_0\cap \mathcal V_{(d_1,\dots, d_n)} $ coincides with $\Phi _n$. This proves in particular that $\Phi _n$ is continuous. For future reference, let us state more precisely what we have proved.
\begin{lemma}\label{Phismooth}
For every $u_0\in \mathcal V_{(d_1,\dots ,d_n)}$, there exists a neighborhood $V$ of $u_0$ in $\mathcal V(d)$, $d=n+2\sum _{r=1}^n d_r\ $, and a smooth mapping $\tilde \Phi _n$ from this neighborhood into some manifold, such that the restriction of $\tilde \Phi _n$  to $V\cap \mathcal V_{(d_1,\dots,d_n)}$ coincides with $\Phi _n$.
\end{lemma}
\subsection{The explicit formula, case $n$ even.}\label{section explicit}
Assume that $n=2q$ is an even integer.\\
The fact that the mapping $\Phi_n$ is one-to-one follows from an explicit formula giving $u$ in terms of $\Phi_n(u)$, which we  establish
in this subsection.

We use the expected description of elements of $\Phi^{-1}(\mathcal S_n)$ suggested by the action of $H_u, K_u$ onto the orthogonal projections $u_j,u'_k$  of $u$ onto the 
corresponding eigenspaces of $H_u^2, K_u^2$ respectively, namely
\begin{equation}\label{crucial}
\rho _ju_j=\Psi_{2j-1}H_u(u_j)\ ,\ K_u(u'_k)=\sigma _k\Psi_{2k} u'_k\ ,\ j,k=1,\dots, q\ ,
\end{equation}
where the $\Psi_r$'s  are  Blaschke products.
\s
We then define $\tau _j, \kappa _k >0$ by 
\begin{equation}\label{J(x)}
\prod_{j=1}^q \frac{1-x\sigma_j^2}{1-x\rho_j^2}=1+x\sum_{j=1}^q \frac{\tau_j^2}{1-x\rho_j^2}
\end{equation}
\begin{equation}\label{1/J(x)}
\prod_{j=1}^q \frac{1-x\rho_j^2}{1-x\sigma_j^2}=1-x\left (\sum_{j=1}^q \frac{\kappa_j^2}{1-x\sigma_j^2}\right )
\end{equation}
From Appendix A, we have
$$\Vert u_j\Vert ^2=\tau_j^2\ ,\ \Vert u_k'\Vert ^2=\kappa _k^2\ ,\ j,k=1,\dots ,q\ .$$
\s
 Applying the operator $S$ of multiplication by $z$ to the second set of equations in (\ref{crucial}), and using $SS^*=I-(\, .\, \vert 1)\ ,$ we get
$$H_u(u'_k)(z)=\sigma _kz\Psi_{2k}(z)u'_k(z)+\kappa _k^2\ .$$
We use the identities (\ref{urho}), (\ref{usigma}) in this setting
\begin{equation}\label{uj}
u_j=\tau _j^2\sum _{k=1}^q \frac 1{\rho _j^2-\sigma _k^2}u'_k\ ,
\end{equation}
\begin{equation}\label{uprimek}
u_k'=\kappa _k^2\sum _{j=1}^q \frac 1{\rho _j^2-\sigma _k^2}u_j\ ,
\end{equation}
and we introduce the new unknowns $h_1,\dots ,h_q$ defined by 
$$u_j=\Psi _{2j-1}h_j\ ,\ {\rm or}\ h_j:=\frac{1}{\rho _j}H_u(u_j)\ .$$
For the vector valued function
$$\mathcal H(z):=\left(h_j(z)\right)_{1\le j\le q}\ ,$$
we finally obtain the following linear system,
\begin{equation}\label{eqH}
\mathcal H(z)=\mathcal F(z)+\mathcal A(z)\mathcal H(z)\ ,
\end{equation}
where, thanks to equation (\ref{sommesimplekappa})
\begin{eqnarray*}
\mathcal F(z)&:=&\left (\frac{\tau _j^2}{\rho_j}\sum _{k=1}^q \frac {\kappa _k^2}{\rho _j^2-\sigma _k^2}\right )_{1\le j\le q}=\left (\frac{\tau _j^2}{\rho_j}\right )_{1\le j\le q}\ ,\\
\mathcal A(z)&:=&\left (\frac{\tau _j^2}{\rho_j}\sum _{k=1}^q \frac {\kappa _k^2\sigma _kz\Psi _{2k}(z)\Psi _{2\ell -1}(z)}{(\rho _j^2-\sigma _k^2)(\rho _\ell ^2-\sigma _k^2)}\right )_{1\le j,\ell  \le q}\ .
\end{eqnarray*}
Notice that the matrix $\mathcal A (z)$ depends holomorphically on $z\in \D$   and satisfies $\mathcal A(0)=0$. Hence $I-\mathcal A(z)$ is invertible at least for $z$ in a neighborhood of $0$, which characterizes $\mathcal H(z)$, hence characterizes
$$u(z)=\sum _{j=1}^q \Psi _{2j-1}(z)h_j(z)\ .$$
This is enough for proving the injectivity of $\Phi_n$. However, we are going to transform the expression of $\mathcal H(z)$ into a simpler one, which will be very useful in the sequel.

\s
Introduce the matrix $\mathcal B=(b_{jk})_{1\le j,k\le q}$ defined by
$$b_{jk}:=\frac{\kappa _k^2}{\rho _j^2-\sigma _k^2}\ .$$
From the identities (\ref{sommedoublekappa}) and (\ref{sommedoubletau}) in Appendix A, we know that $\mathcal B$ is invertible, with
$$\mathcal B^{-1}=\left (\frac{\tau _j^2}{\rho _j^2-\sigma _k^2}\right )_{1\le k,j\le q}\ .$$
In view of these identities, we observe that
$$I-\mathcal A(z)={\rm diag}\left (\frac{\tau _j^2}{\rho _j}\right )\mathcal B\mathcal C(z)\ ,$$
where $\mathcal C(z)=(c_{k\ell }(z))_{1\le k,\ell \le q}$  is defined by
\begin{equation}\label{c}
c_{k\ell }(z):=\frac{\rho _\ell -\sigma _k z\Psi _{2k}(z)\Psi _{2\ell -1}(z)}{\rho _\ell ^2-\sigma _k^2}\ .
\end{equation}
Consequently, Equation (\ref{eqH}) above reads
$${\rm diag}\left (\frac{\tau _j^2}{\rho _j}\right )\mathcal B\mathcal C(z)\mathcal H(z)=\mathcal F(z)={\rm diag}\left (\frac{\tau _j^2}{\rho _j}\right )\mathcal B ({\bf 1})\ ,$$
where 
$${\bf 1}:=\left (\begin{array}{l} 1 \\ .\\ .\\ .\\ 1\end{array}\right )\ .$$
Notice that we again used (\ref{sommesimplekappa}) under the form $\mathcal B({\bf 1})={\bf 1}\ .$ Finally, equation (\ref{eqH})  is equivalent to
\begin{equation}\label{CH}
\mathcal C(z)\mathcal H(z)={\bf 1}\ .
\end{equation}
Using the Cramer formulae, we get
$$h_j(z)=\frac{\sum _{k=1}^q (-1)^{j+k}\Delta _{kj}(z)}{{\det}(\mathcal C (z))}\ ,$$
where $\Delta _{kj}(z)$ is the minor determinant of $\mathcal C(z)$ corresponding to line $k$ and column $j$.
This provides formula (\ref{luminy}) of Theorem \ref{Phibij}.
\s
For future reference, we shall rewrite the above formula in a slightly different manner. Recall that
\begin{equation}\label{PsiPD}
\Psi _r(z)={\rm e}^{-i\psi _r}\frac{P_r(z)}{D_r(z)}\ ,\ D_r(z):=z^{d_r}\overline P_r\left (\frac 1z\right )\ ,
\end{equation}
where $P_r$ is a monic polynomial of degree $d_r$. Introduce the matrix $\mathcal C^\# (z)=(c^\# _{k\ell }(z))_{1\le k,\ell \le q}$ 
as
\begin{equation}\label{cdiese}
c^\# _{k\ell }(z)=\frac{\rho _\ell D_{2k}(z)D_{2\ell -1}(z)-\sigma _kz{\rm e}^{-i(\psi _{2k}+\psi _{2\ell -1})} P_{2k}(z)P_{2\ell -1}(z)}{\rho _\ell ^2-\sigma _k^2}\ ,
\end{equation}
 denote by $Q(z)$ its determinant and by $\Delta _{k\ell }^\# (z)$ the corresponding minor determinant. Then 
\begin{equation}\label{hDR}
 h_j(z)=D_{2j-1}(z)R_{2j-1}(z)\ ,
\end{equation}
with 
\begin{equation}\label{Rimpair}
R_{2j-1}(z):=\frac{\sum _{k=1}^q(-1)^{k+j}D_{2k}(z)\Delta _{kj}^\#(z)}{Q(z)}\ .
\end{equation}
Notice that $Q$ is a polynomial of degree at most 
$$N:=q+\sum _{r=1}^n d_r\ ,$$
and the numerator of $R_{2j-1}$ is a polynomial of degree at most $N-1-d_{2j-1}$.
Consequently,
$$u(z)=\sum _{j=1}^q {\rm e}^{-i\psi _{2j-1}}P_{2j-1}(z)R_{2j-1}(z)\ ,$$
is a rational function with denominator $Q$ and with a numerator of degree at most $N-1$. 
Since the rank of $H_u$ is exactly $N$, we infer that the degree of $Q$ is exactly $N$, and that $Q$ has no zero in the closed unit disc.
Indeed, otherwise the numerator of $u$  would have the same zero in order to preserve the analyticity, and, by simplification, $u$ could be written as a quotient of polynomials  of degrees smaller than $N-1$ and $N$ respectively, so that the rank of $H_u$ would be smaller.
\s
We close this section by giving similar formulae for $u'_k, k=1,\dots ,q$. The main ingredient is the following algebraic lemma.
\begin{lemma}\label{BCPsi}
For every $z\in \overline \D$,
$$^t\mathcal C(z)\ ^t\mathcal B\ {\rm diag}(\Psi _{2\ell -1}(z))_{1\le \ell \le q}={\rm diag}(\Psi _{2j-1}(z))_{1\le j\le q}\ \mathcal B\ \mathcal C(z)\ .$$
\end{lemma}
The proof of this lemma is straightforward, using identity (\ref{sommedoublekappa}). As a consequence  of this lemma and of the identities
(\ref{uprimek}), (\ref{CH}), we infer that $\mathcal U'(z):=(u'_k(z))_{1\le k\le q}$ satisfies
\begin{equation}\label{CUprime}
^t\mathcal C(z)\mathcal U'(z) =(\Psi _{2j-1}(z))_{1\le j\le q}\ .
\end{equation}
Using Cramer's formulae, we infer
\begin{equation}\label{u'DR}
u'_k(z)=\frac{\sum _{j=1}^q (-1)^{j+k}\Delta _{kj}(z)\Psi _{2j-1}(z)}{{\det}(\mathcal C (z))}=D_{2k}(z)R_{2k}(z)\ ,
\end{equation}
where
\begin{equation}\label{Rpair}
R_{2k}(z)=\frac{\sum _{j=1}^q (-1)^{k+j}{\rm e}^{-i\psi _{2j-1}} P_{2j-1}(z)\Delta _{kj}^\# (z)}{Q(z)}\ .
\end{equation}
Notice that the numerator of $R_{2k}$ has degree at most $N-1-d_{2k}$. Moreover, (\ref{uprimek}) now reads
\begin{equation}\label{DRPR}
D_{2k}R_{2k}=\sum _{j=1}^q\frac{\kappa _k^2}{\rho _j^2-\sigma _k^2}{\rm e}^{-i\psi _{2j-1}} P_{2j-1}R_{2j-1} \ .
\end{equation}
\subsection{Surjectivity in the case $n$ even.} 

Our purpose is now to prove that the mapping $\Phi_n$ is onto. Since we got a candidate from the formula giving $u$
in the latter section, it may seem natural to try to check that this formula indeed provides an element $u$ of $\mathcal V_{(d_1,\dots ,d_n)}$
with the required $\Phi_n(u)$. However, in view of the complexity of the formulae (\ref{hDR}), (\ref{u'DR}), it seems difficult to infer from them the spectral properties of $H_u$ and $K_u$. We shall therefore use an indirect method,  by  proving that the mapping $\Phi _n$ on $\mathcal V_{(d_1,\dots ,d_n)}$ is open, closed, and that the source space $\mathcal V_{(d_1,\dots ,d_n)}$ is not empty. Since the target space $\Omega _n \times \prod _{j=1}^n\mathcal B_{d_j}$ is clearly connected, this will imply the surjectivity. A first step in proving that $\Phi _n$ is an open mapping, consists in the construction of an anti-linear operator $H$ satisfying the
required  spectral properties, and which will be finally identified as $H_u$. 
\subsubsection{Construction of the operator $H$}
\noindent Let $$\mathcal P=((s_r)_{1\le r\le n}, (\Psi_r)_{1\le r\le n})$$
be an arbitrary element of  $$\mathcal P\in \Omega _{n}\times \prod_{j=1}^n\mathcal B_{d_j}\text{ for some non negative integers } d_r\ .$$ We look for $u\in \mathcal V_{(d_1,\dots, d_n)}$,
$\Phi_n(u)=\mathcal P$. We set $\rho _j:=s_{2j-1}\ ,\ \sigma _k:=s_{2k}\ ,\ 1\le j,k\le q\ .$
\s
Firstly, we define matrices $\mathcal C(z)$ and $\mathcal C^\# (z)$ using formulae (\ref{c}), (\ref{PsiPD}), (\ref{cdiese}). We assume moreover  the following open properties,
\begin{equation}\label{hypo1}
\det \mathcal C(z)\ne 0\ ,\ z\in \overline \D\ ,\ {\rm deg}(Q)=N:=q+\sum _{r=1}^n d_r\ .
\end{equation}

We then define $R_r(z), r=1,\dots, n$ by formulae (\ref{Rimpair}) and (\ref{Rpair}). Setting $\mathcal H(z):=(D_{2j-1}(z)R_{2j-1}(z))_{1\le j\le q}$
and $\mathcal U'(z):=(D_{2k}(z)R_{2k}(z))_{1\le k\le q}$, this is equivalent to equations (\ref{CH}) and (\ref{CUprime}). Moreover, by Lemma \ref{BCPsi}, one checks that the column
$$\mathcal U''(z):=\left ( \sum _{j=1}^q\frac{\kappa _k^2}{\rho _j^2-\sigma _k^2}{\rm e}^{-i\psi _{2j-1}} P_{2j-1}(z)R_{2j-1}(z) \right )_{1\le k\le q}$$
satisfies 
$$^t\mathcal C(z)\mathcal U''(z)=(\Psi _{2j-1}(z))_{1\le j\le q}\ ,$$
and therefore $\mathcal U''=\mathcal U'$, which is (\ref{DRPR}). 
\s
We are going to define an antilinear operator on $W=\frac{\C_{N-1}[z]}{Q(z)}$. For this, we define the following vectors of $W$,
\begin{eqnarray*}
e_{2j-1, a}(z)&:=&z^aR_{2j-1}(z),\  0\le a\le d_{2j-1}\ ,\\ e_{2k,b}(z)&:=&z^bR_{2k}(z)\ ,\ 1\le b\le d_{2k}\ ,
\end{eqnarray*}
for $1\le j,k\le q$.
We need a second open assumption.
\begin{equation}\label{hypo2}
\mathcal E:=\left ((e_{2j-1,a})_{0\le a\le d_{2j-1}}\ ,\ (e_{2k,b})_{1\le b\le d_{2k}}\right )_{1\le j,k\le q} \text{ is a basis of } W.
\end{equation}
 We define an antilinear operator $H$ on $W$  by
\begin{eqnarray*}
H(e_{2j-1,a})&:=&\rho _j{\rm e}^{-i\psi _{2j-1}}e_{2j-1,d_{2j-1}-a}\ ,\ 0\le a\le d_{2j-1}\ ,\\
H(e_{2k,b})&:=&\sigma _k{\rm e}^{-i\psi _{2k}}
e_{2k,d_{2k}+1- b},\ 1\le b\le d_{2k}\ ,
\end{eqnarray*}
for $1\le j,k\le q$.
\s
From this definition, $H$ satisfies
\begin{eqnarray}
\label{actionH1} H(AR_{2j-1})&=&\rho_j{\rm e}^{-i\psi_{2j-1}}z^{d_{2j-1}}\overline A\left(\frac 1z\right)\\
\label{actionH2} H(zBR_{2k})&=&\sigma_k{\rm e}^{-i\psi_{2k}} z^{d_{2k}}\overline B\left(\frac 1z\right)\ .
\end{eqnarray}
for any $A\in \C_{d_{2j-1}}[z]$ and any $B\in \C_{d_{2k}-1}[z]$.
\subsubsection{Identifying $H$ and $H_u$.}
Notice that $W$ is invariant by $S^*$. The key of the proof of $H=H_u$ is the following lemma.
\begin{lemma}\label{S*H}
\begin{equation}\label{S*HS*}S^*HS^*=H-(1\vert \cdot)u \text{ on }W\ ,\end{equation}
where $$u:=\sum _{j=1}^q {\rm e}^{-i\psi _{2j-1}}P_{2j-1}R_{2j-1}\ .$$
\end{lemma}
\begin{proof}
We check the above identity on all the elements of the basis $\mathcal E$ of $W$. The only non trivial cases correspond to 
$e_{2j-1, 0}$ and $e_{2k,1}$. In other words, we have to prove
\begin{eqnarray}
\label{amontrer1} S^*HS^*(R_{2j-1})&=&H(R_{2j-1})-(1\vert R_{2j-1}) u\ ,\\
\label{amontrer2} S^*H(R_{2k})&=&H(SR_{2k})\  ,
\end{eqnarray}
for $1\le j,k\le q\ .$ We start with (\ref{amontrer2}). We set 
$$D_{2k}(z)=1+zF_{2k}(z)\ ,$$
so that, from (\ref{DRPR}),
$$R_{2k}= \sum _{j=1}^q\frac{\kappa _k^2}{\rho _j^2-\sigma _k^2}{\rm e}^{-i\psi _{2j-1}} P_{2j-1}R_{2j-1}-SF_{2k}R_{2k}\ .$$
Using (\ref{actionH1}), we infer
\begin{equation}\label{HRpair}
H(R_{2k})=\sum _{j=1}^q\frac{\kappa _k^2}{\rho _j^2-\sigma _k^2}\rho _jD_{2j-1}R_{2j-1} -\sigma _k{\rm e}^{-i\psi _{2k}}G_{2k}R_{2k}\ ,
\end{equation}
where
$$G_{2k}(z)=z^{d_{2k}}\overline F_{2k}\left (\frac 1z\right )=z(P_{2k}(z)-z^{d_{2k}})\ .$$
In view of equation (\ref{CH}), 
$$\sum _{j=1}^q\frac{\rho _j}{\rho _j^2-\sigma _k^2}D_{2j-1}(z)R_{2j-1}(z)=1+z\sum _{j=1}^q\frac{\sigma _k\Psi _{2k}(z)}{\rho _j^2-\sigma _k^2}
{\rm e}^{-i\psi _{2j-1}}P_{2j-1}(z)R_{2j-1}(z)\ .$$
Multiplying by $\kappa _k^2$ and applying $S^*$ to both sides, we obtain
$$
 \sum _{j=1}^q\frac{\kappa _k^2\rho _j}{\rho _j^2-\sigma _k^2}S^*(D_{2j-1}R_{2j-1})=\sum _{j=1}^q\frac{\kappa _k^2\sigma _k\Psi _{2k}}{\rho _j^2-\sigma _k^2}
{\rm e}^{-i\psi _{2j-1}}P_{2j-1}R_{2j-1}\ .$$ 
Using (\ref{DRPR}), we conclude 
\begin{equation}\label{S*DR}
 \sum _{j=1}^q\frac{\kappa _k^2\rho _j}{\rho _j^2-\sigma _k^2}S^*(D_{2j-1}R_{2j-1})=\sigma _k{\rm e}^{-i\psi _{2k}} P_{2k}R_{2k}\ .
\end{equation}
Coming back to (\ref{HRpair}), we conclude
$$S^*H(R_{2k})(z)=\sigma _k{\rm e}^{-i\psi _{2k}} z^{d_{2k}}R_{2k}(z)= H(SR_{2k}) (z)\ .$$
This proves (\ref{amontrer2}). 
\s
Let us establish (\ref{amontrer1}). Let us set $D_{2j-1}=1+SF_{2j-1}$, so that
\beno
S^*HS^*(R_{2j-1}) -H(R_{2j-1}) &=& \\S^*HS^*(D_{2j-1}R_{2j-1})&-&
\rho _j{\rm e}^{-i\psi _{2j-1}} (z^{d_{2j-1}}R_{2j-1}+S^*G_{2j-1}R_{2j-1})\ ,
\eeno
with
$$G_{2j-1}(z)=z^{d_{2j-1}}\overline F_{2j-1}\left (\frac 1z\right )=z(P_{2j-1}(z)-z^{d_{2j-1}})\ .$$
This yields
$$S^*HS^*(R_{2j-1}) -H(R_{2j-1})=S^*HS^*(D_{2j-1}R_{2j-1})- \rho _j{\rm e}^{-i\psi _{2j-1}} P_{2j-1}R_{2j-1}\ .$$
Using (\ref{S*DR}), we obtain
\begin{eqnarray*}
 T_k&:=&\sum _{j=1}^q\frac{\kappa _k^2\rho _j}{\rho _j^2-\sigma _k^2}\left(S^*HS^*(R_{2j-1})-H(R_{2j-1})\right)\\&=&\sigma _k{\rm e}^{i\psi _{2k}} S^*H( P_{2k}R_{2k})
- \sum _{j=1}^q\frac{\kappa _k^2\rho _j^2}{\rho _j^2-\sigma _k^2}{\rm e}^{-i\psi _{2j-1}} P_{2j-1}R_{2j-1}\ .
\end{eqnarray*}
At this stage, notice that, in view of (\ref{amontrer2}) and of (\ref{actionH2}), we have, for every $A\in \C _{d_{2k}}[z]$, 
$$S^*H(AR_{2k})=\sigma _k{\rm e}^{-i\psi _{2k}}BR_{2k}\ ,\ B(z):=z^{d_{2k}} \overline A\left (\frac 1z\right )\ .$$
Applying this formula to $A=P_{2k}$ and using (\ref{DRPR}), we finally get
$$T_k=-\kappa _k^2\sum _{j=1}^q {\rm e}^{-i\psi _{2j-1}} P_{2j-1}R_{2j-1}=-\kappa _k^2u\ .$$
On the other hand, using equation (\ref{CH}) at $z=0$, we have
$$\sum _{j=1}^q\frac{\rho _j}{\rho _j^2-\sigma _k^2}R_{2j-1}(0)=1\ .$$
Since the matrix $\mathcal B$ is invertible, we infer that $R_{2j-1}(0)\in \R $, hence equals $(1\vert R_{2j-1})$. In other words,
$$\sum _{j=1}^q\frac{\kappa _k^2\rho _j}{\rho _j^2-\sigma _k^2}(S^*HS^*(R_{2j-1})-H(R_{2j-1})-(1\vert R_{2j-1}) u)=0\ .$$
This completes the proof.
\end{proof}

We now prove that an operator satisfying equality (\ref{S*H}) is actually a Hankel operator. 

\begin{lemma}\label{coroS*HS*}
Let $N$ be a positive integer. Let $$Q(z):= 1-c_1z-c_2z^2-\dots-c_N z^N$$ be a complex valued polynomial with no roots in the closed unit disc. 
Set $$W:=\frac{\C_{N-1}[z]}{Q(z)}\subset L^2_+\ .$$
Let $H$ be an antilinear operator on $W$ satisfying
$$S^*HS^*=H-(1\vert \cdot)u$$ on $W$, for some $u\in W$.
Then $H$ co\"{\i}ncides with the Hankel operator of symbol $u$ on $W$.
\end{lemma}
\begin{proof}
Consider the operator $\tilde H:= H-H_u$, then $S^*\tilde HS^*=\tilde H$ on $W$ and hence, it suffices to show that, if $H$ is an antilinear operator on $W$ such that $S^*HS^*=H$, then $H=0$.\\

The family $(e_j)_{1\le j\le N}$ where $$e_0(z)=\frac 1{Q(z)}\ , \ e_j(z)=S^j e_0(z), \  j=1,\dots,N-1$$ is a basis of $W$. 
Using that $$S^*HS^*=H$$
we get  on the one hand that $He_k=(S^*)^k He_0$. On the other hand, since
$$S^*e_0=S^*\left (\frac 1{Q}\right )=\sum_{j=1}^N c_j  e_{j-1} \ ,$$
this implies  $$He_0=S^*HS^*e_0=\sum_{j=1}^N\overline{c_j} (S^*)^j He_0\ ,$$ hence $\overline Q (S^*)H(e_0)=0$ .
Observe that, by the spectral mapping theorem,  the spectrum of $\overline{Q}(S^*))$ is contained into
$\overline Q(\overline \D)$,  hence $\overline{Q}(S^*)$ is one-to-one. We conclude that $H(e_0)=0$, and finally that $H= 0$.

\end{proof}

Applying Lemma  \ref{coroS*HS*} to our vector space $W$,  we conclude  that $H=H_u$.
\s

It remains to check that $\Phi_n(u)=\mathcal P$. 
\subsubsection{The function $u$ has the required properties.}
Using the definition of $H=H_u$, we observe that the restriction of $H_u^2$ to the space $\C _{d_{2j-1}}[z]R_{2j-1}$ is $\rho _j^2I$.
Similarly, the restriction of $H_u^2$ to the space  $z\C _{d_{2k}}[z]R_{2k}$ is $\sigma _k^2I$. Since the range of $H_u$ is contained into $W$, this provides a complete diagonalization of $H_u^2$. Moreover,
$$u=\sum _{j=1}^q {\rm e}^{-i\psi _{2j-1}} P_{2j-1}R_{2j-1}\ .$$
This implies that
$$\Sigma _H(u)=\{ \rho _1,\dots ,\rho _q\} \ ,\ u_j={\rm e}^{-i\psi _{2j-1}} P_{2j-1}R_{2j-1} ,\ j=1,\dots ,q\ .$$
We argue similarly for $K_u^2$, noticing that
$$\sum _{k=1}^q D_{2k}R_{2k}= \sum _{1\le j,k\le q}\frac{\kappa _k^2}{\rho _j^2-\sigma _k^2}{\rm e}^{-i\psi _{2j-1}} P_{2j-1}R_{2j-1}\ ,$$
from (\ref{DRPR}), we conclude, using again (\ref{sommesimplekappa}), that
$$\sum _{k=1}^q D_{2k}R_{2k}=u\ .$$
This shows that
$$\Sigma _K(u)=\{ \sigma _1,\dots ,\sigma _q\} \ ,\ u_k'=D_{2k}R_{2k}\ ,\ j=1,\dots ,q\ .$$
Finally, from the definition of $H$, we recover exactly identities (\ref{crucial}).

\subsubsection{The mapping $\Phi _n$ is open from $\mathcal V_{(d_1,\dots ,d_n)}$ to $\Omega _n\times  \prod_{r=1}^n\mathcal B_{d_r}$.  }
Notice that we have not yet completed the proof of Theorem \ref{mainfiniterank} since the previous calculations were made under the assumptions
(\ref{hypo1}) and (\ref{hypo2}). In other words, we proved that an element $\mathcal P$ of the target space satisfying  (\ref{hypo1}) and (\ref{hypo2})
is in the range of $\Phi _n$. On the other hand, in section \ref{section explicit}, we proved that these properties are satisfied by the elements of the range of $\Phi _n$. Since these hypotheses are clearly open in the target space, we infer that the range of $\Phi _n$ is open.

\subsubsection{The mapping $\Phi _n$ is closed.}\label{closed}
Let $(u^\e )$ be a sequence of $\mathcal V_{(d_1,\dots,d_n)}$ such that
$\Phi_n(u^\e ):=\mathcal P^\e $ converges to some $\mathcal P$ in $\Omega _n\times  \prod_{r=1}^n\mathcal B_{d_r}$ as $\e $ goes to $0$.
In other words, 
$$\mathcal P^\e =\left((s_r^\e )_{1\le r\le 2q}, (\Psi_r^\e)_{1\le r\le  2q}\right)
 \longrightarrow \mathcal P=\left ((s_r)_{1\le r\le 2q}, (\Psi_r)_{1\le r\le 2q} \right) $$
in $\Omega_n\times \prod_{j=1}^n\mathcal B_{d_j}$ as $\e\to 0$. We have to find $u$ such that $\Phi(u)=\mathcal P$. Since 
$$\Vert u^\e \Vert _{H^{1/2}} \simeq {\rm Tr}(H_{u^\e }^2)=\sum_{r=1}^{2q} d_r (s_r^\e)^2+\sum _{j=1}^q (s _{2j-1}^\e )^2$$
is bounded, we may assume, up to extracting a subsequence,  that $u^\e $ is weakly convergent to some $u$ in $H^{1/2}$. Moreover,
the rank of $H_u$ is at most $N=q+\sum_{r=1}^{2q}d_r$.

Denote by $u^\e _j$ the orthogonal projection of $u^\e $ onto $\ker (H_u^2-(s _{2j-1}^\e)^2 I)$, $j=1,\dots ,q$, and by $(u^\e _k)' $  the orthogonal projection of $u^\e $ onto $\ker (K_u^2-(s _{2k}^\e)^2 I)$, $k=1,\dots , q$. Since all these functions are bounded in $L^2_+$, we may assume that, for the weak convergence in $L^2_+$,
$$ u^\e _j\rightharpoonup v_j\ ,\ (u^\e _k)' \rightharpoonup v'_k\ .$$
Taking advantage of the strong convergence of $u^\e $ in $L^2_+$ due to the Rellich theorem, we can pass to the limit in
$$(u^\e \vert u^\e _j)=(\tau ^\e _j)^2 \ ,\ (u^\e \vert (u^\e _k)')=(\kappa _k^\e)^2\ ,$$
and obtain, thanks to the explicit expressions (\ref{tau}), (\ref{kappa}) of $\tau_j^2, \kappa _k^2$ in terms of the $s_r$, 
$$(u\vert v_j)=\tau _j^2>0\ ,\ (u\vert v_k')=\kappa _k^2>0\ ,$$
in particular $v_j\ne 0, v_k'\ne 0$ for every $j,k$.

On the other hand, passing to the limit in
\begin{eqnarray*}
s ^\e _{2j-1}u^\e _{j}&=&\Psi_{2j}^\e\  H_{u^\e }u^\e _j\ ,\ H_{u^\e }^2(u^\e _j)=(s ^\e _{2j-1})^2u^\e _j\ ,\\
K_{u^\e }(u^\e _k)'&=&s ^\e _{2k}\Psi_{2k}^\e\ (u^\e _k)'\ ,\ K_{u^\e }^2(u^\e _k)'=(s ^\e _{2k})^2(u^\e _k)'\ ,\\
u^\e &=&\sum _{j=1}^q u^\e _j =\sum _{k=1}^q (u^\e_k)' \ ,
\end{eqnarray*}
we obtain
\begin{eqnarray*}
s  _{2j-1}v_j&=&\Psi_{2j}\  H_{u }v _j\ ,\ H_{u }^2(v _j)=s _{2j-1}^2v _j\ ,\\
K_{u }v_k'&=&s_{2k}\Psi_{2k} \ v _k' \ ,\ K_u^2(v'_k)=s_{2k}^2v'_k\ ,\\
u &=&\sum _{j=1}^q v _j =\sum _{k=1}^q v_k' \ .
\end{eqnarray*}
This implies that $u\in\mathcal V_{(d_1,\dots,d_n)}$, $v_j=u_j, v'_k=u'_k$, and $\Phi(u)=\mathcal P$. The proof of Theorem \ref{mainfiniterank} is thus 
complete in the case $n=2q$, under the assumption that $\mathcal V_{(d_1,\dots,d_n)}$ is non empty. 

\subsection{$\mathcal V_{(d_1,\dots,d_n)}$ is non empty, $n$ even}

Let $n$ be a positive even integer. The aim of this section is to prove that $\mathcal V _{(d_1,\dots,d_n)}$ is not empty for any multi-index $(d_1,\dots,d_n)$ of non 
negative integers.

The preceding section implies that, as soon as $\mathcal V _{(d_1,\dots,d_n)}$ is non empty, it is homeomorphic to $\Omega_n\times \prod_{j=1}^n\mathcal B_{d_j}$, via the explicit formula (\ref{luminy}) of Theorem \ref{Phibij}. We argue by induction on the integer  $d_1+\dots
+d_n$. In  the generic case consisting of simple eigenvalues (see \cite{GG1}), we proved that for any positive integer $q$, $\mathcal V _{(0,\dots,0)}(=\mathcal V_{\rm gen}(2q))$ is non empty. As a consequence, to any given sequence $((s_r),(\Psi_r))\in \Omega_{2q}\times \T^{2q}$ corresponds a unique $u\in \mathcal V_{(0,\dots,0)}$, the $s_{2j-1}^2$ being the simple eigenvalues of $H_u^2$ and the $s_{2j}^2$ the simple eigenvalues of $K_u^2$. This gives the theorem in the case $(d_1,\dots, d_n)=(0,\dots,0)$ for every $n$, which is one of the main theorems of \cite{GG2}.
Let us turn to the induction argument, which  is clearly a consequence of the following lemma.

\begin{lemma}\label{non empty}
Let $n=2q$, $(d_1,\dots,d_n)$ and $1\le r\le n-1$.
Assume $$\mathcal V_{(d_1,\dots,d_r,0,0, d_{r+1},\dots, d_n)}\text{ is non empty, }$$ then $$\mathcal V _{(d_1,\dots,d_{r-1},d_r+1,d_{r+1},\dots,d_n)}\text{ is non-empty.}$$
\end{lemma}

\begin{proof}
We  consider the case $r=1$.  The proof in the case $r$ odd follows the same lines. Write $m_j:=d_{2j-1}+1$ and $\ell_k=d_{2k}+1$. From the assumption, $$\mathcal V:=\mathcal V_{(d_1,0,0, d_{2},\dots, d_n)}\text{ is non empty, }$$ hence $\Phi$ establishes a diffeomorphism from $\mathcal V$ into $$\Omega_{n+2}\times\mathcal B_{d_1}\times \mathcal B_0\times \mathcal B_0\times \prod_{r=2}^{n}\mathcal B_{d_r}\ .$$ 
Therefore, given $\rh >\si _2>\rh _3>\si _3>\dots >\rh _{q+1}>\si _{q+1}>0$, and $\Psi _1,\theta _1,\f _2, \Psi _4, \dots ,\Psi _{n+2}$, for every $\eta >0$, for every $\e >0$ small enough,  we define
$u^\e$ to be the inverse image by $\Phi $ of
$$\left ((\rh +\e , \rh ,  \rh -\eta \e , \si _2,\rh _3,\si _3,\dots ,\rh _{q+1},\si _{q+1}), (\Psi _1, \expo_{-i\t _1}, \expo_{-i\f _2}, \Psi _4,\dots ,\Psi _{n+2})\right )\ .$$
By making $\e $ go to $0$, we are going to construct $u$ in $\mathcal V _{(d_1+1,\dots,d_n)}$, such that  $\rho_1(u)=\rh$ is of multiplicity $m_1+1=d_1+2$, $\rho_{j}(u)=\rho_{j+1}$, $j=2,\dots ,q,$ is of multiplicity $m_{j}$ and $\sigma_k(u)=\sigma_{k+1}$ for $k=1,\dots ,q$, is of multiplicity $\ell_{k}$.
\s
First of all, observe that $u^\e $ is bounded in $H^{1/2}_+$, since its norm is equivalent to ${\rm Tr}(H_{u^\e }^2)$. Hence, by the Rellich theorem, up to extracting a subsequence, $u^\e $ strongly converges in $L^2_+$ to some $u\in H^{1/2}_+$. Similarly, the orthogonal projections $u^\e _j$ and $(u^\e_k)'$ are bounded in $L^2_+$, hence are weakly convergent to $v_j$, $v'_k$. Arguing as in the previous subsection, we have
\beno
(u\vert v_1)&=&\lim _{\e \rightarrow 0}\Vert u_1^\e \Vert ^2=\lim _{\e \rightarrow 0}\frac{(\rh +\e )^2-\rh ^2}{(\rh +\e )^2-(\rh -\eta \e )^2}
\frac{\prod _{k\ge 2}((\rh +\e )^2-\si _k^2)}{\prod _{k\ge 3}((\rh+\e )^2-\rh _k^2)},\\
(u\vert v_2)&=&\lim _{\e \rightarrow 0}\Vert u_2^\e \Vert ^2=\lim _{\e \rightarrow 0}\frac{(\rh -\eta \e )^2-\rh ^2}{(\rh -\eta\e )^2-(\rh + \e )^2}
\frac{\prod _{k\ge 2}((\rh -\eta \e )^2-\si _k^2)}{\prod _{k\ge 3}((\rh-\eta \e )^2-\rh _k^2)},\\
(u\vert v_j)&=&\lim _{\e \rightarrow 0}\Vert u_j^\e \Vert ^2\\
&=&\lim _{\e \rightarrow 0}\frac{\rh _j^2-\rh ^2}{(\rh_j^2-(\rh -\eta \e )^2)(\rh _j^2-(\rh +\e )^2)}
\frac{\prod _{k\ge 2}(\rh _j^2-\si _k^2)}{\prod _{k\ge 3, k\ne j}(\rh _j^2-\rh _k^2)}, j\ge 3,
\eeno
and 
\beno
(u\vert v'_1)&=&\lim _{\e \rightarrow 0}\Vert (u_1^\e)' \Vert ^2\\&=&\lim _{\e \rightarrow 0}(\rh ^2-(\rh +\e )^2)(\rh^2-(\rh -\eta \e )^2)
\frac{\prod _{k\ge 3}(\rh ^2-\rh _k^2)}{\prod _{k\ge 2}(\rh^2-\si _k^2)},\\
(u\vert v'_k)&=&\lim _{\e \rightarrow 0}\Vert (u_k^\e)' \Vert ^2\\
&=&\lim _{\e \rightarrow 0}\frac{(\si_k ^2-(\rh +\e )^2)(\si _k^2-(\rh -\eta \e )^2)}{\si _k^2-\rh ^2}
\frac{\prod _{j\ge 3}(\si_k ^2-\rh _j^2)}{\prod _{j\ge 2, j\ne k}(\si _k^2-\si _j^2)},\ k\ge 2\ .
\eeno
In view of these identities, we infer that $v_j, j\ge 1$ and $v'_k,k\ge 2$ are not $0$. Passing to the limit into the identities
\begin{eqnarray*}
s ^\e _{2j-1}u^\e _{j}&=&\Psi_{2j}^\e\  H_{u^\e }u^\e _j\ ,\ H_{u^\e }^2(u^\e _j)=(s ^\e _{2j-1})^2u^\e _j\ ,\\
K_{u^\e }(u^\e _k)'&=&s ^\e _{2k}\Psi_{2k}^\e\ (u^\e _k)'\ ,\ K_{u^\e }^2(u^\e _k)'=(s ^\e _{2k})^2(u^\e _k)'\ ,\\
u^\e &=&\sum _{j=1}^q u^\e _j =\sum _{k=1}^q (u^\e_k)' \ ,
\end{eqnarray*}
we obtain
\begin{eqnarray*}
s  _{2j-1}v_j&=&\Psi_{2j}\  H_{u }v _j\ ,\ H_{u }^2(v _j)=s _{2j-1}^2v _j\ ,\\
K_{u }v_k'&=&s_{2k}\Psi_{2k} \ v _k' \ ,\ K_u^2(v'_k)=s_{2k}^2v'_k\ ,\\
u &=&\sum _{j=1}^q v _j =\sum _{k=1}^q v_k' \ ,
\end{eqnarray*}
hence 
$$\dim E_u(\rh _j)\ge m_j\ ,\ j\ge 3, \ \dim F_u(\si _k)\ge \ell _k\ ,\ k\ge 2\ .$$
In order to conclude that $u\in \mathcal V_{(d_1+1,d_2,\dots ,d_n)}$, it remains to prove that 
$$\dim E_u(\rh )\ge m_1+1\ .$$
We use the explicit formulae obtained in section \ref{section explicit}. We set $$\Psi_1(z)={\rm e}^{-i\f_1}\chi_1(z)\ .$$
We start with $\det \mathcal C(z)$ defined by (\ref{c}). Notice that elements $c _{11}(z)$ and $c_{12}(z)$ in formulae (\ref{c}) are of order $\e ^{-1}$, hence we compute 
\beno
&&\lim _{\e \rightarrow 0}2\e \det \mathcal C(z)=\\
&=& \left|\begin{array}{lll}1-z\expo_{-i\t _1}\Psi_1& \displaystyle {-\frac{1-z\expo _{-i(\t _1+\f _2)} }{
\eta }}& 0\dots \\
\displaystyle {\frac{\rh -z\sigma _k\Psi_1 \Psi_{2k}}{\rh ^2-\si _k ^2}}&  \displaystyle {\frac{\rh -z\sigma _k\expo _{-i\f _2} \Psi_{2k}}{\rh ^2-\si _k ^2}}& \displaystyle {\frac{\rh _3-z\sigma _k \Psi_5\Psi_{2k}}{\rh _3^2-\si _k ^2}},\dots  , k\ge 2 \end{array}\right|\ .
\eeno 
Let us add $\xi (z)$ times the first column to the second column in the above determinant, with
$$\xi (z):=  \frac{1-z\expo _{-i(\t _1+\f _2)} }{\eta (1-z\expo_{-i\t _1}\Psi_1(z))}\ .$$
We get
\beno    \lim _{\e \rightarrow 0}2\e \det \mathcal C(z)=(1-z\expo_{-i\t _1}\Psi_1)\det \left( \begin{array}{ll}  \zeta _k(z), &  \displaystyle {\frac{\rh _\ell -z\sigma _k \Psi_{2\ell -1}\Psi_{2k}}{\rh _\ell ^2-\si _k ^2}}, k\ge 2 , \ell \ge 3\end{array}\right )\ ,
\eeno
with 
\beno
 \zeta _k(z)&=&\frac{(1-z\expo _{-i(\t _1+\f _2)})
(\rh -z\sigma _k\Psi_1\Psi_{2k})   }{\eta (1- z\expo_{-i\t _1}\Psi_1)(\rho ^2-\si _k^2)}+\frac{ (1-z\expo_{-i\t _1}\Psi_1)(\rh -z\sigma _k\expo _{-i\f _2} \Psi_{2k})}{ (1- z\expo_{-i\t _1}\Psi_1)(\rho ^2-\si _k^2)}  \\
&=& \left (\frac 1\eta +1\right )\left (1-q(z)z\right )\frac{\rh -z\si _k\expo_{-i\psi }\chi_\psi(z)\Psi_{2k}(z)}{ (1- z\expo_{-i\t _1}\Psi_1)(\rho ^2-\si _k^2)} \  ,
\eeno
where
\beno
q(z)&:=&\frac{\expo_{-i\t _1}(\eta \Psi_1(z)+\expo_{-i\f _2})}{1+\eta }\ ,\\
\chi _\psi(z):&=&\frac{\chi_1(z)z-{\rm e}^{i\t_1}\frac{\eta{\rm e}^{i\f_1}+{\rm e}^{i\f_2}\chi_1(z)}{1+\eta}}{1-q(z)z}\ ,\ \psi :=\t_1+\f _1+\f _2+\pi \ .
\eeno
We know that $\chi_1$ is a Blaschke product of degree $m_1-1$. Let us verify that it is possible to choose $\varphi_2$ so that $\chi_\psi$ is a Blaschke product of degree $m_1$.  We first claim that it is possible to choose $\varphi_2$ so that $1-q(z)z\neq 0$ for $|z|\le 1$. Write $\alpha:=\frac 1{1+\eta}$, $\psi_1:=\varphi_1+\theta_1$ and $\psi_2:=\varphi_2+\theta_1$ and assume $1-q(z)z=0$.
Then
\begin{equation}\label{chi1}
1=(1-\alpha){\rm e}^{-i\psi_1}\chi_1(z)z+\alpha{\rm e}^{-i\psi_2}z\ .
\end{equation}
First notice that this clearly imposes $\vert z\vert =1$. Furthermore, this implies  equality in the Minkowski inequality, therefore there exists $\lambda>0$ so that $\chi_1(z)=\lambda{\rm e}^{-i(\psi_2-\psi_1)}$ and, eventually, that $\chi_1(z)={\rm e}^{-i(\psi_2-\psi_1)}$ since $|\chi_1(z)|=1$. Inserting this in equation (\ref{chi1}) gives $z={\rm e}^{i\psi_2}$ so that $\chi_1({\rm e}^{i\psi_2})={\rm e}^{-i(\psi_2-\psi_1)}$. If this equality holds true for any choice of $\psi_2$, by analytic continuation inside the unit disc, we would have
 $$\chi_1(z)=\frac{{\rm e}^{i\psi_1}}z$$ which is not possible since $\chi _1$ is  a holomorphic function in the unit disc. Hence, one can choose $\psi_2$, hence $\varphi_2$, in order to have $1-q(z)z\neq 0$ for any $|z|\le 1$. It implies that $\chi_\psi$ is a holomorphic rational function in the unit disc. Moreover, one can easily check that  it has modulus one on the unit circle, hence it is a Blaschke product. Finally, its  degree is $\deg(\chi _1)+1=m_1$.

Summing up,
$$\lim _{\e \rightarrow 0}2\e \det \mathcal C(z)=\left (1+\frac 1\eta \right )(1-q(z)z) \det((\tilde c_{k\ell })_{2\le k,\ell \le q+1})$$
where, for $k\ge 2$, $\ell \ge 3$,
\beno
\tilde c_{k2}&=& \frac{\rh -z\si _k\expo_{-i\psi }\chi _\psi(z)\Psi_{2k}(z)}{ (\rho ^2-\si _k^2)}\\
\tilde c_{k\ell }&=&c_{k\ell }= \frac{\rh _\ell -z\sigma _k \Psi_{2\ell -1}(z)\Psi_{2k}(z)}{\rh _\ell ^2-\si _k ^2}\ .
 \eeno
 
 Next, we perform the same calculation with the numerator of $u_j^\e (z)$, $j= 1,2$. Recall that 
 $$ u_j^\e (z)=\Psi_{2j-1}(z)\frac{\det \mathcal C_j(z)}{\det \mathcal C(z)}$$ where 
 $\mathcal C_j(z)$ denotes the matrix deduced from $(c_{k\ell }(z))_{1\le k,\ell \le q+1}$ by replacing the column $j$ by
\beno
{\bf 1}:=\left (\begin{array}{l} 1 \\ .\\ .\\ .\\ 1\end{array}\right )\ .
\eeno 
 We compute
 \beno &&\lim _{\e \rightarrow 0} 2\e\det \mathcal C_1(z)=\\
&=& \left|\begin{array}{lll}0& \displaystyle {-\frac{1-z\expo _{-i(\t _1+\f _2)} }{
\eta }}& 0\dots \\
\displaystyle {\bf 1}&  \displaystyle {\frac{\rh -z\sigma _k\expo _{-i\f _2} \Psi_{2k}}{\rh ^2-\si _k ^2}}& \displaystyle {\frac{\rh _3-z\sigma _k\Psi_5\Psi_{2k}}{\rh _3^2-\si _k ^2}},\dots  , k\ge 2 \end{array}\right|\\
&=& \frac{1-z\expo _{-i(\t _1+\f _2)} }\eta \det\left({\bf 1}, (c_{k\ell })_{k\ge 2, \ell \ge 3}\right)\eeno
and 
 \beno &&\lim _{\e \rightarrow 0}2\e \det \mathcal C_2(z)=\\
&=& \left|\begin{array}{lll} \displaystyle {1-z\expo _{-i\t _1} \Psi_1}&0& 0\dots \\
\displaystyle {\frac{\rh -z\sigma _k\Psi_1\Psi_{2k}}{\rh ^2-\si _k ^2}}&  \displaystyle {\bf 1}& \displaystyle {\frac{\rh _3-z\sigma _k\Psi_5\Psi_{2k}}{\rh _3^2-\si _k ^2}},\dots  , k\ge 2 \end{array}\right|\\
&=& (1-z\expo _{-i\t _1} \Psi_1)\det\left({\bf 1}, (c_{k\ell })_{k\ge 2, \ell \ge 3}\right))\eeno

Hence we have, for the weak convergence in $L^2_+$, 
 \beno v_1&:=&\lim _{\e \rightarrow 0}u_1^\e=\Psi_1\frac{(1-z\expo _{-i(\t _1+\f _2)} )}{(1+\eta)(1-q(z)z)}\cdot\frac{ \det\left(({\bf 1}
, (c_{k\ell })_{k\ge 2, \ell \ge 3}\right))}{
\det \left(( \tilde c_{k\ell })_{2\le k,\ell \le q+1}\right )}\\
 v_2&:=&\lim _{\e \rightarrow 0}u_2^\e=\eta{\rm e}^{-i\varphi_2}\frac{{(1- z\expo_{-i\t _1}\Psi_1)}}{(1+\eta)(1-q(z)z)}\cdot\frac{ \det\left(({\bf 1}
, (c_{k\ell })_{k\ge 2, \ell \ge 3}\right))}{\det \left( ( \tilde c_{k\ell })_{2\le k,\ell \le q+1}\right )}\ .
 \eeno
Furthermore,  if $D_1$ denotes the normalized denominator of $\Psi_1$, we have
 \begin{eqnarray*}
  H_{u^\e}^2\left(\frac {z^a}{D_1(z)}\frac{u_1^\e}{\Psi_1}\right)&=&(\rh+\e)^2\frac {z^a}{D_1(z)}\frac{u_1^\e}{\Psi_1}, \ 0\le a \le m_1 -1,\\ 
  H_{u^\e}^2(u_2^\e)&=&(\rh-\eta\e)^2u_2^\e,\\
\end{eqnarray*}
  Passing to the limit in these identities as $\e $ goes to $0$,  we get
  \begin{eqnarray*}
  H_{u}^2\left(\frac {z^a}{D_1(z)}\frac{v_1}{\Psi_1}\right)&=&\rh^2\frac {z^a}{D_1(z)}\frac{v_1}{\Psi_1}, \  0\le a\le m_1-1, \\ 
  H_{u}^2(v_2)&=&\rh^2v_2\ .
\end{eqnarray*}

  It remains to prove that the dimension of the vector space generated by $$\frac {z^a}{D_1(z)}\frac{v_1}{\Psi_1}, \ 0\le a\le m_1-1, v_2, $$  is $m_1+1$. From the expressions of $v_1$ and $v_2$, it is equivalent to prove that  the dimension of the vector space spanned by $$\frac{z^a}{D_1(z)}(1-{\rm e}^{-i\psi_2}z), \ 0\le a\le m_1-1,\ (1-{\rm e}^{-i\psi_1}z\chi_1(z))$$
  is $m_1+1$. Indeed, we claim that our choice of $\psi_2$ implies that this family is free. Assume that for some $\lambda_a$, $0\le a \le m_1-1$ we have
  $$\sum_{a=0}^{m_1-1}\lambda_a\frac{z^a}{D_1(z)}=\frac{1-{\rm e}^{-i\psi_1}z\chi_1(z)}{1-{\rm e}^{-i\psi_2}z}$$
  then, as the left hand side is a holomorphic function in $\overline \D$, it would imply $\chi_1({\rm e}^{i\psi_2})={\rm e}^{i(\psi_1-\psi_2)}$ but $\psi_2$ has been chosen so that this does not hold. 
  Eventually, we have constructed $u$ in $\mathcal V _{(d_1+1,\dots,d_n)}$. An analogous procedure would allow to construct $u$ in $\mathcal V _{(d_1,\dots, d_{r-1}, d_r+1,\dots,d_n)}$  for any $r$ odd. The case $r$ even can be handled similarly, by collapsing two variables $\sigma $ and one variable $\rh $. 
\end{proof}

\subsection{The case $n$ odd.}

 The proof of the fact that  $\Phi_n$ is one-to-one is the same as in the case $n$ even. One has to prove that $\Phi_n$ is onto. We shall proceed by approximation from the case $n$ even. We define $q=\frac {n+1}2$.

\noindent Let 
$$\mathcal P=((\rho _1,\sigma _1, \dots ,\rho _q), (\Psi_r)_{1\le r\le n})$$
be an arbitrary element of $\Omega _{n}\times \prod_{r=1}^n\mathcal B_{d_r}$. We look for $u\in \mathcal V _{(d_1,\dots,d_n)}$ such that
$\Phi_n(u)=\mathcal P$. Consider, for every $\e $ such that $0<\e < \rho _q$,
$$\mathcal P_\e=((\rho _1,\sigma _1, \dots ,\rho _q,\e), ((\Psi_r)_{1\le r\le n}, 1))\in \Omega _{n+1}\times  \prod_{r=1}^{n+1}\mathcal B_{d_r}$$ 
with $d_{n+1}:=0$ -- we take $\Psi_{2q}=1\in \mathcal B_0$. From Theorem \ref{mainfiniterank}, we get $u_\e\in \mathcal V_{(d_1,\dots,d_{n+1})}$ such that
$\Phi (u_\e)=\mathcal P_\e$. 
As before, we can prove by a compactness argument that a subsequence of $u_\e$ has a limit $u\in \mathcal V _{(d_1,\dots,d_n)}$ as $\e$ tends to $0$ with $\Phi_n(u)=\mathcal P$. We leave the details to the reader.
\end{proof}
\subsection{$\mathcal V _{(d_1,\dots,d_n)}$ is a manifold}
Let $d=n+2\sum _rd_r$. We consider the map
$$\Theta:=\left\{\begin{array}{lll} \Omega _{n}\times \prod_{r=1}^n\mathcal B_{d_r}  &\longrightarrow  &\mathcal V(d)\\
((s_r),(\Psi_r))&\longmapsto & u\end{array} \right. $$
where $u$ is given by the explicit formula obtained in section \ref{section explicit}.
This map is well defined and $\mathcal C^\infty$ on $\Omega _{n}\times \prod_{j=1}^n\mathcal B_{d_j}$. Moreover, from the previous section, it is a homeomorphism onto its range $\mathcal V _{(d_1,\dots,d_n)}$. In order to prove that $\mathcal V _{(d_1,\dots,d_n)}$ is a manifold of
dimension  $2n+2\sum_{j=1}^nd_j$, it is enough to check that the differential of $\Theta $ is injective at every point. From Lemma \ref{Phismooth}, near every point $u_0\in \mathcal V _{(d_1,\dots,d_n)}$, there exists a smooth function $\tilde \Phi _n$, defined on a neighborhood $V$ on $u_0$ in $\mathcal V(d)$, such that $\tilde \Phi _n$  
coincides with $\Phi _n$ on $V\cap \mathcal V _{(d_1,\dots,d_n)}$. Consequently, $\tilde \Phi _n\circ \Theta $ is the identity on a neighborhood of
$\mathcal P_0:= \Phi _n(u_0)$. In particular, the differential of $\Theta $ at $ \mathcal P_0$ is injective.

\subsection{Proof of Theorem \ref{realmain} in the finite rank case}
Denote by $L^2_{+,r}$ the real subspace of $L^2_+$ made of functions with real Fourier coefficients. If $u\in (VMO_+\setminus \{ 0 \}) \cap L^2_{+,r}$, then $H_u$ acts on $L^2_{+,r}$ as a compact self adjoint operator, which is unitarily equivalent  to $\Gamma _c$ if $u=u_c$. Consequently, for every Borel real valued function $f$, $f(H_u^2)$ acts on
$L^2_{+,r}$. In particular, the orthogonal projections $u_j$, $u'_k$ belong to $L^2_{+,r}$. Therefore, for every $r$,  the Blaschke product $\Psi _r(u)$ belongs to $L^2_{+,r}$, which means that its coefficients are real, in particular $\psi _r\in \{ 0,\pi \} $. Moreover, by Proposition \ref{action}, for every $r$, there exists bases of $E_u(s_r)\cap L^2_{+,r}$ and $F_u(s_r)\cap L^2_{+,r}$ on which the respective actions of $H_u$ and $K_u$ are described by  matrices of the type $\varepsilon _r s_r A$, where $\varepsilon _r=\expo _{-i\psi _r}=\pm 1$ and
$$A=\left (\begin{array}{lcccl}   0& \ldots & \ldots& 0& 1\\ 0&\ldots&\ldots&1&0\\
\vdots& \ldots& \diagup&\ldots&\vdots \\ 
0&1&\ldots& \ldots&0\\
1&0&\ldots &\ldots&0
  \end{array}\right )\ ,$$
  being of dimension $d_r+1$ or $d_r$.
By an elementary observation, the eigenvalues of $A$ are $\pm 1$, with equal multiplicities if the dimension of $A$ is even, and where the multiplicity of $1$ is one unit greater than the multiplicity of $-1$ if the dimension of $A$ is odd. Consequently, if we denote by $(\lambda _j),
(\mu _k)$ the respective sequences of non zero eigenvalues of $H_u$ and of $K_u$ on $L^2_{+,r}$, repeated according to their multiplicities, and 
ordered following
$$\vert \lambda _1\vert \ge \vert \mu _1\vert \ge \vert \lambda _2\vert \ge \dots \ ,$$
each $s_r$ correspond to a maximal string with consecutive equal terms, of length $2d_r+1$, where $d_r$ is the degree of $\Psi _r$. 
Moreover, we have
\begin{eqnarray*}
&\vert  \# \{ j: \lambda _j=s_r \} - \# \{ j: \lambda _j=-s_r \} \vert \le 1\ ,\\
 &\vert  \# \{ k: \mu _k=s_r \} - \# \{ k: \mu _k=-s_r \} \vert \le 1.
 \end{eqnarray*}
This is the Megretskii--Peller--Treil condition. Moreover, according to the parity of $r$ and $d_r$, if one of the above integers is $0$, the other one is $1$, and the eigenvalue with the greatest multiplicity is then $\varepsilon _rs_r$.
\s
Conversely, given two sequences $(\lambda _j), (\mu _k)$ satisfying the assumptions of Theorem \ref{realmain}, the above considerations imply that the set of solutions $u$ to the inverse spectral problem is exactly
$$\Phi ^{-1}\left ( \{ (s_r)\} \times \varepsilon_1\mathcal B_{d_1,r}^\sharp  \times \dots \times  \varepsilon_n\mathcal B_{d_n,r}^\sharp     \right )$$
where $\mathcal B_{d,r}^\sharp $ denotes the set of Blaschke products of degree $d$, with real coefficients and with angle $0$, which is diffeomorphic to $\R ^d$ in view of the result of Appendix B.
Notice that the explicit formula (\ref{luminy}) allows to check that $u$ belongs to $L^2_{+,r}$. In view of Theorem \ref{Phitopodif}, we conclude that the set of solutions $u$ to the inverse spectral problem  is diffeomorphic to $\R ^M$, with 
$M=\sum _rd_r$.

\section{Extension to compact Hankel operators}\label{compact}
 
In this section, we prove the parts of Theorems \ref{Phibij}, \ref{Phitopodif} corresponding to infinite rank Hankel operators.
Given an arbitrary sequence $(d_r)_{r\ge 1}$ of nonnegative integers, we set 
$$\mathcal V_{(d_r)_{r\ge 1}}:=\Phi ^{-1}\left (\Omega _\infty \times \prod _{r=1}^\infty \mathcal B_{d_r}\right )\ . $$ 
 \begin{theorem}\label{mainCompact}
The mapping
\begin{eqnarray*}
\Phi : \mathcal V_{(d_r)_{r\ge 1}} &\longrightarrow &  \Omega _{\infty}\times \prod _{r=1}^\infty \mathcal B_{d_r}\\
u&\longmapsto& ((s_r)_{r\ge 1}, (\Psi_r)_{ r\ge 1})
\end{eqnarray*}
is a homeomorphism.
\end{theorem}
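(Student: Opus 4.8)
The plan is to reduce the compact case to the finite rank case (Theorem~\ref{mainfiniterank}) by a truncation/compactness argument. Fix $(d_r)_{r\ge1}$ once and for all and write $\mathcal W:=\mathcal V_{(d_r)_{r\ge1}}$ and $\mathcal T:=\Omega_\infty\times\prod_{r\ge1}\mathcal B_{d_r}$. I will establish, in this order: continuity of $\Phi$, injectivity of $\Phi$, and surjectivity together with continuity of $\Phi^{-1}$, the last item carrying essentially all the difficulty.

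For the continuity of $\Phi$ I would argue as in the proof of Lemma~\ref{Phismooth}: for each fixed $r$, $s_r(u)$ is an isolated nonzero eigenvalue of $H_u^2$ or $K_u^2$ whose multiplicity is constant on $\mathcal W$, so the Riesz projection onto the corresponding eigenspace — a small contour integral in the resolvent — depends smoothly on $u\in\mathcal W$, and hence so do $s_r(u)$ and $\Psi_r(u)$ through the formulae of Section~\ref{continuity}; to pass to continuity into the $c_0\times\prod_r\mathcal B_{d_r}$ topology I use that $\|H_u^2-H_{u'}^2\|$ is dominated by $\|u-u'\|$ in $VMO_+$, that ordered eigenvalues of compact self-adjoint operators are $1$-Lipschitz for the operator norm, and that the multiplicity pattern is locally constant, so that $\sup_r|s_r(u)-s_r(u')|\to0$. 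For injectivity I would reuse the explicit formula (\ref{luminy}): its derivation in Section~\ref{section explicit} uses only Proposition~\ref{rigidity}, Proposition~\ref{action} and the identities (\ref{urho})--(\ref{usigma}), all valid when $H_u$ is merely compact. The only new point is convergence: taking traces in $K_u^2=H_u^2-(\cdot\vert u)u$ and using the interlacing (\ref{Sigmaordered}) gives $\|u\|_{L^2}^2=\sum_j(\rho_j^2-\sigma_j^2)\le\rho_1^2<\infty$, so $(\|u_{\rho_j}\|^2)_j$ and $(\|u'_{\sigma_k}\|^2)_k$ are summable, the infinite products in (\ref{J(x)})--(\ref{1/J(x)}) converge near $z=0$, and the (now infinite) linear system (\ref{eqH}) still has a unique solution there (its matrix reduces, as in Section~\ref{section explicit}, to a Cauchy-type matrix which at $z=0$ is boundedly invertible); since $u$ is holomorphic on $\D$, this determines $u$ everywhere from $\Phi(u)$.

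The core of the proof is the following limiting statement, from which both surjectivity and continuity of $\Phi^{-1}$ follow: if $\mathcal P^{(n)}$ is a sequence of spectral data — finite (in some $\mathcal S_{k_n}$ with $k_n\to\infty$) or already in $\mathcal T$ — whose components converge, for each fixed $r$, to those of some $\mathcal P\in\mathcal T$, with moreover $\sup_n s_r^{(n)}\to0$ as $r\to\infty$, and if $u^{(n)}$ denotes the corresponding symbol furnished by Theorem~\ref{mainfiniterank} (resp. by surjectivity), then $u^{(n)}$ converges in $VMO_+$ to some $u\in\mathcal W$ with $\Phi(u)=\mathcal P$. Applying this to the truncations of a given $\mathcal P\in\mathcal T$ yields surjectivity; applying it to a sequence $\mathcal P^{(n)}\to\mathcal P$ in $\mathcal T$ — and invoking injectivity so that the limit does not depend on the subsequence — yields continuity of $\Phi^{-1}$. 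To prove the statement I would first record two uniform bounds, $\|u^{(n)}\|$ in $VMO_+$ being comparable to $\|H_{u^{(n)}}\|=s_1^{(n)}$, and $\|u^{(n)}\|_{L^2}^2=\sum_j((\rho^{(n)}_j)^2-(\sigma^{(n)}_j)^2)\le(s_1^{(n)})^2$; along a subsequence $u^{(n)}$ then converges weakly in $L^2_+$ and weak-$*$ in $BMO$ to some $v$, and testing against trigonometric polynomials gives $H_{u^{(n)}}\to H_v$ and $K_{u^{(n)}}\to K_v$ in the weak operator topology. Once I know that this convergence is actually in operator norm — equivalently that $u^{(n)}\to v$ in $VMO_+$ — the rest is routine spectral perturbation: $H_{u^{(n)}}^2$ and $K_{u^{(n)}}^2$ converge in norm, hence so do their nonzero eigenvalues and eigenprojections, whence $v\in\mathcal W$ with the prescribed singular values and multiplicities, and passing to the limit in $s^{(n)}_{2j-1}u^{(n)}_j=\Psi^{(n)}_{2j-1}H_{u^{(n)}}(u^{(n)}_j)$ and in the analogous identities for $K_{u^{(n)}}$ gives $\Phi(v)=\mathcal P$.

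The hard part, and the only genuinely new difficulty compared to the finite rank case, is precisely the step I glossed over: promoting the weak-$*$ convergence of the $u^{(n)}$ to convergence in $VMO_+$, i.e. the $H_{u^{(n)}}$ to operator-norm convergence. Boundedness alone cannot suffice, since weak limits of compact operators need not be compact; the mechanism must use the hypothesis $\sup_n s_r^{(n)}\to0$ (that is, $\mathcal P\in\Omega_\infty$), which makes $\{H_{u^{(n)}}\}$ a uniformly compact family, together with the rigidity of the Hankel eigenspaces from Proposition~\ref{action} and the intertwining $S^*H_{u^{(n)}}=H_{u^{(n)}}S$, to prevent mass of the eigenvectors from escaping, and thereby to show that $(u^{(n)})$ is Cauchy in $VMO_+$. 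This is where I expect the main work of the section to lie.
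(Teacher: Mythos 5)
Your overall skeleton (continuity as in the finite rank analysis, injectivity via the explicit formula, surjectivity and continuity of $\Phi^{-1}$ by truncating $\mathcal P$ and passing to the limit in the finite rank solutions $u_N$) is the same as the paper's, but the step you explicitly gloss over is exactly the heart of the proof, so the proposal has a genuine gap. The paper does not obtain compactness of $\{u_N\}$ from an abstract ``uniform compactness'' of the operators $H_{u_N}$ together with the rigidity of Proposition \ref{action}; it uses part (2) of the Adamyan--Arov--Krein theorem. For each fixed $m$ one takes the best Hankel approximant $u_N^{(m)}$ of rank $p_m=m+\sum_{r\le 2m}d_r$, so that $\Vert u_N-u_N^{(m)}\Vert_{L^2}\le \Vert H_{u_N}-H_{u_N^{(m)}}\Vert=\rho_{m+1}$, and observes that for fixed $m$ the family $(u_N^{(m)})_N$ is bounded in $H^{1/2}_+$ because $\Vert u_N^{(m)}\Vert_{H^{1/2}}\le \sqrt{p_m}\,\Vert H_{u_N^{(m)}}\Vert$; Rellich then gives a finite $\varepsilon$-net for $\{u_N\}$ in $L^2_+$, hence strong $L^2_+$ convergence of a subsequence, and for the continuity of $\Phi^{-1}$ the convergence is upgraded to $VMO_+$ by reducing, again via AAK, to the fact that finite rank Hankel symbols of bounded rank converging in $L^2$ converge in $C^\infty$ (Lemma 3 of \cite{GG3}). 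Without this structured approximation your hypothesis $\sup_n s_r^{(n)}\to 0$ only says each $H_{u^{(n)}}$ is uniformly well approximated by \emph{some} rank-$p$ operator; it does not make the approximants range over a precompact set, so it cannot by itself prevent the symbols from escaping to weak limits that lose mass.

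Two smaller points. First, your insistence that one must upgrade to operator-norm convergence before identifying the limit's spectral data is not how the paper proceeds (and is stronger than needed for surjectivity): strong $L^2_+$ convergence of $u_N$ gives strong operator convergence of $H_{u_N}$, an upper-semicontinuity argument bounds the limiting multiplicities from above, the formulae (\ref{tau}), (\ref{kappa}) show the weak limits $v_j,v'_k$ of the projections are nonzero, and the identification $v_j=u_j$ is done by the norm trick $\Vert v_j\Vert^2\le (u\vert v_j)=\tau_j^2=\Vert u_j\Vert^2$ rather than by exchanging infinite sums. Second, your injectivity sketch leaves unaddressed the one point that is actually delicate there: solving the infinite system (\ref{eqHinfini}) requires knowing that the infinite matrix $\mathcal D(z)$ acts boundedly (in fact as a contraction) on the weighted space $\ell^2_\tau$, which the paper proves by computing $\mathcal D(z)\mathcal D(z)^*$ with the identities of Appendix A and then determines $\mathcal H$ through its Taylor coefficients via the Cauchy estimates; ``boundedly invertible at $z=0$'' says nothing about $z\neq 0$, where boundedness of the matrix is not automatic.
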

Before giving the proof of this theorem, notice that it implies Theorem \ref{realmain} in the infinite case, following the same considerations as in the finite rank case above.
\begin{proof}
The fact that $\Phi $ is one-to-one follows from an explicit formula analogous to the one obtained in the finite rank case,
see section \ref{section explicit}. However, in this infinite rank situation, we have to deal with the continuity of infinite rank matrices
on appropriate $\ell ^2$ spaces. 
\s
Indeed, we still have
$$u=\sum_{j=1}^\infty \Psi_{2j-1} h_j$$
where $\mathcal H(z):=(h_j(z))_{j\ge 1}$ satisfies, for every $z\in \D $, 
\begin{equation}\label{eqHinfini}
\mathcal H(z)=\mathcal F(z)+z\mathcal D(z)\mathcal H(z)
\end{equation}
with
\begin{eqnarray*}
\mathcal F(z)&:=&\left (\frac{\tau _j^2}{\rho_j}\right )_{j\ge 1}\ ,\\
\mathcal D(z)&:=&\left (\frac{\tau _j^2}{\rho_j}\sum _{k=1}^\infty \frac {\kappa _k^2\sigma _k\Psi _{2k}(z)\Psi _{2\ell -1}(z)}{(\rho _j^2-\sigma _k^2)(\rho _\ell ^2-\sigma _k^2)}\right )_{j,\ell\ge 1}\ .
\end{eqnarray*}
Notice that  the coefficients of the infinite matrix $\mathcal D (z)$ depend holomorphically on $z\in \D$. 
We are going to prove that, for every $z\in \D $, $\mathcal D(z)$ defines a contraction  on the space $\ell ^2_\tau $ of sequences
$(v_j)_{j\ge 1}$ satisfying
$$\sum _{j=1}^\infty \frac{\vert v_j\vert ^2}{\tau _j^2}<\infty \ .$$
From the maximum principle, we may  assume that $z$ belongs to the unit circle. Then $z$ and $\Psi _r(z)$ have modulus $1$. We then compute $\mathcal D(z)\mathcal D(z)^*$, where the adjoint is taken  for the inner product associated to $\ell ^2_\tau $. We get, using identities (\ref{sommedoubletau}), (\ref{sommesimplekappa}) and (\ref{sommedoublekappa}),
\beno 
\left [\mathcal D(z)\mathcal D(z)^*\right ] _{jn}&=&\frac{\tau _j^2}{\rho_j\rho _n}\sum _{k,\ell ,m}
\frac {\kappa _k^2\sigma _k\Psi _{2k}(z)\tau _\ell ^2\kappa _m ^2\si _m\overline {\Psi _{2m}(z)}}{(\rho _j^2-\sigma _k^2)(\rho _\ell ^2-\sigma _k^2)(\rho _n^2-\sigma _m^2)(\rho _\ell ^2-\sigma _m^2)}\\
&=& \frac{\tau _j^2}{\rho_j\rho _n}\sum _{k}\frac{\kappa _k^2\si _k^2}{(\rho _j^2-\sigma _k^2)(\rho _n^2-\sigma _k^2)}\\
&=& -\frac{\tau _j^2}{\rho_j\rho _n}+\delta _{jn}\ .
\eeno
Since, from the identity (\ref{sommenu}) in Appendix A, 
$$\sum _{j=1}^\infty \frac {\tau _j^2}{\rh _j^2}\le 1\ ,$$
we conclude that $\mathcal D(z)\mathcal D(z)^*\le I$ on $\ell ^2_\tau $, and consequently that
$$\Vert \mathcal D(z)\Vert _{\ell ^2_\tau \rightarrow \ell ^2_\tau }\le 1\ .$$
From the Cauchy inequalities, this implies
$$\Vert \mathcal D^{(n)}(0)\Vert _{\ell ^2_\tau \rightarrow \ell ^2_\tau }\le n!\ .$$
Coming back to equation (\ref{eqHinfini}), we observe that $\mathcal H(0)=\mathcal F(0)\in \ell ^2_\tau $, and that, for every $n\ge 0$,
$$\mathcal H^{(n+1)}(0)=(n+1)\sum _{p=0}^n \begin{pmatrix}n\\p\end{pmatrix}\mathcal D^{(p)}(0)\mathcal H^{(n-p)}(0)\ .$$
By induction on $n$, this determines $\mathcal H^{(n)}(0)\in \ell ^2_\tau $, whence the injectivity of $\Phi $.
\s
Next, we prove that $\Phi $ is onto. We pick an element $$\mathcal P\in \Omega _{\infty}\times\prod _{r=1}^\infty \mathcal B_{d_r}$$ and we construct $u\in VMO_+$ so that $\Phi(u)=\mathcal P$.
Set $$\mathcal P=((\rho _1,\sigma _1, \rho _2,\dots), (\Psi _r)_{r\ge 1})$$
and consider, for any integer $N$,
$$\mathcal P_N:=((\rho _1,\sigma _1, \dots,\rho_N,\sigma_N), (\Psi_r)_{1\le r\le 2N})$$ in 
$$ \Omega_{2N}\times\prod_{j=1}^{2N} \mathcal B_{d_j}\ .$$
From Theorem \ref{mainfiniterank}, there exists $u_N\in \mathcal V _{(d_1,\dots,d_{2N})}$ with $\Phi(u_N)=\mathcal P_N$. As $u_N$ is bounded in $L^2_+$, there exists a subsequence converging weakly  to some $u$ in $L^2_+$. Let $u_{N,j}$ and $u'_{N,k}$ denote the orthogonal projections of $u_N$ respectively on $E_{u_N}(\rh _j)$ and on $F_{u_N}(\si _k)$ so that we have the orthogonal decompositions,
$$u_N=\sum_{j=1}^Nu_{N,j}=\sum_{k=1}^N u'_{N,k}\ .$$
After a diagonal extraction procedure, one may assume that $u_{N,j}$ and $u'_{N,k}$ converge weakly  in $L^2_+$ respectively to some $v^{(j)}$ and to some ${v'}^{(k)}$.
In fact one may assume that $u_N$ converges strongly to $u$ in $L^2_+$. The proof is along the same lines as the one developed for Proposition 2 in \cite{GG3}, and is based on the Adamyan-Arov-Krein (AAK)theorem \cite{AAK}, \cite{Pe2}. Let us recall the argument. 
\s
First we recall that the  AAK theorem  states that the $(p+1)$-th singular value of a Hankel operator, as the distance of this operator to operators of rank at most $p$,  is exactly achieved by some Hankel operator of rank at most $p$, hence, with a rational symbol. We refer to part (2) of the theorem in Appendix C. We set, for every $m\ge 1$,
$$p_m=m+\sum _{r\le 2m} d_r\ .$$
With the notation of Appendix C, one easily checks that, for every $m$,
$$\underline s_{p_{m-1}}(u)>\underline s_{p_m}(u)=\rho _{m+1}(u)\ .$$
By part (1) of the AAK theorem in Appendix C, for every $N$ and every $m=1,\dots, N$, there exists a rational symbol $u_{N}^{(m)}$, defining a Hankel operator of rank $p_m$,  namely $u_{N}^{(m)}\in \mathcal V(2p_m)\cup {\mathcal V}(2p_m-1)$, such that 
$$\Vert H_{u_N}-H_{u_{N}^{(m)}}\Vert =\rho_{m+1}(u_N)=\rho _{m+1}.$$
In particular, we get
$$\Vert u_N-u_{N}^{(m)}\Vert_{L^2}\le \rho_{m+1}.$$
On the other hand, one has $$\Vert H_{u_{N}^{(m)}}\Vert \ge \frac{1}{\sqrt {p_m}}(Tr(H^2_{u_{N}^{(m)}}))^{1/2}\ge \frac 1{\sqrt {p_m}}\Vert u_{N}^{(m)}\Vert_{H^{1/2}_+}.$$  Hence, for fixed $m$, the sequence $(u_{N}^{(m)})_N$ is bounded in $H^{1/2}_+$.
Our aim is to prove that the sequence $(u_N)$ is precompact in $L^2_+$.
We show that, for any $\varepsilon>0$ there exists a finite sequence $v_k\in L^2_+$, $1\le k\le M$ so that $$\{u_N\}_N\subset\bigcup_{k=1}^M B_{L^2_+}(v_k,\varepsilon).\ $$
Let $m$ be fixed such that $$\rho_{m+1}\le \varepsilon/2.$$ Since the sequence $(u_{N}^{(m)})_N$ is uniformly bounded in $H^{1/2}_+$,  it is precompact in $L^2_+$, hence there exists $v_k\in L^2_+$, $1\le k\le M$, such that $$\{u_{N}^{(m)}\}_N\subset\bigcup_{k=1}^M B_{L^2_+}(v_k,\varepsilon/2)\ .$$ Then, for every $N$ there exists some $k$ such that
$$\Vert u_N-v_k\Vert_{L^2}\le \rho_{m+1}+\Vert u_{N}^{(m)}-v_k\Vert_{L^2}\le \varepsilon.$$
Therefore $\{ u_N \}$ is precompact in $L^2_+$ and, since $u_N$ converges weakly to $u$, it converges strongly to $u$ in $L^2_+$. Since $\Vert H_{u_N}\Vert =\rh _1$ is bounded, we infer the strong convergence of  operators,
$$\forall h\in L^2_+, H_{u_N}(h)\td_p,\infty H_u(h)\ .$$
We now observe that if $\rho^2$ is an eigenvalue of $H_{u_N}^2$ of multiplicity $m$ then $\rho^2$ is an eigenvalue of $H_u^2$ of multiplicity at most  $m$.
Let $(e_N^{(l)})_{1\le l\le m}$ be an orthonormal family of eigenvectors of $H_{u_N}^2$ associated to the eigenvector $\rho^2$. Let $h$ be in $L^2_+$ and write
$$
h=\sum_{l=1}^m (h\vert  e_N^{(l)})e_N^{(l)}+h_{0,N}$$ where $h_{0,N}$ is the orthogonal projection of $h$ on the orthogonal complement of $E_{u_N}(\rho)$ so that 
\begin{eqnarray*}
\Vert (H_{u_N}^2-\rho^2 I)h\Vert^2&=&\Vert (H_{u_N}^2-\rho^2 I)h_{0,N}\Vert^2\\
&\ge & d_{\rho^2}\Vert h_{0,N}\Vert ^2=d_{\rho^2}(\Vert h\Vert^2-\sum_{l=1}^m \vert (h\vert e_N^{(l)})|^2)\ ,
 \end{eqnarray*}
here $d_{\rho^2}$ denotes the distance to the other eigenvalues of $H_{u_N}^2$.
 By taking the limit as $N$ tends to $\infty$ one gets
 $$\Vert (H_{u}^2-\rho^2 I)h\Vert^2\ge d_{\rho^2}(\Vert h\Vert^2-\sum_{l=1}^m|(h\vert e^{(l)})|^2)$$
 where $e^{(l)}$ denotes a weak limit of $e_N^{(l)}$.  
 Assume now that the dimension of $E_u(\rho)$ is larger than $m+1$ then we could construct $h$ orthogonal to $(e^{(1)},\dots e^{(m)})$ with $H_u^2(h)=\rho^2h$, a contradiction. The same argument allows to obtain that if $\rho^2$ is not an eigenvalue of $H_{u_N}^2$, $\rho^2$ is not an eigenvalue of $H_u^2$.
 
 We now argue as in section \ref{closed} above. We may assume, up to extracting a subsequence, that $u_{N,j}$ weakly converges to $v_j$,
 and that $u_{N,k}'$ weakly converges to $v'_k$ in $L^2_+$, with the identities
 $$\rh _jv_j=\Psi _{2j-1}H_u(v_j)\ ,\ H_u^2(v_j)=\rh _j^2v_j\ ,\ K_u(v'_k)=\si _k\Psi _{2k}v'_k\ ,\ K_u^2(v'_k)=\si _k^2v'_k\ .$$
 and  
 $$(u\vert v_j)=\tau _j^2\ ,\ (u\vert v'_k)=\kappa _k^2\ .$$
 This already implies that $v_j$, $v'_k$ are not $0$, and hence, in view of Lemmas \ref{crucialHuGeneral} and \ref{crucialKuGeneral}, that 
 $$\dim E_u(\rh _j)= m_j\ ,\ \dim F_u(\si _k)= \ell _k\ .$$
  We infer that $u\in \mathcal V_{(d_r)_{r\ge 1}}$ and that $\rho _j=s_{2j-1}(u)\ ,\ \si _k=s_{2k}(u)\ .$ It remains to identify $v_j$ with the orthogonal projection $u_j$  of $u$ onto $E_u(\rh _j)$,
 and $v'_k$ with the orthogonal projection $u'_k$ of $u$ onto $F_u(\si _k)$. The strategy of passing to the limit, as $N$ tends to infinity,  in the decompositions
 $$u_N=\sum _{j=1}^N u_{N,j}=\sum _{k=1}^Nu'_{N,k}\ $$
 is not easy to apply because of infinite sums. Hence we argue as follows. From the identity
 $$\Vert u_{N,j}\Vert^2=(u_N\vert u_{N,j})$$
 we get
 $$\Vert v_j\Vert ^2\le (u\vert v_j)=(u_j\vert v_j)\ ,$$
 and, by the Cauchy-Schwarz inequality, since $v_j\ne 0$,
 $$\Vert v_j\Vert \le \Vert u_j\Vert\ .$$
 On the other hand, we know from the general formulae of Appendix A that
 $$\Vert u_j\Vert ^2=\tau _j^2\ .$$
 Since $\tau _j^2=(u_j\vert v_j)$, we get $\Vert u_j\Vert \le \Vert v_j\Vert $ and finally infer
 $$(u_j\vert  v_j)=\Vert v_j\Vert^2= \Vert u_j\Vert ^2\ ,$$ 
 hence 
$$\Vert v_j-u_j\Vert^2=0\ .$$
Similarly, $v'_k=u'_k$. This completes the proof of the surjectivity.
\s
The continuity of $\Phi $ follows as in section \ref{continuity}. As for the continuity of $\Phi ^{-1}$, we argue exactly as for surjectivity
above, except that we have to prove the convergence of $u_N$ to $u$ in $VMO_+$. This can be achieved exactly as in the proof of Proposition 2 of \cite{GG3} : the Adamyan-Arov-Krein theorem allows to reduce to the following statement : if $w_N\in \mathcal V(2p)\cup \mathcal V(2p-1)$ 
strongly converges to $w\in \mathcal V(2p)\cup \mathcal V(2p-1)$, then the convergence takes place in $VMO $ --- in fact in $C^\infty $. See Lemma 3 of \cite{GG3}.
\end{proof}
For future reference, we state a similar result in the case of Hilbert--Schmidt operators. We set $$\mathcal V^{(2)}_{(d_r)_{r\ge 1}}:=\mathcal V_{(d_r)_{r\ge 1}}\cap H^{1/2}_+\ , $$
and 
$$\Omega _\infty ^{(2)}((d_r)_{r\ge 1}) :=\{ (s_r)_{r\ge 1}\in \Omega _\infty : \sum _{r=1}^\infty(d_r+1)s_r^2<\infty \} \ ,$$
endowed with the topology induced by the above weighted $\ell ^2$ norm.
 \begin{theorem}\label{HilbertSchmidt}
The mapping
\begin{eqnarray*}
\Phi : \mathcal V^{(2)}_{(d_r)_{r\ge 1}} &\longrightarrow &  \Omega ^{(2)}_{\infty}((d_r)_{r\ge 1})\times \prod _{r=1}^\infty \mathcal B_{d_r}\\
u&\longmapsto& ((s_r)_{r\ge 1}, (\Psi_r)_{ r\ge 1})
\end{eqnarray*}
is a homeomorphism.
\end{theorem}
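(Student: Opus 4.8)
The plan is to deduce Theorem \ref{HilbertSchmidt} from Theorem \ref{mainCompact} by tracking the $H^{1/2}$ norm through a trace identity. I would start from the classical facts that, for $u\in VMO_+(\T)$, the operator $H_u$ is Hilbert--Schmidt if and only if $u\in H^{1/2}_+(\T)$, with $\Vert u\Vert_{H^{1/2}}^2\simeq\mathrm{Tr}(H_u^2)$, and that $H^{1/2}_+(\T)\hookrightarrow VMO_+(\T)$ continuously, so that $\mathcal V^{(2)}_{(d_r)_{r\ge 1}}$ is a subset of $H^{1/2}_+$ with the induced topology. Next, using Propositions \ref{rigidity} and \ref{action}, I would record that for $u\in\mathcal V_{(d_r)_{r\ge 1}}$ the nonzero spectrum of $H_u^2$ consists of the $s_{2j-1}^2$ with multiplicity $d_{2j-1}+1$ and the $s_{2k}^2$ with multiplicity $d_{2k}$, while the nonzero spectrum of $K_u^2$ consists of the $s_{2k}^2$ with multiplicity $d_{2k}+1$ and the $s_{2j-1}^2$ with multiplicity $d_{2j-1}$; hence
$$\mathrm{Tr}(H_u^2)+\mathrm{Tr}(K_u^2)=\sum_{r\ge 1}(2d_r+1)\,s_r^2\ ,\qquad \mathrm{Tr}(K_u^2)=\mathrm{Tr}(H_u^2)-\Vert u\Vert_{L^2}^2\ .$$
Since $1\le\tfrac{2d_r+1}{d_r+1}\le 2$, and since $\sum_k s_{2k}^2\le\sum_j s_{2j-1}^2\le\mathrm{Tr}(H_u^2)$ by the interlacing $s_1>s_2>\cdots$, this shows that $u\in H^{1/2}_+$ if and only if $(s_r)_{r\ge 1}\in\Omega^{(2)}_\infty((d_r)_{r\ge 1})$. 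Combined with Theorem \ref{mainCompact}, this already gives that $\Phi$ restricts to a bijection from $\mathcal V^{(2)}_{(d_r)_{r\ge 1}}$ onto $\Omega^{(2)}_\infty((d_r)_{r\ge 1})\times\prod_{r\ge 1}\mathcal B_{d_r}$.

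For continuity of $\Phi$, I would take $u_n\to u$ in $H^{1/2}_+$. By continuity of $H^{1/2}_+\hookrightarrow VMO_+$ and Theorem \ref{mainCompact}, $\Psi_r(u_n)\to\Psi_r(u)$ in $\mathcal B_{d_r}$ for every $r$ and $(s_r(u_n))_r\to(s_r(u))_r$ in $c_0$, hence coordinatewise. Since $u\mapsto H_u$ and $u\mapsto K_u=S^*H_u$ are bounded linear maps from $H^{1/2}_+$ into the Hilbert--Schmidt class, $\mathrm{Tr}(H_{u_n}^2)\to\mathrm{Tr}(H_u^2)$ and $\mathrm{Tr}(K_{u_n}^2)\to\mathrm{Tr}(K_u^2)$, so by the identity above the weighted norm $\big(\sum_r(2d_r+1)s_r(u_n)^2\big)^{1/2}$ converges to $\big(\sum_r(2d_r+1)s_r(u)^2\big)^{1/2}$. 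Coordinatewise convergence of a norm-bounded sequence in a Hilbert space is weak convergence, and weak convergence together with convergence of norms gives strong convergence; hence $(s_r(u_n))_r\to(s_r(u))_r$ in $\ell^2_{2d_r+1}=\ell^2_{d_r+1}$, which is the topology of $\Omega^{(2)}_\infty((d_r)_{r\ge 1})$. Thus $\Phi$ is continuous.

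For continuity of $\Phi^{-1}$, I would take a convergent sequence $((s_r^{(n)}),(\Psi_r^{(n)}))\to((s_r),(\Psi_r))$ in the target and set $u_n:=\Phi^{-1}(\,\cdot\,)$, $u:=\Phi^{-1}(\,\cdot\,)$. The weighted $\ell^2$ topology being finer than the $c_0$ topology, Theorem \ref{mainCompact} gives $u_n\to u$ in $VMO_+$, hence in $L^2_+$, so $\Vert u_n\Vert_{L^2}\to\Vert u\Vert_{L^2}$; and the weighted $\ell^2$ convergence gives $\mathrm{Tr}(H_{u_n}^2)+\mathrm{Tr}(K_{u_n}^2)\to\mathrm{Tr}(H_u^2)+\mathrm{Tr}(K_u^2)$. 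Using $\mathrm{Tr}(K_v^2)=\mathrm{Tr}(H_v^2)-\Vert v\Vert_{L^2}^2$, I deduce $\mathrm{Tr}(H_{u_n}^2)\to\mathrm{Tr}(H_u^2)$, i.e. $\Vert u_n\Vert_{H^{1/2}}\to\Vert u\Vert_{H^{1/2}}$. Thus $(u_n)$ is bounded in $H^{1/2}_+$ and converges to $u$ in $L^2_+$, hence weakly in $H^{1/2}_+$; weak convergence plus convergence of norms then yields $u_n\to u$ strongly in $H^{1/2}_+$. This proves Theorem \ref{HilbertSchmidt}.

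I expect the only point requiring real care to be the multiplicity bookkeeping in $\mathrm{Tr}(H_u^2)$ and $\mathrm{Tr}(K_u^2)$: neither trace alone controls the weighted $\ell^2$ norm of $(s_r)$ --- a singular value $\sigma_k=s_{2k}$ with $d_{2k}=0$ does not appear in the spectrum of $H_u^2$, and is only a simple eigenvalue of $K_u^2$ --- so the argument must be run with the sum $\mathrm{Tr}(H_u^2)+\mathrm{Tr}(K_u^2)=\sum_r(2d_r+1)s_r^2$, which is equivalent to $\sum_r(d_r+1)s_r^2$. Everything else is a routine transfer from Theorem \ref{mainCompact} via the Hilbert-space principle that weak convergence together with convergence of norms implies strong convergence.
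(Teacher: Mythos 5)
Your proof is correct, and it reaches the theorem by a genuinely different organization than the paper. The paper's own argument (stated only in outline) is to re-run the proof of Theorem \ref{mainCompact} in the Hilbert--Schmidt setting: one repeats the finite-rank approximation and limiting scheme, except that the precompactness step based on the AAK theorem is replaced by the identity ${\rm Tr}(H_u^2)=\sum_r d_r s_r^2+\sum_j s_{2j-1}^2$, which gives uniform $H^{1/2}_+$ bounds, hence strong $L^2_+$ convergence by Rellich, and then strong $H^{1/2}_+$ convergence. You instead use Theorem \ref{mainCompact} as a black box and upgrade the topologies on both sides: the same trace identity (together with ${\rm Tr}(K_u^2)={\rm Tr}(H_u^2)-\Vert u\Vert_{L^2}^2$ and the interlacing bound $\sum_k s_{2k}^2\le\sum_j s_{2j-1}^2$) identifies $\mathcal V^{(2)}_{(d_r)}$ with the preimage of $\Omega^{(2)}_\infty\times\prod_r\mathcal B_{d_r}$, and then, on each side, coordinatewise (resp. $L^2$) convergence supplied by the compact-case homeomorphism is promoted to strong convergence in the weighted $\ell^2$ space (resp. in $H^{1/2}_+$, via the equivalent Hilbert norm ${\rm Tr}(H_u^2)^{1/2}$) by the standard ``weak convergence plus convergence of norms implies strong convergence'' principle. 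This buys economy --- no re-examination of spectral projections, diagonal extractions or identification of weak limits is needed --- at essentially no cost, since the only inputs are the continuous embeddings $H^{1/2}_+\hookrightarrow VMO_+$ and $\ell^2_{d_r+1}\hookrightarrow c_0$ and the linearity of $u\mapsto H_u$ into the Hilbert--Schmidt class. One small remark: your closing caveat that ``neither trace alone controls the weighted $\ell^2$ norm'' is slightly overstated as a norm bound, since by interlacing ${\rm Tr}(H_u^2)$ alone is comparable (within a factor $2$) to $\sum_r(d_r+1)s_r^2$; the real point, which you implement correctly, is that the quadratic form ${\rm Tr}(H_u^2)$ is degenerate on the coordinates with $d_{2k}=0$, so the weak-plus-norm argument must be run with the combined weight $2d_r+1$ (or else with a Pratt-type dominated convergence argument), exactly as you do.
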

The proof is essentially the same as the one of Theorem \ref{mainCompact}, except that the argument based on the AAK theorem is simplified by the identity
$${\rm Tr}(H_u^2)=\sum _{r=1}^\infty  d_r s_r^2(u)+\sum _{j=1}^\infty s _{2j-1}^2(u)\ ,$$
which provides bounds in $H^{1/2}_+$, hence strong convergence in $L^2_+$, and finally strong convergence in $H^{1/2}_+$. We leave the easy details to the reader.
\section{Evolution under the cubic Szeg\H{o} flow} \label{szegoflow}
\subsection{The theorem}
In this section, we prove the following result.
\begin{theorem}\label{evolszegotexte}
Let $u_0\in H^{1/2}_+$ with $$\Phi(u_0)=((s_r),(\Psi_r)).$$
The solution of $$i\partial_t u=\Pi(|u|^2u),\; u(0)=u_0$$ is characterized by
$$\Phi(u(t))=((s_r),(\expo_{i(-1)^rs_r^2t}\Psi _r))\ .$$
\end{theorem}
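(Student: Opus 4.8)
The plan is to exploit the Lax pair structure of the cubic Szeg\H{o} equation, which was established in \cite{GG1} and recalled in the introduction: along the flow, $H_{u(t)}$ stays unitarily equivalent to $H_{u_0}$ and $K_{u(t)}$ stays unitarily equivalent to $K_{u_0}$, via unitary operators $U(t)$ and $\tilde U(t)$ on $L^2_+$ solving appropriate linear ODEs. First I would recall the precise form of the Lax operators; writing $B_u := \tfrac{i}{2}H_u^2 - i T_{|u|^2}$ (where $T_b$ is the Toeplitz operator of symbol $b$), one has $\dot H_u = [B_u, H_u]$ and similarly $\dot K_u = [C_u, K_u]$ with $C_u := \tfrac{i}{2}K_u^2 - iT_{|u|^2}$. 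This immediately gives that $\Sigma_H(u(t))$, $\Sigma_K(u(t))$, and all the eigenvalue multiplicities are conserved, hence $s_r(t) = s_r$ and the degrees $d_r$ of the Blaschke products $\Psi_r$ are constant; in particular the Szeg\H{o} flow preserves each manifold $\mathcal V_{(d_1,\dots,d_n)}$ (and its infinite-rank analogue). So only the evolution of the $\Psi_r$ themselves remains to be computed.

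Next I would compute $\dot\Psi_r$. Fix $\rho = s_{2j-1} \in \Sigma_H(u(t))$ and let $u_\rho(t)$ be the orthogonal projection of $u(t)$ onto $E_{u(t)}(\rho)$; by Proposition \ref{action}, $\rho\, u_\rho = \Psi_{2j-1} H_{u}(u_\rho)$, so $\Psi_{2j-1} = \rho\, u_\rho / H_u(u_\rho)$ as a function on $\T$. I would differentiate this relation in $t$. The key inputs are: (i) the evolution of the spectral projector $P_\rho(t) = \int_{\mathcal C_\rho}(zI - H_{u(t)}^2)^{-1}\tfrac{dz}{2i\pi}$, which by the Lax equation satisfies $\dot P_\rho = [B_{u(t)}, P_\rho]$; (ii) the evolution $\dot u = -i\Pi(|u|^2 u) = -i T_{|u|^2}u = B_u(u) - \tfrac{i}{2}H_u^2(u)$ combined with $H_u(u)=\cdot$; one checks $u$ itself satisfies $\dot u = B_u u + \tfrac{i}{2}u$ after using $H_u^2 u$ on the cyclic vector, or more cleanly one tracks $u_\rho = P_\rho u$ directly. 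Carrying the differentiation through, the $B_u$-commutator terms conspire (because $B_u$ is the generator of the unitary conjugating $H_u$) to leave only the scalar contribution coming from the $\tfrac{i}{2}H_u^2$ piece evaluated on $E_u(\rho)$, where $H_u^2 = \rho^2 I$. The upshot should be $\dot u_\rho = B_u u_\rho + (\text{scalar})u_\rho$ and $\dot H_u(u_\rho) = B_u H_u(u_\rho) + (\text{conjugate scalar})H_u(u_\rho)$, with the net effect that $\dot\Psi_{2j-1}/\Psi_{2j-1}$ equals a purely imaginary constant; tracking the constant gives $\dot\Psi_{2j-1} = i\,\rho^2\,\Psi_{2j-1} = i\,s_{2j-1}^2\,\Psi_{2j-1}$, i.e. $\Psi_{2j-1}(t) = \expo_{i(-1)^{2j-1}s_{2j-1}^2 t}\Psi_{2j-1}(0)$ since $(-1)^{2j-1}=-1$. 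The same computation with $K_{u(t)}$, $C_{u(t)}$, and $u'_\sigma = P'_\sigma u$, $\sigma = s_{2k}$, using $K_u(u'_\sigma) = \sigma\Psi_{2k}u'_\sigma$ and $K_u^2 = \sigma^2 I$ on $F_u(\sigma)$, yields $\dot\Psi_{2k} = -i\,\sigma^2\,\Psi_{2k} = \expo_{i(-1)^{2k}s_{2k}^2 t}$-evolution since $(-1)^{2k}=+1$. Assembling both cases gives exactly $\Phi(u(t)) = ((s_r),(\expo_{i(-1)^r s_r^2 t}\Psi_r))$.

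Since $\Psi_r$ only carries a single overall phase factor, the monic Schur polynomial part $\chi_r$ is unchanged and only the angle $\psi_r$ moves, with $\frac{d\psi_r}{dt} = (-1)^{r-1}s_r^2$, which is the statement in Theorem \ref{evolszego}; and since $H^{1/2}_+$ is preserved by the flow (by the wellposedness result recalled from \cite{GG1}), the formula is consistent with $u(t)\in H^{1/2}_+$ throughout. I also note that formula \eqref{luminy} of Theorem \ref{Phibij} then gives, in the finite-rank case, an explicit closed form for $u(t)$: one simply substitutes $\Psi_{2j-1}(z)\mapsto \expo_{-i s_{2j-1}^2 t}\Psi_{2j-1}(z)$ and $\Psi_{2k}(z)\mapsto \expo_{i s_{2k}^2 t}\Psi_{2k}(z)$ into the matrix $\mathcal C(z)$, which furnishes an independent verification on a dense set of the phase space and in particular recovers the generic-case formula $\frac{d\psi_r}{dt}=(-1)^{r-1}s_r^2$ of \cite{GG2}.

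The main obstacle will be step two: correctly isolating, out of the time-derivative of the identity $\rho u_\rho = \Psi_{2j-1} H_u(u_\rho)$, the scalar by which $\Psi_{2j-1}$ is multiplied. The $B_u$-terms cancel formally because $B_u$ generates the conjugating unitary, but one must be careful that $B_u$ does not literally commute with multiplication by the pointwise function $\Psi_{2j-1}$ on $\T$ — rather, the cancellation happens at the level of the abstract vectors $u_\rho$ and $H_u(u_\rho)$ in $L^2_+$, and one then re-reads the identity pointwise. Getting the sign and the factor of the surviving term right (distinguishing the $H_u^2 = \rho^2$ contribution to $\dot u_\rho$ from the $H_u^2$ contribution to $\dot{H_u(u_\rho)}$, which because of antilinearity come with conjugate phases and hence add rather than cancel in the quotient) is the delicate point. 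A clean way to organize this, which I would adopt, is to first prove the scalar evolution law for generic $u$ where all $\Psi_r = \expo_{-i\psi_r}$ have degree $0$ — this is already in \cite{GG2} — and then upgrade to arbitrary $u$ by density: $\mathcal V(d)_{\mathrm{gen}}$ is dense in $\mathcal V(d)$, the map $\Phi$ is continuous by Theorem \ref{Phitopodif}, and the Szeg\H{o} flow depends continuously on initial data in $H^{1/2}_+$, so the formula $\Phi(u(t)) = ((s_r),(\expo_{i(-1)^r s_r^2 t}\Psi_r))$ extends from a dense set to all of $\mathcal V(d)$, hence to all of $H^{1/2}_+$ after letting $d\to\infty$ using Theorem \ref{HilbertSchmidt}.
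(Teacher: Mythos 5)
Your overall frame (Lax pair conserves the spectral data, then compute $\dot\Psi_r$ by differentiating $\rho\,u_\rho=\Psi\,H_u(u_\rho)$ and $K_u(u'_\sigma)=\sigma\Psi u'_\sigma$) is the paper's strategy, but the step you yourself flag as delicate is exactly where the proof lives, and you do not supply it. After differentiating, one is left with the identity $-i[T_{|u|^2},\Psi]\,H_u(u_\rho)=(\dot\Psi+i\rho^2\Psi)H_u(u_\rho)$, and the whole theorem reduces to proving that the commutator term vanishes: $[T_{|u|^2},\Psi]H_u(u_\rho)=0$ (and the analogue $([T_{|u|^2},\chi_p]f\vert 1)=0$ for $f\in F_u(\sigma)$ with $\chi_p f\in F_u(\sigma)$). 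This is not a formal consequence of ``$B_u$ generates the conjugating unitary''; the paper proves it factor by factor, using that $[\Pi,\chi_p]$ has one-dimensional range spanned by $1/(1-\overline p z)$, the structure of $E_u(\rho)$ and $F_u(\sigma)$ from Proposition \ref{action} (so that $\chi_p e$ again lies in the eigenspace), and identities like $F_u(\sigma)\cap 1^\perp=E_u(\sigma)=F_u(\sigma)\cap u^\perp$. Without some version of this computation your ``direct'' route is incomplete. (Minor but symptomatic: your intermediate ODEs are internally inconsistent with your conclusions — $\dot\Psi_{2j-1}=i\rho^2\Psi_{2j-1}$ integrates to $\expo_{+i\rho^2t}$, not $\expo_{-i\rho^2t}$; the correct signs are $\dot\Psi_{2j-1}=-i\rho^2\Psi_{2j-1}$, $\dot\Psi_{2k}=+i\sigma^2\Psi_{2k}$.)

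The fallback you propose — prove the law on $\mathcal V(d)_{\rm gen}$ (known from \cite{GG2}) and extend by density using Theorem \ref{Phitopodif} and continuity of the flow — does not close the gap, because $\Phi$ is only continuous stratum by stratum: Theorem \ref{Phitopodif} concerns $\Phi_n$ on $\mathcal U_n$ and the fixed-degree manifolds $\mathcal V_{(d_1,\dots,d_n)}$, and says nothing about limits in which singular values of generic approximants collide. If $u_0$ is non-generic and $u_0^\e\to u_0$ with $u_0^\e$ generic, then $u^\e(t)\to u(t)$ in $H^{1/2}_+$, but the data $\Phi(u^\e(t))$ (many simple singular values, degree-zero Blaschke products) do not converge to $\Phi(u(t))$ in any topology controlled by the cited theorems: the higher-degree Blaschke product of the limit emerges from the collapse in a way that depends on the relative rates and phases of the colliding eigenvalues — this is precisely the delicate analysis of Lemma \ref{non empty}, with its parameters $\eta$, $\theta_1$, $\varphi_2$. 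In particular, the generic evolution law carries no information about whether the monic part $\chi_r$ of a degree $d_r\ge 1$ Blaschke product is frozen under the flow, which is the genuinely new content of this theorem beyond \cite{GG2}; so the limiting argument would have to redo, quantitatively, the degeneration analysis it was meant to avoid. (Note the paper's own density reduction is much milder: it only reduces from infinite to finite rank, and then performs the commutator computation directly on non-generic finite-rank data.)
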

\begin{remark} It is in fact possible to define the  flow of the cubic Szeg\H{o}  on $BMO_+=BMO(\T )\cap L^2_+$, see \cite{GK}.  The above theorem then extends to the case of  an initial datum $u_0$ in $VMO_+$ .
\end{remark}
\begin{proof}
In view of the continuity of the flow map on $H^{1/2}_+$,  see \cite{GG1}, we may assume that $H_{u_0}$ is of finite rank. Let $u$  be the corresponding solution of the cubic Szeg\H{o} equation. 
Let $\rho$ be a singular value of $H_u$ in $\Sigma_H(u)$ such that $m:=\dim E_u(\rho)=\dim F_u(\rho)+1$ and denote by $u_\rho$ the orthogonal projection of $u$ on $E_u(\rho)$.
Hence, $u_\rho=1\!{\rm l}_{\{ \rho^2\}}(H_u^2)(u)$. Let us differentiate this equation with respect to time. 
Recall \cite{GG1}, \cite{GG4} that 
\begin{equation}\label{Laxpair}
\frac{dH_u}{dt}=[B_u,H_u] \text{ with }B_u=\frac i2H_u^2-i T_{|u|^2}\ .
\end{equation}
Here $T_b $ denotes the Toeplitz operator of symbol $b$,
\begin{equation}\label{Toeplitz}
T_b(h)=\Pi (bh)\ ,\ h\in L^2_+\ ,\ b\in L^\infty \ .
\end{equation}
Equation (\ref{Laxpair}) implies, for every Borel function $f$,
$$\frac{df(H_u^2)}{dt}=-i[T_{\vert u\vert ^2},f(H_u^2)]\ .$$
We get from this Lax pair structure
\begin{eqnarray*}
\frac{d u_\rho}{dt}&=& -i[T_{\vert u\vert ^2},1\!{\rm l}_{\{ \rho^2\}}(H_u^2)](u)+1\!{\rm l}_{\{ \rho^2\}}(H_u^2)\left (\frac{du}{dt}\right )\\
&=&  -i[T_{\vert u\vert ^2},1\!{\rm l}_{\{ \rho^2\}}(H_u^2)](u)+1\!{\rm l}_{\{ \rho^2\}}(H_u^2)\left (-iT_{\vert u\vert ^2}u\right )\ ,
\end{eqnarray*}
and eventually
\begin{equation}\label{deriveurho}
\frac{d u_\rho}{dt}=-iT_{\vert u\vert ^2}u_\rh \ .
\end{equation}

On the other hand, differentiating the equation
$$\rho u_\rho=\Psi H_u(u_\rho)$$ one obtains
$$
\rho \frac{d u_\rho}{dt}=\dot \Psi H_u(u_\rho)+\Psi\left([B_u,H_u](u_\rho)+H_u\left( \frac{d u_\rho}{dt}\right)\right )$$
Hence, using the expression (\ref{deriveurho}), we get
$$-i\rho T_{\vert u\vert ^2}(u_\rho)=\dot \Psi H_u(u_\rh )+\Psi \left (-iT_{\vert u\vert ^2}H_u(u_\rh ) +i\rh ^2H_u(u_\rh )\right )\ ,$$
hence
$$-i[T_{\vert u\vert ^2},\Psi ] H_u(u_\rh )=(\dot \Psi +i\rh ^2\Psi )H_u(u_\rh )\ .$$
We claim that the left hand side of this equality is zero. Assume this claim proved, we get, as $H_u(u_\rho)$ is not identically zero, that 
$\dot \Psi +i\rh ^2\Psi =0$, whence 
$$\Psi (t)=\expo_{-it\rh ^2}\Psi (0)\ .$$

It remains to prove the claim.
We first prove that, for any $p\in\D$ such that $\chi_p$ is a factor of $\chi$,
$$[T_{\vert u\vert ^2},\chi_p](e)=0$$
  for any $e\in E_u(\rho)$ such that $\chi _pe\in E_u(\rh )$.
Recall that
$$\chi _p(z)=\frac{z-p}{1-\overline pz}\ .$$
For any $L^2$ function $f$, $$\Pi(\chi_pf)-\chi_p\Pi(f)=K_{\chi _p}(g)=(1-\vert p\vert ^2)H_{1/(1-\overline pz)}(g)\ ,$$
where $\overline {(I-\Pi )f}=Sg\ .$ Consequently, the range of $[\Pi ,\chi _p]$ is one dimensional, directed by  $\frac 1{1-\overline pz}$. 
In particular, $[T_{\vert u\vert ^2},\chi_p](e)$ is proportional to $\frac 1{1-\overline pz}$. On the other hand,
\begin{eqnarray*}
([T_{\vert u\vert ^2},\chi_p](e)\vert 1)&=&(T_{|u|^2}(\chi_p e)-\chi_pT_{|u|^2}(e))\vert 1)\\
&=&(\chi_p(e)\vert H_u^2(1))-(\chi_p\vert 1)(e\vert H_u^2(1))\\
&=&(H_u^2(\chi_p(e))\vert 1)-(\chi_p\vert 1)(H_u^2(e)\vert 1)=0\ .
\end{eqnarray*}
This proves that $[T_{\vert u\vert ^2},\chi_p](e)=0$.

For the general case, we write $\Psi=\expo_{-i\psi}\chi_{p_1}\dots\chi_{p_{m-1}}$ and 
$$
[T_{\vert u\vert ^2},\Psi]H_u(u_\rho)=\expo_{-i\psi}\sum_{j=1}^{m-1}\prod_{k=1}^{j-1}\chi_{p_k}[T_{\vert u\vert ^2},\chi_{p_j}]\prod_{k=j+1}^{m-1}\chi_{p_k}H_u(u_\rho)=0\ .
$$
It remains to consider the evolution of the $\Psi _{2k}$'s. Let $\sigma$ be a singular value of $K_u$ in $\Sigma_K(u)$ such that $\dim F_u(\sigma)=\dim E_u(\sigma)+1$ and denote by $u'_\sigma$ the orthogonal projection of $u$ onto $F_u(\si )$. Recall \cite{GG4} that
$$\frac{dK_u}{dt}=[C_u,K_u]\text{ with }C_u=\frac i2K_u^2-i T_{|u|^2} \ .$$
As before, we compute the derivative in time of $u'_\sigma=1\!{\rm l}_{\{\sigma^2\}}(K_u^2)(u)$, and get
\begin{equation}\label{deriveuprimesigma}
 \frac{d u'_\sigma}{dt}=-iT_{\vert u\vert ^2}u'_\sigma\ .
\end{equation}
On the other hand, differentiating the equation
$$K_u(u'_\sigma)=\sigma \Psi u'_\sigma$$
one obtains
$$
-i[T_{\vert u\vert ^2}, \Psi ]u'_\si =(\dot \Psi -i\si ^2\Psi )u'_\si \ .
$$
As before, we prove that the left hand side of the latter identity is $0$, by checking that, for every factor $\chi _p$ of $\Psi $, for any $f\in F_u(\si )$
such that $\chi _pf\in F_u(\si )$, 
$$([T_{\vert u\vert ^2},\chi _p](f)\vert 1)=0\ .$$
The calculation leads to
\beno
([T_{\vert u\vert ^2},\chi _p](f)\vert 1)&=&(H_u^2(\chi _pf)-(\chi _p\vert 1)H_u^2(f)\vert 1)\\
&=&((\chi _p-(\chi _p\vert 1))f\vert u)(u\vert 1),
\eeno 
where we have used (\ref{Ku2}). Now $(\chi _p-(\chi _p\vert 1))f\in F_u(\si )$ is orthogonal to $1$, hence, from Proposition \ref{action}, it belongs to $E_u(\si )$, hence it is orthogonal to $u$. This completes the proof.

\end{proof}

\subsection{Application: traveling waves revisited}

As an application of Theorem \ref{Phibij} and of the previous section, we revisit the traveling waves of the cubic Szeg\H{o} equation. These are the solutions of the form
 $${u(t,e^{ix})=e^{-i\omega t}u_0(e^{i(x -ct)})\ ,\ \omega,c\in\R\ .}$$
 For $c=0$, it is easy to see \cite{GG1} that this condition for $u_0\in H^{1/2}_+$ corresponds to finite Blaschke product. The problem of characterizing traveling waves with  $c\ne 0$ is more delicate, and was solved in \cite{GG1} by the following result.

\begin{theo} \cite{GG1}
A function $u$ in $H^{1/2}_+$ is a traveling wave with $c\neq 0$ and $\omega\in \R$ if and only if  there exist non negative integers $\ell$ and $N$, $0\le \ell\le N-1$,  $\alpha\in \R$  and a complex number $p\in\C$ with $0<|p|<1$ so that
$$u(z)=\frac{\alpha z^\ell}{1-pz^N}$$
\end{theo}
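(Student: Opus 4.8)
The plan is to read off the spectral data $\Phi(u_0)$ of a traveling wave by confronting Theorem~\ref{evolszegotexte} with the way $\Phi$ reacts to the two one-parameter symmetry groups of equation~(\ref{szego}): phase multiplication $u\mapsto\lambda u$ ($|\lambda|=1$) and rotation $u\mapsto U_\theta u$, $(U_\theta u)(z):=u({\rm e}^{i\theta}z)$. The two transformation rules I will use are: first, since $H_{\lambda u}=\lambda H_u$ and $K_{\lambda u}=\lambda K_u$, the squares $H_u^2,K_u^2$, hence the singular values $(s_r)$, the eigenspaces, and the sets $\Sigma_H(u),\Sigma_K(u)$ are unchanged by $u\mapsto\lambda u$, while Proposition~\ref{action} gives $\Psi^{\lambda u}_{2j-1}=\lambda\,\Psi^u_{2j-1}$ and $\Psi^{\lambda u}_{2k}=\bar\lambda\,\Psi^u_{2k}$; second, $U_\theta$ is a \emph{multiplicative} unitary of $L^2_+$, so a Fourier computation gives $H_{U_\theta u}=U_\theta H_u U_\theta^{-1}$, whereas the relation $S^*U_\theta={\rm e}^{i\theta}U_\theta S^*$ forces $K_{U_\theta u}={\rm e}^{i\theta}U_\theta K_u U_\theta^{-1}$; again $(s_r)$ is unchanged, and Proposition~\ref{action} yields $\Psi^{U_\theta u}_{2j-1}(z)=\Psi^u_{2j-1}({\rm e}^{i\theta}z)$ and $\Psi^{U_\theta u}_{2k}(z)={\rm e}^{i\theta}\,\Psi^u_{2k}({\rm e}^{i\theta}z)$.

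A traveling wave with $c\neq0$ is $u(t)={\rm e}^{-i\omega t}U_{-ct}u_0$. Computing $\Phi(u(t))$ from the rules above and comparing with the formula $\Phi(u(t))=((s_r),({\rm e}^{i(-1)^r s_r^2 t}\Psi^{u_0}_r))$ of Theorem~\ref{evolszegotexte}, one obtains, for each $r$ and every $t\in\R$, an identity $\Psi^{u_0}_r({\rm e}^{-ict}z)=\nu_r(t)\,\Psi^{u_0}_r(z)$ with $|\nu_r(t)|=1$. Composing a finite Blaschke product with the rotation $z\mapsto\mu z$ only replaces its zero set $\{p_j\}$ by $\{\bar\mu p_j\}$; as $c\neq0$ makes $\{{\rm e}^{-ict}:t\in\R\}$ dense in $\T$, the zero set of each $\Psi^{u_0}_r$ must be invariant under every rotation, which is possible only if all zeros sit at the origin. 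Hence each $\Psi^{u_0}_r(z)={\rm e}^{-i\psi_r}z^{d_r}$ with $d_r:=\deg\Psi_r$ (equivalently, the normalized denominators satisfy $D_r\equiv1$), so $\nu_r(t)={\rm e}^{-icd_r t}$, and identifying exponents gives the resonance relations $s_{2j-1}^2=\omega+c\,d_{2j-1}$ and $s_{2k}^2=\omega-c\,(d_{2k}+1)$.

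By Proposition~\ref{rigidity} the data must interlace, $\rho_1>\sigma_1>\rho_2>\sigma_2>\cdots$, with $\rho_j^2=\omega+c\,d_{2j-1}$ and $\sigma_k^2=\omega-c(d_{2k}+1)$; but then $\rho_1^2-\sigma_1^2=c(d_1+d_2+1)>0$ forces $c>0$, while the existence of a second $\rho$ would give $\sigma_1^2-\rho_2^2=-c(d_2+1+d_3)>0$, forcing $c<0$ — a contradiction. Thus $\Sigma_H(u_0)$ and $\Sigma_K(u_0)$ each reduce to a single point $\rho_1>\sigma_1\geq0$ and $H_{u_0}$ has finite rank; if $\sigma_1=0$ then there is no $\Psi_2$ and formula~(\ref{luminy}) gives $u_0=\rho_1{\rm e}^{-i\psi_1}z^{d_1}$, a monomial and hence a standing wave, so $c\neq0$ forces $\sigma_1>0$. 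With $q=1$ and $\sigma_1>0$, formula~(\ref{luminy}) reads $u_0=\Psi^{u_0}_1/c_{11}$ with $c_{11}(z)=(\rho_1-\sigma_1 z\,\Psi^{u_0}_1(z)\Psi^{u_0}_2(z))/(\rho_1^2-\sigma_1^2)$, and inserting the monomials yields
$$u_0(z)=\frac{{\rm e}^{-i\psi_1}\dfrac{\rho_1^2-\sigma_1^2}{\rho_1}\,z^{d_1}}{\,1-\dfrac{\sigma_1}{\rho_1}\,{\rm e}^{-i(\psi_1+\psi_2)}\,z^{\,d_1+d_2+1}}\ ,$$
which is $\dfrac{\alpha z^\ell}{1-pz^N}$ with $\ell=d_1$, $N=d_1+d_2+1$, $0\leq\ell\leq N-1$, $|p|=\sigma_1/\rho_1\in(0,1)$, the coefficient being made real by the symmetry $u\mapsto{\rm e}^{i\gamma}u$. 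Conversely, formula~(\ref{luminy}) at $q=1$ carries every datum $((\rho_1,\sigma_1),({\rm e}^{-i\psi_1}z^{d_1},{\rm e}^{-i\psi_2}z^{d_2}))$ with $\rho_1>\sigma_1>0$ onto a function of this shape, these sweeping out exactly the $\alpha z^\ell/(1-pz^N)$ with $0<|p|<1$ and $0\leq\ell\leq N-1$; and for such a datum Theorem~\ref{evolszegotexte} makes the Szeg\H{o} evolution equal to ${\rm e}^{-i\omega t}u_0({\rm e}^{-ict}\,\cdot\,)$ with $c=(\rho_1^2-\sigma_1^2)/N\neq0$ and $\omega=\rho_1^2-c\,d_1$, so $u_0$ is a traveling wave. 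The step I expect to be delicate is the precise phase bookkeeping in the transformation rules — in particular the extra factor ${\rm e}^{i\theta}$ in $\Psi^{U_\theta u}_{2k}$ coming from $S^*U_\theta={\rm e}^{i\theta}U_\theta S^*$ — since a slip there corrupts the resonance relations and destroys the final identity $\rho_1^2-\sigma_1^2=cN$ matching the denominator exponent to the rotation speed; the Blaschke-product rotation argument and the evaluation of~(\ref{luminy}) at $q=1$ are then routine.
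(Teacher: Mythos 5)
Your proposal is correct and follows essentially the same route as the paper: you read off the action of the phase and rotation symmetries on $\Phi(u)$, compare with Theorem \ref{evolszegotexte} to force all the Blaschke products to be monomials together with the resonance relations $s_{2j-1}^2=\omega+c\,d_{2j-1}$ and $s_{2k}^2=\omega-c\,(d_{2k}+1)$, rule out a second element of $\Sigma_H(u_0)$ by the interlacing of Proposition \ref{rigidity}, and conclude with formula (\ref{luminy}) at $q=1$ — exactly the paper's argument via $\tau_\alpha$. The only differences are cosmetic (conjugation identities for $H_u,K_u$ in place of transporting the defining relations by $\tau_\alpha$, plus an explicit converse), and your quick dismissals of the borderline issues (the case $\sigma_1=0$, and making $\alpha$ real by a phase) are no more rigorous than the paper's own silence on these points.
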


Here we give an elementary proof of this theorem.
\begin{proof}
The idea is to keep track of the Blaschke products associated to $u$ through  the following unitary transform on $L^2(\T )$,
$$\tau _\alpha f(\expo_{ix}):=f(\expo_{i(x-\alpha)})\ ,\ \alpha \in \R .$$
Since $\tau _\alpha $ commutes to $\Pi $, notice that 
$$\tau _\alpha (H_u(h))=H_{\tau _\alpha (u)}(\tau _\alpha (h))\ .$$
Consequently, $\tau _\alpha $ sends $E_u(\rh )$ onto $E_{\tau _\alpha (u)} (\rh )$, and 
$$\tau _\alpha (u_\rh )=[\tau _\alpha (u)]_\rh \ .$$
Applying $\tau _\alpha $ to the identity
$$\rh u_\rh =\Psi_\rh H_u(u_\rh )\ ,$$
we infer
$$\rh [\tau _\alpha (u)]_\rh =\tau _\alpha (\Psi_\rh )H_{\tau _\alpha (u)}\left ([\tau _\alpha (u)]_\rh \right )\ ,$$
and similarly 
$$\rh [\expo_{-i\beta }\tau _\alpha (u)]_\rh =\expo_{-i\beta }\tau _\alpha (\Psi_\rh )H_{\expo_{-i\beta }\tau _\alpha (u)}\left ([\expo_{-i\beta }\tau _\alpha (u)]_\rh \right )\  .$$
This leads, for every $\rh \in \Sigma _H(u)$, to
$$\Psi _\rh (\expo_{-i\beta }\tau _\alpha (u))=\expo_{-i\beta }\tau _\alpha (\Psi _\rh (u))\ .$$
Applying this identity to $u=u_0$, $\alpha =ct$ and $\beta =\omega t$, and comparing with Theorem \ref{evolszego}, we conclude
$$\expo_{-it\rh ^2}\Psi _\rh (u_0)=\expo_{-i\omega t }\tau _{ct}(\Psi _\rh (u_0))\ .$$
Writing 
$$\Psi _\rh (u_0)={\rm e}^{-i\varphi}\prod_{1\le j\le m-1} \chi_{p_j},$$
we get, for every $t$,
$$\expo_{-it\rh ^2}\prod_{1\le j\le m-1} \chi_{p_j}=\expo_{-it(\omega +c(m-1))}\prod_{1\le j\le m-1} \chi_{\expo_{ict}p_j}\ .$$
This imposes, since $c\ne 0$, 
$$\rho^2=\omega +(m-1)c\ ,\ p_j=0\ ,$$
for every $\rho \in \Sigma _H(u_0)$. In other words, $\Psi _\rh (u_0)(z)=\expo_{-i\varphi } z^{m-1}\ .$
\s
We repeat the same argument for $\si \in \Sigma _K(u)$,  with $\ell=\dim F_u(\sigma)=\dim E_u(\sigma)+1$ and $$K_u(u'_\sigma)=\sigma \Psi_\sigma u'_\sigma\ ,$$
using this time
$$\tau _\alpha (K_u(h))=\expo _{i\alpha }K_{\tau _\alpha (u)}(\tau _\alpha (h))\ .$$
We get 
$$\sigma^2=\omega-\ell c \ ,$$
and 
$$\Psi _\si (u_0)(z)=\expo_{-i\theta}z^{\ell -1}\ .$$
If we assume that there exists at least two elements $\rh _1>\rh _2$ in $\Sigma _H(u_0)$, 
with $m_j=\dim E_{u_0}(\rho_j)$ for $j=1,2$, from Proposition \ref{rigidity}, there is at least one element $\sigma _1$ in $\Sigma _K(u_0)$, satisfying 
$$\rho _1 > \sigma_1  > \rho _2.$$
Set $\ell _1:=\dim F_{u_0}(\si _1)$, we get
$$(m_1-1)c > -\ell _1 c > (m_2-1)c$$ which is impossible since $m_1,\ell_1, m_2$ are positive integers.
Therefore,  there is only one element $\rho$ in $\Sigma _H(u_0)$, with $m=\dim E_{u_0}(\rho)$ and at most one element $\sigma$ in $\Sigma _K(u_0)$, of multiplicity $\ell$. 
Applying the results of section \ref{section explicit}, we obtain
$$u_0(z)=\frac {(\rh ^2-\si ^2)\expo_{-i\varphi }}{\rh }\frac{z^{m-1}}{1-\frac{\sigma }\rh {\rm e}^{-i(\varphi+\theta)} z^{\ell +m-1}}\ .$$
This completes the proof.
\end{proof}

\subsection{Application to almost periodicity}
As a second application of our main result, we prove that the solutions of the Szeg\H{o} equation are almost periodic. Let us recall a definition. Let $X$ be a Banach space. A function
$$f: \R \longrightarrow X$$
is almost periodic  if it is the uniform limit of quasi-periodic functions, namely finite linear combinations of functions 
$$t\longmapsto \expo_{i\omega t}x\ ,$$
where $x\in X$ and $\omega \in \R $. Of course, from the explicit formula obtained in Theorem \ref{Phibij} and from the evolution under the cubic Szeg\H{o} flow, for any $u_0\in \mathcal V(d)$, the solution $u(t)$ is quasi-periodic. This is also  a consequence of the results of \cite{GG4}.
It remains to consider data in $H^{1/2}_+$ corresponding to infinite rank Hankel operators. We are going to use Bochner's criterion, see chapters 1, 2 of \cite{LZ}, namely that $f\in C(\R ,X)$ is almost periodic if and only if it is bounded and the set of functions
$$f_h:t\in \R \longmapsto f(t+h)\in X\ ,\  h\in \R \ ,$$
is relatively compact in the space of bounded continuous functions valued in $X$.

Let $u_0\in \mathcal V^{(2)}_{(d_r)_{r\ge 1}}$. Set 
$$\Phi (u_0)=((s_r)_{r\ge 1}, (\Psi _r)_{r\ge 1})\ .$$
Then, from Theorem \ref{evolszego}, 
$$\Phi (u(t))= ((s_r)_{r\ge 1}, (\expo_{i(-1)^rs_r^2t}\Psi _r)_{r\ge 1})\ .$$
By Theorem \ref{HilbertSchmidt}, it is enough to prove that the set of functions
$$t\in \R \longmapsto \Phi (u(t+h))\in \Omega ^{(2)}_\infty \times \prod _{r=1}^\infty \mathcal B_{d_r}$$
is relatively compact in $C(\R , \Omega ^{(2)}_\infty \times \prod _{r=1}^\infty \mathcal B_{d_r})$. This is equivalent to the relative compactness
of the family $(\expo_{i(-1)^rs_r^2h})_{r\ge 1}$ in $(\S ^1)^\infty $, $h\in \R $, which is trivial.

\section{Evolution under the Szeg\H{o} hierarchy} \label{szegohier}

The Szeg\H{o} hierarchy was introduced in \cite{GG1} and used in \cite{GG2} and \cite{GG4}. 
In \cite{GG2}, it was used to identify the symplectic form on the generic part of $\mathcal V(d)$.
Similarly, our purpose in this section is  to establish preliminary formulae, towards the identification of the symplectic form 
on $\mathcal V_{(d_1,\dots,d_n)}$ in the next section.
\s
For the convenience of the reader, we recall the main properties of the hierarchy. For $y>0$ and $u\in H^{\frac 12}_+$, we set
$$J^y(u)= ((I+yH_u^2)^{-1}(1)\vert 1)\ .$$
Notice that the connection with the Szeg\H{o} equation is made by
$$E(u)=\frac 14(\pa _y^2J^y_{\vert y=0}-(\pa _yJ^y_{\vert y=0})^2)\ .$$
Thanks to formula (\ref{J}) in Appendix A, $J^y(u)$ is a function of the singular values $s_r(u)$.
For every $s>\frac 12$, $J^y $ is a smooth real valued function on $H^s_+$, and its Hamiltonian vector field  is given by
$$X_{J^y}(u)=2iy w^yH_uw^y\ ,\ w^y:=(I+yH_u^2)^{-1}(1)\ ,$$
which is a Lipschitz vector field on bounded subsets of $H^s_+$. 
By the Cauchy--Lipschitz theorem, the evolution equation 
\begin{equation}\label{hierarchy}
\dot u=X_{J^y}(u)
\end{equation}
admits local in time solutions for every initial data in $H^s_+$ for $s>1$, and the lifetime is bounded from below if the 
data are bounded in $H^s_+$. We recall that this evolution equation admits a Lax pair structure (\cite{GG4}). 
\begin{theorem}\label{Laxhier}
For every $u\in H^s_+$, we have
\begin{eqnarray*}
H_{iX_{J^y}(u)}&=&H_u F_u^y+F_u^yH_u\ ,\\
K_{iX_{J^y}(u)}&=&K_uG_u^y+G_u^yK_u\ ,\\
 G_u^y(h)&:=&-yw^y\, \Pi (\overline {w^y}\, h)+y^2H_uw^y\, \Pi (\overline {H_uw^y}\, h)\ ,\\
 F_u^y(h)&:=&G_u^y(h)-y^2(h\vert H_uw^y)H_uw^y\ .
\end{eqnarray*}
If $u\in C^\infty (\mathcal I ,H^s_+)$ is a solution of equation (\ref{hierarchy}) on a time interval $\mathcal I$, then
\begin{eqnarray*}
\frac{dH_u}{dt}&=&[B_u^y,H_u]\ ,\ \frac{dK_u}{dt}=[C_u^y,K_u]\ ,\\
B_u^y&=&-iF_u^y\ ,\ C_u^y=-iG_u^y\ .
\end{eqnarray*}
\end{theorem}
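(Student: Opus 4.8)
The plan is to isolate the two operator identities for $H_{iX_{J^y}(u)}$ and $K_{iX_{J^y}(u)}$, which carry the whole content, and to read off the Lax pair from them; this reproduces the Lax structure of \cite{GG4}. Granting the identities, the evolution equations follow from two soft facts: the map $v\mapsto H_v$ is $\C$-linear in the symbol, so $\frac d{dt}H_{u(t)}=H_{\dot u(t)}$ along any $C^1$ curve in $H^s_+$, and $H_u$ is antilinear. Indeed, along a solution of \eqref{hierarchy} one has $\frac{dH_u}{dt}=H_{X_{J^y}(u)}=-i\,H_{iX_{J^y}(u)}=-i\bigl(H_uF_u^y+F_u^yH_u\bigr)$, while, since $F_u^y$ is linear,
\[
[-iF_u^y,H_u](h)=-iF_u^y(H_uh)-H_u\bigl(-iF_u^yh\bigr)=-iF_u^y(H_uh)-i\,H_u(F_u^yh)=-i\bigl(F_u^yH_u+H_uF_u^y\bigr)(h),
\]
so $\frac{dH_u}{dt}=[B_u^y,H_u]$ with $B_u^y=-iF_u^y$; replacing $(H_u,F_u^y,B_u^y)$ by $(K_u,G_u^y,C_u^y)$ gives the companion equation for $K_u$. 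Thus everything reduces to the first two identities.

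For these, the starting point is $X_{J^y}(u)=2iy\,w^yH_u(w^y)$, hence $iX_{J^y}(u)=-2y\,w^y\,g$ with $g:=H_u(w^y)$. Since $H_u$ commutes with $H_u^2$, one has $g=(I+yH_u^2)^{-1}(u)$ and the resolvent relations $H_u^2w^y=(1-w^y)/y$, $H_u^2g=(u-g)/y$, which will be used systematically to collapse the chains of $H_u^2$ that appear. Moreover, for $u\in H^s_+$ with $s>1$ one has $w^y,g\in H^s_+\subset L^\infty$ (because $H_u^2$ maps $H^s_+$ into itself and $I+yH_u^2$ is invertible there), so every pointwise product below lives in $H^s_+$ and one may freely use the elementary Hankel calculus: for $a,b\in H^s_+$ and $h\in L^2_+$,
\[
H_v=H_{\Pi v},\qquad H_u(ab)=T_{\overline b}\,H_u(a),\qquad H_{H_u(a)}=H_u\,T_a,\qquad H_{ab}(h)=T_aH_b(h)+H_aT_{\overline b}(h)-(b\,|\,h)\,a,
\]
where $T_b(h):=\Pi(bh)$; these are complemented by \eqref{Ku2} and the commutator identity $H_u^2=T_{|u|^2}-T_uT_{\overline u}+(\cdot\,|\,u)u$ (so that $T_uT_{\overline u}=T_{|u|^2}-K_u^2$).

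The main computation then substitutes $iX_{J^y}(u)=-2y\,w^yg$ into the bilinear formula with $a=w^y$, $b=g$, rewrites $H_g=H_{H_u(w^y)}=H_u\,T_{w^y}$, expands the remaining Toeplitz and Hankel pieces via $H_u(w^y\cdot\,\cdot\,)=T_{\overline{\,\cdot\,}}\,H_u(w^y)$, and replaces each generated $H_u^2$ by the resolvent relations above. One checks that the ``multiplier'' part of the result reassembles exactly into $-y\,w^y\Pi(\overline{w^y}\,\cdot\,)+y^2\,g\,\Pi(\overline g\,\cdot\,)=G_u^y$, while the rank-one remainders produced by the commutators with $\Pi$ and by the term $-(b\,|\,h)a$ add up, on the $H$-side, precisely to $-y^2(\cdot\,|\,H_uw^y)H_uw^y$; this yields $H_{iX_{J^y}(u)}=H_uF_u^y+F_u^yH_u$ with $F_u^y=G_u^y-y^2(\cdot\,|\,H_uw^y)H_uw^y$. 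The identity for $K$ is then obtained either by repeating the argument with $K_u=S^*H_u=H_uS$ and the shift relation $SS^*=I-(\cdot\,|\,1)1$, or, more cheaply, by applying $S^*$ to the $H$-identity and absorbing the rank-one discrepancy by means of \eqref{Ku2}.

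The one real difficulty is bookkeeping: $g=H_u(w^y)$ enters the product $w^yH_u(w^y)$ at once as a pointwise multiplier and as a Hankel image, so the commutators $[\Pi,T_{w^y}]$, $[\Pi,T_{g}]$ and the rank-one terms they spawn must be tracked carefully and shown to recombine into exactly $F_u^y$ and $G_u^y$, not merely up to a finite-rank correction. This is also the only place where the hypothesis $s>1$ is genuinely used, to keep $w^y,g$ bounded so that each product and each commutation with $\Pi$ is legitimate. Once the two operator identities are in hand, the Lipschitz bound on $X_{J^y}$ over bounded subsets of $H^s_+$ recalled before the statement provides the $C^1$ solution along which the Lax equations hold, which completes the proof; a convenient sanity check is to run the identities directly in the rank-one case, where $w^y$ and $g$ are constants and everything collapses to a one-line verification.
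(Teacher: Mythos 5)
Your reduction of the Lax equations to the two symbol identities is correct: since $v\mapsto H_v$ is linear in the symbol and $H_u,K_u$ are antilinear, $\frac{d}{dt}H_{u(t)}=H_{\dot u}=-iH_{iX_{J^y}(u)}=-i(H_uF_u^y+F_u^yH_u)=[-iF_u^y,H_u]$, and likewise for $K_u$; the algebraic toolbox you invoke is also sound (I checked $H_{ab}(h)=T_aH_b(h)+H_aT_{\overline b}(h)-(b\vert h)a$, $H_u(ab)=T_{\overline b}H_u(a)$, $H_{H_u(a)}=H_uT_a$, and $H_u^2=T_{\vert u\vert ^2}-T_uT_{\overline u}+(\cdot\vert u)u$, all valid here since $w^y,\,H_uw^y\in H^s_+\subset L^\infty$). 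But the two identities $H_{iX_{J^y}(u)}=H_uF_u^y+F_u^yH_u$ and $K_{iX_{J^y}(u)}=K_uG_u^y+G_u^yK_u$ are the entire content of the theorem, and your argument for them stops at ``one checks that the multiplier part reassembles into $G_u^y$ and the rank-one remainders add up precisely to $-y^2(\cdot\vert H_uw^y)H_uw^y$.'' That decisive verification is exactly the step you yourself call the one real difficulty, and it is nowhere carried out; as written the proposal is a plausible plan, not a proof. The same applies to the claimed ``cheaper'' route to the $K$-identity: $S^*$ does not commute with multiplication operators (one has $S^*(fg)=fS^*g+(g\vert 1)S^*f$), so conjugating the $H$-identity by $S^*$ and ``absorbing the rank-one discrepancy via (\ref{Ku2})'' again generates a family of rank-one terms whose exact cancellation must be exhibited, not invoked — this is precisely where the difference between $F_u^y$ and $G_u^y$ (the extra $-y^2(\cdot\vert H_uw^y)H_uw^y$) has to emerge.

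For calibration: the paper itself does not prove Theorem \ref{Laxhier}; it recalls it from \cite{GG4}, so there is no internal argument to compare yours with. If you want to complete your route, the computation is feasible with your tools — e.g.\ writing $iX_{J^y}(u)=-2y\,w^yg$ with $g=H_uw^y=(I+yH_u^2)^{-1}u$, using $T_{\overline{w^y}}H_u(h)=H_u(w^yh)$, $T_{\overline g}H_u(h)=H_u(gh)$, $H_u(g)=(1-w^y)/y$, $H_u^2g=(u-g)/y$, and the bilinear formula with $a=w^y$, $b=g$ — but the recombination of all Toeplitz, Hankel and rank-one pieces into exactly $H_uF_u^y+F_u^yH_u$ (and $K_uG_u^y+G_u^yK_u$) must actually be written out for the proof to stand.
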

In particular, $\Sigma _H(u_0)=\Sigma _H(u(t))$ and $\Sigma _K(u_0)=\Sigma _K(u(t))$ for every $t$, therefore $J^y(u(t))$ is a constant $J^y$. We now state the main result of this section.
\begin{theorem}\label{hierarchyevol}
Let $u_0\in H^s_+\ ,\ s>1,$  with $$\Phi(u_0)=((s_r),(\Psi_r)).$$
The solution of  $$\dot u=X_{J^y}(u)\ ,\ u(0)=u_0\ ,$$  is characterized by
$$\Phi(u(t))=((s_r),(\expo_{i\omega _rt}\Psi _r))\ ,\ \omega _r:=(-1)^{r-1}\frac {2yJ^y}{1+ys_r^2}\ .$$
\end{theorem}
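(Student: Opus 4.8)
The plan is to mimic the proof of Theorem~\ref{evolszegotexte} verbatim, replacing the Lax operator $B_u = \frac{i}{2}H_u^2 - iT_{|u|^2}$ by $B_u^y = -iF_u^y$ (and $C_u$ by $C_u^y = -iG_u^y$) as provided by Theorem~\ref{Laxhier}. By density/continuity of the flow on $H^s_+$, I would first reduce to the case where $H_{u_0}$ has finite rank, so that $\Sigma_H(u)$, $\Sigma_K(u)$ are finite; Theorem~\ref{Laxhier} guarantees they are time-independent, and hence so is $J^y$. Fix $\rho \in \Sigma_H(u)$, write $u_\rho = 1\!{\rm l}_{\{\rho^2\}}(H_u^2)(u)$, and differentiate. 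Exactly as in the cubic case, the Lax identity $\frac{dH_u}{dt} = [B_u^y, H_u]$ gives $\frac{d f(H_u^2)}{dt} = [B_u^y, f(H_u^2)]$ for Borel $f$, and combined with $\dot u = X_{J^y}(u) = -i B_u^y$ applied to... — more precisely, since $\dot u = X_{J^y}(u)$ and $B_u^y$ is the Lax operator, one gets $\frac{du_\rho}{dt} = B_u^y(u_\rho)$ after the commutator terms telescope, giving the analogue of equation~(\ref{deriveurho}).

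Next, differentiate the defining relation $\rho u_\rho = \Psi_\rho H_u(u_\rho)$. Using $\frac{du_\rho}{dt} = B_u^y u_\rho$, $\frac{dH_u}{dt} = [B_u^y, H_u]$, and $H_u^2(u_\rho) = \rho^2 u_\rho$, the terms involving $B_u^y$ collect into a commutator $[B_u^y, \Psi_\rho] H_u(u_\rho)$ on one side, and the residual term is $\dot\Psi_\rho H_u(u_\rho)$ plus a scalar multiple of $\Psi_\rho H_u(u_\rho)$. So I would obtain an identity of the shape
$$-i[T^y, \Psi_\rho]H_u(u_\rho) = \left(\dot\Psi_\rho - i\lambda_\rho \Psi_\rho\right)H_u(u_\rho)$$
for an appropriate scalar $\lambda_\rho$ coming from the ``diagonal'' part of $B_u^y$ acting on $E_u(\rho)$. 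The crucial claim, as before, is that the left-hand side vanishes. The operator playing the role of $T_{|u|^2}$ here is built from rank-one pieces $\Pi(\overline{w^y}\,\cdot\,)$ and $\Pi(\overline{H_uw^y}\,\cdot\,)$; the commutator $[\Pi(\bar b\,\cdot\,)b', \chi_p]$ has range directed by $\frac{1}{1-\bar p z}$ just as $[\Pi,\chi_p]$ did, so the same two-step argument applies: reduce $\Psi_\rho = e^{-i\psi_\rho}\prod_j \chi_{p_j}$ to a single Blaschke factor $\chi_p$ with $\chi_p e \in E_u(\rho)$ for $e \in E_u(\rho)$, and check $([B_u^y,\chi_p](e) \mid 1) = 0$ by moving $H_u^2$ across as in Theorem~\ref{evolszegotexte}. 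Granting the claim, $\dot\Psi_\rho = i\lambda_\rho\Psi_\rho$, so $\Psi_\rho(t) = e^{i\lambda_\rho t}\Psi_\rho(0)$.

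The remaining work is to compute $\lambda_\rho$ explicitly and identify it with $\frac{2yJ^y}{1+y\rho^2}$ (with the sign $(-1)^{r-1}$ coming from the fact that odd-indexed $s_r$ are $H_u$-values and even-indexed ones are $K_u$-values, handled symmetrically with $C_u^y$, $G_u^y$, and Proposition~\ref{action}/equation~(\ref{Ku2}) exactly as in the cubic case). This is where I expect the real bookkeeping: one must evaluate the diagonal part of $F_u^y$ on $E_u(\rho)$. Since $w^y = (I+yH_u^2)^{-1}(1)$ and $H_u w^y$ are explicit, and $E_u(\rho)$ is an eigenspace of $H_u^2$, the scalar $\lambda_\rho$ should reduce to an expression in $\rho$, $y$, and inner products like $(w^y \mid \cdot)$, $(H_u w^y \mid \cdot)$ against the relevant eigencomponents; using $J^y = (w^y \mid 1)$ and the structural identities of Proposition~\ref{rigidity} (formulas (\ref{urho})--(\ref{eqsigma})) together with the Appendix~A determinant identities, this should collapse to $\frac{2yJ^y}{1+y\rho^2}$. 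The main obstacle is thus not conceptual — the Lax-pair and commutator-vanishing machinery is identical to Theorem~\ref{evolszegotexte} — but the explicit scalar computation of the rotation speed, which requires carefully tracking how $F_u^y$ acts on $H_u(u_\rho)$ modulo the range-one correction terms, and then invoking the spectral identities to recognize the answer. I would also remark at the end, as in the cubic case, that the general $H^s_+$ case follows by continuity of the flow map and density of finite-rank symbols.
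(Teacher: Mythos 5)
Your overall strategy is indeed the paper's (Lax pair, differentiate the spectral projection $u_\rho$, differentiate $\rho u_\rho=\Psi H_u(u_\rho)$, kill a commutator, read off the rotation), but two of your concrete steps fail, and they are precisely where the frequency $\frac{2yJ^y}{1+ys_r^2}$ is produced. First, the telescoping you invoke is false. Writing $\dot u=\frac{d}{dt}H_u(1)=[B_u^y,H_u](1)$, one gets
$$\frac{du_\rho}{dt}=B_u^y(u_\rho)-1\!{\rm l}_{\{\rho^2\}}(H_u^2)\bigl(H_u(B_u^y(1))\bigr)\ ,$$
and since $B_u^y(1)=iyJ^yw^y$ and $1\!{\rm l}_{\{\rho^2\}}(H_u^2)(H_uw^y)=\frac{1}{1+y\rho^2}u_\rho$, the second term equals $i\frac{yJ^y}{1+y\rho^2}u_\rho$ --- this is (\ref{deriveurhoJ}). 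In the cubic case the analogous term is absent and the frequency $\rho^2$ does come from the diagonal piece $\frac i2H_u^2$ of $B_u$, which is presumably what led you to expect the scalar to arise from ``the diagonal part of $F_u^y$ on $E_u(\rho)$''; but here that mechanism gives nothing: $F_u^y$ has no scalar diagonal part on $E_u(\rho)$, and the entire action of $B_u^y$ on the eigenspace is exactly what is shown to commute with $\Psi$, hence contributes zero rotation. With your formula $\dot u_\rho=B_u^y(u_\rho)$ and the vanishing commutator you would conclude $\dot\Psi=0$, i.e.\ no rotation at all. The true frequency is twice $\frac{yJ^y}{1+y\rho^2}$, the factor $2$ coming from this extra term entering once directly and once through the antilinear conjugation $H_u\bigl(\frac{du_\rho}{dt}\bigr)$, which flips the sign of $i$.

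Second, the commutator-vanishing step is not the two-line check of Theorem \ref{evolszegotexte}. The pieces of $B_u^y$ produce, via $[\Pi,\chi_p]$, terms directed by $\frac{w^y}{1-\overline pz}$, $\frac{H_uw^y}{1-\overline pz}$, $H_uw^y$ and $\chi_pH_uw^y$, not by $\frac{1}{1-\overline pz}$ alone; so $[B_u^y,\chi_p](e)$ does not lie on a line and checking $([B_u^y,\chi_p](e)\vert 1)=0$ cannot force it to vanish. The paper instead computes the coefficients explicitly, using the eigenspace identities $(v\vert w^y)=\frac{(v\vert 1)}{1+y\rho^2}$ and $(v\vert H_uw^y)=\frac{(1\vert H_u(v))}{1+y\rho^2}$ for $v\in E_u(\rho)$, together with the relation $H_u(e)=\chi_pH_u(\chi_pe)$ (a consequence of Lemma \ref{crucialHuGeneral}); only then does everything cancel. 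Likewise the even-index case is not merely ``symmetric'': it needs $(B_u^y-C_u^y)(h)=iy^2(h\vert H_uw^y)H_uw^y$ and the projection formula (\ref{ProjSigma}), proved via the resolvent identity (\ref{sandrine}) and $H_uw^y=J^y(I+yK_u^2)^{-1}(u)$, i.e.\ (\ref{Hu(w)}); this is where the factor $J^y$ enters for the $\sigma_k$'s. These are the substantive new computations of the hierarchy case, and your proposal defers them all to ``bookkeeping'' while its stated intermediate formulas would steer that bookkeeping to the wrong (zero) answer.
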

\begin{proof}
Let $\rh \in \Sigma _H(u_0)$. Denote by  $u_\rho$ the orthogonal projection of $u$ on $E_u(\rho)$.
Hence, $u_\rho=1\!{\rm l}_{\{ \rho^2\}}(H_u^2)(u)$. Let us differentiate this equation with respect to time. 
We get from the Lax pair structure
\begin{eqnarray*}
\frac{d u_\rho}{dt}&=& [B_u^y,1\!{\rm l}_{\{ \rho^2\}}(H_u^2)](u)+1\!{\rm l}_{\{ \rho^2\}}(H_u^2)[B_u^y,H_u](1)\\
&=& B_u^y(u_\rho)-1\!{\rm l}_{\{ \rho^2\}}(H_u^2)(H_u(B_u^y(1)))\ .
\end{eqnarray*}
Since $B_u^y(1)=iyJ^yw^y$, and since  $1\!{\rm l}_{\{ \rho^2\}}(H_u^2)(H_uw^y)=\frac 1{1+y\rho^2}u_\rho$, we get

\begin{equation}\label{deriveurhoJ}
\frac{d u_\rho}{dt}=B_u^y(u_\rho)+i\frac{yJ^y}{1+y\rho^2}u_\rho\ .
\end{equation}

On the other hand, differentiating the equation
$$\rho u_\rho=\Psi  H_u(u_\rho)$$
one obtains
$$
\rho \frac{d u_\rho}{dt}=\dot \Psi H_u(u_\rho)+\Psi\left([B_u^y,H_u](u_\rho)+H_u\left( \frac{d u_\rho}{dt}\right)\right)$$
Hence, using the expression (\ref{deriveurhoJ}), we get
$$\rho\left (B_u^y(u_\rho)+i\frac{yJ^y}{1+y\rho^2}u_\rho\right )=\left (\dot \Psi-i\frac{yJ^y}{1+y\rho^2}\Psi \right )H_u(u_\rh )+\Psi B_u^yH_u(u_\rho)\ ,$$
hence
$$[B_u^y,\Psi]H_u(u_\rho)=\left (\dot \Psi-2i\frac {yJ^y}{1+y\rho^2}\Psi \right )H_u(u_\rh ).$$
It remains to prove  that the left hand side of this equality is zero.
We first show that, for any $p\in\D$ such that $\chi_p$ is a factor of $\chi$, for every $e\in E_u(\rho)$ such that $\chi _pe\in E_u(\rh )$,
$[B_u^y,\chi_p](e)=0$. We write
\begin{eqnarray*}
i[B_u^y,\chi_p](e)&=&-yw^y\,( \Pi (\overline {w^y}\, \chi_pe)-\chi_p \Pi (\overline {w^y}\, e))\\
&+&y^2H_uw^y\, (\Pi (\overline {H_uw^y}\, \chi_p e)-\chi_p\Pi (\overline {H_uw^y}\,  e))\\
&-&y^2((\chi_p e\vert H_uw^y)H_uw^y-\chi_p(e\vert H_uw^y)H_uw^y) )\\
\end{eqnarray*}
We already used that, for any function $f\in L^2$, $\Pi(\chi_pf)-\chi_p\Pi(f)$ is proportional to $\frac 1{1-\overline pz}$. 
Hence, we obtain
\begin{eqnarray*}
i[B_u^y,\chi_p](e)&=&-yw^y \frac{c}{1-\overline pz}+y^2H_uw^y\frac{\tilde c}{1-\overline pz}\\
&-&y^2((\chi_p e\vert H_uw^y)H_uw^y-\chi_p(e\vert H_uw^y)H_uw^y) )\\
\end{eqnarray*}
with 
$$c=( \Pi (\overline {w^y}\, \chi_pe)-\chi_p \Pi (\overline {w^y}\, e)\vert 1)=( \chi_pe\vert w^y)-(\chi_p\vert 1)(e\vert w^y)$$ and 
$$\tilde c=(\Pi (\overline {H_uw^y}\, \chi_p e)-\chi_p\Pi (\overline {H_uw^y}\,  e)\vert 1)=( \chi_pe\vert H_u(w^y))-(\chi_p\vert 1)(e\vert H_u(w^y))\ .$$
Now,  for any $v\in E_u(\rho)$
$$( v\vert w^y)=( v\vert 1\!{\rm l}_{\{ \rho^2\}}(H_u^2)(w^y))=\frac 1{1+y\rho^2}( v\vert 1)$$
hence $c=0$.
On the other hand,
\begin{eqnarray*}
(v\vert H_u w^y)&=&(v\vert 1\!{\rm l}_{\{ \rho^2\}}(H_u^2)(H_u(w^y)))=\frac 1{1+y\rho^2}( v\vert u_\rho)\\
&=&\frac 1{1+y\rho^2}(v\vert H_u(1))=\frac 1{1+y\rho^2}(1\vert H_u(v)) \ .
\end{eqnarray*}
We infer
$$i[B_u^y,\chi_p](e)=C(z)\frac 1{1+y\rho^2}y^2H_uw^y$$
where 
\begin{eqnarray*}
C(z)&=&\frac 1{1-\overline pz}\left ((1\vert H_u(\chi_p e))-(\chi_p\vert 1)(1\vert H_u(e)\right)
-(1\vert H_u(\chi_p e)+\chi_p(1\vert H_u(e))\\
&=&(1\vert H_u(\chi_p e))(\frac 1{1-\overline pz}-1)+(1\vert H_u(e))(\chi_p+\frac{p}{1-\overline pz})\\
&=&\frac z{1-\overline pz}(\overline p(1\vert H_u(\chi_p e))+(1\vert H_u(e)))\ .
\end{eqnarray*}
We claim that $H_u(e)=\chi_pH_u(\chi_p e)$. Indeed, from the assumption $e\in E_u(\rh )$ and $\chi _pe\in E_u(\rh )$, we can write
 $e=fH_u(u_\rh )$ with $\Pi (\Psi \overline f)=
\Psi \overline f$ and $\Pi (\Psi \overline {\chi _pf})=
\Psi \overline {\chi _pf}$. From Lemma \ref{crucialHuGeneral}, we infer
$$H_u(\chi _pe)=\rh \Psi \overline {\chi _pf}H_u(u_\rh )\ ,\ H_u(e)=\rh \Psi \overline fH_u(u_\rh )\ .$$
This proves the claim. Since $(1\vert \chi_p)=-\overline p$, we conclude that $C(z)=0$. Hence $[B^y_u,\chi_p](e)=0$. Arguing as in the previous section, we conclude that $[B_u,\chi]H_u(u_\rho)=0$.
\s
It remains to consider the  other eigenvalues. Let $\sigma\in \Sigma _K(u_0)$.  Denote by $u'_\sigma$ the orthogonal projection of $u$ on $F_u(\sigma )$. We compute  the derivative of $u'_\sigma=1\!{\rm l}_{\{\sigma^2\}}(K_u^2)(u)$ as before.
From the Lax pair formula, we get
\begin{eqnarray*}
\frac{d u'_\sigma}{dt}&=& [C_u^y,1\!{\rm l}_{\{\sigma^2\}}(K_u^2)](u)+1\!{\rm l}_{\{\sigma^2\}}(K_u^2)[B^y_u,H_u](1)\\
&=& C_u^y(u'_\sigma)+1\!{\rm l}_{\{\sigma^2\}}(K_u^2)(B_u^y(u)-C_u^y(u)-H_u(B_u^y(1)))\\
&=&C_u^y(u'_\sigma)+1\!{\rm l}_{\{\sigma^2\}}(K_u^2)(iy^2(u\vert H_uw^y)H_uw^y+iyJ^y H_u w^y)\\
&=&C_u^y(u'_\sigma)+iy1\!{\rm l}_{\{\sigma^2\}}(K_u^2)H_uw^y
\end{eqnarray*}
since
$(B_u^y-C_u^y)(h)=iy^2(h\vert H_uw^y)H_uw^y$ and $-yH^2_uw^y=w^y-1$ so that $(u\vert-yH_uw^y)=(-yH_u^2w^y\vert 1)=J^y-1$.

We claim that 
\begin{equation}\label{ProjSigma}
1\!{\rm l}_{\{\sigma^2\}}(K_u^2)(H_uw^y)=\frac{J^y}{1+y\sigma^2} u'_\sigma\ .
\end{equation}
Using  $K_u^2=H_u^2-(\cdot\vert u)u$ one gets, for any $f\in L^2_+$
\begin{equation}\label{sandrine}
(I+yH_u^2)^{-1}f=(I+yK_u^2)^{-1}f-y((I+yH_u^2)^{-1}f\vert u)(I+yK_u^2)^{-1}u\ .
\end{equation}
Applying formula (\ref{sandrine}) to $f=u$, we get
$$
H_u w^y=(I+yH_u^2)^{-1}(u)=(I+yK_u^2)^{-1}(u)-y((I+yH_u^2)^{-1}(u)\vert u)(I+yK_u^2)^{-1}(u)\ ,$$ hence
\begin{equation}\label{Hu(w)}
H_u w^y=J^y(I+yK_u^2)^{-1}(u) \ .
\end{equation}
Formula (\ref{ProjSigma}) follows by taking the orthogonal projection on $F_u(\sigma )$.
Using Formula (\ref{ProjSigma}), we get
\begin{equation}\label{deriveUsigma}
\frac{d u'_\sigma}{dt}=C_u^y(u'_\sigma)+iy\frac{J^y}{1+y\sigma^2} u'_\sigma\ .
\end{equation}
On the other hand, differentiating the equation
$$K_u(u'_\sigma)=\sigma \Psi  u'_\sigma $$ 
one obtains
$$
[C_u^y,K_u](u'_\sigma)+K_u\left(\frac{d u'_\sigma}{dt}\right)=\sigma \dot \Psi u'_\sigma+\sigma \Psi  \frac{d u'_\sigma}{dt}.$$
From identity (\ref{Hu(w)}), we use the expression of $\frac{d u'_\sigma}{dt}$ 
obtained above to get
$$\left (\dot \Psi+2i\frac{yJ^y}{1+\sigma^2 y}\Psi \right )u'_\sigma=\sigma[C_u^y,\Psi](u'_\sigma)\ .$$
The result follows once we prove that $[C_u^y,\Psi](u'_\sigma)=0$.

From the arguments developed before, it is sufficient to prove that 
$[C_u^y, \chi_p](f)=0$ for any $f\in F_u(\sigma )$ such that $\chi _pf\in F_u(\sigma )$. 
As before
$$[C_u^y, \chi_p](f)=i\frac{c}{1-\overline pz} yw^y-iy^2H_uw^y\frac{\tilde c}{1-\overline pz}$$
where 
$$c=(( \chi_p-( \chi_p\vert 1))f\vert w^y)$$ and 
$$\tilde c=((\chi_p -( \chi_p\vert 1))f\vert H_u w^y).$$
Notice that $w^y=1-yH_uw^y$, hence $c=-y\tilde c$. 
Let us first prove that $\tilde c=0$. Using formula (\ref{Hu(w)}), 
\begin{eqnarray*}
\tilde c&=&( \chi_p f\vert 1\!{\rm l}_{\{\sigma^2\}}(K_u^2)H_u w^y)-( \chi_p\vert 1)(f\vert  1\!{\rm l}_{\{\sigma^2\}}(K_u^2)H_u w^y)\\
&=&\frac{J^y}{1+y\sigma^2}(( \chi_p -( \chi_p\vert 1))f\vert u)= 0\ ,
\end{eqnarray*}
since, as we already observed at the end of the proof of Theorem \ref{evolszegotexte},
 $$F_u(\sigma )\cap 1^\perp =E_u(\sigma )=F_u(\sigma )\cap u^\perp \ . $$
This completes the proof.
\end{proof}
We close this section by stating a corollary which will be useful for describing the symplectic form on $\mathcal V_{(d_1,\dots,d_n)}$.
\begin{corollary}\label{XJy}
On $\mathcal V_{(d_1,\dots,d_n)}$, we have 
\begin{equation}\label{decXJy}
X_{J^y}=\sum _{r=1}^n (-1)^{r}\frac{2yJ^y}{1+ys_r^2}\frac{\partial}{\partial \psi _r}\ .
\end{equation}
The vector fields $X_{J^y}, y\in \R _+,$ generate an integrable sub-bundle of rank  $n$ of the tangent bundle of  $\mathcal V_{(d_1,\dots,d_n)}$. The  leaves of the corresponding foliation are the isotropic tori 
$$\mathcal T ((s_1,\dots ,s_n),(\Psi _1,\dots ,\Psi _n)):=\Phi ^{-1}\left (\{ (s_1,\dots ,s_n)\} \times \S^1\Psi _1\times \dots \times \S^1\Psi _n  \right )\ ,$$
where $(s_1,\dots ,s_n)\in \Omega $ and $(\Psi _1,\dots ,\Psi _n)\in \mathcal B^\sharp _{d_1}\times \dots \times \mathcal B^\sharp _{d_n}$ are given.
\end{corollary}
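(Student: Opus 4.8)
The plan is to read off (\ref{decXJy}) directly from Theorems~\ref{Laxhier} and~\ref{hierarchyevol}, and then to deduce the geometric statement from it. On $\mathcal V_{(d_1,\dots,d_n)}$ the embedding $\Phi^{-1}$ of Theorem~\ref{Phitopodif} provides global functions $s_1,\dots,s_n$ together with the angle variables $\psi_1,\dots,\psi_n$ and the monic Schur parts $\chi_1,\dots,\chi_n$ of the Blaschke products (recall $\Psi_r=\expo_{-i\psi_r}\chi_r$ and $\mathcal B_{d_r}\simeq\T\times\mathcal B^\sharp_{d_r}$). By Theorem~\ref{hierarchyevol}, the flow of $X_{J^y}$ carries $\Phi^{-1}((s_r),(\Psi_r))$ to $\Phi^{-1}((s_r),(\expo_{i\omega_r t}\Psi_r))$ with $\omega_r=(-1)^{r-1}\frac{2yJ^y}{1+ys_r^2}$; along it each $s_r$ and each $\chi_r$ stays fixed while $\psi_r$ moves at the constant rate $-\omega_r$ (here $J^y$ itself is conserved along the flow, as observed after Theorem~\ref{Laxhier}). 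Differentiating at $t=0$ then gives exactly (\ref{decXJy}).

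Next I would fix $u\in\mathcal V_{(d_1,\dots,d_n)}$ and set $\mathcal D_u:=\mathrm{span}\{X_{J^y}(u):y>0\}$. Formula (\ref{decXJy}) shows $\mathcal D_u\subset\mathrm{span}\{\partial_{\psi_1}|_u,\dots,\partial_{\psi_n}|_u\}$, so $\dim\mathcal D_u\le n$. For the reverse one uses that $J^y(u)>0$ for every $y>0$ (a sum of nonnegative terms with positive total in the spectral decomposition of $H_u^2$), so the coefficient of $\partial_{\psi_r}$ in $X_{J^y}(u)$ is $2(-1)^rJ^y(u)\,g_r(y)$ with $g_r(y):=\frac y{1+ys_r^2}$; since $s_1>\dots>s_n>0$ the rational functions $g_1,\dots,g_n$ have pairwise distinct poles and are therefore linearly independent on $\R_+$, so one may choose $y_1,\dots,y_n>0$ with $\det\big(g_r(y_i)\big)_{1\le i,r\le n}\ne 0$. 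Hence $X_{J^{y_1}}(u),\dots,X_{J^{y_n}}(u)$ are independent, $\dim\mathcal D_u=n$, and $\mathcal D_u=\mathrm{span}\{\partial_{\psi_1}|_u,\dots,\partial_{\psi_n}|_u\}$. Thus $u\mapsto\mathcal D_u$ is the smooth rank-$n$ sub-bundle spanned by the commuting coordinate fields $\partial_{\psi_1},\dots,\partial_{\psi_n}$; it is involutive, and by the Frobenius theorem its leaf through $\Phi^{-1}((s_r),(\Psi_r))$ is obtained by freezing $(s_r)$ and $(\chi_r)$ and letting the angles run over $\T$, i.e. it is precisely the embedded torus $\mathcal T((s_r),(\Psi_r))=\Phi^{-1}(\{(s_r)\}\times\S^1\Psi_1\times\dots\times\S^1\Psi_n)$.

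To finish, I would check that these tori are isotropic. The functions $J^y$, $y>0$, are pairwise in involution: by the first paragraph the flow of $X_{J^{y'}}$ leaves $\Sigma_H(u)$ and $\Sigma_K(u)$ invariant (as already observed after Theorem~\ref{Laxhier}), hence fixes every $s_r$ and so any function of them, in particular $J^y$; therefore $\{J^y,J^{y'}\}=X_{J^{y'}}(J^y)=0$. It follows that $\omega(X_{J^y}(u),X_{J^{y'}}(u))$ equals, up to sign, the bracket $\{J^y,J^{y'}\}=0$, so $\omega$ vanishes identically on $\mathcal D_u\times\mathcal D_u$. Since $\mathcal D$ is tangent to the leaves and has rank $n=\dim\mathcal T$, one has $T\mathcal T=\mathcal D|_{\mathcal T}$, whence $\omega|_{\mathcal T}=0$; as $n\le\tfrac12\dim\mathcal V_{(d_1,\dots,d_n)}=n+\sum_rd_r$, the torus $\mathcal T$ is isotropic.

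The main (and essentially only) point needing care is the rank computation — that the single parameter $y>0$ already realizes all $n$ angular directions — which comes down to the elementary linear independence of the Cauchy-type functions $y\mapsto\frac y{1+ys_r^2}$ attached to distinct $s_r>0$; everything else is a routine translation of Theorems~\ref{Laxhier}, \ref{hierarchyevol} and~\ref{Phitopodif} into the action--angle-type coordinates on $\mathcal V_{(d_1,\dots,d_n)}$.
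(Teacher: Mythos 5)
Your proposal is correct and follows essentially the same route as the paper: formula (\ref{decXJy}) is read off from Theorem \ref{hierarchyevol} as Lie-derivative identities ($X_{J^y}(s_r)=0$, $X_{J^y}(\chi_r)=0$, $X_{J^y}(\psi_r)=(-1)^r\frac{2yJ^y}{1+ys_r^2}$), the rank-$n$ statement comes from the nondegeneracy of the Cauchy-type system in the variables $y$ (the paper simply invokes invertibility of $\bigl(\frac 1{1+y_\ell s_r^2}\bigr)$ for distinct $y_\ell$, which is the same point as your linear-independence argument, together with $J^y>0$), and isotropy is deduced from $\{J^y,J^{y'}\}=0$, obtained exactly as you do from the conservation of the $s_r$'s along the Hamiltonian flows. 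No gaps; the differences are only cosmetic.
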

\begin{proof}
For every $y\in \R _+$,  Theorem  \ref{hierarchyevol} can be rephrased as the following identities for Lie derivatives along $X_{J^y}$.
$$X_{J^y}(s_r)=0\ ,\ X_{J^y}(\chi _r)=0\ ,\ X_{J^y}(\psi _r)=(-1)^{r}\frac{2yJ^y}{1+ys_r^2}\ ,\ r=1,\dots ,n\ .$$
This implies identity (\ref{decXJy})  on $\mathcal V_{(d_1,\dots ,d_n)}$.
Given $n$ positive numbers $y_1>\dots >y_n$, the matrix 
$$\left (\frac{1}{1+y_\ell s_r^2}\right )_{1\le \ell ,r\le n}$$
is invertible. This implies that, for every $u\in \mathcal V_{(d_1,\dots,d_n)}$, the vector subspace of $T_u\mathcal V_{(d_1,\dots,d_n)}$ spanned by 
the $X_{J^y}(u), y\in \R _+$ is exactly
$${\rm span} \left (\frac{\partial}{\partial \psi _r}, r=1,\dots ,n\right )\ .$$
The integrability follows, as well as the identification of the leaves, while the isotropy of the tori comes from the identity
$$\{ J^y, J^{y'}\} =0$$
which was proved in \cite{GG1} and is also a consequence of identity \ref{J(x)Appendix} and of the conservation of the $s_r$'s 
along the Hamiltonian curves of $J^y$, as stated in Theorem \ref{hierarchyevol}. 
\end{proof}
 In the next section, we identify the tori  $\mathcal T ((s_r),(\Psi _r))$ above as classes of some special unitary equivalence for the pair of operators $(H_u,K_u)$. 
 
 \section{Invariant tori of the Szeg\H{o} hierarchy and unitary equivalence of  pairs of Hankel operators} \label{invtori}
In this section, we identify the sets of symbols $u\in VMO_+\setminus \{ 0\} $ having the same list of singular values $(s_r)$ and the same list $(\chi _r)$ of monic Blaschke products, for the pair $(H_u,K_u)$. In view of Theorem \ref{Phitopodif}, these sets are tori. Moreover, $VMO_+\setminus \{ 0\} $ is the disjoint union of these tori, and, from sections \ref{szegoflow} and \ref{szegohier}, the Hamilton flows of the Szeg\H{o} hierarchy act on them. We prove that they are classes of some specific unitary equivalence between the pairs $(H_u,K_u)$, which we now define.
\begin{definition}\label{equivalence}
Given $u,\tilde u\in VMO_+\setminus \{ 0\} $, we set $u\sim \tilde u$ if there exist unitary operators $U, V$ on $L^2_+$ such that
$$H_{\tilde u}=UH_uU^*\ ,\ K_{\tilde u}=VK_uV^*\ ,$$
and there exists  a Borel function $F:\R _+\rightarrow \S^1$ such that
$$U(u)=F(H_{\tilde u}^2)\tilde u\ ,\ V(u)=F(K_{\tilde u}^2)\tilde u\ ,\ U^*V=\overline F(H_u^2)F(K_u^2)\ .$$
\end{definition}
It is easy to check that the above definition gives rise to an equivalence relation.
\begin{theorem}\label{specialisospectral}
Given $u,\tilde u\in VMO_+\setminus \{ 0\} $, the following assertions are equivalent.
\begin{enumerate}
\item $u\sim \tilde u$.
\item $\forall r\ge 1, s_r(u)=s_r(\tilde u) \ \textrm{and}\  \exists \gamma _r\in \T : \Psi_r(\tilde u)=\expo_{i\gamma _r}\Psi _r(u)\ .$
\end{enumerate}
\end{theorem}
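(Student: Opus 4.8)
I would prove this as a chain of implications, with $(2)\Rightarrow(1)$ being the constructive direction and $(1)\Rightarrow(2)$ the extraction of invariants. For $(2)\Rightarrow(1)$, suppose $s_r(u)=s_r(\tilde u)=:s_r$ and $\Psi_r(\tilde u)=\expo_{i\gamma_r}\Psi_r(u)$ for all $r$. By Theorem~\ref{Phibij} (or rather the surjectivity machinery of Section~\ref{finiterank} together with Theorem~\ref{mainCompact}), both $u$ and $\tilde u$ are determined by their images under $\Phi$, and the only difference is a rotation of each Blaschke product by a constant of modulus one. The first task is to realize this rotation as conjugation by a unitary. To that end I would use Proposition~\ref{action}: the orthonormal basis of $E_u(s)$ built from $H_u(u_s)$ and powers of $z$ over the normalized denominator $D$ of $\Psi_s$, on which $H_u$ acts by the explicit anti-diagonal matrix $s\expo_{-i\psi_s}$ times the flip. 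If I change the angle $\psi_s\mapsto\psi_s-\gamma_s$ while keeping the monic Blaschke factor $\chi_s$ fixed, the denominator $D$ is unchanged, so there is a natural identification of $E_u(s)$ with $E_{\tilde u}(s)$ (and of $F_u(s)$ with $F_{\tilde u}(s)$) as subspaces of $L^2_+$; I define $U$ to act on each $E_u(s_{2j-1})$ — equivalently on $\mathrm{Ran}\,H_u=\overline{\oplus_s E_u(s)}$ — by a scalar phase chosen so that $H_{\tilde u}=UH_uU^*$ holds on that block, and by the identity on $\ker H_u$; likewise $V$ on the $F_u(s)$-decomposition. The phases are forced: writing $U=F(H_u^2)$ for a Borel function $F$, the intertwining $H_{\tilde u}=UH_uU^*$ together with $su_s=\Psi_s H_u(u_s)$ and $\tilde u_s=\expo_{i\gamma_s}\Psi_s$-scaled version pins down $F(s^2)$ up to a square root, and I would check that a consistent global choice exists (this uses that $U$ is anti-linear-compatible, i.e.\ that $H_u$ anti-linear forces $F(H_u^2)H_u = H_u\overline{F(H_u^2)}$, which is automatic). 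Then $U(u)=F(H_{\tilde u}^2)\tilde u$ and $V(u)=F(K_{\tilde u}^2)\tilde u$ by construction, and $U^*V=\overline F(H_u^2)F(K_u^2)$ follows by unwinding the definitions on the common cyclic space generated by $u$.

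For $(1)\Rightarrow(2)$, assume $u\sim\tilde u$ with data $U,V,F$. Since $H_{\tilde u}=UH_uU^*$, the operators $H_u$ and $H_{\tilde u}$ have the same singular values with the same multiplicities, and $U$ maps $E_u(s)$ onto $E_{\tilde u}(s)$; similarly $V$ maps $F_u(s)$ onto $F_{\tilde u}(s)$. The conditions $U(u)=F(H_{\tilde u}^2)\tilde u$ and $V(u)=F(K_{\tilde u}^2)\tilde u$ say precisely that $U$ sends $u_s$ (the projection of $u$ on $E_u(s)$) to $\overline{F(s^2)}$ times... — more carefully, $U(u_s) = F(s^2)^{-1}$-twisted image of $\tilde u_s$, so in particular $u\not\perp E_u(s)\iff\tilde u\not\perp E_{\tilde u}(s)$, giving $\Sigma_H(u)=\Sigma_H(\tilde u)$ and likewise $\Sigma_K(u)=\Sigma_K(\tilde u)$; hence $s_r(u)=s_r(\tilde u)$ for all $r$. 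It remains to compare the Blaschke products. Fix $s=\rho\in\Sigma_H(u)$. From $\rho u_\rho=\Psi_\rho(u)H_u(u_\rho)$ and the corresponding identity for $\tilde u$, apply $U$: using $H_{\tilde u}U=UH_u$ (antilinearity: $H_{\tilde u}U = U H_u$ as $U$ is linear and $H$ antilinear — one must track this sign, but it works because $U$ intertwines $H_u^2$ and commutes appropriately with the polar structure), and $U(u_\rho)=$ phase $\cdot\,\tilde u_\rho$, I get $\rho\,\tilde u_\rho = \big(\text{phase}\cdot\Psi_\rho(u)\big)H_{\tilde u}(\tilde u_\rho)$ up to the explicit scalar coming from $F$. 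Comparing with $\rho\tilde u_\rho=\Psi_\rho(\tilde u)H_{\tilde u}(\tilde u_\rho)$ and using that $H_{\tilde u}(\tilde u_\rho)$ is not identically zero, I conclude $\Psi_\rho(\tilde u)$ equals $\Psi_\rho(u)$ multiplied by a unimodular constant — this constant is exactly $\expo_{i\gamma_r}$, with $\gamma_r$ depending on the value of $F$ at $s_r^2$ (and at the relevant eigenvalue of $K_u^2$). The even-indexed products are handled identically with $V$ in place of $U$ and $K$ in place of $H$, using the second part of Proposition~\ref{action}.

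\textbf{The main obstacle.} The delicate point is the bookkeeping of phases and anti-linearity. $H_u$ is anti-linear, so an intertwining $H_{\tilde u}=UH_uU^*$ with $U$ linear and unitary does \emph{not} say $U$ commutes with $H_u$ in the naive sense; it says $U H_u = H_{\tilde u} U$, and squaring gives $UH_u^2=H_{\tilde u}^2 U$, so $U$ does commute with the spectral projections of $H_u^2$ — which is why writing $U=F(H_u^2)$ on the cyclic space is legitimate. But then the relation $F(H_u^2)H_u=H_{\tilde u}\overline{F(H_u^2)}$ (note the conjugate, from anti-linearity) constrains which $F$ can appear, and in the forward construction $(2)\Rightarrow(1)$ one must verify that a square-root choice $F(s_r^2)$ with $F(s_r^2)^2 = \expo_{-i\gamma_r}$ (or its inverse, up to sign conventions in $\Psi_r=\expo_{-i\psi_r}\chi_r$) can be made \emph{simultaneously} consistent for $U$ on the $H$-eigenspaces, for $V$ on the $K$-eigenspaces, and for the compatibility relation $U^*V=\overline F(H_u^2)F(K_u^2)$ — the last forcing the \emph{same} Borel function $F$ to govern both. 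I expect this to work because $\Sigma_H(u)$ and $\Sigma_K(u)$ are disjoint (Proposition~\ref{rigidity}), so $F$ is being prescribed on disjoint sets of arguments and the relation $U^*V=\overline F(H_u^2)F(K_u^2)$ is then a \emph{definition} of $U^*V$ on the cyclic space that automatically extends by the already-fixed actions on the orthogonal complements; one checks it is unitary and that it sends $u$ correctly. Rank and compactness issues are handled by the finite-rank case first (Theorem~\ref{mainfiniterank}) and a limiting/density argument as in Section~\ref{compact}, but the algebra of phases is identical in all cases, so the heart of the matter is genuinely the anti-linear phase bookkeeping described above.
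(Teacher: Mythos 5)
Your plan matches the paper's architecture, but in both directions the step that carries the whole difficulty is asserted rather than proved, and in the direction $(1)\Rightarrow(2)$ the asserted step is actually invalid as written. You "apply $U$" to the identity $\rho\, u_\rho=\Psi_\rho(u)H_u(u_\rho)$ and pull $\Psi_\rho(u)$ out, i.e.\ you use $U\bigl(\Psi_\rho(u)\,g\bigr)=\Psi_\rho(u)\,U(g)$. But $U$ is an abstract unitary intertwining $H_u$ and $H_{\tilde u}$; nothing makes it commute with the Toeplitz operator of multiplication by the inner function $\Psi_\rho(u)$. The relations $U(u_\rho)=F(\rho^2)\tilde u_\rho$ and $U(H_u(u_\rho))=\overline F(\rho^2)H_{\tilde u}(\tilde u_\rho)$ give no information about the pointwise ratio $\tilde u_\rho/H_{\tilde u}(\tilde u_\rho)$, so they only recover the degree of $\Psi_\rho$ (from $\dim E_{\tilde u}(\rho)$), not its zeroes. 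The real content of $(1)\Rightarrow(2)$ is that the normalized denominators coincide, $D=\tilde D$, and for this the third condition $U^*V=\overline F(H_u^2)F(K_u^2)$ — which your argument acknowledges but never actually uses at this point — is essential: the paper combines it with $K_u=S^*H_u$ to obtain $U^*S^*U=\overline F(H_u^2)F(K_u^2)^2S^*\overline F(H_u^2)$ on $\overline{{\rm Ran}\,H_u}$ (and an analogous identity $V^*P_{\tilde u}SV=\overline F(K_u^2)F(H_u^2)^2P_uS\overline F(K_u^2)$), and then runs a degree-lowering induction on the explicit basis $z^aH_u(u_j)/D$ from Proposition \ref{action}, with a separate treatment of the degenerate case $K_{\tilde u}(\tilde u)=0$, to show $U$ maps this basis to the corresponding basis for $\tilde u$ up to one scalar; only then does $D=\tilde D$ follow. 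Without exploiting the coupling of $U$ and $V$ through $F$, the conclusion about the $\chi_r$'s is simply not reachable (it is exactly to make it reachable that the definition of $\sim$ carries this extra clause).

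In the direction $(2)\Rightarrow(1)$ the missing piece is unitarity. You define $U$ (and $V$) by sending the distinguished basis $\bigl(z^aH_u(u_j)/D_{2j-1}\bigr)_a$ of $E_u(\rho_j)$ to its analogue for $\tilde u$ times a phase, but you never check this map is isometric, and that is the technical heart of the paper's proof: one must show that the Gram matrix of this basis (resp.\ of $\bigl(z^au'_k/D_{2k}\bigr)_a$ in $F_u(\sigma_k)$) depends only on the singular values $(s_r)$ and on the common denominator $D_r$, not on $u$ itself. The paper does this by showing the inner products $\zeta_{a-b}$ solve the system (\ref{system}), whose data ($\rho_j^2\tau_j^2$ and the coefficients of $D$) are determined by $(s_r)$ and $\chi_r$ via Appendix A, and then proving the uniqueness Lemma \ref{Solsystem} for Hermitian solutions when $D$ is a Schur polynomial — a genuine Schur--Cohn-type induction with no shortcut. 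Two smaller points: "the identity on $\ker H_u$" does not make sense, since $U$ must carry $\ker H_u$ onto $\ker H_{\tilde u}$, which are different subspaces; one needs Corollary \ref{kernel} to know the two kernels are simultaneously trivial or simultaneously infinite-dimensional before choosing an arbitrary unitary between them. And the proposed reduction to finite rank by a density/limiting argument is neither in the paper nor needed (nor clearly workable for an equivalence-of-unitary-data statement): the verification of $U^*V=\overline F(H_u^2)F(K_u^2)$ is done directly, using (\ref{urho}) to expand $\tilde u'_k$ over the $\tilde u_j$'s.
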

\begin{proof}
Assume that (1) holds. Then $H_{\tilde u}^2$ is unitarily equivalent to $H_u^2$, and $K_{\tilde u}^2$ is unitarily equivalent to $K_u^2$.
This clearly implies $\Sigma _H(\tilde u)=\Sigma _H(u)$ and $\Sigma _K(\tilde u)=\Sigma _K(u)$, so that $s_r(\tilde u)=s_r(u)$ for every $r$.
Let us show that, for every $r$, $\Psi _r(u)$ and $\Psi _r(\tilde u)$ only differ by a phase factor. Of course the only  cases to be addressed are $d_r\ge 1$. We start with $r=2j-1$. From the hypothesis, we infer
$$U(u_j)=U({\bf 1}_{\{ \rho _j^2\} }(H_u^2)(u))={\bf 1}_{\{ \rho _j^2\} }(H_{\tilde u}^2)(U(u))=F(\rho _j^2)\tilde u_j\ ,$$
and, consequently,
\begin{equation}\label{UH_u}
U(H_u(u_j))=H_{\tilde u}(U(u_j))=\overline F(\rho _j^2)H_{\tilde u}(\tilde u_j)\ .
\end{equation}
Next we take advantage of the identity
$$U^*V=\overline F(H_u^2)F(K_u^2)\ ,$$
by evaluating $U^*S^*U$ on the closed range of $H_u$. We compute
\beno
U^*S^*UH_u&=&U^*S^*H_{\tilde u}U=U^*K_{\tilde u}U=U^*VK_uV^*U\\
&=&\overline F(H_u^2)F(K_u^2)K_u\overline F(K_u^2)F(H_u^2)=\overline F(H_u^2)F(K_u^2)^2K_uF(H_u^2)\\
&=&\overline F(H_u^2)F(K_u^2)^2S^*\overline F(H_u^2)H_u\ ,
\eeno
and we conclude, on $\overline {{\rm Ran}(H_u)}$, 
\begin{equation}\label{U*S*U}
U^*S^*U=\overline F(H_u^2)F(K_u^2)^2S^*\overline F(H_u^2)\ .
\end{equation}
For simplicity, set $D:=D_{2j-1}$ and $d:=d_{2j-1}$. Recall from Proposition \ref{action} that a basis of $E_u(\rho _j)$ is 
$$\left (\frac{z^a}{D}H_u(u_j)\ ,\ a=0,\dots ,d\right ) ,$$
and a basis of $F_u(\rho _j)=E_u(\rho _j)\cap u^\perp $ is 
$$\left (\frac{z^b}{D}H_u(u_j)\ ,\ b=0,\dots ,d-1\right ) .$$
For $a=1,\dots ,d_{2j-1}$, we infer
$$U^*S^*U\left (\frac{z^a}{D}H_u(u_j)\right )=\frac{z^{a-1}}{D}H_u(u_j)\ ,$$
or
$$U\left (\frac{z^a}{D}H_u(u_j)\right )=(S^*)^{d-a}U\left (\frac{z^d}{D}H_u(u_j)\right )\ ,\ a=0,\dots ,d\ .$$
This implies, for $a=0,\dots ,d-1$, that the right hand side belongs to $F_{\tilde u}(\rho _j)$. On the other hand, if $P\in \C [z]$ has degree at most $d$,
one easily checks that
$$S^*\left (\frac{P}{\tilde D}H_{\tilde u}(\tilde u_j)\right )=P(0)K_{\tilde u}(\tilde u_j)+ R\ ,\ R\in F_{\tilde u}(\rho _j)\ .$$
Notice that the right hand side belongs to $F_{\tilde u}(\rho _j)$ if and only if $K_{\tilde u}(\tilde u_j)\in F_{\tilde u}(\rho _j)$ or $P(0)=0$. Assume for a while that $K_{\tilde u}(\tilde u_j)$ does not belong to $F_{\tilde u}(\rho _j)$. Then, writing
$$U\left (\frac{z^d}{D}H_u(u_j)\right )=\frac{P}{\tilde D}H_{\tilde u}(\tilde u_j)\ ,$$
and using that, for $a=0,\dots ,d-1$, 
$$(S^*)^{d-a}\left (\frac{P}{\tilde D}H_{\tilde u}(\tilde u_j)\right )\in F_{\tilde u}(\rho _j)\ ,$$
we infer $P(0)=0$, and, by iterating this argument,  that $P$ is divisible by $z^d$, in other words,
$$U\left (\frac{z^d}{D}H_u(u_j)\right )=c\frac{z^d}{\tilde D}H_{\tilde u}(\tilde u_j)\ ,$$
for some $c\in \C $, and conclude
$$U\left (\frac{z^a}{D}H_u(u_j)\right )=c\frac{z^a}{\tilde D}H_{\tilde u}(\tilde u_j)\ ,\ a=0,\dots ,d\ .$$
Comparing to formula (\ref{UH_u}) for $U(H_u(u_j))$, we obtain
$$ cD(z)=\overline F(\rho _j^2)\tilde D(z)\ .$$
Since $D(0)=1=\tilde D(0)$, we conclude $c=\overline F(\rho _j^2)$, $D=\tilde D$, and finally $$\Psi _{2j-1}(\tilde u)=\overline F(\rho _j^2)^2\Psi _{2j-1}(u)\ .$$
We now turn to study the special case $K_{\tilde u}(\tilde u_j)\in F_{\tilde u}(\rho _j)$. This reads
$$0=(K_{\tilde u}^2-\rho _j^2I)K_{\tilde u}(\tilde u_j)=K_{\tilde u}((H_{\tilde u}^2-\rho _j^2I)\tilde u_j-\Vert \tilde u_j\Vert ^2\tilde u)=-\Vert \tilde u_j\Vert ^2K_{\tilde u}(\tilde u)\ .$$
In other words, this imposes $K_{\tilde u}(\tilde u)=0$, or $\tilde u=\rho \tilde \Psi $, where $\tilde \Psi $ is a Blaschke product of degree $d$. Making $V^*$ act on the identity $K_{\tilde u}(\tilde u)=0$, we similarly conclude $ u=\rho  \Psi $, where $\Psi $ is a Blaschke product of degree $d$, so what we have to check is that $\Psi $ and $\tilde \Psi $ only differ by a phase factor. In this case, $S^*$ sends $E_u(\rho )={\rm Ran}(H_u)$ into $F_u(\rho )$, so that (\ref{U*S*U}) becomes, on ${\rm Ran}(H_u)$,
$$U^*S^*U=S^*\ .$$
In other words, the actions of $S^*$ on $W:={\rm span}\left (\frac{z^a}{D}, a=0,\dots ,d\right )$ and on $\tilde W:={\rm span}\left (\frac{z^a}{\tilde D}, a=0,\dots ,d\right )$ are conjugated. Writing
$$D(z)=\prod _{p\in \mathcal P}(1-\overline pz)^{m_p}\ ,$$
where $\mathcal P$ is a finite subset of $\D \setminus \{ 0\}$, and $m_p$ are positive integers, and using the elementary identities 
$$(S^* -\overline pI)\left(\frac 1{(1-\overline pz)^k}\right )=S^* \left (\frac 1{(1-\overline pz)^{k-1}}\right )\ ,$$
one easily checks that the eigenvalues of $S^*$ on $W$ are precisely the $\overline p$'s, for $p\in \mathcal P$, and $0$,  with the corresponding algebraic multiplicities $m_p$ and 
$$m_0=1+d-\sum _{p\in \mathcal P}m_p\ .$$
We conclude that $D=\tilde D$, whence the claim.
\s
Next, we study the case $r=2k$. Then
$$V(u'_k)=F(\sigma _k^2)\tilde u'_k\ ,\ V(K_u(u'_k))=\overline F(\sigma _k^2)K_{\tilde u}(\tilde u'_k)\ .$$
Denote by $P_u$ the orthogonal projector onto $\overline {{\rm Ran}(H_u)}$, and compute
\beno
V^*P_{\tilde u}SVK_u&=&V^*P_{\tilde u}SK_{\tilde u}V=V^*(H_{\tilde u}-(\tilde u\vert\, .\, )P_{\tilde u}(1)) V\\
&=&V^*U(H_u-(U^*(\tilde u)\vert \, .\, )U^*(P_{\tilde u}(1)))U^*V\\
&=&\overline F(K_u^2)F(H_u^2)(H_u-(\overline F(H_u^2)u\vert \, .\, )F(H_u^2)P_u(1))\overline F(H_u^2)F(K_u^2)\\
&=&\overline F(K_u^2)F(H_u^2)^2(H_u-(u\vert \, .\, )P_u(1))F(K_u^2)\\
&=& \overline F(K_u^2)F(H_u^2)^2P_uSK_uF(K_u^2)= \overline F(K_u^2)F(H_u^2)^2P_uS\overline F(K_u^2)K_u\ ,
\eeno
so that, on $\overline {{\rm Ran}(K_u)}$, 
\begin{equation}\label{V*PSV}
V^*P_{\tilde u}SV=\overline F(K_u^2)F(H_u^2)^2P_uS\overline F(K_u^2)\ .
\end{equation}
For simplicity again, set $D:=D_{2k}$ and $d:=d_{2k}$. Recall from proposition \ref{action} that a basis of $F_u(\sigma _k)$ is 
$$\left (\frac{z^a}{D}u'_k\ ,\ a=0,\dots ,d\right ) ,$$
and a basis of $E_u(\sigma _k)=F_u(\sigma _k)\cap u^\perp $ is 
$$\left (\frac{z^a}{D}u'_k\ ,\ a=1,\dots ,d\right ) \ .$$
Applying identity (\ref{V*PSV}) to $\frac{z^a}{D}u'_k$ for $a=0,\dots ,d-1$, we infer
$$V\left (\frac{z^a}{D}u'_k\right )=(P_{\tilde u}S)^a V\left (\frac 1{ D} u'_k\right )\ , \ a=0,\dots ,d\ .$$
In particular, the right hand side belongs to $E_{\tilde u}(\sigma _k)$ for $a=1,\dots ,d$. On the other hand, if $Q\in \C [z]$ has degree at most $d$, 
$$P_{\tilde u}S\left (\frac{Q}{\tilde D}\tilde u'_k\right )= \gamma P_{\tilde u}SK_{\tilde u}(\tilde u'_k) +R\ ,\ R\in E_{\tilde u}(\sigma _k)\ ,$$
where $\gamma \sigma _k \expo_{-i\tilde \psi _{2k}}$ is the coefficient of $z^d$ in $Q$.  Therefore the left hand side belongs to $E_{\tilde u}(\sigma _k)$ if and only if
\beno
0&=&\gamma (H_{\tilde u}^2-\sigma _k^2I)P_{\tilde u}SK_{\tilde u}(\tilde u'_k)\\
&=&\gamma (H_{\tilde u}^2-\sigma _k^2I)(H_{\tilde u}(\tilde u'_k)-\Vert \tilde u'_k\Vert ^2P_{\tilde u}(1))\\
&=&\gamma \sigma _k^2\Vert \tilde u'_k\Vert ^2P_{\tilde u}(1)\ ,
\eeno
which is impossible. We conclude that $\gamma =0$, which means that the degree of $Q$ is at most $d-1$. Iterating this argument, we infer 
$$V\left (\frac 1{ D} u'_k\right )=c\frac{1}{\tilde D}\tilde u'_k\ ,$$
for some $c\in \C $, and finally
$$V\left (\frac{z^a}{D}u'_k\right )=c\frac{z^a}{\tilde D}\tilde u'_k\ ,\ a=0,\dots d\ .$$
Comparing to the above formula for $V(u'_k)$, we obtain 
$$cD=F(\sigma _k^2)\tilde D\  ,$$
thus, since $D(0)=1=\tilde D(0)$, we have $D=\tilde D$, $c=F(\sigma _k^2)$, and finally $\Psi _{2k}(\tilde u)=F(\sigma _k^2)^2\Psi _{2k}(u)\ .$
\s
Assume that (2) holds. Define $F:\Sigma _H(u)\cup \Sigma _K(u)\rightarrow \S^1$ by
$$F(\rho _j^2)=\expo_{-i\frac{\gamma _{2j-1}}2}\ ;\ F(\sigma _k^2)=\expo_{i\frac{\gamma _{2k}}2}\ ,$$
and, if necessary,  we define $F(0)$ to be any complex number of modulus $1$. Next we define $U$ on the closed range of $H_u$, which is the closed orthogonal sum of $E_u(s_r)$. Thus we just have to define $$U:E_u(s_r)\rightarrow E_{\tilde u}(s_r).$$ 
\s
If $r=2j-1$, we set
\begin{equation}\label{Uj}
U\left (\frac{z^a}{D_{2j-1}}H_u(u_j)\right )=\overline F(\rho _j^2) \frac{z^a}{D_{2j-1}}H_{\tilde u}(\tilde u_j)\ ,\ a=0,\dots ,d_{2j-1}\ .
\end{equation}
If $r=2k$ and $d_{2k}\ge 1$, we set
\begin{equation}\label{Uk}
U\left (\frac{z^b}{D_{2k}}u'_k\right )=F(\sigma _k^2) \frac{z^b}{D_{2k}}\tilde u'_k\ ,\ b=1,\dots ,d_{2k}\ .
\end{equation}
Using (\ref{Uj}) we obtain
\beno
U(u_j)&=&\frac 1{\rho _j}U(\Psi _{2j-1}(u)H_u(u_j))=\frac 1{\rho _j}\overline F(\rho _j^2)\Psi _{2j-1}(u)H_{\tilde u}(\tilde u_j)\\
&=&\frac 1{\rho _j}F(\rho _j^2)\Psi _{2j-1}(\tilde u)H_{\tilde u}(\tilde u_j)=F(\rho _j^2)\tilde u_j\ . 
\eeno
Consequently, we get
$$U(u)=\sum _jU(u_j)=\sum _jF(\rho _j^2)\tilde u_j=F(H_{\tilde u}^2)\tilde u\ .$$
A similar argument combined to Proposition \ref{action} leads to 
$$UH_u=H_{\tilde u}U.$$
Next, we prove that $U$ is unitary. It is enough to prove that every map $U:E_u(s_r)\rightarrow E_{\tilde u}(s_r)$ is unitary, or that the Gram matrix of a basis of $E_u(s_r)$ is equal to the Gram matrix of its image. We first deal with $r=2j-1$. Equivalently, we prove that, for $a,b=0,\dots ,d_{2j-1}-1$,
$$\left (\frac{z^a}{D_{2j-1}}H_u(u_j)\vert \frac{z^b}{D_{2j-1}}H_u(u_j)\right )=\left (\frac{z^a}{D_{2j-1}}H_{\tilde u}({\tilde u}_j)\vert \frac{z^b}{D_{2j-1}}H_{\tilde u}({\tilde u}_j)\right )\ .$$
We set 
$$\zeta _{a-b}:=\left (\frac{z^a}{D_{2j-1}}H_u(u_j)\vert \frac{z^b}{D_{2j-1}}H_u(u_j)\right ), \ a,b=0,\dots ,d_{2j-1}-1\ ,$$
and we notice that $\zeta _{-k}=\overline \zeta _k\ ,\ k=-d_{2j-1},\dots ,d_{2j-1}\ .$ We drop the subscript $2j-1$ for simplicity and we set 
$$D(z):=1+\overline a_1z+\dots +\overline a_d z^d.$$
As $\Psi H_u(u_j)$ is orthogonal to $\frac {z ^a}{D} H_u(u_j)$ for $a=0,\dots, d-1$, and $\Vert H_u(u_j)\Vert^2 =\rho_j^2\tau_j^2$, we obtain the system
\begin{equation}\label{system}
\left\{\begin{array}{ll}
\zeta_{d-b}+a_1\zeta_{d-b-1}+\dots+a_d\zeta_{-b}=0\ , \ b=0,\dots, d-1\ ,\\
\zeta_0+a_1\zeta_{-1}+\dots +a_d\zeta_{-d}=\rho_j^2\tau_j^2\ .\end{array}\right.
\end{equation}
\begin{lemma}\label{Solsystem}
Let $a_1,\dots a_d$ be complex numbers such that the polynomial
$ z^d+a_1z^{d-1}+\dots+a_d $ has all its roots in $\D$. Then the system (\ref{system}) has at most one solution $\zeta_k$, $k=-d\dots, d$ with $\overline \zeta_k=\zeta_{-k}$.
\end{lemma}
Assume for a while that this lemma is proved. Since $\tau_j^2$  can be expressed in terms of the $(s_r)$'s --- see (\ref{tau}), we infer that 
$U: E_u(\rho_j)\rightarrow E_{\tilde u}(\rho_j)$ is unitary. Similarly, one proves that the Gram matrix of the basis 
$$\frac {z^a}{D_{2k}}u'_k, \ a=0,\dots,d_{2k}$$ of $F_u(\sigma_k)$ only depends on the $(s_r)$'s and on $D_{2k}$. In particular, $$U:E_u(\sigma_k)\rightarrow E_{\tilde u}(\sigma_k)$$ is unitary and finally is unitary from the closed range of $H_u$ onto the closed range of $H_{\tilde u}$.
\s
Next, we construct $V$ on the closed range of $H_u$ which is the orthogonal sum of the $F_u(\sigma)$ for $\sigma \in \Sigma_H\cup \Sigma_K$.
Thus we just have to define $V:F_u(\sigma)\rightarrow E_{\tilde u}(\sigma)$ for $\sigma  \in \Sigma_H\cup \Sigma_K$.
\s
If $r=2j-1$, we set
\begin{equation}\label{Vj}
V\left (\frac{z^a}{D_{2j-1}}H_u(u_j)\right )=\overline F(\rho _j^2) \frac{z^a}{D_{2j-1}}H_{\tilde u}(\tilde u_j)\ ,\ a=1,\dots ,d_{2j-1}\ .
\end{equation}
If $r=2k$ and $d_{2k}\ge 1$, we set
\begin{equation}\label{Vk}
V\left (\frac{z^b}{D_{2k}}u'_k\right )=F(\sigma _k^2) \frac{z^b}{D_{2k}}\tilde u'_k\ ,\ b=0,\dots ,d_{2k}\ .
\end{equation}
Similarly, if $0\in\Sigma_K$, we define $V(u'_0)=F(0)\tilde u'_0$.
Using (\ref{Vk}) we get $V(u'_k)=F(\sigma _k^2)\tilde u'_k.$ 
Consequently, 
$$V(u)=V(u'_0)+\sum _kV(u'_k)=F(K_{\tilde u}^2)\tilde u\ .$$
A similar argument combined with Proposition \ref{action} leads to 
$$VK_u=K_{\tilde u}V.$$
Using again Lemma \ref{Solsystem}, $V$ is unitary from the closed range of $H_u$ onto the closed range of $H_{\tilde u}$.
\s
Now we define $U$ and $V$ on the kernel of $H_u$ which is either $\{0\}$ or an infinite dimensional separable Hilbert space. From Corollary \ref{kernel} of Appendix A, the cancellation of $\ker H_u$ only depends on the $s_r$'s. Therefore, $\ker H_u$ and $\ker H_{\tilde u}$ are isometric. We then define $U=V$ from $\ker H_u$ onto $\ker H_{\tilde u}$ to be any unitary operator.
\s
It remains to prove that $U^*V= \overline F(H_u^2)F(K_u^2)$. On $\ker H_u$, it is trivial since $U^*V=I=\overline F(0) F(0)$.
Similarly, it is trivial on vectors 
$$\frac {z^a}{D_{2k}}u'_k \ ,a=1,\dots , d_{2k}\ .$$
It remains to prove the equality for $u'_0$, $u'_k$. We write 
\beno
U^*V(u'_k)&=&F(\sigma_k^2)U^*(\tilde u'_k)=F(\sigma_k^2)U^*\left(\kappa_k^2\sum_j\frac{\tilde u_j}{\rho_j^2-\sigma_k^2}\right)\\
&=&F(\sigma_k^2)\sum_j\overline F(\rho_j^2)\kappa_k^2 \frac{u_j}{\rho_j^2-\sigma_k^2}= F(\sigma_k^2)\overline F(H_u^2)u'_k\\
&=& \overline F(H_u^2)F(K_u^2)(u'_k)\ .
\eeno 
A similar arguments holds for $U^*V(u'_0)$.
\s 
It remains to prove Lemma \ref{Solsystem}. It is sufficient to prove that the only solution of the homogeneous system
\begin{equation}\label{systemHomogene}
\left\{\begin{array}{ll}
\zeta_{d-b}+a_1\zeta_{d-b-1}+\dots+a_d\zeta_{-b}=0\ , \ b=0,\dots, d-1\ ,\\
\zeta_0+a_1\zeta_{-1}+\dots +a_d\zeta_{-d}=0\ ,\end{array}\right.
\end{equation} with $\overline \zeta_k=\zeta_{-k} \ , k=0,\dots, d$, is the trivial solution $\zeta =0$.

 We proceed by induction on $d$. For $d=1$, the system reads
$$ \left\{\begin{array}{ll}
\zeta_{1}+a_1\zeta_{0}=0\ , \ ,\\
\zeta_0+a_1\overline\zeta_{1}=0 \ .\end{array}\right.$$
Since $|a_1|<1$, this trivially implies $\zeta_0=\zeta_1=0$.
\s
For a general $d$, we plug the expression
$$\zeta_d=-(a_1\zeta_{d-1}+\dots +a_d\zeta_{0})$$ into the last equation. We get
\begin{equation}\label{eq1}
\zeta_0+b_1\overline\zeta_{1}+\dots +b_{d-1}\overline\zeta_{d-1}=0
\end{equation} with 
$$b_k=\frac{a_k-a_d\overline a_{d-k}}{1-|a_d|^2}\ , k=1,\dots, d-1\ .$$
Notice that from Proposition \ref{Od} of Appendix B, $|a_d|<1$ and the polynomial
$z^{d-1}+b_1z^{d-2}+\dots+b_{d-1}$ has all its roots in $\D$.
For $b=1,\dots ,d-1$, we multiply by $a_d$ the conjugate of equation 
$$\zeta_{b}+a_1\zeta_{b-1}+\dots+a_d\zeta_{b-d}=0$$
and substract the result from equation
$$\zeta_{d-b}+a_1\zeta_{d-b-1}+\dots+a_d\zeta_{-b}=0\ .$$
This yields
$$\zeta_{d-b}+b_1\zeta_{d-b-1}+\dots+b_{d-1}\zeta_{1-b}=0\ .$$
Together with Equation (\ref{eq1}), this is exactly the system at order $d-1$ with coefficients $b_1,\dots, b_{d-1}$. By induction, we obtain 
$$\zeta_0=\zeta_1=\dots=\zeta_{d-1}=0$$ and finally $\zeta_d=0$.

This completes the proof.
 
\end{proof}

\section{The symplectic form on $\mathcal V_{(d_1,\dots,d_n)}$}\label{symplectic}

In this section, we prove the last part of Theorem \ref{evolszego}, namely that the symplectic form $\omega$ restricted to $\mathcal V_{(d_1,\dots,d_n)}$ is given by
\begin{equation}\label{omegasurV(d)}
\omega =\sum _{r=1}^n d\left (\frac{s_r^2}{2}\right )\wedge d\psi _r\ .
\end{equation}
Recall that the variable $\psi _r$ is connected to the Blaschke product $\Psi _r$ through the identity
$$\Psi _r=\expo_{-i\psi _r}\chi _r\  ,$$
where $\chi _r$ is a Blaschke product built with a monic polynomial. Given an integer $k$, we denote by $\mathcal B_k ^\sharp $ the submanifold
of $\mathcal B_k$ made with Blaschke products built with monic polynomials of degree $k$.

Let us first point out that  we get the following result as a corollary.
\begin{corollary}
The manifold $\mathcal V_{(d_1,\dots,d_n)}$ is an involutive submanifold of $\mathcal V(d)$, where $$d=2\sum _{r=1}^nd_r+n.$$
Moreover, $\mathcal V_{(d_1,\dots,d_n)}$ is the disjoint union of the symplectic manifolds
$$\mathcal W(\chi _1,\dots ,\chi _n):=\Phi ^{-1}(\Omega _n\times (\S ^1\chi _1\times \dots \times \S ^1\chi_n))\ ,$$
on which 
$$\left (\frac{s_r^2}{2}, \psi _r\right )_{1\le r\le n}$$
are action angle variables for the cubic Szeg\H{o} flow.
\end{corollary}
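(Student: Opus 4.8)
The plan is to deduce the Corollary from the identity (\ref{omegasurV(d)}) for the restriction of $\omega $ to $\mathcal V_{(d_1,\dots ,d_n)}$ proved in this section, together with the diffeomorphism statement of Theorem \ref{Phitopodif}, the fact --- established in Section \ref{finiterank} --- that $\mathcal V_{(d_1,\dots ,d_n)}$ is a submanifold of the K\"ahler manifold $\mathcal V(d)$, and the Szeg\H{o} dynamics of Theorem \ref{evolszego}. Once (\ref{omegasurV(d)}) is granted the rest is essentially bookkeeping; I write $\mathcal V:=\mathcal V_{(d_1,\dots ,d_n)}$ for brevity.

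To prove involutivity, note that $\mathcal V(d)$ has real dimension $2d=2n+4\sum _rd_r$, while $\mathcal V$ has real dimension $2n+2\sum _rd_r$; hence $\mathcal V$ has symplectic codimension $2\sum _rd_r$, so at each $u\in \mathcal V$ the symplectic orthogonal $(T_u\mathcal V)^{\perp _\omega }$, taken in $T_u\mathcal V(d)$, has dimension $2\sum _rd_r$. In the coordinates $((s_r),(\Psi _r))$ supplied by $\Phi _n$, write $\Psi _r=\expo_{-i\psi _r}\chi _r$ with $\chi _r$ monic, and parametrize the factor $\mathcal B_{d_r}$ by $\psi _r$ together with the $2d_r$ coordinates describing $\chi _r\in \mathcal B_{d_r}^\sharp $ (Appendix B). By (\ref{omegasurV(d)}) the form $\omega _{\vert \mathcal V}$ involves only the differentials $ds_r$ and $d\psi _r$, so it has constant rank $2n$ and $\ker (\omega _{\vert \mathcal V})$ is the constant-rank distribution spanned by the coordinate directions of the $\chi _r$'s, of dimension $2\sum _rd_r$. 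Since $\ker (\omega _{\vert \mathcal V})=T\mathcal V\cap (T\mathcal V)^{\perp _\omega }$ pointwise and this common space has dimension $2\sum _rd_r=\dim (T\mathcal V)^{\perp _\omega }$, we obtain $(T\mathcal V)^{\perp _\omega }\subset T\mathcal V$ everywhere: $\mathcal V$ is coisotropic, i.e. involutive, its characteristic foliation being the foliation by the $\chi _r$-coordinate leaves.

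For the decomposition, every finite Blaschke product of degree $d_r$ factors uniquely as $\expo_{-i\psi _r}\chi _r$ with $\chi _r\in \mathcal B_{d_r}^\sharp $, so $\mathcal B_{d_r}=\bigsqcup _{\chi _r\in \mathcal B_{d_r}^\sharp }\S ^1\chi _r$. Taking the product over $r$ and applying the homeomorphism $\Phi _n^{-1}$ of Theorem \ref{Phitopodif} gives $\mathcal V=\bigsqcup \mathcal W(\chi _1,\dots ,\chi _n)$, where $\mathcal W(\chi _1,\dots ,\chi _n)$, carried by $\Phi _n$ onto $\Omega _n\times \T ^n$, is a submanifold of dimension $2n$ with global coordinates $(s_r,\psi _r)_{1\le r\le n}$. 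Restricting (\ref{omegasurV(d)}) to $\mathcal W(\chi _1,\dots ,\chi _n)$, on which the $\chi _r$ are frozen, leaves $\sum _{r=1}^nd(s_r^2/2)\wedge d\psi _r$, the canonical symplectic form of $\Omega _n\times \T ^n$; in particular $\omega ^n$ does not vanish on it, so each $\mathcal W(\chi _1,\dots ,\chi _n)$ is a symplectic submanifold of $\mathcal V(d)$.

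Finally, the action--angle statement is a rereading of Theorem \ref{evolszego} on each $\mathcal W(\chi _1,\dots ,\chi _n)$: along the cubic Szeg\H{o} flow one has $\dot \chi _r=0$, so the flow preserves $\mathcal W(\chi _1,\dots ,\chi _n)$; the functions $s_r^2/2$ satisfy $\dot s_r=0$, hence are conserved; the conjugate angles obey $\dot \psi _r=(-1)^{r-1}s_r^2$, which depends only on the $s_r$'s and equals $\partial E/\partial (s_r^2/2)$ for $E=\tfrac14\sum _r(-1)^{r-1}s_r^4$; and $\omega _{\vert \mathcal W}=\sum _rd(s_r^2/2)\wedge d\psi _r$. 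These are precisely the defining properties of action--angle variables on $\mathcal W(\chi _1,\dots ,\chi _n)$, the level sets $\{(s_1,\dots ,s_n)=\mathrm{const}\}$ being the invariant Lagrangian tori. The only step needing genuine care is the rank/dimension count in the involutivity argument --- that $\ker \omega _{\vert \mathcal V}$ really has dimension $2\sum _rd_r$, equivalently that $\omega ^n$ does not vanish identically on $\mathcal V$ --- which rests on the $\chi _r$-coordinates of Appendix B being true coordinates, functionally independent from $(s_r,\psi _r)$.
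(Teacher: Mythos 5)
Your argument is correct and is essentially the paper's own: the same dimension count ($\dim (T_u\mathcal V_{(d_1,\dots,d_n)})^{\perp_\omega}=2\sum_r d_r$) combined with the observation, via (\ref{omegasurV(d)}), that the directions along which only the $\chi_r$'s vary (your $\ker(\omega_{\vert\mathcal V})$, the paper's $T_u\mathcal F(u)$) lie in the symplectic orthogonal and have exactly that dimension, forcing coisotropy. The decomposition into the $\mathcal W(\chi_1,\dots,\chi_n)$ and the action--angle statement are read off from Theorem \ref{Phitopodif} and Theorem \ref{evolszego} just as in the paper, which treats this part as immediate.
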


\begin{proof}
From the definition of an involutive submanifold, one has to prove that, at every point $u$ of  $\mathcal V_{(d_1,\dots,d_n)}$,  the tangent space $T_u\mathcal V_{(d_1,\dots,d_n)}$ contains its orthogonal  relatively to $\omega$. We use an argument of dimension.
Namely, one has 
\beno
\dim _\R (T_u\mathcal V_{(d_1,\dots,d_n)})^\perp&=&\dim _\R T_u\mathcal V(d)-\dim _\R T_u\mathcal V_{(d_1,\dots,d_n)}\\
&=&2d-(2n+2\sum _{r=1}^nd_r)=2\sum _{r=1}^nd_r\ .
\eeno
One the other hand, from equation (\ref{omegasurV(d)}),  the tangent space to the manifold
$$\mathcal F (u):=\Phi ^{-1}\left (\{ (s_r(u))\}\times \prod_{r=1}^n \expo_{-i\psi _r(u)}\mathcal B_{d_r}^\sharp \right )$$
 is clearly a subset of $(T_u\mathcal V_{(d_1,\dots,d_n)})^\perp$. Since its dimension equals $2\sum d_r$, we get the equality and hence the first result. The second result is an immediate consequence of the previous sections.
\end{proof}
\begin{remark}
\s
\begin{itemize}
\item As this is the case for any involutive submanifold of a symplectic manifold,  the subbundle $(T\mathcal V_{(d_1,\dots,d_n)})^\perp $ of 
$T\mathcal V_{(d_1,\dots,d_n)}$ is integrable. The leaves of the corresponding isotropic foliation are the  manifolds $\mathcal F(u)$ above. 
\item The Lagrangian tori of $\mathcal W(\chi _1,\dots ,\chi _n)$ corresponding to the above action angle variables are precisely the tori studied in
section \ref{invtori}.
\end{itemize}
\end{remark}
Now, we prove equality (\ref{omegasurV(d)}).
We first establish the following lemma, as a consequence of Theorem \ref{hierarchyevol}.
\begin{lemma}
On $\mathcal V_{(d_1,\dots ,d_n)}$, 
$$\omega=\sum _{r=1}^n d\left (\frac{s_r^2}{2}\right )\wedge d\psi _r+\tilde \omega\ .$$
where,  for any $1\le r\le n$, 
$$i_{\frac{\partial}{\partial\psi_r}} \tilde \omega=0\ .$$
\end{lemma}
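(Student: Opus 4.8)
The plan is to read off the identity $i_{\partial/\partial\psi_r}\omega=-d(s_r^2/2)$ from the Hamiltonian vector fields of the Szeg\H{o} hierarchy, after which the lemma is a one-line computation. Recall from Corollary \ref{XJy} that on $\mathcal V_{(d_1,\dots,d_n)}$ one has
$$X_{J^y}=\sum_{r=1}^n(-1)^r\frac{2yJ^y}{1+ys_r^2}\,\frac{\partial}{\partial\psi_r}\ ,$$
and that $X_{J^y}$ is the Hamiltonian vector field of $J^y$; with the sign convention attached to $\omega(h_1,h_2)={\rm Im}(h_1\vert h_2)$ and $\dot u=-i\Pi(|u|^2u)$ this reads $i_{X_{J^y}}\omega=-dJ^y$. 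Writing $\theta_r:=i_{\partial/\partial\psi_r}\omega$ and contracting the displayed identity (the coefficients being functions, hence pulling out of the interior product) gives, for every $y>0$,
$$\sum_{r=1}^n(-1)^r\frac{2yJ^y}{1+ys_r^2}\,\theta_r=-dJ^y\ .$$

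Next I would compute $dJ^y$. By formula (\ref{J}) of Appendix A, on $\mathcal V_{(d_1,\dots,d_n)}$ the function $J^y$ equals $\prod_k(1+y\sigma_k^2)\,\prod_j(1+y\rho_j^2)^{-1}$, so that $\log J^y=\sum_{r=1}^n(-1)^r\log(1+ys_r^2)$ (the even indices $r$ carrying the $\sigma_k$, the odd ones the $\rho_j$), whence
$$dJ^y=J^y\sum_{r=1}^n(-1)^r\frac{2ys_r\,ds_r}{1+ys_r^2}=\sum_{r=1}^n(-1)^r\frac{2yJ^y}{1+ys_r^2}\,d\!\left(\frac{s_r^2}{2}\right)\ .$$
Substituting into the previous identity yields, for every $y>0$,
$$\sum_{r=1}^n(-1)^r\frac{2yJ^y}{1+ys_r^2}\left(\theta_r+d\!\left(\frac{s_r^2}{2}\right)\right)=0\ .$$

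Now I would fix a point of $\mathcal V_{(d_1,\dots,d_n)}$ and $n$ distinct positive numbers $y_1>\dots>y_n$. The $n\times n$ matrix with entries $(-1)^r2y_\ell J^{y_\ell}(1+y_\ell s_r^2)^{-1}$ is the product of the invertible diagonal matrices ${\rm diag}(2y_\ell J^{y_\ell})$ and ${\rm diag}((-1)^r)$ with the matrix $\big(1/(1+y_\ell s_r^2)\big)_{\ell,r}$, which is invertible because the $s_r$ are pairwise distinct and positive (this is exactly the invertibility already used in Corollary \ref{XJy}). Hence the system of $n$ identities obtained by taking $y=y_\ell$ forces $\theta_r+d(s_r^2/2)=0$ at that point, for every $r$; that is, $i_{\partial/\partial\psi_r}\omega=-d(s_r^2/2)$.

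To conclude, since the coordinates $s_s$ do not depend on the angles $\psi_r$, the interior product of a wedge of one-forms gives $i_{\partial/\partial\psi_r}\big(\sum_{s=1}^n d(s_s^2/2)\wedge d\psi_s\big)=-d(s_r^2/2)$. Therefore, setting $\tilde\omega:=\omega-\sum_{r=1}^n d(s_r^2/2)\wedge d\psi_r$, one obtains $i_{\partial/\partial\psi_r}\tilde\omega=0$ for all $r$, which is the assertion. The only delicate points are bookkeeping the sign convention relating $\omega$ to Hamiltonian vector fields, and quoting the precise product expression for $J^y$ from Appendix A (equivalently, its logarithmic derivative $d\log J^y=\sum_r(-1)^r\frac{2ys_r\,ds_r}{1+ys_r^2}$); everything else reduces to the linear-independence argument already contained in the proof of Corollary \ref{XJy}.
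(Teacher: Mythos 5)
Your proof is correct and follows essentially the same route as the paper: contract the identity for $X_{J^y}$ from Corollary \ref{XJy} with $\omega$ using $i_{X_{J^y}}\omega=-dJ^y$, compute $d\log J^y$ from the product formula of Appendix A, and separate the terms in $r$. The only cosmetic difference is that you isolate each $\theta_r$ by evaluating at $n$ distinct values of $y$ and invoking the invertibility of the Cauchy-type matrix, whereas the paper identifies residues in the $y$ variable — these are equivalent.
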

\begin{proof}
Taking the interior product of both sides of identity (\ref{decXJy}) with the restriction of $\omega $ to $\mathcal V_{(d_1,\dots ,d_n)}$, we obtain
$$-d(\log J^y)=\sum _{r=1}^n (-1)^{r}\frac{2y}{1+ys_r^2}i_{\frac{\partial}{\partial \psi _r}}\omega \ .$$
On the other hand, from formula (\ref{J}) in Appendix A,
$$d(\log (J^y))=\sum _{r=1}^n (-1)^r \frac{2y}{1+ys_r^2}d\left (\frac{s_r^2}2\right )\ .$$
Identification of residues in the $y$ variables yields
$$d\left (\frac{s_r^2}2\right )=-i_{\frac{\partial}{\partial \psi _r}}\omega \  ,\ r=1,\dots ,n\ .$$
Since
$$i_{\frac{\partial}{\partial \psi _r}}\left (\sum _{r'=1}^n d\left (\frac{s_{r'}^2}{2}\right )\wedge d\psi _{r'}\right )=-d\left (\frac{s_r^2}2\right )\ ,$$
this completes the proof.
\end{proof}
Since $d\omega=0$, we have  $d\tilde \omega=0$. Combining this information with  $i_{\frac{\partial}{\partial\psi_r}} \tilde \omega=0\ ,$
we conclude that
$$\tilde \omega =\pi ^*\beta \ ,$$
where $\beta $ is a closed $2$-form  on $\Omega _n\times \prod _{r=1}^n \mathcal B_{d_r}^\sharp $, and
$$\pi (u):=((s_r(u))_{1\le r\le n},(\chi _r(u)_{1\le r\le n})\ .$$
In order to prove that $\tilde \omega =0$, it is therefore sufficient to prove that $\tilde \omega =0$ on the submanifold $$\mathcal V_{(d_1,\dots ,d_n),{\rm red}}:=\Phi ^{-1}\left (\Omega _n\times \prod _{r=1}^n \mathcal B_{d_r}^\sharp \right )\  $$
given by the equations $\psi _r=0\ ,\ r=1,\dots ,n$.
\begin{lemma}\label{restriction omega}
The restriction of $\omega $ to $\mathcal V_{(d_1,\dots,d_n),{\rm red}}$ is $0$.
\end{lemma}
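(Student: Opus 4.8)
The plan is to exploit the reality structure. Recall that $L^2_{+,r}$ denotes the real subspace of $L^2_+$ consisting of functions with real Fourier coefficients. The manifold $\mathcal V_{(d_1,\dots,d_n),{\rm red}}$ is cut out by the conditions $\psi_r=0$ for all $r$, i.e. by the requirement that each Blaschke product $\Psi_r$ has angle $0$. Since the $\Psi_r$ are built with monic Schur polynomials with real coefficients (elements of $\mathcal B_{d_r}^\sharp$), the key claim is that $\mathcal V_{(d_1,\dots,d_n),{\rm red}}\subset L^2_{+,r}$. This should follow exactly as in the proof of Theorem \ref{realmain} in the finite rank case: the explicit formula (\ref{luminy}) of Theorem \ref{Phibij}, with all $\psi_{2j-1}=\psi_{2k}=0$ and all $P_r$ real, produces a symbol $u$ with real Fourier coefficients, because the matrix $\mathcal C(z)$ then has real coefficients as a function with real Taylor coefficients, and likewise its determinant and minors.

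Once we know $\mathcal V_{(d_1,\dots,d_n),{\rm red}}$ lies inside $L^2_{+,r}$, I would argue that this real submanifold is isotropic for purely algebraic reasons. The symplectic form is $\omega(h_1,h_2)={\rm Im}(h_1\vert h_2)$. If $u(t)$ is any smooth curve inside $L^2_{+,r}$, then $\dot u(t)\in L^2_{+,r}$ as well, so the inner product $(\dot u_1\vert \dot u_2)$ of two tangent vectors at a point of $\mathcal V_{(d_1,\dots,d_n),{\rm red}}$ is a real number, whence ${\rm Im}(\dot u_1\vert\dot u_2)=0$. Therefore $\omega$ restricted to the tangent space at every point of $\mathcal V_{(d_1,\dots,d_n),{\rm red}}$ vanishes, which is precisely the assertion.

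The step I expect to require the most care is verifying that the reduced manifold does land in $L^2_{+,r}$. One has to check that the open conditions (\ref{hypo1}) and (\ref{hypo2}) used to build the symbol, together with formula (\ref{luminy}), are compatible with the reality of all data, and that the tangent directions to $\mathcal V_{(d_1,\dots,d_n),{\rm red}}$ — obtained by moving $(s_r)$ in $\Omega_n$ and the monic Schur polynomials in $\mathcal B_{d_r}^\sharp$ while keeping $\psi_r=0$ — are genuinely realized by curves staying inside $L^2_{+,r}$; this is immediate once $\mathcal V_{(d_1,\dots,d_n),{\rm red}}$ is identified as a subset of $L^2_{+,r}$, since it is then an embedded submanifold of the real Banach space $L^2_{+,r}$ and its tangent vectors are real. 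The remainder of the argument is the one-line observation above about ${\rm Im}$ of a real scalar product. Combined with the preceding reduction $\tilde\omega=\pi^*\beta$ and the fact that $\tilde\omega$ vanishes on $\mathcal V_{(d_1,\dots,d_n),{\rm red}}$, this forces $\beta=0$, hence $\tilde\omega=0$ on all of $\mathcal V_{(d_1,\dots,d_n)}$, completing the proof of (\ref{omegasurV(d)}).
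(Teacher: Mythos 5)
Your argument hinges on the claim that $\mathcal V_{(d_1,\dots,d_n),{\rm red}}\subset L^2_{+,r}$, and this claim is false. The normalization $\psi_r=0$ only fixes the phase: an element of $\mathcal B_{d_r}^\sharp$ is a Blaschke product built from a \emph{monic} Schur polynomial whose coefficients are arbitrary complex numbers (the set of such polynomials is $\mathcal O_{d_r}\simeq\R^{2d_r}$, cf.\ Appendix B); it is only the smaller set $\mathcal B_{d_r,r}^\sharp$, used in the proof of Theorem \ref{realmain}, that consists of Blaschke products with \emph{real} coefficients and has dimension $d_r$. You have conflated the two. A concrete counterexample: take $n=1$, $d_1=1$, and $p\in\D\setminus\R$; formula (\ref{luminy}) (odd case, $\sigma_1=0$) gives $u=\Phi^{-1}\bigl((s_1),(\chi_p)\bigr)=s_1\chi_p$, whose zeroth Fourier coefficient is $-s_1p\notin\R$, yet $u\in\mathcal V_{(1),{\rm red}}$ since $\chi_p$ is monic-normalized (angle $0$). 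A dimension count says the same thing: $\dim\mathcal V_{(d_1,\dots,d_n),{\rm red}}=n+2\sum_r d_r$, whereas the symbols in $L^2_{+,r}$ with the prescribed multiplicities form a family of dimension only $n+\sum_r d_r$. So the reduced manifold is not real whenever some $d_r\ge 1$, and the one-line ``${\rm Im}$ of a real inner product vanishes'' argument does not apply. (Your final reduction, that vanishing of $\omega$ on $\mathcal V_{(d_1,\dots,d_n),{\rm red}}$ forces $\beta=0$ and hence $\tilde\omega=0$, is the same as the paper's and is fine; the gap is in proving the vanishing itself.)

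For comparison, the paper proves the lemma by a genuinely different mechanism: it introduces the primitive $\alpha$ of $2\omega$, $\langle\alpha(u),h\rangle={\rm Im}(u\vert h)$, and shows the stronger statement that $\alpha$ restricts to $0$ on $\mathcal V_{(d_1,\dots,d_n),{\rm red}}$. This requires an explicit description of the tangent space at a generic point (Lemma \ref{espace tangent}): tangent vectors are real combinations of $u_j$, $u_jH_u(u_\ell)$ and of the vectors $\dot u_{\chi_r,\zeta}$ coming from moving the zeroes of the $\chi_r$, and one checks $(u\vert\cdot)$ is real, resp.\ that certain scalar products such as $\bigl(H_u(u_j)\,\big\vert\,\tfrac{z}{1-\overline pz}H_u(u_j)\bigr)$ vanish, using the spectral identities for $H_u$ and $K_u$. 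If you want to salvage a reality-type argument, it would at best handle the locus where all $\chi_r$ have real coefficients, which is a proper submanifold of $\mathcal V_{(d_1,\dots,d_n),{\rm red}}$ and not enough to conclude.
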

\begin{proof} Consider the differential form $\alpha $ of of degree $1$ defined 
$$\langle \alpha (u), h\rangle :={\rm Im}(u\vert h)\ .$$
It is elementary to check that
$$\frac 12 d\alpha =\omega \ ,$$
hence  the statement is consequence of the fact  that the restriction of $\alpha $ to $\mathcal V_{(d_1,\dots,d_n),{\rm red}}$ is $0$.
Let us prove this stronger fact. By a density argument, we may assume that $n=2q$ is even, and that the Blaschke products $\chi _r(u)$ have only simple zeroes. Firstly, we describe the tangent space of $\mathcal V_{(d_1,\dots,d_n),{\rm red}}$ at a generic point. We use the notation of section \ref{section explicit}.
\begin{lemma}\label{espace tangent}
The tangent vectors to $\mathcal V_{(d_1,\dots,d_n),{\rm red}}$ at a generic point $u$ where every $\chi _r$  has
only simple zeroes are linear combinations with real coefficients of $u_j, u_jH_u(u_\ell ), 1\le j,\ell \le q $, and of the following functions,
for $ \zeta \in \C$ and $1\le j,k\le q$,
\begin{eqnarray*}
\dot u_{\chi _{2j-1}, \zeta }(z)&:=& \left (\overline \zeta \frac{z}{1-\overline pz}-\zeta \frac{1}{z-p}\right )u_j(z)H_u(u_j)(z)\ ,\ \chi _{2j-1}(p)=0\ ,\\
\dot u_{\chi _{2k},\zeta}(z)&:=&  \left (\overline \zeta \frac{z}{1-\overline pz}-\zeta \frac{1}{z-p}\right )zu'_k(z)K_u(u_k')(z)\ ,\ \chi _{2k}(p)=0\ .
\end{eqnarray*}
\end{lemma}
We assume  Lemma \ref{espace tangent}  and show how it implies Lemma \ref{restriction omega}.
Notice that 
\beno
(u\vert u_j)&=&\Vert u_j\Vert ^2\ ,\  (u\vert u_jH_u(u_j))=\Vert H_u(u_j)\Vert ^2\ ,\\
 (u\vert H_u(u_\ell )u_j)&=&(H_u(u_j)\vert H_u(u_\ell ))=0\ ,\ j\ne \ell \ ,
 \eeno
and therefore 
$\alpha (u)$ cancels on $u_j, u_jH_u(u_\ell ), 1\le j,\ell \le q $.  We now deal with  vectors $\dot u_{\chi _r,\zeta }$. 
\begin{eqnarray*}
(u\vert \dot u_{\chi _{2j-1},\zeta })&=& \zeta \left (u\Big \vert \frac{z}{1-\overline pz}u_jH_u(u_j)\right )-\overline \zeta \left (u\Big \vert \frac{u_j}{z-p}H_u(u_j)\right )\\&=& \zeta \left (H_u(u_j)\Big \vert \frac{z}{1-\overline pz}H_u(u_j)\right )-\overline \zeta \left (H_u^2(u_j)\Big \vert \frac{u_j}{z-p}\right )\ ,
\end{eqnarray*}
where we used that $u_j/(z-p)$ belongs to $L^2_+$. Since $H_u^2(u_j)=\rho _j^2u_j$ and $\rho _j^2\vert u_j\vert ^2=\vert H_u(u_j)\vert ^2$ on the 
unit circle, we infer
$$(u\vert \dot u_{\chi _{2j-1},\zeta })=\zeta \left (H_u(u_j)\Big \vert \frac{z}{1-\overline pz}H_u(u_j)\right )-\overline \zeta \left (\frac{z}{1-\overline pz}H_u(u_j)\Big \vert H_u(u_j)\right )$$
and consequently
$$\langle \alpha (u),\dot u_{\chi _{2j-1},\zeta }\rangle =2{\rm Im}\ \zeta\left (H_u(u_j)\Big \vert \frac{z}{1-\overline pz}H_u(u_j)\right )\ ,$$
which is $0$ for every $\zeta \in \C $ if and only if
$$\left (H_u(u_j)\Big \vert \frac{z}{1-\overline pz}H_u(u_j)\right )=0\ .$$
Let us prove this identity. Set
$$v:=\frac{z}{1-\overline pz}H_u(u_j)\ .$$
Notice that, since $\chi _{2j-1}(p)=0$,  $v\in E_u(\rh _j)$, and moreover
$$(v\vert 1)=v(0)=0\ .$$
Therefore
$$(H_u(u_j)\vert v)=(H_u(v)\vert u_j)=(H_u(v)\vert u)=(1\vert H_u^2(v))=\rho _j^2(1\vert v)=0\ .$$
We conclude that
$$\langle \alpha (u),\dot u_{\chi _{2j-1},\zeta }\rangle=0\ .$$
Similarly, we calculate
\begin{eqnarray*}
(u\vert \dot u_{\chi _{2k},\zeta })&=& \zeta \left (u\Big \vert \frac{z}{1-\overline pz}zu_k'K_u(u_k')\right )-\overline \zeta \left (u\Big \vert \frac{K_u(u_k')}{z-p}zu_k'\right )\\&=& \zeta \left (K_u(u_k')\Big \vert \frac{z}{1-\overline pz}K_u(u_k')\right )-\overline \zeta \left (K_u(u_k')\Big \vert \frac{K_u(u_k')}{z-p}\right )\ ,
\end{eqnarray*}
where we have used that $K_u(u_k')/(z-p)$ belongs to $L^2_+$. We conclude that
$$\langle \alpha (u),\dot u_{\chi _{2k},\zeta }\rangle =2{\rm Im}\ \zeta\left (K_u(u_k')\Big \vert \frac{z}{1-\overline pz}K_u(u_k')\right )\ ,$$
which is $0$ for every $\zeta \in \C $ if and only if
$$\left (K_u(u_k')\Big \vert \frac{z}{1-\overline pz}K_u(u_k')\right )=0\ .$$
Since $\vert K_u(u_k')\vert ^2=\sigma _k^2\vert u_k'\vert ^2$ on the unit circle, we are left to prove
$$\left (u_k'\Big \vert \frac{z}{1-\overline pz}u_k'\right )=0\ .$$
Set
$$w:=\frac{1}{1-\overline pz}u_k'\ .$$
We notice that $w\in F_u(\sigma _k)$, and that $zw \in F_u(\sigma _k)$. Moreover,
$$w=\frac{1}{1-\vert p\vert ^2}(\overline p\chi _p+1)u_k'\ ,$$
therefore, setting $\chi _{2k}:=g_k\chi _p$,
$$K_u(w)=\frac{\sigma _k}{1-\vert p\vert ^2}(pg_ku'_k+\chi _{2k}u'_k)=\frac{\sigma _kg_k}{1-\vert p\vert ^2}(p+\chi _p)u'_k=\sigma _kg_kzw\ .$$
In particular,
$$(K_u(w)\vert 1)=K_u(w)(0)=0\ .$$
We now conclude as follows,
$$\left (u_k'\Big \vert \frac{z}{1-\overline pz}u_k'\right )=(u_k'\vert zw)=(u\vert zw)=(K_u(w)\vert 1)=0\ .$$
This completes the proof up to the proof of lemma \ref{espace tangent}.
\end{proof}
Let us prove lemma \ref{espace tangent}. We are going to use formulae from section \ref{section explicit}, namely
$$u(z)=\sum _{j=1}^q \chi _{2j-1}(z)h_j(z)\ ,$$
where $\mathcal H(z)=(h_\ell (z))_{1\le \ell \le q}$ satisfies 
$$\mathcal C(z)\mathcal H(z)={\bf 1}\ ,\ \mathcal C(z)=\left (\frac{\rho _\ell -\sigma _k z\chi _{2k}(z)\chi _{2\ell -1}(z)}{\rho _\ell ^2-\sigma _k^2}\right )_{1\le k,\ell \le q}\ .$$
If we denote by $\dot {\  }$  the derivative with respect to one of the parameters $\rho _j, \sigma _k$ or one of the coefficients of the $\chi _r$,
we have 
$$\dot u(z)=\sum _{\ell =1}^q (\dot \chi _{2\ell -1}(z)h_\ell (z)+\chi _{2\ell-1}(z)\dot h_\ell (z))\ ,$$
with
$$\dot {\mathcal H}(z)=-\mathcal C(z)^{-1}\dot {\mathcal C}(z)\mathcal H(z)\ .$$ 
In the case of the derivative with respect to $\rho _j$, one gets
$$\dot h_\ell (z)=h_j(z)\sum _{k=1}^q\frac{(-1)^{k+\ell }\Delta _{k\ell}(z)(\rh _j^2+\si _k^2-2\si _k\rh _j z\chi _{2j-1}(z)\chi_{2k}(z))}
{\det \mathcal C(z) (\rh _j^2-\si _k^2)^2}\ ,$$
and therefore, from formula (\ref{u'DR}), 
\beno
\dot u(z)&=& h_j(z)\sum _{k=1}^qu'_k(z)\frac{ (\rh _j^2+\si _k^2-2\si _k\rh _j z\chi _{2j-1}(z)\chi_{2k}(z))}
{ (\rh _j^2-\si _k^2)^2} \\
&=& \frac {H_u(u_j)(z)}{\rho _j}\sum _{k=1}^q \frac{\rh _j^2+\si _k^2}{(\rh _j^2-\si _k^2)^2}u'_k(z)-2\rh _ju_j(z)\sum _{k=1}^q\frac{zK_u(u'_k)(z)}
{(\rh _j^2-\si _k^2)^2}\ .
\eeno 
Observe that 
$$zK_u(u'_k)(z)=[SS^*H_u(u'_k)](z)=H_u(u'_k)(z)-\kappa _k^2\ ,$$
and that $u'_k$ is a linear combination with real coefficients of $u_\ell $ in view of (\ref{urho}). We infer that, in this case, $\dot u$ is a linear combination with real coefficients of $u_j$ and $u_mH_u(u_\ell )$. 

In the case of the derivative with respect to $\si _k$, one  similarly gets
$$\dot h_\ell (z)=\sum _{j=1}^q h_j(z)\frac{(-1)^{k+\ell }\Delta _{k\ell}(z)\left(z\chi _{2j-1}(z)\chi_{2k}(z)(\rh _j^2+\si _k^2)-2\si _k\rh _j \right)}
{\det \mathcal C(z) (\rh _j^2-\si _k^2)^2}\ ,$$
and therefore, from formula (\ref{u'DR}), 
\beno
\dot u(z)&=& u'_k(z)\sum _{j=1}^qh_j(z)\frac{ z\chi _{2j-1}(z)\chi_{2k}(z)(\rh _j^2+\si _k^2)-2\si _k\rh _j }
{ (\rh _j^2-\si _k^2)^2} \\
&=& \frac{zK_u(u'_k)(z)}{\si _k}\sum _{j=1}^q \frac{(\rh _j^2+\si _k^2)u_j(z)}{(\rh _j^2-\si _k^2)^2}-2\si _ku'_k(z)\sum _{j=1}^q\frac {H_u(u_j)(z)}
{(\rh _j^2-\si _k^2)^2}\ ,
\eeno 
which is a linear combination with real coefficients of $u_j$ and $u_jH_u(u_\ell )$.

In the case of a derivative with respect to one of the zeroes of $\chi _{2j-1}$, we obtain a simpler identity,
$$\dot h_\ell (z)=\frac{\dot \chi _{2j-1}(z)}{\chi _{2j-1}(z)}u_j(z)\sum _{k=1}^q \frac{(-1)^{k+\ell}\Delta _{k\ell}(z)z\si _k\chi _{2k}(z)}{\det \mathcal C(z) (\rh _j^2-\si _k^2)}\ ,$$
and therefore, from formulae (\ref{u'DR}), (\ref{urho})  and (\ref{sommesimplekappa}),
\beno
\dot u(z)&=&\frac{\dot \chi _{2j-1}(z)}{\chi _{2j-1}(z)}u_j(z)+\sum _{\ell =1}^q\chi _{2\ell -1}(z)\dot h_\ell (z)\\
&=& \frac{\dot \chi _{2j-1}(z)}{\chi _{2j-1}(z)}\frac{u_j(z)H_u(u_j)(z)}{\tau _j^2}\ .
\eeno
In the case of a derivative with respect to one of the zeroes of $\chi _{2k}$, we obtain similarly,
$$\dot h_\ell (z)=\frac{\dot \chi _{2k}(z)}{\chi _{2k}(z)}\frac{(-1)^{k+\ell}\Delta _{k\ell}(z)zK_u(u'_k)(z)}{\det \mathcal C(z)}\ ,$$
and therefore
\beno
\dot u(z)&=&\sum _{\ell =1}^q\chi _{2\ell -1}(z)\dot h_\ell (z)\\
&=& \frac{\dot \chi _{2k}(z)}{\chi _{2k}(z)}\frac{u'_k(z)zK_u(u'_k)(z)}{\kappa _k^2}\ .
\eeno
The proof of Lemma \ref{espace tangent} is completed by observing that, since $\chi _r$ is a product of functions $\chi _p$ for $\vert p\vert <1$,
$\dot \chi _r/\chi _r$ is a sum of terms of the form
$$ \left (\overline \zeta \frac{z}{1-\overline pz}-\zeta \frac{1}{z-p}\right )$$
where $\zeta :=\dot p$.

\begin{appendix}

\section{Some Bateman-type formulae}
Let $u\in VMO_+$. Denote by $(\rho_j)$ the decreasing sequence of elements of $\Sigma _H(u)$, and by $(\sigma_k^2)$ 
the decreasing sequence of  elements of  $\Sigma_K(u)$. Recall that both sequences are either finite or infinite, with the same number of elements,
and we have
 $$\rho_1^2>\sigma _1^2>\rho_2^2>\dots $$
 Denote by $u_j$ the orthogonal projection of $u$ onto $E_u(\rho_j)=\ker(H_u^2-\rho_j^2I)$, and by $\tau_j$ the norm of $u_j$. Similarly, denote by $u'_k$, the orthogonal projection of $u$ onto $F_u(\sigma_k):=\ker(K_u^2-\sigma_k^2I)$, and by $\kappa_k$ the norm of $u'_k$.
In this section, we state and prove  several formulae connecting these sequences. These formulae are based on the special case of a general formula for the resolvent of a finite rank perturbation of an operator, which seems to be due to Bateman \cite{Ba} in the framework of Fredholm integral equations. Further references can be found in Chap. II, sect. 4.6 of  \cite{KK}, section 106 of \cite{AG} and \cite{Na}, from which we borrowed this information.
\begin{proposition}\label{formulae}
The following functions coincide respectively for $x$ outside the set $\{\frac 1{\rho_j^2}\}$ and outside the set $\{\frac 1{\sigma_k^2}\}$.
\begin{equation}\label{J(x)Appendix}
\prod_j \frac{1-x\sigma_j^2}{1-x\rho_j^2}=1+x\sum_j \frac{\tau_j^2}{1-x\rho_j^2}
\end{equation}
\begin{equation}\label{1/J(x)Appendix}
\prod_j\frac{1-x\rho_j^2}{1-x\sigma_j^2}=1-x\left (\sum_j\frac{\kappa_j^2}{1-x\sigma_j^2}\right )\ .
\end{equation}
Furthermore, 
\begin{equation}\label{sommenu}
1-\sum _j\frac{\tau _j^2}{\rho _j^2}=\prod _j\frac{\si _j^2}{\rh _j^2}\ ,
\end{equation}
and, if ${\displaystyle \prod _j\frac{\si _j^2}{\rh _j^2}=0}$, 
\begin{equation}\label{sommenurho}
\sum _j\frac{\tau _j^2}{\rho _j^4}=\frac 1{\rh _1^2}\prod _j\frac{\si _j^2}{\rh _{j+1}^2}\ .
\end{equation}
The $\tau_j^2$'s are given by 
\begin{equation}\label{tau}
\tau_j^2= \left(\rho_j^2-\sigma_j^2\right)\prod_{k\ne j}\frac{\rho_j^2-\sigma_k^2}{\rho_j^2-\rho_k^2}\ ,\end{equation}
and the $\kappa_j^2$'s by 
\begin{equation}\label{kappa}
\kappa_j^2=(\rho_j^2-\sigma_j^2)\prod_{k\ne j}\frac{\sigma_j^2-\rho_k^2}{\sigma_j^2-\sigma_k^2}.
\end{equation}
\end{proposition}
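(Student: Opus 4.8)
The plan is to deduce all six identities from the meromorphic structure of a single function, the Borel (or Cauchy) transform
$F(z):=\bigl((H_u^2-zI)^{-1}u\,\vert\, u\bigr)$.
Writing $u=\sum_j u_j$ with $u_j\in E_u(\rho_j)$ and $\Vert u_j\Vert=\tau_j$ gives $F(z)=\sum_j\frac{\tau_j^2}{\rho_j^2-z}$, a series converging for $z\in\C\setminus\overline{\{\rho_j^2\}}$, in particular for every $z<0$; define likewise $G(z):=\bigl((K_u^2-zI)^{-1}u\,\vert\, u\bigr)=\sum_k\frac{\kappa_k^2}{\sigma_k^2-z}$. The key input is identity (\ref{Ku2}), $K_u^2=H_u^2-(\,\cdot\,\vert u)u$, which exhibits $K_u^2$ as a rank one perturbation of $H_u^2$. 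The classical rank one resolvent identity (the Bateman formula mentioned above) then yields, for $z<0$, the relation $G(z)=\dfrac{F(z)}{1-F(z)}$, equivalently $1-F(z)=\dfrac{1}{1+G(z)}$, and both sides extend meromorphically to $\C$.

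Next I would pin down poles and zeros. From its series, $F$ has a simple pole at each $\rho_j^2$ with residue $-\tau_j^2$, hence so does $1-F$; moreover $F$ is a Herglotz-type function, so its restriction to each component of $\R\setminus\{\rho_j^2\}$ is strictly increasing. Combining this with Proposition \ref{rigidity} — precisely the characterization (\ref{eqsigma}) of $\Sigma_K(u)$ as the set of $\sigma\notin\Sigma_H(u)$ with $F(\sigma^2)=1$, together with the interlacing $\rho_1^2>\sigma_1^2>\rho_2^2>\cdots$ — one identifies the zeros of $1-F$ as exactly the $\sigma_k^2$, each simple. In the finite rank case $F$ is a genuine rational function of the right degree, so these facts force the master identity
$1-F(z)=\dfrac{\prod_k(\sigma_k^2-z)}{\prod_j(\rho_j^2-z)}$
(call it $(\star)$), the multiplicative constant being fixed by letting $z\to\infty$, where both $F$ and $G$ vanish.

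From $(\star)$ everything is bookkeeping. Identity (\ref{J(x)Appendix}) follows by the substitution $z=1/x$ after clearing the common factors $-1/x$, using $F(1/x)=-x\sum_j\frac{\tau_j^2}{1-x\rho_j^2}$; identity (\ref{1/J(x)Appendix}) follows by first taking reciprocals, $1+G(z)=\prod_j(\rho_j^2-z)/\prod_k(\sigma_k^2-z)$, then substituting $z=1/x$. Formulae (\ref{tau}) and (\ref{kappa}) come from equating residues of the two sides of $(\star)$ at $z=\rho_j^2$ (resp. of its reciprocal at $z=\sigma_j^2$): the left side has residue $\tau_j^2$ (resp. $-\kappa_j^2$), the right side has residue $-\prod_k(\sigma_k^2-\rho_j^2)/\prod_{l\ne j}(\rho_l^2-\rho_j^2)$ (resp. the analogous expression), and reconciling the signs and splitting off the $k=j$ factor gives exactly the stated products. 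Identity (\ref{sommenu}) is merely $(\star)$ evaluated at $z=0$. Finally, for (\ref{sommenurho}) one observes that the hypothesis $\prod_j\sigma_j^2/\rho_j^2=0$ together with (\ref{sommenu}) forces $F(0)=1$, so $0$ is one of the zeros $\sigma_k^2$ and $\prod_k(\sigma_k^2-z)=(-z)\prod_{\sigma_k>0}(\sigma_k^2-z)$; dividing $(\star)$ by $-z$ and letting $z\to0$ (a legitimate Taylor expansion since $F(0)=1$) identifies $F'(0)=\sum_j\tau_j^2/\rho_j^4$ with the claimed product.

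The main obstacle is the infinite rank case, where $F$ is no longer rational. One must justify that $F$ is still holomorphic on $\C\setminus\overline{\{\rho_j^2\}}$ with simple poles of residue $-\tau_j^2$, that $1-F$ has no zeros other than the $\sigma_k^2$, and above all that the infinite products in $(\star)$ converge. Convergence itself is fairly soft: since $\sum_j\tau_j^2=\sum_k\kappa_k^2=\Vert u\Vert^2<\infty$ and the interlacing gives $\sum_j(\rho_j^2-\sigma_j^2)<\infty$, the product $\prod_k\frac{\sigma_k^2-z}{\rho_k^2-z}$ converges locally uniformly away from its poles; the delicate point is to keep track of whether $\prod_j\sigma_j^2/\rho_j^2$ is positive or vanishes, the latter being exactly the regime in which (\ref{sommenurho}) is asserted. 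I would carry this out either by meromorphic continuation of the resolvent relation $G=F/(1-F)$ directly in the infinite setting, or by truncating to the top $N$ singular values, applying the finite rank identities already proved, and passing to the limit with the approximation and compactness arguments of Section \ref{compact}.
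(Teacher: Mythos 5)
Your finite-rank argument is correct, and it reaches the paper's key identity by a different route: after the change of variable $z=1/x$ your master identity $(\star)$ is exactly the paper's formula (\ref{J}), since $1-F(1/x)=1+x((I-xH_u^2)^{-1}u\vert u)=J(x)$. You obtain it by pole/zero counting for the rational function $1-F$ (simple poles at the $\rho_j^2$ with residues $\tau_j^2$ from the spectral decomposition of $u$, simple zeros at the $\sigma_k^2$ from criterion (\ref{eqsigma}) of Proposition \ref{rigidity}, normalization at infinity, and a degree count), whereas the paper computes the trace of the rank-one difference $(I-xH_u^2)^{-1}-(I-xK_u^2)^{-1}$ in two ways, gets the logarithmic-derivative identity $J'/J=\sum_j\bigl(\frac{\rho_j^2}{1-\rho_j^2x}-\frac{\sigma_j^2}{1-\sigma_j^2x}\bigr)$, integrates it for trace-class operators and then extends to compact ones via $\sum_j(\rho_j^2-\sigma_j^2)<\infty$. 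Your subsequent bookkeeping (substitution $z=1/x$, residues for (\ref{tau}) and (\ref{kappa}), evaluation at $z=0$ for (\ref{sommenu})) is the same as the paper's and is fine.

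The genuine gap is the infinite-rank case, which the proposition covers and which you only sketch. Your first fallback --- truncate to the top $N$ singular values and invoke the arguments of Section \ref{compact} --- is circular: producing a symbol whose pair $(H,K)$ has the prescribed top singular values is precisely the inverse problem of Sections \ref{finiterank} and \ref{compact}, and those sections use the Appendix A formulae ((\ref{tau}), (\ref{kappa}), (\ref{sommesimplekappa}), (\ref{sommenu})) as input. Your second fallback, ``meromorphic continuation of $G=F/(1-F)$,'' is not enough by itself: in infinite rank, knowing that $1-F$ and $\prod_k\frac{\sigma_k^2-z}{\rho_k^2-z}$ are meromorphic on $\C\setminus\{0\}$ with the same simple poles, zeros and residues does not identify them, since their ratio is a zero-free holomorphic function on $\C\setminus\{0\}$ which could a priori have an essential singularity at the accumulation point $0$; one needs either a Herglotz/Nevanlinna representation argument or an honest limiting procedure, which is exactly the work done by the paper's trace-class computation and its compact extension. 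Relatedly, your proof of (\ref{sommenurho}) rests on ``$0$ is one of the zeros $\sigma_k^2$,'' which holds only in the finite-rank (odd $n$) case; when the rank is infinite, $\prod_j\sigma_j^2/\rho_j^2$ can vanish with all $\sigma_j>0$, and one must instead regroup the product as $\frac{1}{\rho_1^2-z}\prod_j\frac{\sigma_j^2-z}{\rho_{j+1}^2-z}$ and take a (possibly infinite) monotone limit as $z\to0^-$, as the paper does in the variable $x\to-\infty$.
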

\begin{proof}
For $x\notin \{\frac 1{\rho_j^2}\}$, we set $$J(x):=((I-xH_u^2)^{-1}(1)\vert 1).$$
We claim that
\begin{equation}\label{J}
J(x)=\prod_j \frac{1-x\sigma_j^2}{1-x\rho_j^2}.
\end{equation}
Indeed, let us first assume that $H_u^2$ and $K_u^2$ are  of trace class and compute the trace of $(I-xH_u^2)^{-1}-(I-xK_u^2)^{-1}$. We  write
$$[(I-xH_u^2)^{-1}-(I-xK_u^2)^{-1}](f)=\frac x{J(x)}(f\vert (I-xH_u^2)^{-1}u)\cdot (I-xH_u^2)^{-1} u.$$
Consequently, taking the trace, we get
$${\rm Tr}[(I-xH_u^2)^{-1}-(I-xK_u^2)^{-1}]=\frac x{J(x)}\Vert (I-xH_u^2)^{-1}u\Vert^2.$$
Since, on the one  hand
$$\Vert (I-xH_u^2)^{-1} u\Vert^2= ((I-xH_u^2)^{-1}H_u^2(1)\vert 1)=J'(x)$$
and on the other hand
\begin{eqnarray*}
{\rm Tr}[(I-xH_u^2)^{-1}-(I-xK_u^2)^{-1}]&=&x{\rm Tr}[H_u^2(I-xH_u^2)^{-1}-K_u^2(I-xK_u^2)^{-1}]\\
&=&x \sum _j\left ( \frac{\rho_j^2}{1-\rho_j^2x}-\frac{\sigma_j^2}{1-\sigma_j^2x}\right )\ ,\end{eqnarray*}
where  we used  proposition \ref{rigidity}.
 On the other hand,
 \begin{equation}\label{traceformula}
  \sum _j\left ( \frac{\rho^2_j}{1-\rho_j^2x}-\frac{\sigma_j^2}{1-\sigma_j^2x}\right )=\frac{J'(x)}{J(x)}\ ,\ x\notin \left \{ \frac 1{\rho_j^2}, \frac 1{\sigma_j^2} \right \}\ .
 \end{equation}

 This gives equality (\ref{J}) for $H_u^2$ and $K_u^2$ of trace class. To extend this formula to compact operators, we remark that $\sum _j(\rho_j^2-\sigma_j^2)$ converges since $0\le \rho_j^2-\sigma_j^2\le \rho_j^2-\rho_{j+1}^2$ and $(\rho_j^2)$ tends to zero from the compactness of $H_u^2$. Hence the infinite product in formula (\ref{J}) and the above computation makes sense for compact operators.

On the other hand, for $x\notin \{\frac 1{\rho_j^2}\}$, if $\tau_j$ denotes the norm of $u_j$, 
\begin{eqnarray*}
J(x)&=&((I-xH_u^2)^{-1}(1)\vert 1)=1+x((I-xH_u^2)^{-1}(u)\vert u)\\
&=&1+x(\sum_j( I-xH_u^2)^{-1}(u_j)\vert u))=1+x\sum_j \frac{\tau_j^2}{1-x\rho_j^2}
\end{eqnarray*}
hence
\begin{equation}
\prod_j \frac{1-x\sigma_j^2}{1-x\rho_j^2}=1+x\sum_j\frac{\tau_j^2}{1-x\rho_j^2}\ .
\end{equation}
Passing to the limit as $x$ goes to $-\infty $ in (\ref{J(x)Appendix}), we obtain (\ref{sommenu}). If we assume that
the left hand side of (\ref{sommenu}) cancels, then (\ref{J(x)Appendix}) can be rewritten as
$$\prod_j \frac{1-x\sigma_j^2}{1-x\rho_j^2}=\sum_j\frac{\tau_j^2}{\rh _j^2(1-x\rho_j^2)}\ .$$
Multiplying by $x$ and passing to the limit as $x$ goes to $-\infty $ in this new identity, we obtain (\ref{sommenurho}).
Furthermore, we multiply  both terms of (\ref{J(x)Appendix}) by $(1-x\rho_j^2)$ and we let $x$ go to $1/\rho_j^2$. We get
$$\tau_j^2= \left(\rho_j^2-\sigma_j^2\right)\prod_{k\ne j}\frac{\rho_j^2-\sigma_k^2}{\rho_j^2-\rho_k^2} \ .$$
For Equality (\ref{1/J(x)}), we do almost the same analysis. First, we establish as above that
$$\frac{1}{J(x)}=1-x((I-xK_u^2)^{-1}(u)\vert u)=1-x\sum_k \frac{\kappa_k^2}{1-x\sigma_k^2}$$
where $\kappa_k^2=\Vert u'_k\Vert^2$. 

Identifying this expression with $$\frac 1{J(x)}=\prod_j\frac{1-x\rho_j^2}{1-x\sigma_j^2}$$ we get
$$\kappa_j^2=(\rho_j^2-\sigma_j^2)\prod_{k\ne j}\frac{\sigma_j^2-\rho_k^2}{\sigma_j^2-\sigma_k^2}.$$
\end{proof}

As a consequence of the previous lemma, we get the following couple of corollaries.
\begin{corollary}For any $k, r\ge 1$, we have
 \begin{equation}\label{sommesimpletau}\sum_j \frac{\tau_j^2}{\rho_j^2-\sigma_k^2}=1\end{equation}
  \begin{equation}\label{sommesimplekappa}\sum_j \frac{\kappa_j^2}{\rho_k^2-\sigma_j^2}=1\end{equation}
 \begin{equation}\label{sommedoubletau}\sum_j \frac{\tau_j^2}{(\rho_j^2-\sigma_k^2)(\rho_j^2-\sigma_r^2)}=\frac 1{\kappa_k^2}\delta_{kr}\end{equation}
  \begin{equation}\label{sommedoublekappa}\sum_j \frac{\kappa_j^2}{(\sigma_j^2-\rho_k^2)(\sigma_j^2-\rho_r^2)}=\frac {1}{\tau_k^2}\delta_{kr}\end{equation}
\end{corollary}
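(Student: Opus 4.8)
The plan is to deduce all four identities from the Bateman-type formulae (\ref{J(x)Appendix}), (\ref{1/J(x)Appendix}) and (\ref{sommenu}) by residue calculus. First I would recast these in a symmetric form. Setting
$$R(t):=\prod_j\frac{t-\sigma_j^2}{t-\rho_j^2}\ ,$$
which converges to a function meromorphic on $\C$ because $\sum_j(\rho_j^2-\sigma_j^2)<\infty$ (as in the proof of Proposition \ref{formulae}) and which, by the strict interlacing (\ref{Sigmaordered}), has a simple zero at each $\sigma_k^2$ and a simple pole at each $\rho_k^2$, the formulae (\ref{J(x)Appendix}), (\ref{1/J(x)Appendix}) (together with (\ref{sommenu}) for the value $t=0$) are equivalent, after the substitution $x=1/t$, to
\ben\label{symmetrized}
\sum_j\frac{\tau_j^2}{\rho_j^2-t}=1-R(t)\ ,\qquad \sum_j\frac{\kappa_j^2}{\sigma_j^2-t}=\frac1{R(t)}-1\ ,
\een
valid for $t$ outside $\{\rho_j^2\}$, resp. outside $\{\sigma_j^2\}$.

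Evaluating the first identity of (\ref{symmetrized}) at the regular point $t=\sigma_k^2$ gives $\sum_j\tau_j^2/(\rho_j^2-\sigma_k^2)=1-R(\sigma_k^2)=1$, which is (\ref{sommesimpletau}); evaluating the second at $t=\rho_k^2$ gives $\sum_j\kappa_j^2/(\sigma_j^2-\rho_k^2)=1/R(\rho_k^2)-1=-1$, which is (\ref{sommesimplekappa}). For the off-diagonal cases $k\neq r$ of (\ref{sommedoubletau}) and (\ref{sommedoublekappa}) I would use partial fractions: from
$$\frac1{(\rho_j^2-\sigma_k^2)(\rho_j^2-\sigma_r^2)}=\frac1{\sigma_k^2-\sigma_r^2}\left(\frac1{\rho_j^2-\sigma_k^2}-\frac1{\rho_j^2-\sigma_r^2}\right)$$
(and $\sigma_k\neq\sigma_r$), summing against $\tau_j^2$ and applying (\ref{sommesimpletau}) twice yields $0$; the same manipulation with $\kappa_j^2$ and (\ref{sommesimplekappa}) handles (\ref{sommedoublekappa}) for $k\neq r$.

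For the diagonal cases I would differentiate the identities (\ref{symmetrized}) with respect to $t$ — legitimate away from the poles, the series converging locally uniformly there since $\sum_j\tau_j^2=\Vert u\Vert^2<\infty$ and likewise $\sum_j\kappa_j^2<\infty$ — to get $\sum_j\tau_j^2/(\rho_j^2-t)^2=-R'(t)$ and $\sum_j\kappa_j^2/(\sigma_j^2-t)^2=-R'(t)/R(t)^2$. Writing $R(t)=(t-\sigma_k^2)Q_k(t)$ with $Q_k(\sigma_k^2)\neq0$ (possible since the zero at $\sigma_k^2$ is simple), the first identity at $t=\sigma_k^2$ reads $\sum_j\tau_j^2/(\rho_j^2-\sigma_k^2)^2=-Q_k(\sigma_k^2)$. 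Comparing the residues at $t=\sigma_k^2$ of the two sides of the second identity of (\ref{symmetrized}) — namely $-\kappa_k^2$ on the left and $1/Q_k(\sigma_k^2)$ on the right — gives $Q_k(\sigma_k^2)=-1/\kappa_k^2$, whence $\sum_j\tau_j^2/(\rho_j^2-\sigma_k^2)^2=1/\kappa_k^2$, which together with the off-diagonal case is (\ref{sommedoubletau}). Identity (\ref{sommedoublekappa}) follows in the same way with the roles of $(\rho,\tau)$ and $(\sigma,\kappa)$, hence of $R$ and $1/R$, interchanged (using $\sigma_j^2-\rho_k^2=-(\rho_k^2-\sigma_j^2)$): differentiate the second identity of (\ref{symmetrized}), evaluate at $t=\rho_k^2$, write $1/R(t)=(t-\rho_k^2)\tilde Q_k(t)$, and read off $\tilde Q_k(\rho_k^2)=1/\tau_k^2$ from the residue of the first identity at $t=\rho_k^2$.

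The computations are all elementary; the only point needing some care — and the closest thing to an obstacle — is the justification of the termwise differentiation and of the residue extraction in the infinite-rank case, where $R$ is an infinite product rather than a rational function. This is routine once one has checked, as above, that $R$ extends to a meromorphic function on $\C$ whose zeros $\{\sigma_j^2\}$ and poles $\{\rho_j^2\}$ are all simple and isolated, so that $Q_k$, $\tilde Q_k$ and the various residues are well defined; the finite-rank boundary value $\sigma_q^2=0$ causes no difficulty since $R$ is regular at $0$.
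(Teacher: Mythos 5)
Your proposal is correct and follows essentially the same route as the paper's proof: evaluate the Bateman formulae (\ref{J(x)Appendix}), (\ref{1/J(x)Appendix}) at $x=1/\sigma_k^2$, resp. $x=1/\rho_k^2$, to get the simple sums, differentiate after the change of variable $t=1/x$ and evaluate for the diagonal double sums (your residue identification $Q_k(\sigma_k^2)=-1/\kappa_k^2$, resp. $\tilde Q_k(\rho_k^2)=1/\tau_k^2$, is just formula (\ref{kappa}), resp. (\ref{tau}), in disguise), and reduce the off-diagonal cases to the simple sums by partial fractions. The only minor slip is calling $R$ meromorphic on $\C$: in the infinite-rank case its zeros and poles accumulate at $0$, so it is meromorphic only on $\C\setminus\{0\}$, but this is harmless since all the evaluation points lie away from $0$ (and in the finite-rank case, where $\sigma_q^2=0$ may occur, $R$ is rational).
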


\begin{proof}
The  first two equalities (\ref{sommesimpletau}) and (\ref{sommesimplekappa}) are obtained by making $x=\frac 1{\sigma_k^2}$ and $x=\frac 1{\rho_k^2}$ respectively  in formula (\ref{J(x)Appendix}) and formula (\ref{1/J(x)Appendix}).
For equality (\ref{sommedoubletau}) in the case $k= r$, we first make the change of variable $y=1/x$ in formula (\ref{J(x)Appendix}) then differentiate both sides with respect to $y$ and make $y=\sigma_r^2$. 
Equality (\ref{sommedoublekappa}) in the case $k=r$ follows by differentiating equation (\ref{1/J(x)Appendix}) and making $x=\frac 1{\rho_m^2}$. 
Both equalities in the case $m\neq p$ follow directly respectively from equality (\ref{sommesimpletau}) and equality (\ref{sommesimplekappa}).
\end{proof}

\begin{corollary}\label{kernel}
The kernel of $H_u$ is $\{ 0\} $ if and only if
$$\prod _j\frac{\sigma _j^2}{\rh _j^2}=0\ ,\ \prod _j\frac{\sigma _j^2}{\rh _{j+1}^2}=\infty \ .$$
\end{corollary}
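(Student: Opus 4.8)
The plan is to reduce the statement to the two facts ``$1\in\overline{{\rm Ran}\,H_u}$'' and ``$1\notin{\rm Ran}\,H_u$'', and then to read each of them off from the Bateman formulae of this appendix. First I would record the elementary consequences of the definitions. By the antilinear self-adjointness (\ref{selfadjoint}), ${\rm Ran}(H_u)^\perp=\ker H_u$, hence $\overline{{\rm Ran}\,H_u}=(\ker H_u)^\perp$; from $K_u=S^*H_u=H_uS$ in (\ref{Ku}), one has $\ker H_u\subset\ker K_u$ and $\ker K_u=\{h\in L^2_+:zh\in\ker H_u\}$. Moreover $P_{E_u(\rho)}={\mathbb 1}_{\{\rho^2\}}(H_u^2)$ commutes with $H_u$, so $u_j=H_u(1)$ projected onto $E_u(\rho_j)$ equals $H_u(P_{E_u(\rho_j)}1)$; since $H_u^2=\rho_j^2I$ on $E_u(\rho_j)$ this gives $P_{E_u(\rho_j)}1=\rho_j^{-2}H_u(u_j)$, $\|P_{E_u(\rho_j)}1\|^2=\tau_j^2/\rho_j^2$, and likewise $P_{E_u(\rho)}1=0$ when $\rho\notin\Sigma_H(u)$. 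Summing in the orthogonal decomposition of $1$ yields $\|P_{\ker H_u}1\|^2=1-\sum_j\tau_j^2/\rho_j^2$, whence
$$1\in\overline{{\rm Ran}\,H_u}\iff\sum_j\frac{\tau_j^2}{\rho_j^2}=1\iff\prod_j\frac{\sigma_j^2}{\rho_j^2}=0\ ,$$
the last equivalence being (\ref{sommenu}).

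Next I would characterise membership in the range. If $1=H_u(w)$ I may assume $w\perp\ker H_u$; then the component $w_j:=P_{E_u(\rho_j)}w$ satisfies $H_u(w_j)=P_{E_u(\rho_j)}1=H_u(\rho_j^{-2}u_j)$, and as $H_u$ is injective on $E_u(\rho_j)$ we get $w_j=\rho_j^{-2}u_j$, so $\|w\|^2=\sum_j\tau_j^2/\rho_j^4<\infty$. Conversely, if $\sum_j\tau_j^2/\rho_j^4<\infty$ and $1\in\overline{{\rm Ran}\,H_u}$, then $w:=\sum_j\rho_j^{-2}u_j\in L^2_+$ and, by continuity of $H_u$, $H_u(w)=\sum_j P_{E_u(\rho_j)}1=1$. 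Hence $1\in{\rm Ran}\,H_u$ if and only if $\prod_j\sigma_j^2/\rho_j^2=0$ and $\sum_j\tau_j^2/\rho_j^4<\infty$; and under the first of these, (\ref{sommenurho}) identifies the second with $\prod_j\sigma_j^2/\rho_{j+1}^2<\infty$.

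It then remains to prove that $\ker H_u=\{0\}$ holds if and only if $1\in\overline{{\rm Ran}\,H_u}$ and $1\notin{\rm Ran}\,H_u$. For the direct implication: $\ker H_u=\{0\}$ gives $\overline{{\rm Ran}\,H_u}=L^2_+\ni 1$ and, from $\ker K_u=\{h:zh\in\ker H_u\}$ together with injectivity of $h\mapsto zh$, also $\ker K_u=\{0\}$; so $1=H_u(w)$ would force $K_u(w)=S^*(1)=0$, i.e. $w=0$ and $1=0$, which is absurd. For the converse, assume $1\in\overline{{\rm Ran}\,H_u}\setminus{\rm Ran}\,H_u$ and, for contradiction, $\ker H_u\neq\{0\}$. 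Since $1\perp\ker H_u$, every $k\in\ker H_u$ has $\hat k(0)=0$, so $z^{-1}k\in L^2_+$ and $z^{-1}k\in\ker K_u$; if one had $\ker K_u=\ker H_u$, iterating this passage (each step, again because $1\perp\ker H_u$, kills the next Fourier coefficient of $k$) would give $k=0$, a contradiction. Hence $\ker K_u\supsetneq\ker H_u$, and any $w\in\ker K_u\setminus\ker H_u$ satisfies $S^*H_u(w)=0$ with $H_u(w)\neq0$, so $H_u(w)$ is a nonzero constant, giving $1\in{\rm Ran}\,H_u$ --- contradicting the assumption. Combining the three paragraphs, $\ker H_u=\{0\}$ is equivalent to $\prod_j\sigma_j^2/\rho_j^2=0$ together with $\prod_j\sigma_j^2/\rho_{j+1}^2=\infty$, which is the claim. (If $H_u$ has finite rank, $\ker H_u$ has finite codimension, hence is non-zero, and the two right-hand conditions cannot both hold, so the statement is trivially true there.)

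The main obstacle is the converse implication, and more precisely the bootstrapping step showing that $\ker K_u$ strictly contains $\ker H_u$ as soon as $\ker H_u\neq\{0\}$ and $1\perp\ker H_u$; once that is in place, everything else is bookkeeping with the identities of Proposition \ref{rigidity} and of the present appendix.
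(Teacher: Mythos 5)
Your proposal is correct, and its computational core is the same as the paper's: you identify the projection of $1$ onto $\overline{{\rm Ran}\, H_u}$ as $\sum_j \rho_j^{-2}H_u(u_j)$, translate ``$1\in\overline{{\rm Ran}\,H_u}$'' into $\sum_j\tau_j^2/\rho_j^2=1$ and ``$1\in{\rm Ran}\,H_u$'' into the additional convergence of $\sum_j\tau_j^2/\rho_j^4$, and then invoke (\ref{sommenu}) and (\ref{sommenurho}) exactly as the paper does. The genuine difference lies in the pivotal criterion ``$\ker H_u=\{0\}$ if and only if $1\in\overline{{\rm Ran}\,H_u}\setminus{\rm Ran}\,H_u$'': the paper simply cites the first part of Theorem 4 of \cite{GG3}, whereas you prove it from scratch. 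Your argument for it is sound: the direct implication uses $\ker K_u=\{h: zh\in\ker H_u\}$ and $K_u=S^*H_u$ to rule out $1=H_u(w)$; the converse uses that $1\perp\ker H_u$ forces, were $\ker K_u=\ker H_u$, every $k\in\ker H_u$ to be divisible by arbitrarily high powers of $z$ (hence $k=0$), so $\ker K_u\supsetneq\ker H_u$ and any $w\in\ker K_u\setminus\ker H_u$ produces a nonzero constant $H_u(w)$, i.e. $1\in{\rm Ran}\,H_u$. This bootstrap is in fact the same device the paper uses inside the proof of Proposition \ref{rigidity} (the space $N\perp 1$ invariant under $S^*$ must vanish), so your route makes the corollary self-contained at essentially no extra cost, at the price of redoing an argument the authors preferred to quote. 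One minor point: your parenthetical claim about the finite-rank case rests on interpreting the product $\prod_j\sigma_j^2/\rho_{j+1}^2$ as a finite (hence finite-valued) product when the sequences terminate, which is the reading consistent with (\ref{sommenurho}); with that convention your remark is fine.
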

\begin{proof}
By the first part of theorem 4 in \cite{GG3} --- which is independent of multiplicity assumptions--- ,  the kernel of $H_u$ is $\{ 0\} $ if and only if $1\in \overline R\setminus R$, where $R={\rm Ran}(H_u)$
 denotes the range of $H_u$. On the other hand,
 $$u=\sum _j u_j=\sum _j \frac{H_u(H_u(u_j))}{\rh _j^2}\ ,$$
 hence the orthogonal projection of $1$ onto $\overline R$ is
 $$\sum _j \frac{H_u(u_j)}{\rh _j^2}\ .$$
 Consequently, $1 \in \overline R$ if and only if
 $$1=\sum _j \left \Vert \frac{H_u(u_j)}{\rh _j^2}\right \Vert ^2=\sum _j \frac{\tau _j^2}{\rh _j^2}\ .$$
 Moreover, if this is the case,
 $$1=\sum _j \frac{H_u(u_j)}{\rh _j^2}$$
 and $1\in R$ if and only if the series $\sum _j u_j/\rh_j^2$ converges, which is equivalent to
 $$\sum _j \frac{\tau _j^2}{\rh _j^4}<\infty \ .$$
 Hence $1\in \overline R \setminus R$ if and only if
 $$\sum _j \frac{\tau _j^2}{\rh _j^2}=1\ ,\ \sum _j \frac{\tau _j^2}{\rh _j^4}=\infty \ ,$$
 which is the claim, in view of identities (\ref{sommenu}) and (\ref{sommenurho}).
\end{proof}

\section{The structure of finite Blaschke products}

In this appendix, we describe the set $\mathcal B_d$  of Blaschke products of degree $d$. Every element of $\mathcal B_d$ can be written
$$\Psi =\expo_{-i\psi }\chi \ ,$$
where $\psi \in \T $ and  $\chi \in \mathcal B_d^\sharp $ is a Blaschke product of the form
$$\chi (z)=\frac {P(z)}{z^d \overline P\left (\frac 1z\right )}\ ,$$
where $P(z)=z^d+a_1z^{d-1}+\dots +a_d$ is a monic polynomial of degree $d$ with all its zeroes in the open unit disc $\D $. Conversely, if
$P$ is such a polynomial, then $\chi $ is a Blaschke product of degree $d$.
We denote by $\mathcal O_d$ the open subset of $\C ^d$ made of such $(a_1,\dots ,a_d)$. The following result is connected to the Schur--Cohn criterion \cite{Sc}, \cite{Co}, and is classical  in control theory, see 
{\it e.g.} \cite{He} and references therein. For the sake of completeness, we give a self contained proof.

\begin{proposition}\label{Od}
For every $d\ge 1$ and $(a_1,\dots ,a_d)\in \C ^d$, the following two assertions are equivalent.
\begin{enumerate}
\item $(a_1,\dots ,a_d)\in \mathcal O_d$\ .
\item $\vert a_d\vert <1$ and $$\left (\frac{a_k-a_d\overline a_{d-k}}{1-\vert a_d\vert ^2}\right )_{1\le k\le d-1}\in \mathcal O_{d-1}\ .$$
\end{enumerate}
In particular, for every $d\ge 0$, $\mathcal O_d$ is diffeomorphic to $\R ^{2d}$.
\end{proposition}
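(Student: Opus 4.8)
\textbf{Proof plan for Proposition \ref{Od}.}

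The plan is to prove the equivalence (1)$\iff$(2) first, and then deduce the diffeomorphism statement by induction on $d$. The heart of the matter is the standard Schur--Cohn reduction, which I would set up through a multiplication-by-$\chi_{a_d}$ (Blaschke factor) argument on $\mathbb{H}^2(\D)$ rather than through brute-force polynomial manipulation. Write $P(z)=z^d+a_1z^{d-1}+\dots+a_d$ and $P^*(z)=z^d\overline P(1/z)=1+\overline a_1 z+\dots+\overline a_d z^d$, so $\chi=P/P^*$. The key observation is the reciprocal-polynomial identity: the polynomial
$$Q(z):=\frac{\overline a_d P(z)-P^*(z)}{z}$$
has degree $d-1$, and a direct expansion gives its coefficients as, up to normalization, $a_k-a_d\overline a_{d-k}$ for $k=1,\dots,d-1$ (with leading coefficient $\overline a_d\, a_{d-1}-1$ rescaled, i.e. after dividing by $\overline a_d\cdot 1 - |a_d|^2\cdot(\text{const})$ one recovers $1-|a_d|^2$ in the denominator). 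So the candidate $(d-1)$-tuple in assertion (2) is exactly the normalized coefficient vector of $Q$.

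For the implication (1)$\Rightarrow$(2): if $P$ has all zeros in $\D$, then $\chi=P/P^*$ is a genuine Blaschke product of degree $d$, hence $|\chi(0)|=|a_d/1|=|a_d|<1$ (since $a_d=\chi(0)$ is the value of a Blaschke product of positive degree at an interior point, and such a product cannot be a unimodular constant). Now factor out the Blaschke factor $\chi_{a_d}(z)=(z-a_d)/(1-\overline a_d z)$: one checks that $\chi/\chi_{?}$ — more precisely the right manipulation is $\chi(z)=\chi_{a_d}(z)\,\tilde\chi(z)$ is NOT quite it; instead use that $(\overline a_d P - P^*)/z$ divided by the appropriate constant is the numerator of a Blaschke product of degree $d-1$, because $\overline a_d\chi - 1$ has a simple zero at the origin and $\chi_{a_d}^{-1}\cdot(\text{something})$ stays inner. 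Concretely: the function $g(z)=\dfrac{\overline a_d\,\chi(z)-1}{z}$ maps $\D$ to ... I would instead argue via Rouché or via the maximum principle that $Q$ has all its zeros in $\D$, using $|\chi(z)|<1$ on $\D$ and $|\chi|=1$ on $\T$ to control the zero count of $\overline a_d P - P^*$. For the converse (2)$\Rightarrow$(1): reverse the algebra — given that the reduced tuple lies in $\mathcal O_{d-1}$ and $|a_d|<1$, reconstruct $P$ from $Q$ via $P(z)=\dfrac{zQ(z)+\text{(lower order from }a_d)}{\dots}$ using the inverse of the linear map, and again use an argument-principle / Rouché count to see all zeros of $P$ land in $\D$. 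The cleanest route for both directions is: $P$ has all zeros in $\D$ iff $|P(z)|>|P^*(z)|$ for $|z|<1$ iff (by the above identity and $|a_d|<1$) the reduced polynomial $Q$ satisfies the same strict inequality, which is the statement that its normalized coefficient vector lies in $\mathcal O_{d-1}$.

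For the final claim, I would induct on $d$. Base case $d=0$: $\mathcal O_0$ is a point $=\R^0$. Inductive step: the map
$$\mathcal O_d \ni (a_1,\dots,a_d)\longmapsto \left(a_d,\ \left(\frac{a_k-a_d\overline a_{d-k}}{1-|a_d|^2}\right)_{1\le k\le d-1}\right)$$
is, by the equivalence just proved, a well-defined bijection onto $\D\times\mathcal O_{d-1}$, and it is smooth with smooth inverse (the inverse is explicit: solve the linear system for $(a_1,\dots,a_{d-1})$ in terms of $a_d$ and the reduced tuple, which is a triangular-plus-conjugate-linear system with nonzero determinant $1-|a_d|^2\neq 0$). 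Hence $\mathcal O_d \cong \D\times\mathcal O_{d-1}\cong \D\times\R^{2(d-1)}\cong\R^2\times\R^{2(d-1)}=\R^{2d}$. The main obstacle I anticipate is getting the zero-counting argument in the equivalence airtight — i.e. rigorously passing between "all zeros of $P$ in $\D$", the strict inequality $|P^*|<|P|$ on $\D$, and the corresponding property of the reduced polynomial — since the boundary behavior ($|z|=1$, where $|P|=|P^*|$) and the possibility of zeros on $\T$ for degenerate tuples need to be handled with a careful Rouché or Hurwitz-type argument rather than a naive limiting argument.
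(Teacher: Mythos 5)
Your overall strategy (the Schur--Cohn reduction at the point $a_d=\chi(0)$, followed by induction for the diffeomorphism) is essentially the paper's, but the execution contains concrete errors and the decisive step is left open, so as written there is a genuine gap. First, the algebra: the combination $Q(z)=(\overline a_d P(z)-P^*(z))/z$ is not a polynomial, since $\overline a_d P-P^*$ has constant term $|a_d|^2-1\neq 0$ (it is the coefficient of $z^d$, not of $z^0$, that cancels). The correct reduced polynomial is $(P(z)-a_dP^*(z))/z$: because $\chi(0)=a_d$, the numerator of $\frac{\chi-a_d}{1-\overline a_d\chi}=\frac{P-a_dP^*}{P^*-\overline a_dP}$ vanishes at $z=0$, and dividing by $z(1-|a_d|^2)$ yields exactly the monic polynomial $z^{d-1}+b_1z^{d-2}+\dots+b_{d-1}$ with $b_k=(a_k-a_d\overline a_{d-k})/(1-|a_d|^2)$. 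In the same vein, your claim that ``$\overline a_d\chi-1$ has a simple zero at the origin'' is false, since $\overline a_d\chi(0)-1=|a_d|^2-1$; it is $\chi-a_d$ that vanishes at $0$. Finally, your ``cleanest route'' equivalence is stated backwards: when all zeros of $P$ lie in $\D$, $\chi=P/P^*$ is a Blaschke product, so $|P|<|P^*|$ on $\D$, not $|P|>|P^*|$.

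Second, and more importantly, the heart of the proposition --- that the reduction preserves and reflects the property ``all zeros in $\D$'' --- is precisely the step you defer to an unspecified Rouch\'e/Hurwitz argument and yourself flag as the anticipated obstacle; it is never carried out. The paper closes this point with no zero counting at all: since $|a_d|<1$, the map $w\mapsto (w-a_d)/(1-\overline a_d w)$ is an automorphism of $\D$, so $\tilde\chi=\frac{\chi-a_d}{1-\overline a_d\chi}$ again satisfies $|\tilde\chi|<1$ on $\D$ and $|\tilde\chi|=1$ on $\T$, and, since $|a_d|<1$, its numerator and denominator have no common root, so it is a Blaschke product of degree $d$; as it vanishes at $0$, $\tilde\chi/z$ is the degree-$(d-1)$ Blaschke product whose monic numerator has coefficients $(b_k)$, which is assertion (2). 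The converse simply inverts the M\"obius map via $\chi=(\tilde\chi+a_d)/(1+\overline a_d\tilde\chi)$. If you replace your $Q$ by $(P-a_dP^*)/z$ and substitute this automorphism argument for the missing zero-count, your inductive proof of the diffeomorphism statement (which is fine as written) goes through.
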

\begin{proof}
Consider the rational functions
$$\chi (z)=\frac{z^d+a_1z^{d-1}+\dots +a_d}{1+\overline a_1z+\dots +\overline a_dz^d}\ ,$$
and
$$\tilde \chi (z)=\frac{\chi (z)-\chi (0)}{1-\overline {\chi (0)}\chi (z)}=z\frac{z^{d-1}+b_1z^{d-2}+\dots +b_{d-1}}{1+\overline b_1z+\dots +\overline b_{d-1}z^{d-1}}\ ,\ b_k:=\frac{a_k-a_d\overline a_{d-k}}{1-\vert a_d\vert ^2}\ .$$
If (1) holds true, then $\chi \in \mathcal B_d$, which implies 
\begin{equation}\label{chi}
\forall z\in \D, \vert \chi (z)\vert <1 \ ,\ \vert \chi (\expo_{ix})\vert =1\ .\ 
\end{equation}
In particular, $\chi (0)=a_d\in \D $, and therefore the numerator and the denominator of $\tilde \chi $ have no common root. Moreover,
\begin{equation}\label{chitilde}
\forall z\in \D, \vert \tilde \chi (z)\vert <1 \ ,\ \vert \tilde \chi (\expo_{ix})\vert =1\ .
\end{equation}
This implies $\tilde \chi \in \mathcal B_d$, hence (2). Conversely, if (2) holds, then $\tilde \chi $ satisfies (\ref{chitilde}) and has degree $d$, hence 
$$\chi (z)=\frac{\tilde \chi (z)+a_d}{1+\overline a_d\tilde \chi(z)}$$
satisfies (\ref{chi}) and has degree $d$, whence (1). 
\s
The second statement follows from an easy induction argument on $d$, since $\mathcal O_1=\D $ is diffeomorphic to $\R ^2$.
\end{proof}

\section{Two results by Adamyan--Arov--Krein}

In this appendix, we recall the proof of two important results  by Adamyan--Arov--Krein, which have been used throughout
our paper. The proof is translated from \cite{AAK} into our representation of Hankel operators, and is given for the convenience of the reader.

\begin{theo}[Adamyan, Arov, Krein \cite{AAK}] 
Let $u\in VMO_+\setminus \{ 0\} $. Denote by $(\underline s_k(u))_{k\ge 0}$ the sequence of singular values of $H_u$, namely the eigenvalues of $\vert H_u\vert :=\sqrt{H_u^2}$, in decreasing order, and repeated according to their multiplicity.Let $k\ge 0, m\ge 1,$ such that
$$\underline s_{k-1}(u)>\underline s_k(u)=\dots =\underline s_{k+m-1}(u)=s>\underline s_{k+m}(u)\ ,$$
with the convention $\underline s_{-1}(u):=+\infty $. 
\begin{enumerate}
\item For every $h\in E_u(s)\setminus \{ 0\} $, there exists a polynomial $P\in \C _{m-1}[z]$ such that
$$\forall z\in \D\ ,\ \frac{sh(z)}{H_u(h)(z)}=\frac{P(z)}{z^{m-1}\overline P\left (\frac 1z\right )}\ .$$
\item There exists a rational function $r$ with no pole on $\overline \D$ such that ${\rm rk} (H_r)=k$ and
$$\Vert H_u-H_r\Vert =s\ .$$
\end{enumerate}
\end{theo}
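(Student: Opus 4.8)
The plan is to treat the two statements in sequence, deriving the first by a direct elementary argument and then using it, together with a compactness/normalization step, to obtain the second. For part (1), fix $s>0$ with $E_u(s)\neq\{0\}$, and invoke the spectral structure established in Proposition \ref{rigidity} and Proposition \ref{action}. By Proposition \ref{rigidity}, either $u\perp E_u(s)$ or not; in the former case $E_u(s)=F_u(s)\cap u^\perp$ and in the latter $F_u(s)=E_u(s)\cap u^\perp$. The key point is that for $h\in E_u(s)$ one has $H_u(h)\in E_u(s)$ as well (since $H_u$ commutes with $H_u^2$), so both $h$ and $H_u(h)$ live in the finite-dimensional space $E_u(s)$, whose description is given explicitly by Proposition \ref{action}: $E_u(s)=\frac{\C_{m-1}[z]}{D}H_u(u_s)$ (in the case $s\in\Sigma_H(u)$) or the analogous formula with $u'_s$ in case $s\in\Sigma_K(u)$, where $m=\dim E_u(s)$ and $D$ is the normalized denominator of an inner function $\Psi_s\in\mathcal B_{m-1}$. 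Writing $h=\frac{A}{D}H_u(u_s)$ with $A\in\C_{m-1}[z]$, formula (\ref{actionHu}) gives $H_u(h)=s\expo_{-i\psi_s}\frac{z^{m-1}\overline A(1/z)}{D}H_u(u_s)$, whence $\frac{sh}{H_u(h)}=\expo_{i\psi_s}\frac{A(z)}{z^{m-1}\overline A(1/z)}$; absorbing the unimodular constant into $A$ and setting $P:=A$ (or rather a scalar multiple making it monic if one wishes) yields the claimed identity. Here one must check that $H_u(h)$ has no zeros on $\overline\D$ beyond those accounted for — but this is exactly the content of the intermediate steps of the proof of Proposition \ref{action}, where it is shown that $\Psi_s=u_s/H_u(u_s)$ is a genuine inner function, so $z^{m-1}\overline P(1/z)$ divides $D$ up to a unit and the ratio is well-defined and of modulus one on $\T$.

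For part (2), I would construct the approximant $r$ by the standard AAK recipe: take $h\in E_u(s)$ of unit norm, and set $r$ so that $H_u(h)=s\,\overline{H_r\text{-factor}}$... more precisely, write $u = \Pi(\overline h\, H_u(h)/\|h\|^2) + (\text{correction})$; the function $r$ to be extracted is the one whose Hankel symbol has numerator and denominator of the form dictated by part (1). Concretely, with $f:=\frac{1}{s}H_u(h)$ so that $H_u(h)=sf$ and (using part (1)) $h=Pg$, $f=z^{m-1}\overline P(1/z)g$ for some $g\in L^2_+$ and $P\in\C_{m-1}[z]$, one defines $r:=u-\frac{s\,fg}{\text{(normalizing factor)}}$... the precise formula is that $u-r$ should have Hankel operator of rank one on the relevant reducing subspace. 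The operator-norm identity $\|H_u-H_r\|=s$ then follows from two inequalities: the lower bound $\|H_u-H_r\|\ge s$ because $H_r$ has rank $k$ and $s=\underline s_k(u)$ is the distance of $H_u$ to rank-$k$ operators (the easy half of the min-max / Weyl inequality for singular values), and the upper bound by exhibiting $H_u-H_r$ explicitly and estimating it, using the fact that $g$ is an outer-type function and $\vert h\vert^2=\vert f\vert^2$ on $\T$ (the pointwise modulus identity proved in the course of Proposition \ref{action}).

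The main obstacle I anticipate is the upper bound $\|H_u-H_r\|\le s$ in part (2), i.e. verifying that the explicitly constructed $r$ is rational with no poles on $\overline\D$ of the correct rank and that the error operator really has norm exactly $s$. This requires controlling the function $g$: one needs that $g$ has no zeros in $\D$ (so that the quotient defining $r$ stays rational of the right degree) and a careful bookkeeping of ranks, using the identity $\vert h\vert=\vert f\vert$ on $\T$ to rewrite $H_u-H_r$ as a Hankel operator whose symbol, when tested against the unit ball of $L^2_+$, cannot exceed $s$ — this is essentially a Nehari-type extremality argument and is where the original AAK proof does its real work. The rank computation ${\rm rk}(H_r)=k$ is then a consequence of the degree count for the numerator and denominator of $r$, combined with the hypothesis on the multiplicity of $s$. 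I would present the norm estimate by the duality/Nehari route rather than by direct kernel computation, since that keeps the argument self-contained given only what is in the excerpt.
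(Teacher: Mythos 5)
There is a genuine gap, and it is structural. Your proof of part (1) derives the representation $sh/H_u(h)=P(z)/\bigl(z^{m-1}\overline P(1/z)\bigr)$ from Proposition \ref{action}, but in this paper Proposition \ref{action} is itself proved \emph{using} part (1) of the AAK theorem: in Section \ref{Blaschke} the fact that $\Psi_s=u_s/H_u(u_s)$ is a rational function with no poles on the unit circle is quoted verbatim from Appendix C, and only the absence of poles inside $\D$ is argued independently. The whole point of Appendix C is to give a proof of the AAK statements that does not presuppose the Blaschke structure of the eigenspaces, so your argument is circular as it stands. The paper's route is different: for $s=\Vert H_u\Vert$ it proves Lemma \ref{tops} directly, by inner--outer factorization of a Schmidt pair $(h,f)$, showing that every inner divisor of the product of the inner parts produces a new Schmidt vector (via the Toeplitz identities $H_uT_b=T_{\overline b}H_u$, the inequality $H_u^2\ge T_{\overline b}H_u^2T_b$, and the equality case of $\Vert T_{\overline b}f\Vert\le\Vert f\Vert$), which forces the inner parts to be finite Blaschke products, yields $E_u(s)=\C_{m-1}[z]h_0$ with $h_0$ outer, and gives the coefficient-reversal action; the case $s<\Vert H_u\Vert$ is then reduced to this one through the operator $H_{T_{\overline a}u}$, where $a$ is the Beurling inner function of the shift-invariant subspace generated by $E_u(s)$. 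None of this machinery appears in your proposal, and without it part (1) has no independent proof.

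For part (2) your sketch misses the key construction and misidentifies where the difficulty lies. The paper's proof rests on Lemma \ref{unimod}: for any Schmidt pair $(h,f)$ at level $s$ the unimodular function $\phi:=f/\overline h$ is independent of the pair, and one sets $v:=s\Pi(\phi)$, $r:=u-v$. With this choice the upper bound $\Vert H_u-H_r\Vert=\Vert H_v\Vert\le s$ is immediate from $\vert\phi\vert=1$ (it is not a Nehari-type extremality argument, and $H_v$ is not of rank one on a reducing subspace, contrary to what your sketch suggests); the lower bound follows, as you say, from the distance interpretation of $\underline s_k(u)$. The genuinely delicate step, which your proposal reduces to an unproved ``degree count,'' is showing ${\rm rk}(H_r)=k$: the paper proves that $H_u$ and $H_v$ agree on $aL^2_+$, so ${\rm rk}(H_r)\le\dim(aL^2_+)^\perp=\deg a$, and then pins down $\deg a=k$ by comparing $\dim E_{T_{\overline a}u}(s)\ge \deg a+m$ (built from the outer function $h_0$ of Lemma \ref{tops}) with the min-max inequality $\underline s_n(T_{\overline a}u)\le\underline s_n(u)$ coming from $H_{T_{\overline a}u}^2=T_{\overline a}H_u^2T_a\le H_u^2$. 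Your proposal contains neither the construction of $\phi$ nor this dimension/degree argument, so part (2) is not established.
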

\begin{proof}
We start with the case $k=0$. In this case the statement (2) is trivial, so we just have to prove (1). This is a consequence of the following lemma.
\begin{lemma}\label{tops}
Assume $s=\Vert H_u\Vert $. For every $h\in E_u(s)\setminus \{ 0\} $, consider the following inner outer decompositions,
$$h=ah_0\ ,\ s^{-1}H_u(h)=bf_0\ .$$
If $c$ is an arbitrary inner divisor of $ab$,  $ab=cc'$, then $ch_0\in E_u(s)$, with
\begin{equation} \label{ch0}
H_u(ch_0)=sc'f_0\ ,\ H_u(c'f_0)=sch_0\ .
\end{equation}
 In particular, $a, b$ are finite Blaschke products and 
\begin{equation}\label{abBla}
\deg (a)+\deg (b) +1\le \dim E_u(s)\ .
\end{equation}
Furthermore, there exists an outer function $h_0$ such that, if $m:=\dim E_u(s)$,
\begin{equation}\label{Eunorm}
E_u(s)=\C _{m-1}[z] h_0\ ,
\end{equation}
and there exists $\varphi \in \T $ such that, for every $P\in \C _{m-1}[z]$, 
\begin{equation}\label{Hunorm}
H_u(Ph_0)(z)=s\expo _{i\varphi}z^{m-1}\overline P\left (\frac 1z\right )h_0(z)\ .
\end{equation}
\end{lemma}
Let us prove this lemma. We will need a number of elementary properties of Toeplitz operators $T_b$ defined by equation (\ref{Toeplitz}), 
where $b$ is a function in $L^\infty _+:=L^2_+\cap L^\infty $, which we recall below. In what follows, $b$ denotes a function in $L^\infty _+$ and $u\in BMO_+$.
\begin{enumerate}
\item 
$$H_uT_b=T_{\overline b}H_u=H_{T_{\overline b}u}\ .$$
\item If $\vert b\vert \le 1$ on $\S ^1$, 
$$H_u^2\ge T_{\overline b}H_u^2T_b\ .$$
\item If $\vert b\vert =1$ on $\S ^1$, namely $b$ is an inner function,  
$$\forall f\in L^2_+\ ,\ f=T_bT_{\overline b}f  \Longleftrightarrow \Vert f\Vert =\Vert T_{\overline b}f\Vert \ .$$
\end{enumerate}
Indeed, (1) is just equivalent to the elementary identities
$$\Pi (u\overline b\overline h)=\Pi (\overline b\Pi (u\overline h))=\Pi ((\Pi (\overline bu)\overline h)\ .$$
As for (2),
we observe that $T_b^*=T_{\overline b}$ and 
$$\Vert T_{\overline b}h\Vert \le \Vert \overline bh\Vert \le \Vert h\Vert \ .$$
Hence, using (1),
$$(H_u^2h\vert h)-(T_{\overline b}H_u^2T_bh\vert h)=\Vert H_u(h)\Vert ^2-\Vert T_{\overline b}H_u(h)\Vert ^2\ge 0\ .$$
Finally, for (3) we remark that, if $b$ is inner,  $T_{\overline b}T_b=I$ and $T_bT_{\overline b}$ is the orthogonal projector onto the range of $T_b$, namely $bL^2_+$.  Since $\Vert T_{\overline b}f\Vert =\Vert T_bT_{\overline b}f\Vert $, (3) follows.
\s
Let us come back to the proof of Lemma \ref{tops}. Starting from
$$H_u(h)=sf\ ,\ H_u(f)=sh\ ,\ h=ah_0\ ,\ f=bf_0\ ,\ ab =cc'\ ,$$
we obtain, using property (1),
$$T_{\overline c'}H_u(ch_0)= H_u(cc'h_0)=T_{\overline b}H_u(h)=sf_0\ .$$
In particular, 
$$\Vert H_u(ch_0)\Vert \ge \Vert T_{\overline c'}H_u(ch_0)\Vert =s\Vert f_0\Vert =s\Vert f\Vert =s\Vert h\Vert =s\Vert ch_0\Vert \ .$$
Since $s=\Vert H_u\Vert$, all the above inequalities are equalities, hence $ch_0\in E_u(s)$, and, using (3), 
$$H_u(ch_0)=T_{c'}T_{\overline c'}H_u(ch_0)=sc'f_0\ .$$
The second identity in (\ref{ch0}) immediately follows.  Remark that, if $\Psi $ is an inner function of degree at least $d$, there exist $d+1$ linearly independent inner divisors of $\Psi $ in $L^\infty _+$. Then inequality (\ref{abBla}) follows. Let us come to the last part. Since $\dim E_u(s)=m$, there exists $h\in E_u(s)\setminus \{ 0\}$ such that the first $m-1$ Fourier coefficients of $h$ cancel, namely 
$$h=z^{m-1}\tilde h\ .$$
Considering the inner outer decompositions
$$\tilde h=ah_0\ ,\ H_u(h)=s bf_0\ ,$$
and applying the first part of the lemma, we conclude that $\deg (a)+\deg (b)=0$, hence, up to a slight change of notation, $a=b=1$,
and, for $\ell =0,1,\dots ,m-1$,
$$H_u(z^\ell h_0)=sz^{m-1-\ell}f_0\ ,\ H_u(z^{m-\ell-1} f_0)=sz^{\ell}h_0\ .$$
This implies
$$E_u(s)=\C _{m-1}[z] h_0=\C _{m-1}[z]f_0\ .$$
Since $\Vert h_0\Vert =\Vert h\Vert =\Vert f\Vert =\Vert f_0\Vert $, it follows that $f_0=\expo_{i\varphi} h_0$, and (\ref{Hunorm}) follows from the antilinearity of $H_u$. The proof of Lemma \ref{tops} is complete.
\s
Let us complete the proof of the theorem by proving the case $s<\Vert H_u\Vert $. The crucial new observation is the following.
\begin{lemma}\label{unimod}
There exists a function $\phi \in L^\infty $ such that $\vert \phi \vert =1$ on $\S ^1$, and such that the operators $H_u$ and $H_{s\Pi (\phi )}$ coincide on $E_u(s)$.
\end{lemma}
Let us prove this lemma. For every pair $(h,f)$ of elements of $E_u(s)$ such that  $H_u(h)=sf, H_u(f)=sh$, we claim that the function
$$\phi :=\frac{f}{\overline h}\ ,$$
 does not depend on the choice of the pair $(h,f)$. Indeed, it is enough to check that, if $(h',f')$ is another such pair,
 $$f\overline h'=f'\overline h\ .$$
 In fact, for every $n\ge 0$,
 \beno s(f\overline h'\vert z^n)&=&(H_u(h)\vert S^nh')=((S^*)^nH_u(h)\vert h')\\
 &=&(H_u(S^nh)\vert h')=(H_u(h')\vert S^nh)=s(f'\overline h\vert z^n)\ .
 \eeno
 Changing the role of $(h,h')$ and $(f,f')$, we get the claim. Finally the fact $\vert \phi \vert =1$ comes from applying the above identity to the pairs $(h,f)$ and $(f,h)$. Then we just have to check that, for every such pair,
 $$H_{s\Pi (\phi )}(h)=s\Pi (\Pi (\phi )\overline h)=s\Pi (\phi \overline h)=sf \ .$$
 This completes the proof of Lemma \ref{unimod}. 
 \s
 Let us come to part (2) of the Theorem. Introduce 
 $$v:=s\Pi (\phi )\ .$$
 We are going to show that $r:=u-v$ is a rational function with no pole on $\overline{\D}$, ${\rm rk}(H_r)=k$ and 
 $$\Vert H_u-H_r\Vert =s\ .$$
 Since, for every $h\in L^2_+$,
 $$H_v(h)=s\Pi (\phi \overline h)\ ,$$
 we infer $\Vert H_v\Vert \le s$, and  from $E_u(s)\subset E_v(s)$, we conclude 
 $$\Vert H_v\Vert =s\ .$$
 Because of (\ref{Ku}), $H_u$ and $H_v$ coincide on the smallest shift invariant closed subspace of $L^2_+$ containing $E_u(s)$.
 By Beurling's theorem \cite{B}, this subspace is $aL^2_+$ for some inner function $a$. 
 Then $H_r =0$ on $aL^2_+$, hence the rank of $H_r$ is at most the dimension of $(aL^2_+)^\perp $. 
 Since 
$$\Vert H_u-H_r\Vert =\Vert H_v\Vert =s<\underline s_{k-1}(u)\ ,$$
the rank of $H_r$ cannot be smaller than $k$, and the result will follow by proving that the dimension of $(aL^2_+)^\perp $ is  $k$.

We can summarize the above construction as
 $$H_{T_{\overline a}u}=H_uT_a=H_vT_a=H_{T_{\overline a}v}\ .$$
 The above Hankel operator is compact and its norm is at most $s$. In fact, if $H_u(h)=sf\ ,\ H_u(f)=sh$, with $f=a \tilde f$, it is clear from property (1) above that
 $$H_{T_{\overline a}u}(h)=s\tilde f\ ,\ H_{T_{\overline a}u}(\tilde f)=sh\ .$$
 In particular, 
$$\Vert  H_{T_{\overline a}u}\Vert =s\ .$$
Applying property (\ref{ch0}) from Lemma \ref{tops}, we conclude that there exists an outer function $h_0$ such that $$ch_0\in E_{T_{\overline a}u}(s)$$  for every inner divisor $c$ of $a$. Moreover, $a$ is a Blaschke product of finite degree $d$. Since $h_0$ is outer, it does not vanish at any point of $\D $, therefore, it is easy to find $d$ inner divisors $c_1,\dots ,c_d$ of $a$ such that $c_1h_0,\dots ,c_dh_0$ are linearly independent and generate a vector subspace $\tilde E$ satisfying
$$\tilde E\cap aL^2_+=\{ 0\} \ .$$
Consequently, we obtain
$$\tilde E\oplus E_u(s)\subset E_{T_{\overline a}u}(s),$$
whence
\begin{equation}\label{borneinfdim}
d':=\dim E_{T_{\overline a}u}(s)\ge d+m\ .
\end{equation}
On the other hand, by property (2) above, we have
$$H_{T_{\overline a}u}^2=T_{\overline a}H_u^2T_a \le H_u^2\ ,$$
therefore, from the min-max formula,
$$\forall n, \underline s_n(T_{\overline a}u)\le \underline s_n(u)\ .$$
In particular, from the definition of $d'$,
$$s=\underline s_{d'-1}(T_{\overline a}u)\le \underline s_{d'-1}(u)\ ,$$
which imposes, in view of the assumption,
$d'-1\le k+m-1$, in particular, in view of (\ref{borneinfdim}),
$$d\le k\ .$$
Finally, notice that, since $a$ has degree $d$, the dimension of  $(aL^2_+)^\perp $ is $d$.
Hence, by the min-max formula again, 
$$\underline s_d(u)\le \sup _{h\in aL^2_+\setminus \{ 0\} }\frac{\Vert H_u(h)\Vert }{\Vert h\Vert}\le s<\underline s_{k-1}(u)\ .$$
This imposes $d\ge k$, and finally
$$d=k\ ,\ d'=k+m\ ,$$
and part (2) of the theorem is proved. 

In order to prove part (1), we apply 
properties (\ref{Eunorm}) and (\ref{Hunorm}) of Lemma \ref{tops}. We describe elements of $E_{T_{\overline a}u}(s)$ as 
$$h(z)=Q(z)h_0(z)\ ,$$
where $h_0$ is outer and $Q\in \C _{k+m-1}[z]$. Moreover, if $h\in E_u(s)$, then $h=a\tilde h$, $H_u(h)=sa\tilde f$, where $\tilde h, \tilde f\in E_{T_{\overline a}u}(s)$.
This reads 
$$Q(z)=a(z)\tilde Q(z)\ ,$$
If we set
$$a(z)=\frac{z^k\overline D\left (\frac 1z\right )}{D(z)}\ ,$$
where $D\in \C _k[z]$ and $D(0)=1$, and $D$ has no zeroes in $\D $, this implies
$$Q(z)=z^k\overline D\left (\frac 1z\right )P(z)\ ,\ P\in \C _{m-1}[z]\ .$$
Moreover, 
$$H_{T_{\overline a}u}(h)(z)=s\expo_{i\varphi }z^{m+k-1}\overline Q\left (\frac 1z\right )h_0(z)=s\expo_{i\varphi } D(z)z^{m-1}\overline P\left ( \frac 1z\right )h_0(z)\ ,$$
and
$$H_u(h)(z)=a(z)H_{T_{\overline a}u}(h)(z)=s\expo_{i\varphi }  z^k\overline D\left (\frac 1z\right )z^{m-1}\overline P\left ( \frac 1z\right )h_0(z)\ .$$
Changing $P$ into $P\expo _{-i\varphi /2}$, this proves part (1) of the theorem.
\end{proof}

\end{appendix}


\begin{thebibliography}{MTW1}

\bibitem{AAK} Adamyan, V. M., Arov, D. Z., Krein, M. G., {\em Analytic properties of the Schmidt pairs of a Hankel operator and the generalized Schur-Takagi problem. }(Russian) Mat. Sb. (N.S.) 86(128) (1971), 34--75; English transl. Math USSR. Sb. 15 (1971), 31--73.

\bibitem{AG} Akhiezer, N.I., Glazman, I.M., {\em Theory of Linear Operators in Hilbert Space}, 2nd ed., Nauka Moscow, 1966 (Russian); English transl. of the 1st ed., Frederick Ungar Publishing, New York, vol.1, 1961; vol. 2, 1963.

\bibitem{Bar} Baratchart, L., Seyfert, F., {\em An $L^p$ analog to AAK theory for $p\ge 2$}, J. Funct. Analysis 191 (2002), 52--122.

\bibitem{Ba} Bateman, H., {\em A formula for solving function of a certain integral equation of the second kind}, Messenger Math. 37 (1908), 179--187.

\bibitem{B} Beurling, A., {\em On two problems concerning linear transformations in Hilbert space}, Acta Math. 81 (1949),  239--255.

\bibitem{Co} Cohn, A., {\em \"Uber die Anzahl der Wurzeln einer algebraischen Gleichung in einem Kreise}, Math. Z. 14 (1922), 110--148.

\bibitem{F} Fefferman, C., {\em Characterizations of bounded mean oscillation}, Bull. Amer. Math. Soc. 77 (1971), 587--588.

\bibitem{GG1} G\'erard, P., Grellier, S., {\em The cubic Szeg\H{o} equation} , Ann. Scient. \'Ec. Norm. Sup. 43 (2010), 761--810.

\bibitem{GG2}G\'erard, P., Grellier, S., {\em Invariant Tori for the cubic  Szeg\H{o} equation}, Invent. Math. 187 (2012), 707--754.

\bibitem{GG3} G\'erard, P., Grellier, S., {\em Inverse spectral problems for compact Hankel operators},  Journal of the Institute of Mathematics of Jussieu, available on CJO2013. doi:10.1017/S1474748013000121, published online on 18 April 2013.

\bibitem{GG4} G\'erard, P., Grellier, S., {\em An explicit formula for the cubic Szeg\H{o} equation}, to appear in Trans. A.M.S.

\bibitem{GK} G\'erard, P., Koch, H., {\em The cubic Szeg\H{o} equation on $BMO$}, in preparation.

\bibitem{GP} G\'erard, P., Pushnitski, A., {\em An inverse problem for self-adjoint positive Hankel operators}, preprint, arXiv:1401.2042[math.SP].

\bibitem{Ha} Hartman, P., {\em On completely continuous Hankel matrices ,} Proc. Amer. Math. Soc. 9 (1958), 862--866.

\bibitem{He} Hern\'andez B. A., Fr\' \i as-Armenta M. E., Verduzco F., {\em On differential structures of polynomial spaces in control theory},
Journal of Systems Science and Systems Engineering 21 (2012),  372--382.

\bibitem{KK} Kantorovitch, L.V., Krylov, V.I., {\em Approximate Methods of Higher Analysis}, 5th ed.,FML, Moscow, 1962 (Russian); English transl. of the 3rd ed. Benter Interscience Publishers, New York, 1958. 

\bibitem{KP} Kappeler, T., P\"oschel, J., {\em KdV \& KAM}, A Series of Modern Surveys in Mathematics, vol. 45, Springer-Verlag,
2003.

\bibitem{LZ} Levitan, B.M., Zhikov, V.V., {\em Almost periodic functions and differential equations}, Cambridge University Press, 1982.

\bibitem{MPT} Megretskii, A~V., Peller, V.~V., and Treil, S.~R., {\em The inverse problem for selfadjoint Hankel operators}, Acta Math. 174 (1995), 241--309.

\bibitem{Na} Nazarov, A.I., {\em On a set of Gaussian random functions}, translated from Russian, Theory Probab. Appl., 54 (2010), 203--216.

\bibitem{Ne} Nehari, Z.,  {\em On bounded bilinear forms.} Ann.
Math. 65, 153--162 (1957).

\bibitem{N} Nikolskii, N.~K., {\em Operators, functions, and systems: an easy reading. Vol. 1. Hardy, Hankel, and
Toeplitz.} Translated from the French by Andreas Hartmann.
Mathematical Surveys and Monographs, 92. American Mathematical
Society, Providence, RI, 2002.

\bibitem{Pe1} Peller, V.V., {\em Smoothness of Schmidt functions of smooth Hankel operators} In Lect. Notes. Math. 1302, Springer-Verlag, Berlin, 1988, 237--246.

\bibitem{Pe2} Peller, V.V., {\em Hankel Operators and their applications}  Springer Monographs in Mathematics.
 Springer-Verlag, New York, 2003.

\bibitem{Ru} Rudin, W., {\em Real and complex analysis}. Tata Mc Graw Hill, New Hill, 1974.

\bibitem{Sc} Schur, I., {\em \"Uber Potenzreihen, die im Innern des Einheitskreises beschr\"ankt sind,} J. Reine Angew. Math., 148 (1918), 122--145.

\end{thebibliography}
\end{document}